\documentclass[a4paper,11pt]{amsart}
\usepackage{amsmath}
\usepackage{amsthm,amsfonts,amssymb,graphics,graphicx}
\usepackage[foot]{amsaddr}
\usepackage{hyperref}
\usepackage[normalem]{ulem}
\usepackage{enumerate}
\usepackage{comment}
\usepackage{geometry}
 \geometry{
 a4paper,
 left=27mm,
 right=27mm,
 top=27mm,
 bottom=27mm
 }
\usepackage{dsfont}
\usepackage{centernot}
\usepackage{bbm}
\usepackage{subcaption}
\captionsetup[subfigure]{labelfont=rm}

\usepackage[style=numeric,
            giveninits=true,
            maxnames=4]{biblatex}
\addbibresource{references.bib}

\AtEveryBibitem{%
  \clearfield{volume}%
  \clearfield{number}%
  \clearfield{pages}%
  \clearfield{doi}%
  \clearfield{isbn}%
  \clearfield{issn}%
  \clearfield{url}%
  \clearfield{note}%
  \clearfield{edition}%
  \clearlist{address}%
  \clearlist{location}%
  \clearlist{publisher}%
  \clearfield{series}%
}

\DeclareBibliographyDriver{article}{%
  \printnames{author}%
  \setunit{\labelnamepunct}\newblock
  \printfield[title]{title}%
  \newunit\newblock
  \printfield{journaltitle}%
  \setunit{\addcomma\space}%
  \printfield{year}%
  \finentry}

\DeclareBibliographyDriver{book}{%
  \printnames{author}%
  \setunit{\labelnamepunct}\newblock
  \printfield[title]{title}%
  \newunit\newblock
  \printlist{publisher}%
  \setunit{\addcomma\space}%
  \printdate%
  \finentry}

\DeclareSortingScheme{nameyearmonthtitle}{
  \sort{
    \name{author}
  }
  \sort{
    \field{year}
  }
  \sort{
    \field{month}
  }
  \sort{
    \field{title}
  }
}

\ExecuteBibliographyOptions{sorting=nameyearmonthtitle}

\usepackage{tikz,tikz-3dplot}
\tdplotsetmaincoords{80}{110}
\usepackage{pgfplots}
\pgfplotsset{compat=1.17}
\usetikzlibrary{intersections}
\usepgfplotslibrary{fillbetween}

\usetikzlibrary{decorations.pathmorphing,patterns,fpu,calc}
\usetikzlibrary{shapes.misc}
\makeatletter
\tikzset{use fpu reciprocal/.code={%
\def\pgfmathreciprocal@##1{%
    \begingroup
    \pgfkeys{/pgf/fpu=true,/pgf/fpu/output format=fixed}%
    \pgfmathparse{1/##1}%
    \pgfmath@smuggleone\pgfmathresult
    \endgroup
}}}%

\newtheorem{theorem}{Theorem}[section]
\newtheorem{lemma}[theorem]{Lemma}
\newtheorem{proposition}[theorem]{Proposition}

\theoremstyle{remark}
\newtheorem{remark}[theorem]{Remark}

\newtheorem{definition}[theorem]{Definition}
\newtheorem{algorithm}[theorem]{Algorithm}

\numberwithin{equation}{section}

\newcommand {\R} {\mathbb{R}}
\renewcommand {\S} {\mathbb{S}}
\renewcommand {\mod} {\mathrm{mod}}
\renewcommand {\Im} {\mathrm{Im}\:}
\renewcommand {\Re} {\mathrm{Re}\:}
\newcommand{\T}{\mathbb{T}}
\newcommand {\D} {\mathbb{D}}
\newcommand {\E} {\mathbb{E}}
\renewcommand {\H} {\mathbb{H}}
\newcommand {\C} {\mathbb{C}}
\newcommand {\N} {\mathbb{N}}
\newcommand {\Z} {\mathbb{Z}}
\renewcommand{\P} {\mathbb{P}}
\newcommand {\Var} {\mathrm{Var}}
\newcommand {\diam} {\mathrm{diam}}

\newcommand{\ind}{\mathds{1}}

\begin{document}
\title[Conformal welding of independent GMC measures]{Conformal welding of independent Gaussian multiplicative chaos measures} 
\author{Antti Kupiainen\textsuperscript{1}}
\address{\textsuperscript{1}Department of Mathematics and Statistics, University of Helsinki}
\address{\textsuperscript{2}Current address: School of Mathematics and Statistics, Technological University Dublin}
\email{antti.kupiainen@helsinki.fi}
\author{Michael McAuley\textsuperscript{1,2}}
\email{m.mcauley@cantab.net}
\author{Eero Saksman\textsuperscript{1}}
\email{eero.saksman@helsinki.fi}
\subjclass[2020]{60G57, 30C62, 60J67}
\keywords{Conformal welding, Gaussian multiplicative chaos, Beltrami equation, Liouville Quantum Gravity} 
\begin{abstract}
We solve the classical conformal welding problem for a composition of two random homeomorphisms generated by independent Gaussian multiplicative chaos measures with small parameter values. In other words, given two such measures on the boundary of the unit disk we show that there exist conformal maps to complementary domains on the Riemann sphere such that the pushforward of the normalised measures agree on their common boundary.
\end{abstract}
\date{\today}
\thanks{}

\maketitle
\section{Introduction and main result}\label{s:Intro}
\subsection{Introduction}
Schramm-Loewner evolution (SLE) and Liouville quantum gravity (LQG) are two major modern contributions to the field of mathematical physics. The former was first described by Schramm \cite{schramm00} as the scaling limit of loop-erased random walk and is now either known or conjectured to be the scaling limit of many other planar statistical mechanics models at criticality \cite{smirnov2001,lawler2004,schramm2005}. The latter is a model of random surfaces inspired by the work of Polyakov on conformal field theory \cite{polyakov2,polyakov1}. We give more details and some references regarding this model below.

These two objects were elegantly related to one another by Sheffield \cite{sheffield2016} through conformal welding: roughly speaking, if one takes a particular coupling of a LQG surface and chordal SLE then the SLE curve corresponds to `zipping up' the boundary of the surface in a length-preserving way. This insight is the foundation of many results for SLE, LQG and random planar maps (see \cite{ghs23} for a survey). 

The classical welding procedure can be traditionally viewed as consisting of two directions: the `direct problem' deals with constructing the welding homeomorphism $\phi:\S^1 \to \S^1$ when the Jordan curve $\Gamma\subset\C$ is given, where $\S^1$ is the unit circle in the complex plane. This direction is always solvable. The direction that is usually much harder in the deterministic world consists of solving the `welding problem', i.e., given a welding homeomorphism $\phi:\S^1\to\S^1$, does there exist a Jordan curve $\Gamma$ with this welding homeomorphism, and,  secondly, is it unique modulo M\"obius maps?

In dealing with rough random curves, the usually easier `direct problem' can also be remarkably difficult to deal with. This is demonstrated by Sheffield's fundamental work, where identifying the probabilistic structure of the welding homeomorphism $\phi$ for SLE-curves turns out to be very intricate. The uniqueness of the welding in this situation follows rather easily from the known H\"older property of the SLE-curves.

Our motivation in this work is to develop an alternative approach to directly solving the `welding problem' for given rough random self-homeomorphisms of $\S^1$. Thus the aim is to prove the existence of the unique (up to M\"obius maps) Jordan curves solving the welding problem without any a priori knowledge of the existence or any properties of the curves. The method can thus be used also as a way to construct such random loops as soon as the random welding homeomorphism is given.

Our approach is especially designed to cover SLE-loops (at least for small parameter values), but it has some inherent flexibility and can potentially be used in more general situations. Later on, we indeed demonstrate the latter by showing that it can also be used to weld two quantum wedges with different parameters. Below, we begin with an informal statement of our result in the basic case of two quantum wedges with the same parameter. The generalization to non-equal  parameter values is described in Section~\ref{s:generalisations} via Theorem~\ref{t:WeldingGeneral}.

Let $h_1,h_2$ be two independent copies of the planar Gaussian free field restricted to the unit circle $\S^1$ (which we identify with $\T:=\R\slash\Z\simeq[0,1)$). We formally define random measures on $[0,1)$ by
\begin{displaymath}
\tau^{(i)}(dx):=e^{\gamma h_i(x)}\;dx
\end{displaymath}
for $\gamma\in[0,\sqrt{2})$ and $i=1,2$. (Since $h_i$ is not defined pointwise, this measure must be defined by a normalisation procedure described below.) One can think of $\tau^{(i)}$ as measuring `quantum boundary length' for a LQG surface parameterised by the unit disk (this is a particular case of a Gaussian multiplicative chaos measure). Our main result states that for $\gamma>0$ sufficiently small, we can map these two surfaces conformally onto disjoint domains in $\hat{\C}$ with common boundary such that the normalised quantum boundary lengths between points coincide for each surface (see Figure~\ref{fig:WeldingIllustration}). 
\begin{figure}
    \centering
    \begin{tikzpicture}[scale=1]
	\begin{scope}[shift = {(0,1.8)}]
    \begin{scope}
	    \clip(-3,3) -- (-3,0) -- (-2,0) arc (180:360:2cm and 0.6cm) -- (3,0) -- (3,3) -- (-3,3);
		\shade[ball color=gray!60!white, opacity=0.70] (0,0) circle (2cm);
    \end{scope}

    \draw[dashed] (2,0) arc (0:180:2cm and 0.6cm);
    \draw (2,0) arc (0:-180:2cm and 0.6cm);
    \draw (-2,0) arc (180:0:2cm and 2cm);
    \foreach \x in { 70, 80, 95, 140, 190, 220, 260,  310, 330 } {
    \draw (\x:2cm and 0.6cm) node[cross out, draw, thick, minimum size=4pt, inner sep=0pt] {};
    }
    \draw (0:2cm and 0.6cm) node[circle, fill = black, draw, thick, minimum size=6pt, inner sep=0pt] {};
    \node[below right] at (0:2cm and 0.6cm) {$1$};
    \node at (-2,2) {$\hat{\mathbb{C}}\setminus\mathbb{D}$};
	\end{scope}    
    
	\begin{scope}
	    \clip(-3,-3) -- (-3,0) -- (-2,0) arc (180:360:2cm and 0.6cm) -- (3,0) -- (3,-3) -- (-3,-3);
		\shade[ball color=gray!60!white, opacity=0.70] (0,0) circle (2cm);
	\end{scope}
    \shade[shading=radial, inner color=gray!20!white, outer color=gray!60!white, opacity=0.70] (2,0) arc (0:360:2cm and 0.6cm); 
    \draw (2,0) arc (0:360:2cm and 0.6cm);
    \draw (-2,0) arc (180:360:2cm and 2cm);
    \foreach \x in {-25,  35, 90, 100, 135, 185, 220, 245, 280 } {
    \draw (\x:2cm and 0.6cm) node[circle, draw, thick, minimum size=6pt, inner sep=0pt] {};
    }
    \draw (0:2cm and 0.6cm) node[circle, fill = black, draw, thick, minimum size=6pt, inner sep=0pt] {};
    \node[below right] at (0:2cm and 0.6cm) {$1$};
    \node at (-2,-2) {$\overline{\mathbb{D}}$};

    \draw[->] (2.5,2.5)--(4.5,2) node [midway, above] {$f_2$};
    \draw[->] (2.5,-0.5)--(4.5,0)node [midway, above] {$f_1$};

    \begin{scope}[shift = {(7,0.9)}]
		\shade[ball color=gray!60!white, opacity=0.70] (0,0) circle (2cm);
        \draw (0,0) circle (2cm);
        \draw[line width = 1] plot [smooth] coordinates {(-2,0)(-1.8,-0.5)(-1.5,-0.4)(-1.3,-0.1)(-0.6,-0.8)(-0.1,-0.6)(0.3,-0.4)(0.7,-0.5)(0.9,0.1)(1.2,-0.7)(1.5,-0.4)(1.9,-0.2)(2,0)};
        \foreach \point in {(-1.3,-0.1), (-0.6,-0.8), (0.9,0.1),(1.2,-0.7)} {
        \draw \point node[circle, draw, thick, minimum size=6pt, inner sep=0pt] {};
        \draw \point node[cross out, draw, thick, minimum size=4pt, inner sep=0pt] {};
        }
        \draw (0.3,-0.4) node[circle, fill, thick, minimum size=6pt, inner sep=0pt] {};        
        
        \draw[line width = 1, dashed] plot [smooth] coordinates {(2,0)(1.6,0.3)(1.4,0.6)(1.2,0.5)(0.8,0.5)(0.4,0.2)(-0.2,0.5)(-0.7,0.2)(-1.4,0.6)(-1.8,0.2)(-2,0)};
        \foreach \point in {(2,0),(1.4,0.6),(0.4,0.2),(-0.7,0.2),(-1.4,0.6)} {
        \draw \point node[circle, draw, thick, minimum size=6pt, inner sep=0pt] {};
        \draw \point node[cross out, draw, thick, minimum size=4pt, inner sep=0pt] {};
        }
    \end{scope}
\end{tikzpicture}	

    \caption{The conformal maps $f_1,f_2$ can be chosen so that $f_1(1)=f_2(1)$ and for $x,y\in[0,1]$ we have $f_1(e^{2\pi ix})=f_2(e^{2\pi i y})$ if and only if $\tau^{(1)}([0,x])/\tau^{(1)}([0,1])=\tau^{(2)}([0,y])/\tau^{(2)}([0,1])$. Crosses and circles denote points to be matched up in this way. The images of $\partial\D$ under $f_1$ and $f_2$ coincide to give a closed curve in $\C$ which we think of as a subset of $\hat{\C}\simeq\S^2$.}
    \label{fig:WeldingIllustration}
\end{figure}

Before discussing our result and its relation to the literature in more detail, it will be helpful for us to give a precise statement and an outline of our method of proof.

\subsection{Statement of main result}\label{ss:Statement}
The planar Gaussian free field can be thought of intuitively as the Gaussian process $h$ with covariance function
\begin{displaymath}
\E[h(x)h(y)]=\log\frac{1}{\lvert x-y\rvert},\quad x,y\in\R^2.
\end{displaymath}
The trace of the two-dimensional Gaussian free field on the circle $\S^1$ is then the restriction of $h$ to $\S^1$. There is some subtlety to both of these definitions as this covariance function diverges on the diagonal (and also at infinity, although this won't be relevant for the trace), however they can be made sense of in terms of random distributions. We say that a random distribution $g$ taking values in $\mathcal{D}^\prime(\S^1)$ has covariance kernel $K:\S^1\times\S^1\to\R$ if for any smooth functions $\varphi_1,\varphi_2:\S^1\to\R$
\begin{displaymath}
\mathrm{Cov}[g(\varphi_1),g(\varphi_2)]=\int_{\S^1\times \S^1}K(x,y)\varphi_1(x)\varphi_2(y)\;dm_{\S^1}(x)dm_{\S^1}(y)
\end{displaymath}
where $m_{\S^1}$ denotes normalised Lebesgue measure on $\S^1$. In the case that $g$ is Gaussian and centred (i.e., $\E[g(\varphi)]=0$ for every $\varphi\in C^\infty(\S^1)$) its distribution is completely specified by this covariance kernel (see \cite{lifshits2012} for background on general Gaussian processes). We therefore define the trace of the Gaussian free field on $\S^1$ to be the centred Gaussian process $h$ taking values in $\mathcal{D}^\prime(\S^1)$ with covariance kernel
\begin{displaymath}
K(x,y)=\log\frac{1}{\lvert x-y\rvert},\quad x,y\in\S^1.
\end{displaymath}
Identifying $\S^1$ with $\T\simeq[0,1)$, the covariance kernel takes the form
\begin{displaymath}
K(t,u)=\log\frac{1}{2\sin(\pi\lvert t-u\rvert)},\quad t,u\in\T\simeq[0,1).
\end{displaymath}

We wish to define a measure $\tau$ on $\T$ by
\begin{displaymath}
\tau(dx)=e^{\gamma h(x)}dx
\end{displaymath}
where $\gamma>0$ is a parameter. This naive definition is inconsistent since $h$ is not defined pointwise, however it can be justified by taking a limit of regularised versions of $h$. Different regularisation procedures are possible (all leading to the same limit, see \cite{shamov2016}). We will use a regularisation based on a white noise representation of $h$ from \cite{bm02,bacry03} since this representation will be an important part of proving our main result.

We denote by $\lambda$ the hyperbolic measure on the upper half plane $\H$, that is $\lambda(dxdy)=(1/y^2)dxdy$. We let $W$ be a periodic Gaussian white noise with respect to this measure defined as follows: $W$ is a centred Gaussian process indexed by
\begin{displaymath}
\mathcal{B}^\ast(\H):=\left\{A\subset\H:A\text{ is Borel,  }\lambda(A)<\infty\text{ and }\sup_{(x_1,y_1),(x_2,y_2)\in A}\lvert x_1-x_2\rvert\leq 1\right\}
\end{displaymath}
with covariance function
\begin{displaymath}
\mathrm{Cov}(W(A_1),W(A_2))=\lambda\left(A_1\cap\left(\bigcup_{n\in\Z} (A_2+n)\right)\right).
\end{displaymath}
We define the sets
\begin{displaymath}
\mathcal{H}=\{(x,y)\in\H\;:\;\lvert x\rvert< 1/2, y\geq(2/\pi)\tan(\lvert\pi x\rvert)\},\quad \mathcal{H}_\epsilon=\{(x,y)\in \mathcal{H}\;:\;y\geq\epsilon\}\quad\text{for }\epsilon>0
\end{displaymath}
(see Figure~\ref{fig:whitenoise}). Finally we define the Gaussian field $H_\epsilon(\cdot)$ by 
\begin{displaymath}
H_\epsilon(x)=W(\mathcal{H}_\epsilon+x)\quad\text{for }x\in\T\simeq[0,1).
\end{displaymath}
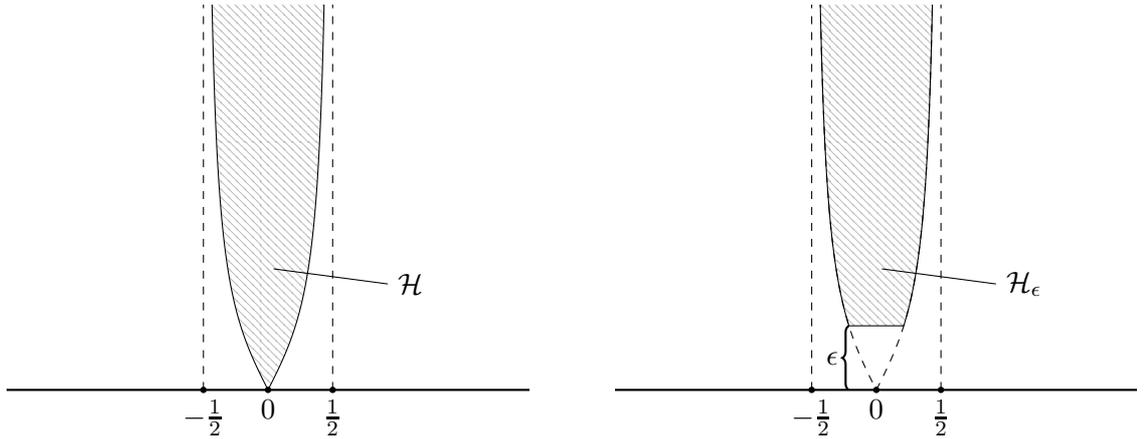
\begin{figure}[ht]
    \centering
 \begin{tikzpicture}[scale=1]

\begin{axis}[trig format plots=rad,samples=500,domain=-0.5:0.5,ymax=3,ymin=-0.35,axis equal,hide axis]
\addplot[samples=500,domain=-.48:0.48,name path=A] {(2/pi)*tan(abs(pi*x))};
\addplot[domain=-1.1:1.1,name path=B]{5};
\addplot[pattern=north west lines,opacity=0.5] fill between[of=A and B];
\filldraw[black] (0,0) circle (1pt) node[anchor=north]{$0$};
\filldraw[black] (0.5,0) circle (1pt) node[anchor=north]{$\frac{1}{2}$};
\filldraw[black] (-0.5,0) circle (1pt) node[anchor=north]{$-\frac{1}{2}$};
\draw[thick] (-5,0)--(5,0);
\draw [dashed] (0.5,0)--(0.5,5);
\draw [dashed] (-0.5,0)--(-0.5,5);
\end{axis}

\draw (3.5,2.2)--(5,2);
\node[right] at (5,2) {$\mathcal{H}$};

\begin{axis}[trig format plots=rad,samples=500,domain=-0.5:0.5,ymax=3,ymin=-0.35,axis equal,hide axis,xshift=8cm]
\addplot[domain=-.48:0.48,dashed] {(2/pi)*tan(abs(pi*x))};
\addplot[domain=-.48:0.48,name path=A] {max((2/pi)*tan(abs(pi*x)),0.5)};
\addplot[domain=-1.1:1.1,name path=B]{5};
\addplot[pattern=north west lines,opacity=0.5] fill between[of=A and B];
\filldraw[black] (0,0) circle (1pt) node[anchor=north]{$0$};
\filldraw[black] (0.5,0) circle (1pt) node[anchor=north]{$\frac{1}{2}$};
\filldraw[black] (-0.5,0) circle (1pt) node[anchor=north]{$-\frac{1}{2}$};
\draw[thick] (-5,0)--(5,0);
\draw [dashed] (0.5,0)--(0.5,5);
\draw [dashed] (-0.5,0)--(-0.5,5);
\draw [thick,decorate,
    decoration = {brace}] ({-atan(pi/4)/180},0) -- node[left] {$\epsilon$} ({-atan(pi/4)/180},0.5);
 
\end{axis}

\draw (11.5,2.2)--(13,2);
\node[right] at (13,2) {$\mathcal{H}_\epsilon$};

\end{tikzpicture}
    
    \caption{The sets $\mathcal{H}$ and $\mathcal{H}_\epsilon$ are used to define a white noise representation for the Gaussian free field.}
    \label{fig:whitenoise}
\end{figure}
This is our regularised process which yields $h$ in the limit:
\begin{lemma}[{\cite[Lemma~3.4]{ajks}}]\label{l:GFFExistence}
There exists a version of the white noise $W$ such that for all $\epsilon>0$ the map $x\mapsto H_\epsilon(x)$ is continuous and as $\epsilon\to 0^+$, $H_\epsilon(\cdot)$ converges in $\mathcal{D}^\prime(\T)$ to some $H(\cdot)$ such that
\begin{displaymath}
H(\cdot)\sim h+G
\end{displaymath}
where $G\sim\mathcal{N}(0,2\log(2))$ is a (scalar) Gaussian variable independent of $h$.
\end{lemma}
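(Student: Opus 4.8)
The plan is to compute the covariance of $H_\epsilon$ explicitly, read off the law of the limit from its covariance kernel, and then pass to the limit $\epsilon\to0^{+}$ in a negative-order Sobolev space. First I would fix a realisation of $W$ -- equivalently a Gaussian isometry of $L^{2}$ of the cylinder $\H/(x\sim x+1)$ equipped with the hyperbolic measure -- so that, since $\mathcal H_\epsilon$ has finite $\lambda$-mass and horizontal width $<1$, the variable $H_\epsilon(x)=W(\mathcal H_\epsilon+x)$ is a well-defined element of $L^{2}(\P)$ for each $x$. For fixed $\epsilon$ one has $\E[(H_\epsilon(x)-H_\epsilon(x'))^{2}]=\lambda\big((\mathcal H_\epsilon+x)\triangle(\mathcal H_\epsilon+x')\big)\le C_\epsilon|x-x'|$ (the region swept by $\mathcal H_\epsilon$ under a small horizontal shift has $\lambda$-mass $O_\epsilon(|x-x'|)$), so, $H_\epsilon$ being Gaussian, Kolmogorov's criterion yields a continuous-in-$x$ modification; running this over a countable dense set of values of $\epsilon$, and using that for each fixed $x$ the map $\epsilon\mapsto H_\epsilon(x)$ is a Gaussian process with independent increments and continuous variance $\lambda(\mathcal H_\epsilon)$ (hence a.s.\ continuous in $\epsilon$), one obtains a single version of $W$ for which $x\mapsto H_\epsilon(x)$ is continuous for every $\epsilon>0$.

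Next comes the covariance computation, which I expect to be the main obstacle. By translation invariance of $\lambda$ and of $W$, the kernel $\Lambda_\epsilon(x,y):=\E[H_\epsilon(x)H_\epsilon(y)]=\lambda\big((\mathcal H_\epsilon+x)\cap\bigcup_{n\in\Z}(\mathcal H_\epsilon+y+n)\big)$ depends on $x,y$ only through $t:=(x-y)\bmod 1$, is nondecreasing as $\epsilon\downarrow0$ (because $\mathcal H_\epsilon\subset\mathcal H_{\epsilon'}$ for $\epsilon'<\epsilon$), and equals $\lambda(\mathcal H_\epsilon)\uparrow\infty$ on the diagonal. The crux is the explicit identity, for $t\in(0,1)$,
\[
\Lambda(t):=\lim_{\epsilon\to0^{+}}\Lambda_\epsilon(x,y)=\log\frac{1}{2\sin(\pi t)}+2\log2=K(x,y)+2\log 2 ,
\]
proved by integrating $y^{-2}\,dx\,dy$ over $(\mathcal H+x)\cap(\mathcal H+y+n)$ for the (at most two) integer translates $n$ that actually contribute; the boundary profile $y=(2/\pi)\tan(\pi|x|)$ is precisely the one for which this intersection $\lambda$-measure is the logarithmic kernel, and the additive constant $2\log 2$ is tied to the normalisation $\int_{0}^{1}\log(2\sin\pi t)\,dt=0$. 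The delicate bookkeeping is keeping track of the periodisation and pinning down the constants. Since $\Lambda(t)\asymp\log(1/t)$ as $t\to0$, one has $\Lambda\in L^{1}([0,1))$ and $(x,y)\mapsto\Lambda(x-y)\in L^{1}_{\mathrm{loc}}([0,1)^{2})$, so the diagonal blow-up is harmless when tested against smooth functions.

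Finally I would pass to the limit and identify $H$. For $\varphi\in C^{\infty}(\S^{1})$ put $H_\epsilon(\varphi)=\int_{0}^{1}H_\epsilon(x)\varphi(x)\,dx$. For $\epsilon'<\epsilon$ the families $\{H_{\epsilon'}(\cdot)-H_\epsilon(\cdot)\}$ and $\{H_\epsilon(\cdot)\}$ are built from $W$ on the disjoint regions $\{(a,b)\in\H:b<\epsilon\}$ and $\{(a,b)\in\H:b\ge\epsilon\}$, hence independent; the increment kernels $\Lambda_{\epsilon'}-\Lambda_\epsilon\ge0$ are positive-definite and decrease to $0$, so $\E[(H_\epsilon(\varphi)-H_{\epsilon'}(\varphi))^{2}]=\int_{0}^{1}\!\int_{0}^{1}(\Lambda_{\epsilon'}-\Lambda_\epsilon)(x-y)\,\varphi(x)\varphi(y)\,dx\,dy\to0$ by monotone convergence, and $H_\epsilon(\varphi)\to H(\varphi)$ in $L^{2}(\P)$ to a centred Gaussian variable with $\E[H(\varphi_1)H(\varphi_2)]=\int_{0}^{1}\!\int_{0}^{1}\big(K(x,y)+2\log2\big)\varphi_1(x)\varphi_2(y)\,dx\,dy$. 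To see that this convergence takes place in $\mathcal D^{\prime}(\S^{1})$, I would pass to the Fourier side: $\widehat\Lambda(k)=\tfrac{1}{2|k|}$ for $k\ne0$, $\widehat\Lambda(0)=2\log2$, and $\E|\widehat{H_\epsilon}(k)|^{2}=\widehat{\Lambda_\epsilon}(k)\le\widehat\Lambda(k)\le\|\Lambda\|_{L^{1}}$, with the increment kernels positive-definite, so $\E\|H_\epsilon-H_{\epsilon'}\|_{H^{-1}(\S^{1})}^{2}=\sum_{k\in\Z}(1+k^{2})^{-1}\big|\widehat{\Lambda_\epsilon}(k)-\widehat{\Lambda_{\epsilon'}}(k)\big|\to0$ by dominated convergence over $k$; hence $H_\epsilon\to H$ in $L^{2}(\P;H^{-1}(\S^{1}))\hookrightarrow\mathcal D^{\prime}(\S^{1})$. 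The limit $H$ is then a centred Gaussian random distribution whose covariance bilinear form coincides with that of $h+G$, where $h$ has covariance kernel $K$ and $G\sim\mathcal N(0,2\log 2)$ is independent of $h$; since the law of a Gaussian element of $\mathcal D^{\prime}(\S^{1})$ is determined by its covariance, $H\sim h+G$.
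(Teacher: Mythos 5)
Your proposal is correct and genuinely more detailed than the paper's own treatment, which simply cites \cite[Lemma~3.4]{ajks} and sketches the argument in two sentences (``continuity and convergence follow from standard arguments (Dudley's theorem)'' plus a bare statement of the covariance identity). Where the paper invokes Dudley's entropy bound, you use Kolmogorov's continuity criterion for the $x$-regularity at fixed $\epsilon$, and you establish the distributional convergence by passing to the Fourier side and working in $L^2(\P;H^{-1}(\S^1))$ — a cleaner and more quantitative route. Both approaches are standard and equivalent in strength for Gaussian fields with a H\"older bound on the increment variance; Dudley would also be needed if one wanted a sharper (log-corrected) modulus, which is not required here. Your Fourier coefficients $\widehat\Lambda(k)=\tfrac{1}{2|k|}$ for $k\neq 0$ and $\widehat\Lambda(0)=2\log 2$ are correct (from $\log(2\sin\pi t)=-\sum_{k\geq 1}\cos(2\pi kt)/k$), and the monotonicity $\widehat{\Lambda_\epsilon}(k)\uparrow\widehat\Lambda(k)$ together with dominated convergence does give $\E\|H_\epsilon-H\|_{H^{-1}}^2\to 0$.

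One point deserves tightening: the lemma asserts that, for a suitable version of $W$, $H_\epsilon\to H$ in $\mathcal D'(\S^1)$, i.e.\ almost surely, whereas your Sobolev estimate as stated only yields $L^2(\P;H^{-1})$ and hence convergence in probability. This is easily repaired using the observation you already make — for $\epsilon'<\epsilon$ the increment $H_{\epsilon'}-H_\epsilon$ is independent of $H_\epsilon$, so $\epsilon\mapsto H_\epsilon(\varphi)$ is a backward $L^2$-bounded martingale for each test function $\varphi$, giving a.s.\ convergence along any sequence; combining with the $H^{-1}$ bound (e.g.\ Doob's inequality applied to $\|H_\epsilon-H\|_{H^{-1}}$) upgrades this to a.s.\ convergence in $\mathcal D'$. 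You should state this explicitly rather than leaving it at the $L^2$ statement.
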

The continuity and convergence stated here follow from standard arguments for random fields (Dudley's theorem). To justify the distribution of $H(\cdot)$, an explicit computation shows that for $t\in(0,1)$
\begin{displaymath}
\lambda(H\cap(H+t))+\lambda(H\cap(H+t-1))=2\log(2)+\log\frac{1}{2\sin(\pi t)}
\end{displaymath}
matching the covariance function of $h+G$. With this representation, we can now define the desired measure $\tau$.

For any $\gamma>0$ and bounded Borel function $g$, the process
\begin{equation}\label{e:Martingale}
    \epsilon\mapsto\int_0^1g(x)e^{\gamma H_\epsilon(x)-\frac{\gamma^2}{2}\Var[H_\epsilon(x)]}\;dx
\end{equation}
is a martingale with respect to decreasing $\epsilon\in(0,1]$ (this follows easily from the definitions). The martingale is bounded in $L^1$ and hence converges almost surely. In particular, setting $g\equiv 1$ we see that the family of measures $e^{\gamma H_\epsilon(x)-\frac{\gamma^2}{2}\Var[H_\epsilon(x)]}\;dx$ indexed by $\epsilon$ are bounded in total variation norm. Hence by the generalised form of Prokhorov's theorem, every sequence of such measures has a subsequence which is weak-$\ast$ convergent. As $\epsilon\to 0$, the convergence of \eqref{e:Martingale} for, say, all polynomials with rational coefficients shows that the subsequential weak-$\ast$ limits coincide. Hence we can unambiguously define the almost sure weak-$\ast$ limit
\begin{equation}\label{e:MeasureDef}
\lim_{\epsilon\to 0^+}e^{\gamma H_\epsilon(x)-\frac{\gamma^2}{2}\Var[H_\epsilon(x)]}e^{-\gamma G}2^{\gamma^2}dx=:\tau(dx).
\end{equation}
Although this definition is valid for all $\gamma>0$, the limiting measure is known to be trivial (i.e., identically zero) whenever $\gamma^2\geq 2$ \cite{RV10}.

Let $\tau^{(1)},\tau^{(2)}$ be independent copies of the measure defined by \eqref{e:MeasureDef}. Let $\Theta_1,\Theta_2$ be two arbitrary random variables taking values in $[0,1]$ (in particular they may depend on $\tau^{(1)}$ and $\tau^{(2)}$ or they may be identically zero). We define the homeomorphisms $\phi_1,\phi_2$ by
\begin{equation}\label{e:DefineHomeo}
\phi_j(e^{2\pi ix}):=\exp\left(2\pi i\Theta_j+2\pi i\cdot\frac{\tau^{(j)}([0,x])}{\tau^{(j)}([0,1])}\right)\qquad\text{for }j=1,2\text{ and }x\in[0,1).
\end{equation}
Our main result is the following:
\begin{theorem}\label{t:Welding}
There exists $\gamma_0\in(0,\sqrt{2})$ such that for each $\gamma\in[0,\gamma_0]$ the following holds with probability one: there exist conformal maps
\begin{displaymath}
f_1:\D\to D,\quad\text{and}\quad f_2:\C\setminus\overline{\D}\to \C\backslash \overline{D}
\end{displaymath}
(where $D\subset\C$ is some simply connected domain) which may be extended to homeomorphisms of their closures such that $f_1\circ\phi_1^{-1}=f_2\circ\phi_2^{-1}$. Moreover the maps $f_1$ and $f_2$ are unique up to post-composition with a M\"obius transformation.
\end{theorem}

Described more succinctly, this theorem says that we can solve the classical conformal welding problem for the homeomorphism $\phi_1^{-1}\circ\phi_2$. We note that the welding problem was solved for $\phi_1$ and for $\phi_1\circ\phi_2^{-1}$ in \cite{ajks}. It was recently shown in \cite{fs25} that if $\Theta_1,\Theta_2$ are chosen to be uniform on $[0,1]$, independent of $\tau^{(1)}$ and $\tau^{(2)}$, then the welding curve induced by our construction is equivalent to an SLE loop measure. In Section~\ref{s:Literature} we discuss in more detail the motivation for considering the welding problem for these different homeomorphisms and compare our method of proof with that of \cite{ajks}.

In Section~\ref{s:DiffParam} we state and prove a generalisation of Theorem~\ref{t:Welding} for which the $\gamma$-parameters of $\phi_1$ and $\phi_2$ are different (and both less than $\gamma_0$). In particular, this includes the welding result for $\phi_1$ proven in \cite{ajks} (when $\gamma\leq\gamma_0$).

Our proof does not yield a quantitative lower bound for $\gamma_0$, however many of our arguments are valid for all $\gamma\in[0,\sqrt{2})$ so we hope that the general approach could be adapted to cover all such values (see the discussion in Section~\ref{s:Literature}).

\subsection{Outline of proof}\label{s:OutlineProof}
Our method of proof can be broken down into the following steps:
\medskip

\noindent\underline{Welding via the Beltrami equation:} A classical approach to solving the conformal welding problem is through the theory of quasiconformal mappings and the Beltrami equation. (For the purpose of this outline, readers unfamiliar with quasiconformal maps can think of them as simply a generalisation of conformal maps. A definition will be given in Section~\ref{s:Beltrami} and further background may be found in \cite{ahlfors2006,astala2009elliptic}.)

We extend $\phi_1$ and $\phi_2$ to $\Phi_1:\overline{\D}\to\overline{\D}$ and $\Phi_2:\C\backslash\D\to\C\backslash\D$ respectively via the Beurling-Ahlfors extension (which we describe in Section~\ref{s:Beltrami}). These extensions are orientation-preserving homeomorphisms which are differentiable almost everywhere. For a function $g$ satisfying the latter properties, we define the complex dilatation $\mu_g$ to be the measurable function satisfying $\partial_{\overline{z}}g=\mu_g\partial_z g$ (where $\partial_{\overline{z}}=(1/2)(\partial_x+i\partial_y)$ and $\partial_z=(1/2)(\partial_x-i\partial_y)$ denote the standard Wirtinger derivatives). Considering the Jacobian of $g$ shows that $\lvert\mu_g\rvert<1$ almost everywhere, so we may define the distortion of $g$ as $K_g:=(1+\lvert\mu_g\rvert)/(1-\lvert\mu_g\rvert)$. The distortion/dilatation in some sense capture how close the function $g$ is to being conformal, since $\mu_g=0$ implies that $g$ is holomorphic (up to sets of measure zero). We wish to find a quasiconformal homeomorphism $F:\C\to\C$ such that
\begin{equation}\label{e:Beltrami}
\mu_F(z)=\begin{cases}
\mu_{\Phi_1^{-1}}(z) &\text{if }z\in\D\\
\mu_{\Phi_2^{-1}}(z) &\text{if }z\in\C\backslash\D.
\end{cases}
\end{equation}
This is a specific instance of the Beltrami equation. If we could obtain such an $F$, then the Stoilow factorisation theorem (a uniqueness result for the Beltrami equation) would state that the functions $f_1,f_2$ defined by
\begin{equation}\label{e:Welding}
\begin{aligned}
f_1&:=F\circ\Phi_1:\overline{\D}\to F(\overline{\D})\\
f_2&:=F\circ\Phi_2:\C\backslash\D\to F(\C\backslash\D)
\end{aligned}
\end{equation}
are conformal on the interior of their domains. Then by definition
\begin{displaymath}
f_1\circ\phi_1^{-1}=F|_{\partial\D}=f_2\circ\phi_2^{-1}
\end{displaymath}
and so $f_1$ and $f_2$ would solve our welding problem.

The standard existence theorem for the Beltrami equation asserts that equations of the form \eqref{e:Beltrami} have a solution whenever the right hand side is bounded (in absolute value) uniformly away from one. This is known as the uniformly elliptic case. Unfortunately this condition is not satisfied in our setting for points close to $\partial\D$; our Beltrami equation is thus described as degenerate.

To make progress, we imitate a classical argument for solving degenerate Beltrami equations that originated with Lehto \cite{lehto1970} (see \cite[Chapter~20]{astala2009elliptic} for a modern treatment and further references). We define a sequence of Beltrami equations by
\begin{displaymath}
\mu_{F_n}(z)=\begin{cases}
\frac{n}{n+1}\mu_{\Phi_1^{-1}}(z) &\text{if }z\in\D\\
\frac{n}{n+1}\mu_{\Phi_2^{-1}}(z) &\text{if }z\in\C\backslash\D
\end{cases}.
\end{displaymath}
Each such equation is uniformly elliptic and so has a well-defined solution. Supposing that the sequence $F_n$ is equicontinuous, the Arzel\`a-Ascoli theorem produces a subsequential limit which, by standard analytic arguments, may be shown to solve \eqref{e:Beltrami}. It is relatively straightforward to establish equicontinuity away from $\partial\D$ in our setting: this follows from almost sure bounds on the distortion of $\Phi_1^{-1}$ and $\Phi_2^{-1}$ which are a consequence of well-known moment bounds on $\tau^{(1)}$ and $\tau^{(2)}$ and the definition of the Beurling-Ahlfors extension. It is (apparently) much more challenging to prove equicontinuity on $\partial\D$ and doing so occupies the majority of our paper, as we describe below.

In fact we eventually establish the stronger statement that the family $F_n$ satisfies a uniform H\"older continuity bound on $\partial\D$. This implies that $F(\partial\D)$ is a H\"older domain which, by a conformal removability result due to Jones and Smirnov \cite{jones2000removability}, guarantees the uniqueness of our solution to the welding problem.
\medskip

\noindent\underline{H\"older continuity via `undistorted' annuli:} The primary information we have about the functions $F_n$ is a uniform bound on their distortion and from this we wish to prove a uniform H\"older continuity bound. We make the link between these properties using conformal modulus/extremal length (which is described in Section~\ref{s:Beltrami} for those who are unfamiliar with the concept). Specifically it is enough to show that for each point of $\partial\D$, there exists a sequence of geometrically shrinking concentric annuli $\mathbb{A}_i$ surrounding the point, such that the images of these annuli under any $F_n$ has conformal modulus bounded away from zero (see Figure~\ref{fig:AnnuliDistort}). The latter property follows if the distortion of $F_n$ on each $\mathbb{A}_i$ is not too large (relative to the size of $\mathbb{A}_i$).

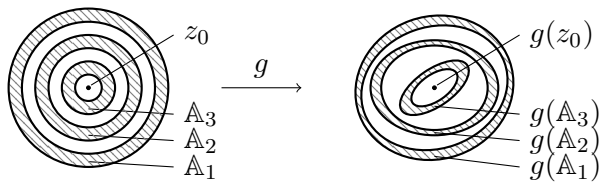
\begin{figure}[h]
    \centering
\begin{tikzpicture}[scale=0.35]
\draw [->] (5,0)--node[above]{$g$}(8,0);

\node[right] at (3.2,2) {$z_0$};
\draw (3.2,2)--(0.1,0.1);
\node[right] at (3.2,-3) {$\mathbb{A}_1$};
\draw (3.2,-3)--(0,-2.8);
\node[right] at (3.2,-2) {$\mathbb{A}_2$};
\draw (3.2,-2)--(0,-1.8);
\node[right] at (3.2,-1) {$\mathbb{A}_3$};
\draw (3.2,-1)--(0,-.8);

\draw[fill] (0,0) circle (2pt);
\fill[even odd rule,pattern=north west lines,pattern color=gray] (0,0) circle (.5) circle (1) circle (1.5) circle(2) circle(2.5) circle(3);
\draw[thick](0,0)  circle (.5) circle (1) circle (1.5) circle(2) circle(2.5) circle(3);

\draw[fill] (13,0) circle (2pt);
\begin{scope}[shift={(13,0)}]
\fill[even odd rule,pattern=north west lines,pattern color=gray,rotate=35] (0,0) ellipse (1 and 0.5) ellipse (1.5 and 0.7);
\fill[even odd rule,pattern=north west lines,pattern color=gray](0,0) ellipse (2 and 1.6) ellipse (2.4 and 1.8);
\fill[even odd rule,pattern=north west lines,pattern color=gray,rotate=20] (0,0) ellipse (2.8 and 2.3) ellipse (3 and 2.7);

\draw[thick,rotate=35] (0,0) ellipse (1 and 0.5) ellipse (1.5 and 0.7);
\draw[thick] (0,0) ellipse (2 and 1.6) ellipse (2.4 and 1.8);
\draw[thick,rotate=20] (0,0) ellipse (2.8 and 2.3) ellipse (3 and 2.7);
\node[right] at (3.2,2) {$g(z_0)$};
\draw (3.2,2)--(0.1,0.1);
\node[right] at (3.2,-3) {$g(\mathbb{A}_1)$};
\draw (3.2,-3)--(0,-2.6);
\node[right] at (3.2,-2) {$g(\mathbb{A}_2)$};
\draw (3.2,-2)--(0,-1.7);
\node[right] at (3.2,-1) {$g(\mathbb{A}_3)$};
\draw (3.2,-1)--(0,-.7);
\end{scope}
\end{tikzpicture}
    \caption{To bound the modulus of continuity of a homeomorphism $g$ near a point $z_0$, it is enough to show that the images of small annuli $\mathbb{A}_i$ surrounding $x$ are not too `distorted' (i.e., they are not too long or thin). We apply this reasoning to $g=F_n$ around points $z_0\in\partial\D$.}
    \label{fig:AnnuliDistort}
\end{figure}

The distortion of $F_n$ on any $\mathbb{A}_i$ is defined in terms of the distortion of $\Phi_1$ and $\Phi_2$ on $\Phi_1^{-1}(\mathbb{A}_i)\cup\Phi_2^{-1}(\mathbb{A}_i)$. This set would be difficult to work with if we chose the $\mathbb{A}_i$ deterministically, so instead we define them as images under $\Phi_1$ and $\Phi_2$ of deterministic sets which we call `half-annuli'. These sets, and the ensuing arguments, are more easily described by a conformal change of domain.

We identify $\R$ as the periodic extension of $\partial\D$ via $z\mapsto e(z):=\exp(2\pi iz)$ and we define our half annuli as follows: for $t\geq 0$, $x\in\R$ and a small parameter $\rho\in(0,1)$ let
\begin{displaymath}
    A_t(x):=x+([-\rho^t,\rho^t]\times[0,\rho^t])\setminus((-\rho^{t+1/4},\rho^{t+1/4})\times(0,\rho^{t+1/4}))
\end{displaymath}
(see Figure~\ref{fig:HalfAnnuli}) and let $\widetilde{A}_t(x)$ be the reflection of this set in the real axis. Observe that as $t$ increases, the half-annuli shrink towards the point $x$.

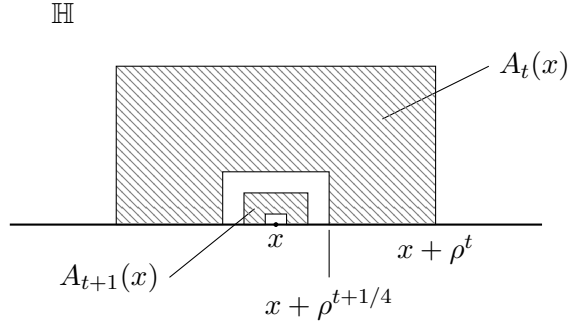
\begin{figure}[h]
    \centering
    \begin{tikzpicture}[scale=0.7]

\node at (-4,4) {$\mathbb{H}$};
\draw[thick] (-5,0)--(5,0);
\node[right] at (4,3) {$A_t(x)$};
\draw (4,3)--(2,2);
\node[left] at (-2,-1) {$A_{t+1}(x)$};
\draw (-2,-1)--(-0.4,.3);
\node[below] at (3,0) {$x+\rho^t$};
\node[below] at (1,-1) {$x+\rho^{t+1/4}$};
\draw (1,-1)--(1,-0.1);

\filldraw[black] (0,0) circle (1pt) node[anchor=north]{$x$};

\draw [pattern=north west lines,pattern color=gray](-3,0)--(-3,3)--(3,3)--(3,0)--(1,0)--(1,1)--(-1,1)--(-1,0)--(-3,0);
\draw[pattern=north west lines,pattern color=gray] (-3/5,0)--(-3/5,3/5)--(3/5,3/5)--(3/5,0)--(1/5,0)--(1/5,1/5)--(-1/5,1/5)--(-1/5,0)--(-3/5,0);

\end{tikzpicture}
    \caption{The half-annuli $A_t(x)$ and $A_{t+1}(x)$.}
    \label{fig:HalfAnnuli}
\end{figure}

For $j=1,2$ we let $\Psi_j$ be the periodic extension of $e^{-1}\circ\Phi_j\circ e$ so that $\Psi_1$ and $\Psi_2$ are (periodic) homeomorphisms of $\H$ and $\C\setminus\H$ respectively. Suppose that we have a pair of half-annuli $A_t(x)$ and $\widetilde{A}_s(y)$ such that the union of their images, under $\Psi_1$ and $\Psi_2$ respectively, contains a topological annulus $\mathbb{A}$ and the distortion of $F_n\circ e$ is not too large on $\mathbb{A}$. Then under our identification of $\R$ and $\partial\D$, this allows us to obtain an annulus around some point in $\partial\D$ on which the distortion of $F_n$ is bounded above (see Figure~\ref{fig:AnnuliIllustration}).

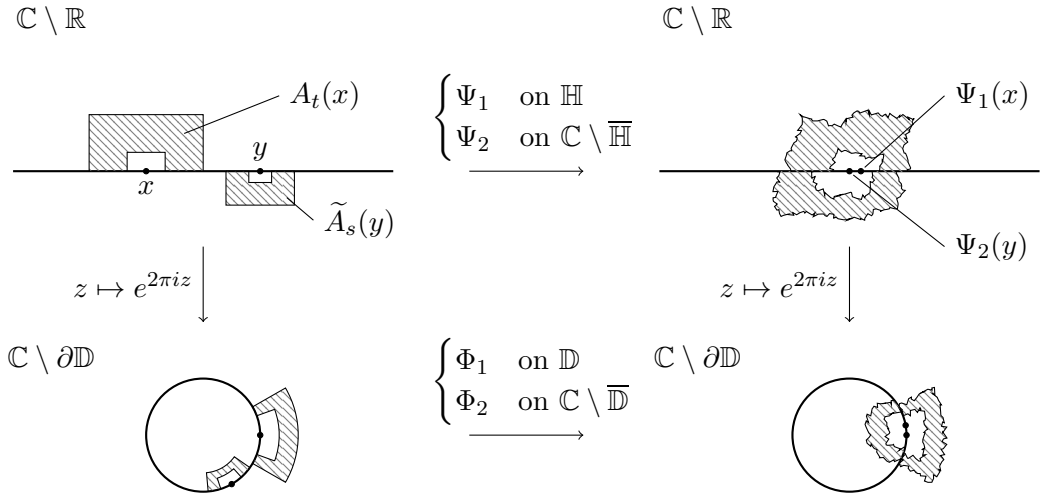
\begin{figure}[h]
    \centering
    \begin{tikzpicture}[use fpu reciprocal,scale=0.5]
\pgfmathsetseed{1}
\node at (-4,4) {$\mathbb{C}\setminus\mathbb{R}$};
\draw[thick] (-5,0)--(5,0);
\filldraw[black] (-1.5,0) circle (2pt) node[anchor=north]{$x$};
\filldraw[black] (1.5,0) circle (2pt) node[anchor=south]{$y$};
\begin{scope}[scale=0.5,shift={(-3,0)}]
\draw [pattern=north west lines,pattern color=gray](-3,0)--(-3,3)--(3,3)--(3,0)--(1,0)--(1,1)--(-1,1)--(-1,0)--(-3,0);
\end{scope}
\begin{scope}[scale=0.3,shift={(5,0)}]
\draw [pattern=north west lines,pattern color=gray](-3,0)--(-3,-3)--(3,-3)--(3,0)--(1,0)--(1,-1)--(-1,-1)--(-1,0)--(-3,0);
\end{scope}
\node[right] at (2,2) {$A_t(x)$};
\draw (2,2)--(-0.5,1);
\node[right] at (2.9,-1.3) {$\widetilde{A}_s(y)$};
\draw (2.9,-1.3)--(2.2,-0.7);

\draw[->] (7,0) -- node [text width=2.5cm,midway,above] {$\begin{cases}\Psi_1&\text{on }\mathbb{H}\\\Psi_2&\text{on } \mathbb{C}\setminus\overline{\mathbb{H}}\end{cases}$} (10,0);

\begin{scope}[shift={(17,0)}]
\node at (-4,4) {$\mathbb{C}\setminus\mathbb{R}$};
\draw[thick] (-5,0)--(5,0);
\filldraw[black] (0,0) circle (2pt);
\filldraw[black] (0.3,0) circle (2pt);

\begin{scope}
\clip (-5,0) rectangle (5,4);
\draw[pattern=north west lines, pattern color = gray, decorate,decoration={random steps,segment length=1pt,amplitude=1pt}] (-1.5,-0.2) to ++ (-0.2,0.5) to ++(0.4,0.7) to ++(0,0.5) to ++(1,-0.3) to ++(0.5,+0.4)to++(1.5,-0.2)
to ++(-0.3,-0.8)
to ++(0.3,-0.9)
to ++(-1,0) 
to ++(0.2,+0.7)
to ++(-0.5,0) to ++(-0.7,0.2)
to ++(-0.2,-0.7)
to ++(-1,-0.1);
\end{scope}
\begin{scope}[shift={(-0.1,0)}]
\clip (-5,-4) rectangle (5,0);
\draw[pattern=north west lines, pattern color = gray, decorate,decoration={random steps,segment length=1pt,amplitude=1pt}] (-1.7,0.2)--(-1.9,-0.8)--(-1.7,-1.2)--(-1.2,-1.1)--(0.5,-1.3)--(0.8,-1)--(1.3,-1)--(1.5,-0.5)--(1.3,0.2)--(0.7,0.2)--(0.7,-0.3)--(0.5,-0.7)--(0,-0.6)--(-0.4,-0.7)--(-0.8,-0.4)--(-0.9,0.2)--cycle;
\end{scope}

\node[right] at (2.5,2){$\Psi_1(x)$};
\draw (2.5,2)--(0.4,0.1);
\node[right] at (2.5,-2){$\Psi_2(y)$};
\draw (2.5,-2)--(0.1,-0.1);

\end{scope}

\draw [->] (0,-2)--node[left]{$z\mapsto e^{2\pi i z}$}(0,-4);
\draw [->] (17,-2)--node[left]{$z\mapsto e^{2\pi i z}$}(17,-4);

\begin{scope}[shift={(0,-7)}]
\node at (-4,2) {$\mathbb{C}\setminus\partial\mathbb{D}$};
\draw[thick] (0,0) circle (1.5);
\filldraw[black] (0:1.5) circle (2pt);
\filldraw[black] (-60:1.5) circle (2pt);

\draw[pattern=north west lines,pattern color=gray] (0,0)+(-20:2) arc[start angle=-20, end angle=20, radius=2]--(20:1.5)arc[start angle=20, end angle=30, radius=1.5]--(30:2.5)arc[start angle=30, end angle=-30, radius=2.5]--(-30:1.5)arc[start angle=-30, end angle=-20, radius=1.5]--(-20:2);

\draw[pattern=north west lines,pattern color=gray,rotate=-60] (0,0)+(-12:1.2) arc[start angle=-12, end angle=12, radius=1.2]--(12:1.5)arc[start angle=12, end angle=25, radius=1.5]--(25:1)arc[start angle=25, end angle=-25, radius=1]--(-25:1.5)arc[start angle=-25, end angle=-12, radius=1.5]--(-12:1.2);
\end{scope}

\draw[->] (7,-7) -- node [text width=2.5cm,midway,above] {$\begin{cases}\Phi_1&\text{on }\mathbb{D}\\\Phi_2&\text{on } \mathbb{C}\setminus\overline{\mathbb{D}}\end{cases}$} (10,-7);

\begin{scope}[shift={(17,-7)}]
\node at (-4,2) {$\mathbb{C}\setminus\partial\mathbb{D}$};
\draw[thick] (0,0) circle (1.5);
\filldraw[black] (1.5,0) circle (2pt);
\filldraw[black] (10:1.5) circle (2pt);

\begin{scope}
\clip (0,0) circle (1.5);
\draw[pattern=north west lines,pattern color=gray,decorate,decoration={random steps,segment length=1pt,amplitude=1pt},rotate=5] (0,0)+(-20:1) arc[start angle=-20, end angle=20, radius=1]--(20:1.6)arc[start angle=20, end angle=30, radius=1.6]--(45:1)--(30:.5)arc[start angle=30, end angle=-30, radius=.5]--(-40:1.2)--(-30:1.6)arc[start angle=-30, end angle=-20, radius=1.6]--cycle;

\end{scope}

\begin{scope}
\clip (-45:1.5)arc[start angle=-45, end angle=45, radius=1.5]--(45:3)arc[start angle=45, end angle=-45, radius=3]--(-45:1.5);
\draw[pattern=north west lines,pattern color=gray,decorate,decoration={random steps,segment length=1pt,amplitude=1pt}] (0,0)+(-20:2) arc[start angle=-20, end angle=20, radius=2]--(20:1.5)arc[start angle=20, end angle=30, radius=1.5]--(30:2.5)arc[start angle=30, end angle=-30, radius=2.5]--(-30:1.5)arc[start angle=-30, end angle=-20, radius=1.5]--(-20:2);
\end{scope}
\end{scope}

\draw[->] (15,-9) -- node [midway,below right] {$F_n$} (12,-11.5);

\begin{scope}[shift={(8.5,-14)},scale = 1.1]
\node at (-4,2) {$\mathbb{C}\setminus F_n(\partial\mathbb{D})$};
\draw[pattern=north west lines,pattern color=gray,decorate,decoration={random steps,segment length=0.5pt,amplitude=0.5pt}] (-5:1.3)--(-5:1) -- (-10:0.8)--(5:0.7)--(20:0.6)--(35:0.6)--(45:0.7)--(63:0.8)--(62:1.1)--(65:1.4)--(65:1.9)--(60:2.2)--(60:2.6)--(55:2.5)--(50:2.4)--(40:2.6)--(30:2.4)--(15:2.6)--(0:2.4)--(0:1.8)--cycle;
\draw[fill=white, decorate,decoration={random steps,segment length=0.5pt,amplitude=0.5pt}] (0:1.2)--(5:1.6)--(10:2.1)--(15:2.2)--(25:2.2)--(35:2.0)--(45:2.2)--(50:2)--(50:1.8)--(55:1.6)--(55:1.2)--(50:1.1)--(35:1.0)--(20:0.8)--(10:0.9)--cycle;

\draw[thick,decorate,decoration={random steps,segment length=1pt,amplitude=0.5pt}] (0:1.5)--(30:1.2)--(40:1.3)--(70:1.7)--(75:1.3)--(100:1.8)--(130:1.8)--(130:1.1)--(160:1.1)--(180:1.1)--(185:1.6)--(210:1.6)--(240:1.4)--(245:1.7)--(280:1.4)--(300:1.2)--(325:1.6)--(345:1.4)--(0:1.5);

\filldraw[black] (40:1.3) circle (2pt);
\filldraw[black] (30:1.25) circle (2pt);
\end{scope}

\end{tikzpicture}

     \caption{The images of the upper and lower half-annuli $A_t(x)$ and $\widetilde{A}_s(y)$ may `match up', in the sense that their union contains a topological annulus, if $\Psi_1(x)\approx\Psi_2(y)$. This is equivalent to finding annuli around points of $\partial\D$, although it is notationally easier to work on $\R$.}
    \label{fig:AnnuliIllustration}
\end{figure}

Our objective then is to find, for every point in $[0,1)$, sufficiently many pairs of such half-annuli whose images surround the chosen point. Consider a finely spaced grid of points in $[0,1)$. H\"older continuity of $\psi_1$ and $\psi_2$ implies that, for each $x$ in the grid we can find a $y$ in the grid such that $\psi_1(x)\approx\psi_2(y)$. Roughly speaking this means that the `centres' of the half annuli $\Psi_1(A_t(x))$ and $\Psi_2(\widetilde{A}_s(y))$ match and this gives us some hope of finding a suitable annulus in the union of these sets. In turn, this requires that the two image half-annuli are of comparable \emph{size} and each have a somewhat regular \emph{shape}. Finally we require that the distortion of $F_n\circ e$ on each half-annulus is not too large. Using the Borel-Cantelli lemma, it is then sufficient to show that these conditions hold with high probability for some sequence of values $(s_n)$, $(t_n)$ and each $x$ and $y$ in the grid. 

\medskip

\noindent\underline{Conditions for `good' half annuli:} To estimate the probability of the above conditions, we must relate them to the measures $\tau^{(1)}$ and $\tau^{(2)}$. From the explicit linear form of the Beurling-Ahlfors extension, it follows that $\Psi_1(A_t(x))$ contains a regular half-annulus of size comparable to $\psi_1(x+\rho^t)-\psi_1(x-\rho^t)$ provided that $\psi_1$ does not vary too much on scale $\rho^t$ near $x$. In fact, an elementary argument shows that this holds on the intersection of finitely many events of the form \begin{equation}\label{e:ShapeEvent}
\frac{\tau^{(1)}(x+\rho^{t}I)}{\tau^{(1)}(x+\rho^{t}J)}\leq c
\end{equation}
where $I,J\subset[0,1]$ and $c>0$ do not depend on $t$. Hence the event that $\Psi_1(A_t(x))$ and $\Psi_2(\widetilde{A}_s(y))$ are of comparable size, denoted $\mathrm{Size}(t,s)$, is of the form
\begin{equation}\label{e:SizeEvent}
    c^{-1}\leq\frac{\tau^{(1)}(x+[-\rho^t,\rho^t])}{\tau^{(1)}([0,1])}\Big/\frac{\tau^{(2)}(y+[-\rho^s,\rho^s])}{\tau^{(2)}([0,1])}\leq c
\end{equation}
for some $c>0$. By a change of variables and simple bounds on the derivative of the Beurling-Ahlfors extension, the distortion of $F_n\circ e$ on $\Psi_1(A_t(x))$ is bounded uniformly over $n$ by
\begin{displaymath}
    c\int_{(u,v)\in A_t(x)}\frac{\lvert\psi_1(u+v)-\psi_1(u-v)\rvert^2}{v^2}\;dudv.
\end{displaymath}
We require the ratio of this integral to the size of $\Psi_1(A_t(x))$ squared to be bounded above by a constant. By partitioning the integral over dyadic squares, this ratio is bounded by an $\ell^2$ sum of terms like those on the left-hand side of \eqref{e:ShapeEvent}. We therefore define $\mathrm{Shape}^{(1)}(t)$ to be a suitable intersection of events of the form \eqref{e:ShapeEvent} and define $\mathrm{Shape}^{(2)}(s)$ analogously for the lower half-annulus $\widetilde{A}_s(y)$.

The classical existence  theory for quasiconformal maps would yield a solution to our welding problem if $\psi_1$ and $\psi_2$ were quasisymmetric. For comparison, we mention that in this case events of the form \eqref{e:ShapeEvent} would hold deterministically for all $x$ and $t$ (with an appropriate choice of constant $c>0$) verifying the shape events. Moreover for each $x$ and $t$ we would be able to find $y$ and $s$ such that $\psi_1(x)=\psi_2(y)$, $s$ is comparable to $t$ and \eqref{e:SizeEvent} holds. Then choosing a sequence $t_n=n$ would yield the required concentric annuli (the condition that $s$ is comparable to $t$ would ensure that the annuli shrink approximately geometrically). In the present setting, the size and shape events may fail for any choice of constants $c$. We therefore need a probabilistic argument that identifies suitable sequences $t_n$ and $s_n$ for which these events occur.

\medskip

\noindent\underline{An approximate decomposition for $\tau^{(1)}$ and $\tau^{(2)}$:} We need to show that the shape and size events occur for many different values of $t$ and $s$. To control the dependence between events, we decompose the measures $\tau^{(1)}$ and $\tau^{(2)}$ using the hyperbolic white noise with which they are defined.

Let $\tau^{(j)}_t$ denote the measure constructed analogously to $\tau^{(j)}$ by restricting the hyperbolic white noise to the horizontal strip $\{(u,v)\in\H\;:\;v\leq\rho^t\}$. For $t\geq 0$ let $B_t=[-\rho^t,\rho^t]$. From the definition of $\tau^{(j)}$ in \eqref{e:MeasureDef} it may seem plausible to the reader that for large $t$
\begin{equation}\label{e:ApproxDecomp}
    \tau^{(j)}(x+A)\approx\tau^{(j)}_t(x+A\setminus B_{t+7/8})\exp\Big(\gamma H^{(j)}_{\rho^t}(x)-\frac{\gamma^2}{2}\Var[H^{(j)}_{\rho^t}(0)]\Big)
\end{equation}
for intervals $A$ which have length of order $\rho^t$. We give further heuristics for this in Section~\ref{s:Decompose} where we also verify rigorously that the approximation is valid, with high probability, for many values of $t$ and $j=1,2$. The justification follows from essentially elementary methods: moment bounds on $\tau^{(j)}$ and independence of the hyperbolic white noise on disjoint regions.

\medskip

\noindent\underline{Reduced size and shape events:} The approximate decomposition of $\tau^{(1)}$ and $\tau^{(2)}$ simplifies our events of interest; substituting the right hand side of \eqref{e:ApproxDecomp} into \eqref{e:ShapeEvent} yields the `reduced' shape event
\begin{displaymath}
    \frac{\tau^{(1)}_t(x+(\rho^{t}I)\setminus B_{t+7/8})}{\tau^{(1)}_t(x+(\rho^{t}J)\setminus B_{t+7/8})}\leq c^\prime
\end{displaymath}
which we denote by $\mathrm{ShapeRed}^{(1)}(t)$. These events are much nicer to work with since they depend on disjoint regions of the hyperbolic white noise for distinct values of $t$ and so are independent. (More precisely, this is true of $\mathrm{ShapeRed}^{(1)}(t)$ and $\mathrm{ShapeRed}^{(1)}(t^\prime)$ for $\lvert t^\prime-t\rvert\geq 1$.) We therefore use standard large deviation estimates for Bernoulli random variables to show that the reduced shape events occur for many values of $t$ (for $j=1,2$).

Similarly if we substitute \eqref{e:ApproxDecomp} into \eqref{e:SizeEvent} then we obtain an event of the form
\begin{displaymath}
1/c^\prime\leq\exp\left(X^{(H)}_{t,s}\right)\frac{\rho^{-t}\tau^{(1)}_{t}(x+[-\rho^{t},\rho^{t}])}{\rho^{-s}\tau^{(2)}_{s}(y+[-\rho^{s},\rho^{s}])}\leq c^\prime
\end{displaymath}
where
\begin{displaymath}
X^{(H)}_{t,s}:=\gamma H^{(1)}_{\rho^t}(x)-\gamma H^{(2)}_{\rho^s}(y)-\Big(1+\frac{\gamma^2}{2}\Big)\log(1/\rho)(t-s).
\end{displaymath}
It is relatively easy to control the fraction in the above expression as it has the same independence properties as the events $\mathrm{ShapeRed}^{(j)}(t)$ for different values of $t$ and $s$. 

Our final objective then is to find many values of $t$ and $s$ (which differ by at least one) such that $\lvert X_{t,s}^{(H)}\rvert$ is not too large. From the definition of $H^{(j)}_\epsilon$ it is apparent that $t\mapsto H^{(1)}_{\rho^t}(x)$ is continuous with independent Gaussian increments and so $X_{t,s}^{(H)}$ can be viewed as a difference of two time-changed Brownian motions with drift, where we allow the time parameters $t$ and $s$ to vary independently. This motivates us to find appropriate values of $t$ and $s$ iteratively: given a current value of $X_{t,s}^{(H)}$, we increase either $t$ or $s$ so that the drift term brings our process closer to zero (on average). In this way we define the sequence of points $(t_n)$ and $(s_n)$. The resulting iterative process is an oscillating random walk (which we define in Section~\ref{ss:algorithm}) and we are able to use basic probabilistic methods to show that $\lvert X_{t_n,s_n}^{(H)}\rvert$ is small for many $n$ with high probability.

\subsection{Discussion}\label{s:Literature}\hfill\\
\noindent\underline{Motivation:} Our result contributes to a line of work which originated with the conjecture by Peter Jones that a variant of SLE could be obtained by solving the conformal welding problem for the homeomorphism $\phi_1$ defined by \eqref{e:DefineHomeo}. It was shown in \cite{ajks} that this problem admits a solution, defining a family of random closed curves. Soon after, Sheffield \cite{sheffield2016} proved that SLE curves correspond directly to welding two independent Liouville Quantum Gravity surfaces according to their boundary measures. More precisely, Sheffield showed that an SLE curve could be coupled with a half-plane Gaussian free field in such a way that welding the two halves of the real line according to the measures induced by the field reproduces the curve. This result has had many significant consequences and generalisations related to random surfaces: see \cite{ghs23} for an overview. Sheffield's result strongly suggested that Jones' conjecture would in fact be correct if one instead solves the conformal welding problem for $\phi_1^{-1}\circ\phi_2$. While the current article was under review, this was confirmed by Fan and Sung \cite[Lemma~4.8]{fs25}, provided that $\Theta_1,\Theta_2\sim\mathrm{Unif}[0,1]$ are independent of $\tau^{(1)}$ and $\tau^{(2)}$.

Our motivation in this work is to build towards an alternative approach to conformal welding for random curves. In contrast with Sheffield and (most) subsequent authors, we start with a random homeomorphism (or equivalently, the measures to be matched) and use this to generate a curve. We therefore do not require a priori control on the welding curve. This approach offers some advantages to studying the relationship between measures and curves: first, it offers a path toward proving continuity properties of the curves (see Theorem~\ref{t:translations} or \cite[Theorem~5.3]{ajks}). Second, it may generalise more readily to other measures; for example in Theorem~\ref{t:WeldingGeneral} we weld measures with distinct parameter values $\gamma_1\neq\gamma_2$. This seems difficult to accomplish using other methods. 

\smallskip

\noindent\underline{Related welding models}: In \cite{AHS20}, Sheffield’s welding result was extended to variants of SLE with two marked points and in \cite{AHS22} to SLE loop measures constructed in \cite {Zhan}. These works were built upon in \cite{fs25} to show that, after suitable normalisation, the SLE loop measures are mutually absolutely continuous with respect to the welding curves induced by $\phi_1^{-1}\circ\phi_2$. We refer to \cite{fs25} for definitions and a precise statement. It follows that these various welding problems are all connected and in particular that our welding result (in the case $\gamma_1=\gamma_2$) follows in some sense from that of Sheffield (although substantial further insight and machinery is required to establish this). On the other hand, our result for differing parameter values (i.e., Theorem~\ref{t:WeldingGeneral}) seems very unlikely to follow from the previous line of reasoning: we would not expect the welding curve in this case to be a variant of SLE, so many of the tools used in the work of Sheffield and others would not be present a priori. We consider it an interesting open question to describe the properties of the welding curve when $\gamma_1\neq\gamma_2$.

\smallskip

\noindent\underline{Approaches to conformal welding:} The proof of Sheffield's welding result in \cite{sheffield2016} made use of stochastic calculus and a scaling argument, both of which rely crucially on properties of SLE. (We refer the reader to \cite{bp25} for an accessible presentation of this argument and also to \cite{ps24} for a more recent, simplified proof of the result.) Most subsequent work on conformal welding in the setting of LQG has used this result as an input to establish welding results for related curves and measures.

Our approach is more closely related to those of classical complex analysis which take a homeomorphism as input to construct welding curves.  General background on the conformal welding problem, along with some historical references, can be found in \cite{ham02}. The fundamental theorem of conformal welding (proven by Pfluger \cite{pfl60}, see \cite[Chapter~5.10]{astala2009elliptic}) states that the conformal welding problem has a unique (up to M\"obius transformation) solution for a homeomorphism $g:\R\to\R$ whenever $g$ is quasisymmetric; that is there exists some $C>0$ such that
\begin{displaymath}
    \frac{1}{C}\leq\frac{g(x+y)-g(x)}{g(x)-g(x-y)}\leq C\qquad\text{for all }x,y\in\R.
\end{displaymath}
This can be proven by solving the Beltrami equation for the Beurling-Ahlfors extension of $g$ (which will be quasiconformal) and using the Stoilow factorisation theorem, as in the outline of our proof.

We cannot apply this classical result because our homeomorphisms are not quasisymmetric; the Beltrami equation we wish to solve is described as degenerate. Deterministic results show that one can solve a degenerate Beltrami equation whenever the distortion function is exponentially integrable (see \cite[Chapter 20]{astala2009elliptic}), although this is again inapplicable in our setting. An alternative approach to degenerate solutions was developed by Lehto \cite{lehto1970} (see \cite[Theorem~20.9.4]{astala2009elliptic} for a modern presentation). Our proof, building on that of \cite{ajks}, adapts elements of Lehto's argument.

Taking a slightly broader perspective, let us mention that generalised conformal weldings were introduced by Hamilton in \cite{ham91} and subsequently used by Bishop \cite{bis07} to show that every homeomorphism is `almost' a welding homeomorphism. The welding problem can also be generalised from homeomorphisms to laminations (i.e., equivalence relations on the unit circle); this was studied in a stochastic setting by Lin and Rohde \cite{lr18,lin19}.
\smallskip

\noindent\underline{Innovations:} Our approach to solving the welding problem differs in a couple of essential ways from that of \cite{ajks}. First of all, the deterministic welding argument is now performed by estimating the moduli of the inverse of an extension by using the extension itself. In this way we are led to estimating the Sobolev $H^{1/2}$-norm of our welding homeomorphism at suitable dyadic intervals. In turn, this can be done conveniently for homeomorphisms via  $\ell^2$-sums of (relative) masses of dyadic subintervals, see Lemma~\ref{l:integral} below. This is actually simpler than the use of `Lehto integrals' in \cite{ajks}, and so could be used to somewhat streamline the proof of the welding result for $\phi_1$ (i.e., the main theorem of \cite{ajks}).

However, when dealing with the homeomorphism $\phi=\phi_1^{-1}\circ \phi_2$ we face a new problem: as we are making our estimates for the `non-inverted' maps (i.e., $\Phi_1$ and $\Phi_2$), we  need to match the images of the annuli we are considering (see the right hand side of Figure~\ref{fig:AnnuliIllustration}). In principle this can be taken care of by posing extra conditions on the location of the images, but as we need to do this on all scales for a large number of annuli, the `extra' log-normal scaling factor of multiplicative chaos (i.e., the exponential term in \eqref{e:ApproxDecomp}) starts to have an effect which must be controlled carefully. This leads to the second main novelty in the techniques of the present paper: we consider an oscillating random walk algorithm in order to find enough matching random annuli, see Section~\ref{ss:algorithm} below.
\smallskip

\noindent\underline{Higher values of $\gamma$:} The main shortcoming of our result is that it is valid only for small values of $\gamma$ (which corresponds to small values of $\kappa$ for the SLE($\kappa$) loop measure by \cite{fs25}). Most parts of our argument could be extended to all subcritical values of $\gamma$, however there are two bottlenecks: first, in Proposition~\ref{p:Stationary_condition} it is necessary to match up sequences of image annuli under $\Psi_1$ and $\Psi_2$ with high probability. We do this using a simple union bound over the centres of the annuli, which necessitates strong probability bounds that hold only for $\gamma<1$ (i.e., in the $L^2$-regime of the Gaussian multiplicative chaos). Second, in Sections~\ref{ss:Sufficient}, \ref{ss:LargeDeviation} and~\ref{ss:Reduced} we give large deviation estimates for the number of image annuli satisfying various conditions. For any $\gamma\in[0,\sqrt{2})$, similar estimates hold for each of the conditions on a constant density subsequence of annuli. In order to have the intersection of these conditions hold for a large number of annuli, we need to show that the individual conditions hold for a subsequence with density close to one, which requires $\gamma$ to be close to zero.

The number of annuli for which we require the various conditions to hold is dictated by our desire to prove uniform H\"older continuity of the maps $F_n$, which in turn ensures uniqueness of the welding solution. Proving existence of the welding solution requires only equicontinuity of $F_n$ which corresponds to a much smaller number of annuli. We suspect that with a little additional work, our methods would be able to provide such an estimate for all subcritical $\gamma$, and hence prove the existence of a welding solution in this case.

While the current work was under review, Binder and Kojar completed a series of preprints \cite{bk23,bk23b,bk24} addressing the welding problem that we consider for $\gamma$ less than some explicit value. They directly apply the approach of \cite{ajks} to the inverse maps $\phi_1^{-1}$ and $\phi_2^{-1}$ which requires difficult technical estimates for the moments and decoupling of the inverse measures. Subsequently \cite{fs25} used Sheffield's result to show that the welding problem has a solution for all subcritical $\gamma$, although this result does not apply when $\phi_1$ and $\phi_2$ have different parameters.

\subsection{Acknowledgements}
The first and second authors were supported by the European Research Council (ERC) Advanced Grant QFPROBA (grant number 741487). The third author was supported by the Finnish Academy project 342183 and Center of Excellence FiRST. We thank Kari Astala, Ilya Binder, Peter Jones, Tomas Kojar, and Xin Sun for many valuable discussions. We would like to thank several anonymous referees for numerous helpful comments that improved the clarity and layout of this work and for pointing out errors in the proofs of Lemma~\ref{l:Good_annuli} and Lemma~\ref{l:Decompose}.

\section{Solving the Beltrami equation}\label{s:Beltrami}
In this section we consider a Beltrami equation which is equivalent to our welding problem. We show that this equation may be solved assuming a H\"older continuity bound for approximate solutions. Sections~\ref{s:Holder}-\ref{s:LargeDeviations} will then be dedicated to proving this bound.

We begin by stating some basic properties of the measures $\tau^{(j)}$:
\begin{lemma}\label{l:MeasureBasic}
    Let $\tau$ be defined according to \eqref{e:MeasureDef} for $\gamma\in[0,\sqrt{2})$, then there exists $c_1,c_2>0$ depending on $\gamma$ such that with probability one, for some $C_1,C_2$ and any $0\leq a<b\leq 1$
    \begin{displaymath}
        C_1\lvert b-a\rvert^{c_1}<\tau([a,b])<C_2\lvert b-a\rvert^{c_2}.
    \end{displaymath}
    Moreover for any $\gamma_0\in(0,\sqrt{2})$, if $\gamma\in[0,\gamma_0]$ and $p\in[1,2/\gamma_0^2)$ then there exists $C_{\gamma_0}>0$ such that
    \begin{displaymath}
        \E[\tau([a,b])^p]\leq C_{\gamma_0}\lvert b-a\rvert^{\zeta_p(\gamma_0)}
    \end{displaymath}
    where $\zeta_p(\gamma):=p-\frac{p^2-p}{2}\gamma^2$ denotes the multifractal spectrum of $\tau$.
\end{lemma}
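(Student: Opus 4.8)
The plan is to establish the two statements separately, deriving the almost-sure two-sided bound from the moment bound together with a Kolmogorov-type continuity argument. First I would prove the moment bound. By stationarity of $\tau$ (which follows from the translation invariance built into the white noise covariance) it suffices to control $\E[\tau([0,\delta])^p]$ for $\delta=b-a$. The standard route is to use the exact scaling relation for Gaussian multiplicative chaos: writing $\tau_\epsilon(dx)=e^{\gamma H_\epsilon(x)-\frac{\gamma^2}{2}\Var[H_\epsilon(x)]}e^{-\gamma G}2^{-\gamma^2}dx$ and using the logarithmic nature of the kernel $K(t,u)=\log\frac{1}{2\sin(\pi|t-u|)}$, one has an approximate scaling $\tau([0,\delta])\overset{d}{\approx}\delta\, e^{\gamma\Omega_\delta-\frac{\gamma^2}{2}\Var[\Omega_\delta]}\widetilde\tau([0,1])$ where $\Omega_\delta$ is a centred Gaussian of variance $\log(1/\delta)+O(1)$ independent of an independent copy $\widetilde\tau$ of $\tau$. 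Taking $p$-th moments gives the factor $\delta^p\cdot\delta^{-\frac{p^2-p}{2}\gamma^2}=\delta^{\zeta_p(\gamma)}$, and the remaining factor $\E[\widetilde\tau([0,1])^p]$ is finite precisely when $p<2/\gamma^2$, which is guaranteed by $p<2/\gamma_0^2\le 2/\gamma^2$. Since $\zeta_p$ is increasing in $\gamma$ on $(0,\gamma_0]$ for $p>1$... actually $\zeta_p(\gamma)=p-\frac{p^2-p}{2}\gamma^2$ is \emph{decreasing} in $\gamma$, so $\zeta_p(\gamma)\ge\zeta_p(\gamma_0)$ and the stated bound with exponent $\zeta_p(\gamma_0)$ follows, with $C_{\gamma_0}$ uniform over $\gamma\in[0,\gamma_0]$. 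The cleanest rigorous implementation avoids the heuristic scaling identity and instead uses the martingale $\tau_\epsilon$ directly: bound $\E[\tau_\epsilon([0,\delta])^p]$ by splitting $H_\epsilon$ on the strip $v\le\delta$ from its complement, apply subadditivity/convexity of $x\mapsto x^p$ across $\lceil 1/\delta\rceil$ translated copies of length $\delta$, and pass to the limit via Fatou. This is the type of estimate that appears in \cite{RV10} and \cite{ajks}, so I would cite those for the finiteness of $\E[\tau([0,1])^p]$ and give only the scaling bookkeeping.

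Next I would deduce the almost-sure upper bound $\tau([a,b])<C_2|b-a|^{c_2}$. Fix $p\in(1,2/\gamma_0^2)$ and set $c_2<\zeta_p(\gamma_0)/p$ (note $\zeta_p(\gamma_0)/p=1-\frac{p-1}{2}\gamma_0^2<1$, and it is positive once $\gamma_0$ is small enough, say $\gamma_0^2<2/(p-1)$, which holds for $p$ close to $1$). By the moment bound, $\P(\tau(I)>|I|^{c_2})\le C_{\gamma_0}|I|^{\zeta_p(\gamma_0)-pc_2}$ with positive exponent. Applying this to all dyadic intervals $I$ of generation $n$ and summing the resulting geometric-in-$n$ series, Borel--Cantelli gives that almost surely $\tau(I)\le|I|^{c_2}$ for all dyadic $I$ of sufficiently small length; a standard covering of an arbitrary interval $[a,b]$ by $O(1)$ dyadic intervals of length comparable to $|b-a|$ then yields $\tau([a,b])\le C_2|b-a|^{c_2}$ with a random constant $C_2$. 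For the lower bound $\tau([a,b])>C_1|b-a|^{c_1}$ one cannot use positive moments of $\tau$ (they do not control the left tail), so instead I would use negative moments: by the same scaling heuristic, $\E[\tau([0,\delta])^{-q}]\le C\delta^{-q-\frac{q^2+q}{2}\gamma^2}$ for every $q>0$ (negative moments of subcritical GMC are finite of all orders, see \cite{RV10}), whence $\P(\tau(I)<|I|^{c_1})\le|I|^{qc_1}\E[\tau(I)^{-q}]\le C|I|^{q(c_1-1)-\frac{q^2+q}{2}\gamma^2}$; choosing $c_1>1$ and then $q$ large makes this exponent positive, and another Borel--Cantelli over dyadic intervals (now noting $\tau$ of a union dominates $\tau$ of any single dyadic subinterval it contains, so the lower bound propagates upward to all intervals) completes the argument.

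The main obstacle, and the only genuinely non-routine point, is establishing the finiteness of the $p$-th moment $\E[\tau([0,1])^p]$ for $1\le p<2/\gamma_0^2$ together with the correct exponent $\zeta_p(\gamma_0)$ in the scaling; everything else is Borel--Cantelli over dyadics. I expect to handle this by invoking the known moment bounds for Gaussian multiplicative chaos (Kahane's theory, \cite{RV10}; the specific white-noise construction is the one used in \cite{ajks}), using the comparison principle for GMC to transfer from the exact $\star$-scale-invariant kernel (for which the scaling relation is exact) to our kernel $K(t,u)=\log\frac{1}{2\sin(\pi|t-u|)}=\log\frac1{|t-u|}+O(1)$, the $O(1)$ discrepancy affecting only the constant $C_{\gamma_0}$. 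The uniformity of $C_{\gamma_0}$ over $\gamma\in[0,\gamma_0]$ follows since all the relevant moment bounds are monotone or continuous in $\gamma$ on the compact interval $[0,\gamma_0]$ and $\zeta_p(\gamma)\ge\zeta_p(\gamma_0)$ there.
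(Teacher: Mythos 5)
The paper's proof of this lemma is essentially a citation: both statements are referred to \cite[Theorem~3.7]{ajks}, with only a parenthetical remark that the moment bound proven there for $p\in(1,2)$ extends to $p\in[1,2/\gamma_0^2)$. You instead sketch the underlying argument, which is the standard route and ultimately rests on the same imported ingredients (Kahane/\cite{RV10}-type GMC moment estimates and the logarithmic kernel's approximate scaling). What you fill in --- positive moments plus Markov and Borel--Cantelli over dyadics for the upper H\"older bound, negative moments of all orders for the lower bound, and propagation from dyadic intervals to arbitrary $[a,b]$ by monotonicity of the measure --- is exactly how one would prove the cited theorem, so the content agrees; the difference is that the paper delegates this to a reference while you spell it out, which is a legitimate alternative and is more transparent for a reader unfamiliar with \cite{ajks}.

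Two parameter-tuning slips are worth correcting, though neither threatens the plan. For the a.s.\ upper bound, Borel--Cantelli over the $2^n$ generation-$n$ dyadics needs the exponent $\zeta_p(\gamma_0)-pc_2$ to exceed $1$, not merely to be positive, so the correct requirement is $c_2<(\zeta_p(\gamma_0)-1)/p$; this is still a nontrivial window since $\zeta_p(\gamma_0)>1$ whenever $p>1$ and $p<2/\gamma_0^2$. For the a.s.\ lower bound, your quantifier order is reversed: the exponent $q(c_1-1)-\tfrac{q^2+q}{2}\gamma^2$ is a downward parabola in $q$, so sending $q\to\infty$ with $c_1$ fixed drives it to $-\infty$, not to $+\infty$. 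One should instead fix $q>0$ and then take $c_1$ large (for instance $c_1>1+\tfrac{q+1}{2}\gamma^2+\tfrac{1}{q}$ makes the exponent exceed $1$), or choose $c_1$ large and then optimise $q$ at roughly $(c_1-1)/\gamma^2$; either way Borel--Cantelli closes.
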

\begin{proof}
    These are both essentially well-known general properties of Gaussian multiplicative chaos measures; the precise statements in our setting are given in \cite[Theorem~3.7]{ajks}. (Note that the cited result is stated only for $p\in(1,2)$, however its proof of the moment bound holds for all $p\in[1,2/\gamma_0^2)$.)
\end{proof}
We can now set up the Beltrami equation. We define the functions $\psi_1,\psi_2:\R\to\R$ by
\begin{displaymath}
    \psi_j(n+x)=n+\Theta_j+\frac{\tau^{(j)}([0,x])}{\tau^{(j)}([0,1])}\qquad\text{for }x\in[0,1), n\in\Z.
\end{displaymath}
The first part of Lemma~\ref{l:MeasureBasic} shows that each $\psi_j$ is a homeomorphism. Next we make use of the Beurling-Ahlfors extension \cite{beurling56}: for $x\in\R$ and $y\in[0,1]$ we define
\begin{equation}\label{e:BAExt}
\mathrm{Ext}(\psi_j)(x+iy)=\frac{1}{2}\int_0^1\psi_j(x+ty)+\psi_j(x-ty)\;dt+i\int_0^1\psi_j(x+ty)-\psi_j(x-ty)\;dt.
\end{equation}
We use a different definition away from the real axis, to simplify later arguments. Since $\psi_j$ is $1$-periodic, for $x\in\R$
\begin{displaymath}
\mathrm{Ext}(\psi_j)(x+i)=x+i+c_0
\end{displaymath}
where $c_0=\int_0^1\psi_j(t)\;dt-1/2$. Then by defining
\begin{displaymath}
\mathrm{Ext}(\psi_j)(x+iy)=\begin{cases}
x+iy+(2-y)c_0 &\text{if }1<y<2\\
x+iy &\text{if }y\geq 2
\end{cases}
\end{displaymath}
we see that $\mathrm{Ext}(\psi_j)$ is an extension of $\psi_j$ which is differentiable almost everywhere and equal to the identity for $\mathrm{Im}(z)\geq 2$. We then define $\Psi_1(z)=\mathrm{Ext}(\psi_1)$ and $\Psi_2(z)=\overline{\mathrm{Ext}(\psi_2)}(\overline{z})$ where $\overline{\cdot}$ denotes complex conjugation so that $\Psi_1:\overline{\H}\to\overline{\H}$ and $\Psi_2:\C\setminus\H\to\C\setminus\H$. Finally we define the homeomorphisms $\Phi_1:\overline{\D}\to\overline{\D}$ and $\Phi_2:\C\setminus\D\to\C\setminus\D$ by
\begin{equation}\label{e:ModifiedBA}
\Phi_j(z)=\exp\left(2\pi i\Psi_j\left(\frac{\log z}{2\pi i}\right)\right)\quad\text{for }j=1,2.
\end{equation}

We record here a basic property of the Beurling-Ahlfors extension which we will make use of later:
\begin{lemma}\label{l:BAderivative}
Let $D\Psi_1$ denote the derivative of $\Psi_1$. For $x\in\R$ and $y\in(0,1)$
\begin{displaymath}
\lvert D\Psi_1(x+iy)\rvert\leq\frac{4\lvert \psi_1(x+y)-\psi_1(x-y)\rvert}{y}.
\end{displaymath}
\end{lemma}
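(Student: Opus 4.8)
The plan is to rewrite the Beurling--Ahlfors extension so that its partial derivatives are explicit, and then to estimate each of them using only monotonicity of $\psi_1$. Changing variables $u=x\pm ty$ in \eqref{e:BAExt} turns $\mathrm{Ext}(\psi_1)=U+iV$ into
\[
U(x,y)=\frac{1}{2y}\int_{x-y}^{x+y}\psi_1(u)\,du,\qquad V(x,y)=\frac1y\left(\int_{x}^{x+y}\psi_1(u)\,du-\int_{x-y}^{x}\psi_1(u)\,du\right).
\]
Since $\psi_1$ is continuous (being a homeomorphism of $\R$), both $U$ and $V$ are $C^1$ on the strip $\{0<y<1\}$, so $D\Psi_1$ there is the real Jacobian of $(U,V)$ and $|D\Psi_1|$ is its operator norm.

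Next I would differentiate. The fundamental theorem of calculus gives
\[
U_x=\frac{\psi_1(x+y)-\psi_1(x-y)}{2y},\qquad V_x=\frac{\psi_1(x+y)-2\psi_1(x)+\psi_1(x-y)}{y},
\]
and the Leibniz rule applied to $\tfrac1y$ times an integral with moving endpoints gives
\[
U_y=\frac1y\left(\frac{\psi_1(x+y)+\psi_1(x-y)}{2}-\frac{1}{2y}\int_{x-y}^{x+y}\psi_1\right),\qquad V_y=\frac1y\left(\bigl(\psi_1(x+y)-\psi_1(x-y)\bigr)-\frac1y\left(\int_{x}^{x+y}\psi_1-\int_{x-y}^{x}\psi_1\right)\right).
\]
Writing $\Delta:=\psi_1(x+y)-\psi_1(x-y)\ge 0$, I claim each of these four quantities has absolute value at most $\Delta/y$. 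For $U_x$ this is immediate. For $V_x$ it holds because $\psi_1(x+y)-\psi_1(x)$ and $\psi_1(x)-\psi_1(x-y)$ are nonnegative with sum $\Delta$, so their difference has modulus $\le\Delta$. For $U_y$ it holds because $\tfrac12(\psi_1(x+y)+\psi_1(x-y))$ and the average $\tfrac{1}{2y}\int_{x-y}^{x+y}\psi_1$ both lie in $[\psi_1(x-y),\psi_1(x+y)]$ (the latter by monotonicity), so they differ by at most $\Delta$. For $V_y$ it holds because $\tfrac1y\int_{x}^{x+y}\psi_1\in[\psi_1(x),\psi_1(x+y)]$ and $\tfrac1y\int_{x-y}^{x}\psi_1\in[\psi_1(x-y),\psi_1(x)]$, so their difference lies in $[0,\Delta]$ and hence $V_y\in[0,\Delta/y]$.

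Finally, bounding the operator norm of a $2\times 2$ matrix by its Frobenius norm yields $|D\Psi_1(x+iy)|\le\sqrt{U_x^2+U_y^2+V_x^2+V_y^2}\le 2\Delta/y$, which is even a little stronger than the stated bound $4|\psi_1(x+y)-\psi_1(x-y)|/y$. I do not anticipate any genuine obstacle here: the only steps needing (minor) care are the Leibniz-rule computation of $U_y$, $V_y$ and the elementary fact that the average of a monotone function over an interval lies between its endpoint values.
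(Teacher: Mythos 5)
Your proposal is correct and follows essentially the same approach as the paper: compute the four partial derivatives of the Beurling--Ahlfors extension after the change of variables $u=x\pm ty$ and bound each by $y^{-1}\lvert\psi_1(x+y)-\psi_1(x-y)\rvert$ using monotonicity of $\psi_1$. You carry out all four computations explicitly (the paper only writes one and asserts the rest are similar) and in fact obtain the slightly sharper constant $2$ in place of $4$.
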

\begin{proof}
By definition of the Beurling-Ahlfors extension \eqref{e:BAExt}, a change of variables and continuity of $\psi_1$, we have
\begin{align*}
    \frac{d}{dx}\Re \Psi_1(x&+iy)=\lim_{h\searrow 0}\frac{1}{2h}\int_{-1}^1\psi_1(x+h+ty)-\psi_1(x+ty)\;dt\\
    &=\lim_{h\searrow 0}\frac{1}{2h y}\Big(\int_{x+y}^{x+y+h}\psi_1(u)\;du-\int_{x-y}^{x-y+h}\psi_1(u)\;du\Big)=\frac{\psi_1(x+y)-\psi_1(x-y)}{2y}.
\end{align*}
Very similar elementary computations show that the partial derivatives of the real/imaginary parts of $\Psi_1$ are each bounded in absolute value by $y^{-1}\lvert \psi_1(x+y)-\psi_1(x-y)\rvert$, thus proving the statement of the lemma.
\end{proof}

We now introduce some definitions from the theory of quasiconformal maps, for details and further background we refer the reader to \cite{lehto1973quasiconformal,astala2009elliptic}. Recall that for two domains $\Omega,\Omega^\prime$ the Sobolev space $W^{1,2}_\mathrm{loc}(\Omega,\Omega^\prime)$ consists of all measurable functions from $\Omega$ to $\Omega^\prime$ with a weak derivative almost everywhere such that the function and its (weak) derivative are locally square integrable. For $g\in W^{1,2}_\mathrm{loc}(\Omega,\Omega^\prime)$ we define the complex dilatation $\mu_g$ and distortion $K(\cdot,g)$ by
\begin{displaymath}
\partial_{\overline{z}}g=\mu_g\partial_zg,\quad\text{and}\quad K(z,g)=\frac{1+\lvert \mu_g(z)\rvert}{1-\lvert\mu_g(z)\rvert}\qquad\text{for a.e.\ }z\in\Omega.
\end{displaymath}
We recall also that an orientation preserving homeomorphic map $g\in W^{1,2}_\mathrm{loc}(\Omega,\Omega^\prime)$ is called $C$-quasiconformal if $\| K(z,g)\|_\infty\leq C$. A map is called quasiconformal if it is $C$-quasiconformal for some $C$.

Since $\Phi_1$ and $\Phi_2$ are $C^1$ homeomorphisms away from the circles $\lvert z\rvert=1,e^{\pm 2\pi},e^{\pm 4\pi}$, by Sard's theorem (and the inverse function theorem) their inverses are continuously differentiable almost everywhere. Hence these inverses have well defined dilatations and so we may define the measurable functions
\begin{equation}\label{e:Distortion}
\mu(z):=\begin{cases}
\mu_{\Phi_1^{-1}}(z)&\text{if }z\in\D\\
\mu_{\Phi_2^{-1}}(z)&\text{if }z\in\C\backslash\overline{\D}
\end{cases},\quad\text{and}\quad K(z):=\frac{1+\lvert\mu(z)\rvert}{1-\lvert\mu(z)\rvert}.
\end{equation}
If we could find a quasiconformal map $F:\C\to\C$ such that $\mu_F=\mu$ then a classical line of reasoning (using the Stoilow factorisation theorem) would yield a unique solution to our welding problem. Unfortunately this approach is not feasible in our case since $K$ is unbounded. Beltrami equations with unbounded distortion are known as \emph{degenerate} and there is longstanding interest in solution methods for such equations (see \cite[Chapter~20]{astala2009elliptic} or \cite[Chapter~4]{grs2012} and references therein).

The link between conformal welding and the Beltrami equation persists in the degenerate case: if $K$ is locally bounded on $\C\setminus\partial\D$ and there exists a homeomorphic map $F\in W^{1,1}_\mathrm{loc}(\C,\C)$ for which $\mu_F=\mu$ almost everywhere then we can construct a solution to the welding problem for $\phi_1^{-1}$ and $\phi_2^{-1}$. To find such an $F$, we adapt an approach which originated with Lehto in \cite{lehto1970} (and was used in a context similar to ours in \cite{ajks}). We shall define solutions $F_n$ to the Beltrami equation for approximations of $\mu$ with bounded distortion and apply the Arzel\`a-Ascoli theorem to take subsequential limits. The following three lemmas will be required to justify equicontinuity (as an input to Arzel\`a-Ascoli).

\begin{lemma}\label{l:Distortion}
For $\gamma\in[0,\sqrt{2})$ with probability one $K\in L^\infty_\mathrm{loc}(\C\setminus\partial\D)$ uniformly over $\Theta_1,\Theta_2$.
\end{lemma}
\begin{proof}
Since $\Phi_1,\Phi_2$ and their inverses are continuously differentiable almost everywhere, a simple calculation using the chain rule shows that
\begin{displaymath}
\lvert \mu(z)\rvert=\begin{cases}
\lvert\mu_{\Phi_1}\circ\Phi_1^{-1}(z)\rvert &\text{for a.e.\ }z\in\D\\
\lvert\mu_{\Phi_2}\circ\Phi_2^{-1}(z)\rvert &\text{for a.e.\ }z\in\C\backslash\D,
\end{cases}\end{displaymath}
and hence
\begin{displaymath}
K(z)=\begin{cases}
K(\Phi_1^{-1}(z),\Phi_1) &\text{for a.e.\ }z\in\D\\
K(\Phi_2^{-1}(z),\Phi_2)&\text{for a.e.\ }z\in\C\backslash\D.
\end{cases}
\end{displaymath}
The result now follows from arguments in \cite{ajks} where the distortion of $\Phi_1$ is bounded above by a discrete approximation which can be controlled by moment estimates for Gaussian multiplicative chaos. Specifically \cite[Equation (25) and Theorem 2.6]{ajks} shows that $K(\cdot,\Phi_1)$ is almost surely in $L^\infty_\mathrm{loc}(\C\setminus\partial\D)$. The same is true of $K(z,\Phi_2)$ since distortion is preserved under reflection in the unit circle (and $\Phi_2$ has the same distribution as $\Phi_1$). The upper bounds depend only on the measures $\tau^{(j)}$ for $j=1,2$ and so hold uniformly over realisations of $\Theta_1$ and $\Theta_2$.
\end{proof}

\begin{lemma}\label{l:Distortion2}
For $\gamma\in[0,\sqrt{2})$ with probability one $K\in L^1_\mathrm{loc}(\C)$.
\end{lemma}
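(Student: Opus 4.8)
The plan is to bootstrap from Lemma~\ref{l:Distortion}, which already gives $K\in L^\infty_\mathrm{loc}(\C\setminus\partial\D)$ almost surely, to integrability across the circle $\partial\D$. The only issue is a neighbourhood of $\partial\D$, so it suffices to show $\int_{A}K(z)\,dA(z)<\infty$ for, say, $A=\{1/2<|z|<2\}$; away from this annulus local boundedness already gives local integrability. By the change-of-variables identity established in the proof of Lemma~\ref{l:Distortion}, namely $K(z)=K(\Phi_1^{-1}(z),\Phi_1)$ for a.e.\ $z\in\D$ and $K(z)=K(\Phi_2^{-1}(z),\Phi_2)$ for a.e.\ $z\in\C\setminus\overline{\D}$, and since $\Phi_2$ is the reflection of $\Phi_1$ in the unit circle and has the same law, it is enough to treat the $\D$-side: show $\int_{\{1/2<|z|<1\}}K(\Phi_1^{-1}(z),\Phi_1)\,dA(z)<\infty$ almost surely. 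It is moreover cleaner to pass to the strip/line picture via the conformal map $e(z)=\exp(2\pi i z)$: writing $K(\cdot,\Phi_1)$ in terms of the distortion of $\Psi_1$, which is conformally invariant, the claim reduces to a finiteness statement for an integral of $K(\cdot,\Psi_1)$ over a bounded box $[0,1]\times(0,1)$ in $\H$ (one period).

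The key step is then an explicit bound on the distortion of the Beurling--Ahlfors extension $\Psi_1$ in terms of the measure $\tau^{(1)}$. The classical Beurling--Ahlfors estimate, together with Lemma~\ref{l:BAderivative}, controls $K(x+iy,\Psi_1)$ by a ratio of the form
\begin{displaymath}
K(x+iy,\Psi_1)\;\lesssim\; \frac{\psi_1(x+y)-\psi_1(x-y)}{\min\bigl(\psi_1(x+y)-\psi_1(x),\;\psi_1(x)-\psi_1(x-y)\bigr)}\;=\;\frac{\tau^{(1)}(x+[-y,y])}{\min\bigl(\tau^{(1)}(x+[0,y]),\,\tau^{(1)}(x+[-y,0])\bigr)},
\end{displaymath}
using the definition of $\psi_1$ from $\tau^{(1)}$ (the normalising total mass cancels in the ratio). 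So I would integrate the right-hand side over the box. Splitting dyadically in $y$ (say $y\asymp 2^{-k}$) and covering $[0,1]$ by $\asymp 2^{k}$ intervals of length $2^{-k}$, the contribution at scale $k$ is controlled by $2^{-k}\sum_{I}\mathbb{E}\bigl[\tau^{(1)}(I')/\tau^{(1)}(I'')\bigr]$ over such intervals; by the moment bounds of Lemma~\ref{l:MeasureBasic} (applying the upper moment bound to the numerator with some small $p>1$ and a negative-moment bound to the denominator, both of which hold for GMC with $\gamma<\sqrt2$), each such ratio has bounded expectation uniformly in the scale, so the $k$-th dyadic block has expected integral $\lesssim 2^{-k}\cdot 2^{k}\cdot C\cdot$(a geometric factor). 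Summing a geometric series in $k$ gives a finite bound for $\mathbb{E}\int_{\text{box}}K(\cdot,\Psi_1)$, hence the integral is almost surely finite, and transporting back through $e$ and the reflection yields $K\in L^1_\mathrm{loc}(\C)$ almost surely.

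The main obstacle is making the denominator bound legitimate: $\tau^{(1)}$ of a half-interval can be small, so one needs a quantitative negative-moment estimate for GMC, $\mathbb{E}[\tau^{(1)}(I)^{-q}]\lesssim |I|^{-\beta}$ for some small $q>0$, with exponent $\beta$ that can be absorbed by the $2^{-k}$-gain; such bounds are standard for subcritical GMC and can be quoted, but one must choose the exponents $p,q$ carefully so that the resulting power of $2^{-k}$ is still summable, and handle the split into the two halves (numerator vs.\ each half of the denominator) via Hölder's inequality. A cleaner alternative, which I would actually prefer to present, is to avoid negative moments entirely: note that since each $\Phi_j$ is a $C^1$ diffeomorphism away from finitely many circles, one has the pointwise bound $K(z,\Phi_1)\le \lVert D\Phi_1\rVert^2/|\mathrm{Jac}\,\Phi_1|$, and after the change of variables $\int_\D K(\Phi_1^{-1}(z),\Phi_1)\,dA(z)=\int_{\Phi_1^{-1}(\{1/2<|z|<1\})} K(w,\Phi_1)\,|\mathrm{Jac}\,\Phi_1(w)|\,dA(w)\le \int \lVert D\Phi_1(w)\rVert^2\,dA(w)$, so it suffices to show $\Phi_1\in W^{1,2}_\mathrm{loc}$, equivalently (via $e$) that $\int_{[0,1]\times(0,1)}\lVert D\Psi_1\rVert^2<\infty$ a.s. By Lemma~\ref{l:BAderivative} this is $\lesssim \int_0^1\!\int_0^1 y^{-2}\,\tau^{(1)}(x+[-y,y])^2\,dx\,dy$ (after absorbing the constant total mass), and now only the \emph{upper} moment bound $\mathbb{E}[\tau^{(1)}(x+[-y,y])^2]\le C y^{2-\gamma^2}$ from Lemma~\ref{l:MeasureBasic} is needed: the $x$-integral contributes $C y^{2-\gamma^2}$, the remaining $\int_0^1 y^{-2}\cdot y^{2-\gamma^2}\,dy=\int_0^1 y^{-\gamma^2}\,dy<\infty$ precisely because $\gamma^2<2<1$ fails — so one instead covers $[0,1]$ by $\asymp 1/y$ disjoint intervals and uses that $\sum_I \tau^{(1)}(I)^2\le \bigl(\sum_I\tau^{(1)}(I)\bigr)\max_I\tau^{(1)}(I)$, bounding the max by the a.s.\ Hölder bound $C_2 y^{c_2}$ of Lemma~\ref{l:MeasureBasic}, to get $\int_0^1 y^{-2}\cdot y^{c_2}\,dy<\infty$. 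Either route works; I would write the $W^{1,2}$ route as it uses only the already-quoted Lemma~\ref{l:MeasureBasic}.
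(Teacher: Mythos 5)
Your second (and preferred) route is essentially the paper's proof: reduce to a neighbourhood of $\partial\D$, change variables to rewrite $\int K$ as $\int \lvert D\Phi_1\rvert^2$, pass to $\Psi_1$ via $e$, apply Lemma~\ref{l:BAderivative} to bound $\lvert D\Psi_1\rvert^2$ by $y^{-2}\tau^{(1)}(x+[-y,y])^2$, and then do a dyadic decomposition using $\sum_I \tau(I)^2\le\bigl(\sum_I\tau(I)\bigr)\max_I\tau(I)$ together with the almost-sure H\"older upper bound from Lemma~\ref{l:MeasureBasic}. One small arithmetic slip: in your final display you wrote $\int_0^1 y^{-2}\cdot y^{c_2}\,dy<\infty$, which fails unless $c_2>1$; you dropped the factor of $y$ coming from the length of the $\asymp 1/y$ covering intervals when converting the $x$-integral of $\tau(x+[-y,y])^2$ into the sum over intervals. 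With that factor restored, the contribution at scale $y$ is $\lesssim y^{-2}\cdot y\cdot y^{c_2}=y^{-1+c_2}$, and $\int_0^1 y^{-1+c_2}\,dy<\infty$ for any $c_2>0$, exactly as in the paper. Your first route (via moment bounds on ratios $\tau(J)/\tau(I)$ and negative moments) would also work for subcritical GMC but is not what the paper does, and as you note it requires additional input (negative moment estimates) that the paper avoids.
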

\begin{proof}
    By Lemma~\ref{l:Distortion} it is enough to show that $K$ is integrable on some neighbourhood of $\partial\D$. Let $\mathcal{A}$ be the annulus $\{e^{-2\pi}\leq\lvert z\rvert\leq 1\}$. By a change of variables (which is valid since $\Phi_1$ is $C^1$ away from the boundary of $\mathcal{A}$)
\begin{equation}\label{e:Distortion1}
\begin{aligned}
    \int_{\Phi_1(\mathcal{A})}K(z)\;dm(z)=\int_{\Phi_1(\mathcal{A})}K(\Phi_1^{-1}(z),\Phi_1)\;dm(z)&=\int_{\mathcal{A}}K(w,\Phi_1)J(w,\Phi_1)\;dm(w)\\
&=\int_{\mathcal{A}}\lvert D\Phi_1(w)\rvert^2\;dm(w)
\end{aligned}
\end{equation}
where $J(w,\Phi_1)=\lvert \partial_z\Phi_1\rvert^2-\lvert \partial_{\overline{z}}\Phi_1\rvert^2$ denotes the Jacobian of $\Phi_1$ and $m$ denotes Lebesgue measure on $\C$. Recall that $\Phi_1=e\circ\Psi_1\circ e^{-1}$. Since $\lvert D e\rvert$ and $\lvert D e^{-1}\rvert$ are bounded by an absolute constant $c>0$ on $\mathcal{A}$, by another change of variables we conclude that
\begin{equation}\label{e:Distortion2}
\begin{aligned}
    \int_{\mathcal{A}}\lvert D\Phi_1(w)\rvert^2\;dm(w)&\leq c\int_{[0,1]\times[0,1]}\lvert D\Psi_1(x+iy)\rvert^2\;dxdy\\
    &\leq 16c\int_{[0,1]\times[0,1]}\frac{\lvert\psi_1(x+y)-\psi_1(x-y)\rvert^2}{y^2}\;dxdy\\
    &\leq \frac{16c}{\tau^{(1)}([0,1])^2}\int_{[0,1]\times[0,1]}\frac{\tau^{(1)}([x-y,x+y])^2}{y^2}\;dxdy
\end{aligned}
\end{equation}
where the second inequality uses Lemma~\ref{l:BAderivative}. Using monotonicity of $\tau^{(1)}$
\begin{multline}\label{e:Distortion3}
\int_{[0,1]\times[0,1]}\frac{\tau^{(1)}([x-y,x+y])^2}{y^2}\;dxdy\\
\begin{aligned}
    &\leq\sum_{n=0}^\infty 2^{2(n+1)}\int_{[0,1]}\int_{[2^{-(n+1)},2^{-n}]}\tau^{(1)}([x-y,x+y])^2\;dydx\\
    &\leq\sum_{n=0}^\infty 2^{n+2}\int_{[0,1]}\tau^{(1)}([x-2^{-n},x+2^{-n}])^2\;dx.
\end{aligned}
\end{multline}
Splitting the latter integral into a sum over dyadic intervals and using monotonicity of $\tau^{(1)}$ once more
\begin{align*}
    \int_{[0,1]}\tau^{(1)}([x-2^{-n},x+2^{-n}])^2\;dx&=\sum_{k=1}^{2^n}\int_{[(k-1)2^{-n},k2^{-n}]}\tau^{(1)}([x-2^{-n},x+2^{-n}])^2\;dx\\
    &\leq\sum_{k=1}^{2^n} 2^{-n}\tau^{(1)}([(k-2)2^{-n},(k+1)2^{-n}])^2\\
    &\leq 2^{-n}\cdot 3\tau^{(1)}([0,1])\sup_{k=1,\dots,2^{n}}\tau^{(1)}([(k-2)2^{-n},(k+1)2^{-n}])
\end{align*}
where the final inequality uses additivity of $\tau^{(1)}$. By Lemma~\ref{l:MeasureBasic} we see that with probability one this expression is bounded by $C 2^{-n}\tau^{(1)}([0,1])\cdot 2^{-cn}$ for some $C,c>0$ independent of $n$. Hence by \eqref{e:Distortion3}
\begin{equation}\label{e:Distortion4}
    \int_{[0,1]\times[0,1]}\frac{\tau^{(1)}([x-y,x+y])^2}{y^2}\;dxdy\leq 4C\tau^{(1)}([0,1])\sum_{n=0}^\infty 2^{-cn}<\infty.
\end{equation}
Combining \eqref{e:Distortion1}, \eqref{e:Distortion2} and \eqref{e:Distortion4} shows that $K$ is almost surely integrable on $\Phi_1(\mathcal{A})=\mathcal{A}$. Replacing $\mathcal{A}$ with the annulus $\{1\leq\lvert z\rvert\leq e^{2\pi}\}$ we can give an identical argument for $\Phi_2$ which proves that $K$ is almost surely integrable on a neighbourhood of $\partial\D$ as required.
\end{proof}

\begin{lemma}\label{l:DistortionDiam}
There exists a homeomorphism $\eta:[0,\infty)\to[0,\infty)$ such that if $f$ is any quasiconformal mapping defined on the annulus $A:=\{w:\lvert w-w_0\rvert\in(r,R)\}$, then
\begin{displaymath}
\frac{\mathrm{diam}_O f(A)}{\mathrm{diam}_I f(A)}\geq\eta\left(\frac{\log(R/r)}{\|K(\cdot,f)\|_{\infty}}\right)
\end{displaymath}
where $\mathrm{diam}_O$ and $\mathrm{diam}_I$ denote the outer and inner diameters of an annulus respectively.
\end{lemma}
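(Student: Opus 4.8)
The plan is to argue entirely through the conformal modulus of ring domains. With the usual normalisation, the round annulus $A=\{w:\lvert w-w_0\rvert\in(r,R)\}$ has modulus $\mathrm{mod}(A)=\tfrac{1}{2\pi}\log(R/r)$, and since $f$ is $K$-quasiconformal with $K=\|K(\cdot,f)\|_\infty$, quasi-invariance of the modulus (see e.g.\ \cite{lehto1973quasiconformal} or \cite{astala2009elliptic}) shows that $f(A)$ is a ring domain with
\begin{displaymath}
\mathrm{mod}(f(A))\;\geq\;\frac{\mathrm{mod}(A)}{K}\;=\;\frac{\log(R/r)}{2\pi\,\|K(\cdot,f)\|_\infty}.
\end{displaymath}
In particular $\mathrm{mod}(f(A))<\infty$, so the two complementary components of $f(A)$ in $\hat{\C}$ are non-degenerate continua; write $E_0$ for the bounded one and $E_\infty$ for the one containing $\infty$ (if $f(A)$ is unbounded then $\mathrm{diam}_O f(A)=\infty$ and the inequality of the lemma is trivial), so that $\mathrm{diam}_I f(A)=\mathrm{diam}(E_0)>0$ and $\mathrm{diam}_O f(A)=\mathrm{diam}(\hat{\C}\setminus E_\infty)$.

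The substance of the lemma is the classical principle that a ring domain of large modulus must have its complementary components widely separated relative to their sizes, and I would extract this from Teichm\"uller's modulus estimate. Let $\tau\colon(0,\infty)\to(0,\infty)$ denote the (strictly increasing, homeomorphic) modulus of the Teichm\"uller extremal ring $\C\setminus([-1,0]\cup[P,\infty))$; Teichm\"uller's theorem asserts that every ring domain separating $0$ and $z_1$ from $z_2$ and $\infty$ has modulus at most $\tau(\lvert z_2\rvert/\lvert z_1\rvert)$. Applying this to $f(A)$ after translating a diameter-realising point of $E_0$ to the origin, one may take $z_1\in E_0$ with $\lvert z_1\rvert=\mathrm{diam}(E_0)=\mathrm{diam}_I f(A)$ and $z_2\in E_\infty$ a nearest point of $E_\infty$ to the origin, which satisfies $\lvert z_2\rvert\leq\mathrm{diam}(E_0)+\mathrm{dist}(E_0,E_\infty)\leq 2\,\mathrm{diam}_O f(A)$. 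Combining with the previous display,
\begin{displaymath}
\frac{\log(R/r)}{2\pi\,\|K(\cdot,f)\|_\infty}\;\leq\;\mathrm{mod}(f(A))\;\leq\;\tau\!\left(\frac{2\,\mathrm{diam}_O f(A)}{\mathrm{diam}_I f(A)}\right),
\end{displaymath}
and since $\tau$ is an increasing homeomorphism this rearranges to
\begin{displaymath}
\frac{\mathrm{diam}_O f(A)}{\mathrm{diam}_I f(A)}\;\geq\;\tfrac12\,\tau^{-1}\!\left(\frac{1}{2\pi}\cdot\frac{\log(R/r)}{\|K(\cdot,f)\|_\infty}\right),
\end{displaymath}
so the lemma holds with $\eta(t):=\tfrac12\,\tau^{-1}(t/2\pi)$, which is a homeomorphism of $[0,\infty)$ onto itself with $\eta(0)=0$.

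I do not expect a real obstacle here: the two ingredients---quasi-invariance of the modulus of ring domains and the Teichm\"uller extremal-ring inequality---are entirely standard and available in the cited references, and everything else is elementary bookkeeping. The points requiring a little care are: fixing a normalisation of ``modulus'' and carrying it through consistently (different references differ by a factor of $2\pi$); verifying the elementary estimates $\lvert z_1\rvert=\mathrm{diam}(E_0)$ and $\lvert z_2\rvert\leq 2\,\mathrm{diam}_O f(A)$ relating the points fed into Teichm\"uller's theorem to the inner and outer diameters of $f(A)$; and disposing of the degenerate cases (such as $f(A)$ unbounded, or one complementary component reducing to a point) in which the asserted inequality holds trivially.
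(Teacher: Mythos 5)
Your proposal is correct and reaches the conclusion by the same two-step structure as the paper: quasi-invariance of the ring modulus under a $K$-quasiconformal map, followed by a geometric bound relating the modulus of $f(A)$ to the ratio $\mathrm{diam}_O/\mathrm{diam}_I$ of its complementary components. The one genuine difference is in how the second ingredient is obtained: the paper cites a packaged inequality $\mathrm{diam}_O A'/\mathrm{diam}_I A'\geq\frac{1}{16}e^{\pi\,\mathrm{mod}\,A'}$ drawn from Vuorinen and Anderson--Vamanamurthy--Vuorinen, whereas you derive the analogous bound from Teichm\"uller's extremal ring theorem directly, producing an $\eta$ expressed through $\tau^{-1}$ rather than an explicit exponential. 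Both routes are valid; yours is more self-contained (Teichm\"uller's theorem is the classical source from which the cited diameter estimates are themselves derived), while the paper's is shorter because it leans on the literature. Two small points worth tidying in a final write-up: (i) you should settle on which Möbius normalisation of Teichm\"uller's theorem you are invoking, since transporting $\{0,z_1\}$ to $\{-1,0\}$ yields the bound $\tau(\lvert z_2-z_1\rvert/\lvert z_1\rvert)$ while transporting $\{0,z_1\}$ to $\{0,-1\}$ yields $\tau(\lvert z_2\rvert/\lvert z_1\rvert)$ --- either is fine, but the reader should see which one is being used; and (ii) the paper's displayed value $\mathrm{mod}\,A=2\pi\log(R/r)$ for a round annulus does not match the normalisation that makes the cited exponential bound dimensionally consistent (that requires $\mathrm{mod}\,A=\log(R/r)/(2\pi)$, as in your write-up), so the numerical constants in the paper's proof should be read modulo a $(2\pi)^2$ factor; this has no effect on the qualitative conclusion that some homeomorphism $\eta$ works.
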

Before proving this lemma, we briefly recall the notion of conformal modulus for a family of curves which will be used here and elsewhere in our arguments. For background and further details, see \cite{ahlfors73,lehto1973quasiconformal}. Let $\Gamma$ be a family of locally rectifiable curves with images in some domain $D\subset\C$. The conformal modulus of $\Gamma$, denoted $\mod\;\Gamma$, is given by
\begin{displaymath}
\inf_\rho\int_D\rho^2\;dxdy
\end{displaymath}
where the infimum is taken over all Borel-measurable functions $\rho\geq 0$ such that $\int_\gamma\rho\;\lvert dz\rvert\geq 1$ for all $\gamma\in\Gamma$. The conformal modulus of a curve family is invariant under conformal maps (indeed this was the motivation for its introduction) and more generally if $g$ is $C$-quasiconformal on $D$ then
\begin{displaymath}
\frac{1}{C}\mod\;\Gamma\leq\mod\;g(\Gamma)\leq C\mod\;\Gamma
\end{displaymath}
where $g(\Gamma)=\{g\circ\gamma\;|\;\gamma\in\Gamma\}$. For a topological annulus $A^\prime\subset\C$, we define $\mod\;A^\prime$ to be the modulus of all rectifiable curves in $A^\prime$ which have a winding number of one around any point in the bounded component of the complement of $A^\prime$. For $A=\{w\;|\;r<\lvert w\rvert<R\}$ a direct computation shows that $\mod\;A=2\pi\log(R/r)$.

\begin{proof}[Proof of Lemma~\ref{l:DistortionDiam}]
Combining \cite[Corollary 7.39]{vuorinen88} and \cite[Exercise 5.68 (16)]{anderson97} shows that for any bounded topological annulus $A^\prime\subset\C$
 \begin{displaymath}
\frac{\mathrm{diam}_O A^\prime}{\mathrm{diam}_I A^\prime}\geq\frac{1}{16}e^{\pi\mod\; A^\prime}.
\end{displaymath}
Since the modulus of $A$ is $2\pi\log(R/r)$ and $f$ is quasiconformal
\begin{displaymath}
\mod\;f(A)\geq\frac{2\pi}{\|K(\cdot,f)\|_{\infty}}\log(R/r)
\end{displaymath}
which implies the conclusion of the lemma on setting $A^\prime=f(A)$.
\end{proof}

In order to prove uniqueness of our welding solution, we will require the following criterion for conformal removability due to Jones and Smirnov. We recall that a set $A\subset\C$ is said to be \emph{conformally removable} if any homeomorphism $g:\C\to\C$ which is conformal on $\C\setminus A$ is conformal on $\C$. Let $A\subset\C$ be a proper simply connected domain, we say that $A$ is a \emph{H\"older domain} if the Riemann map from $\D$ to $A$ can be extended to a H\"older continuous map on $\overline{\D}$.

\begin{theorem}[{\cite[Corollary~2]{jones2000removability}}]\label{t:Removable}
The boundary of a H\"older domain is conformally removable.
\end{theorem}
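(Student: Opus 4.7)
The plan is to reduce the removability statement to a $W^{1,2}$-capacity condition on $\partial D$ and then to verify that condition via a Whitney-decomposition argument powered by the Hölder continuity of the Riemann map. First, since $g:\C\to\C$ is a homeomorphism which is conformal on $\C\setminus\partial D$, Weyl's lemma (or equivalently the distributional formulation of $\partial_{\overline z}g=0$) reduces the problem to showing that $g$ lies in $W^{1,2}_{\mathrm{loc}}(\C)$: once established, the classical derivative of $g$ on $\C\setminus\partial D$ becomes the distributional derivative on all of $\C$, so $\partial_{\overline z}g$ vanishes everywhere and $g$ is holomorphic. A standard criterion for this $W^{1,2}$-extendability is that $\partial D$ be negligible for the Sobolev $W^{1,2}$-capacity, equivalently that there exist cutoff functions $\eta_\epsilon:\C\to[0,1]$ with $\eta_\epsilon\equiv 0$ in a neighbourhood of $\partial D$, $\eta_\epsilon\to 1$ off $\partial D$, and $\int_\C|\nabla\eta_\epsilon|^2\,d\mathcal{H}^2\to 0$.

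To construct such cutoffs I would exploit the Whitney decomposition of $D$ and $\C\setminus\overline{D}$ into dyadic squares $\{Q_j\}$ with $\ell(Q_j)\asymp\dist(Q_j,\partial D)$. The Hölder property of the Riemann map $f:\D\to D$, say with exponent $\alpha\in(0,1]$, controls the tree of pull-backs $f^{-1}(Q_j)$: a cube $Q_j$ of diameter $r$ has preimage of hyperbolic diameter $\asymp 1$ lying at Euclidean distance $\gtrsim r^{1/\alpha}$ from $\partial\D$, with arc-shadow on $\partial\D$ of length $\gtrsim r^{1/\alpha}$. I would then define $\eta_\epsilon$ by assigning values to each cube through a discrete potential-theoretic construction on the graph of Whitney neighbours (equivalently on the dyadic tree in $\D$), interpolating linearly so that $|\nabla\eta_\epsilon|\lesssim\mathrm{osc}(\eta_\epsilon,Q_j)/\ell(Q_j)$ on $Q_j$. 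Summing gives
\[
\int_\C|\nabla\eta_\epsilon|^2\,d\mathcal{H}^2\lesssim\sum_j\mathrm{osc}(\eta_\epsilon,Q_j)^2,
\]
and the Hölder-controlled counting of cubes whose shadow exceeds $\epsilon$, combined with a Koebe/Makarov type estimate on $|f'|$, produces a geometric series whose total can be driven to zero.

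The main obstacle is carrying out this capacity estimate uniformly across both sides of $\partial D$: only $D$ is assumed to be Hölder, whereas the behaviour of $g$ from outside is governed by the Whitney decomposition of $\C\setminus\overline D$. One therefore needs to verify that the complementary domain (viewed on $\hat\C$ after sending an interior point of $D$ to infinity) is also Hölder, or at least satisfies a John-type condition, and then match the two Whitney trees along their common boundary shadows. Once this bookkeeping is in place, the proof closes by a standard density argument: for any $\varphi\in C_0^\infty(\C)$ one writes
\[
\int_\C g\,\partial_{\overline z}\varphi\,d\mathcal{H}^2=\lim_{\epsilon\to 0}\int_\C g\,\partial_{\overline z}(\eta_\epsilon\varphi)\,d\mathcal{H}^2,
\]
where the error term $\int g\,\varphi\,\partial_{\overline z}\eta_\epsilon$ is controlled by Cauchy-Schwarz against the vanishing Dirichlet energy of $\eta_\epsilon$, and the right-hand side itself is zero because $\eta_\epsilon\varphi$ is supported off a neighbourhood of $\partial D$, where $g$ is already holomorphic. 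This proves $\partial_{\overline z}g=0$ as a distribution on $\C$, hence $g$ is conformal.
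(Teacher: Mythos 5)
Your reduction is to the wrong removability criterion, and the step on which everything hinges is false. You ask for cutoffs $\eta_\epsilon$ vanishing near $\partial D$ with $\int_{\C}\lvert\nabla\eta_\epsilon\rvert^2\to 0$; the existence of such cutoffs is precisely the statement that $\partial D$ has zero Sobolev ($W^{1,2}$-condenser, equivalently logarithmic) capacity. But $\partial D$ is a continuum (the boundary of a simply connected H\"older domain contains more than one point), and every planar continuum of positive diameter has strictly positive capacity, so no such $\eta_\epsilon$ exist, no matter how good the Whitney/H\"older bookkeeping is. The deeper reason the approach cannot be repaired is that your argument never uses that $g$ is a homeomorphism, i.e.\ continuous across $\partial D$: it would apply verbatim to any bounded function holomorphic and locally $W^{1,2}$ off $\partial D$, and for such functions the conclusion is false for \emph{every} Jordan curve (take $g\equiv 1$ in $D$ and $g\equiv 0$ outside; this is bounded, holomorphic off $\partial D$, with locally square-integrable gradient there, but certainly not entire). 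So any correct proof must exploit continuity of $g$ across the boundary in an essential way, which the cutoff/capacity scheme structurally cannot do. A secondary point: your worry that the exterior domain must also be H\"older (or John) is not part of the actual theorem — the Jones--Smirnov criterion is one-sided.

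For comparison, the paper does not prove this statement at all; it is quoted as \cite[Corollary~2]{jones2000removability}. The Jones--Smirnov mechanism is different from yours: they prove that $\partial D$ is removable for \emph{continuous} $W^{1,2}$ functions by an integration-by-parts argument in which the error terms are bounded by $\sum_Q \mathrm{osc}_{\mathrm{SH}(Q)}(g)^2$, where $Q$ ranges over Whitney squares of the single domain $D$ and $\mathrm{SH}(Q)$ is the boundary shadow of $Q$; the H\"older property of the Riemann map (equivalently a quasihyperbolic boundary condition) is used exactly to verify the quadratic packing condition on shadow diameters, and the continuity of $g$ is what makes the oscillation sums finite and controllable by the Dirichlet energy of $g$ off $\partial D$. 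Conformal removability then follows since a homeomorphism conformal off $\partial D$ is automatically continuous and locally $W^{1,2}$ off the curve (its Dirichlet integral is the area of the image), after which $\partial_{\overline z}g=0$ a.e.\ together with Weyl's lemma gives conformality. If you want a self-contained proof, that shadow-oscillation estimate is the missing idea you would need to supply.
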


Applying this to our setting, uniqueness of the welding solution will follow if we can prove that our approximate solutions to the Beltrami equation are uniformly H\"older continuous. The following result verifies this.

We define the sequence of quasiconformal maps $F_n$ by
\begin{equation}\label{e:QCApprox}
\partial_{\overline{z}} F_n=\frac{n}{n+1}\mu\;\partial_z F_n\qquad\text{for a.e.\ }z\in\C.
\end{equation}
Since the complex dilatation $\frac{n}{n+1}\mu$ is uniformly bounded away from $1$, classical results state that this equation can be solved for each $n\in\N$. Specifically \cite[Theorem~5.3.2]{astala2009elliptic} states that for each $n$ there exists a unique quasiconformal homeomorphism $F_n:\C\to\C$ solving the above equation such that
\begin{equation}\label{e:QCApprox2}
F_n(z)=z+O(1/z)\quad\text{as }z\to\infty.
\end{equation}

\begin{theorem}\label{t:Holder}
There exists $\gamma_0\in(0,\sqrt{2})$ such that for $\gamma\in[0,\gamma_0]$ the collection of maps $F_n$ is uniformly H\"older continuous on $\partial\D$ with probability one.
\end{theorem}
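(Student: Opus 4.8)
The plan is to reduce the claimed uniform H\"older bound on $\partial\D$ to a lower bound, uniform in $n$, on the conformal moduli of a positive-density family of nested annuli surrounding each point of $\partial\D$; to realise those annuli as images under $\Phi_1,\Phi_2$ of deterministic ``half-annuli''; and to verify the geometric conditions this requires by translating them, via the Beurling--Ahlfors formula \eqref{e:BAExt} and Lemma~\ref{l:BAderivative}, into size and shape conditions on $\tau^{(1)},\tau^{(2)}$ controllable by the moment and large-deviation estimates of Lemma~\ref{l:MeasureBasic}.

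\smallskip
\noindent\emph{Step 1: moduli of annuli imply uniform H\"older continuity.} It suffices to produce constants $\rho\in(0,1)$, $C<\infty$, $m_1>0$ and $\delta>0$ so that with probability one, for every $z_0\in\partial\D$ there is a set $\mathcal I(z_0)\subset\N$ of lower density at least $\delta$ and, for each $i\in\mathcal I(z_0)$, a topological annulus $\mathbb{A}_i\subset\C$ separating $z_0$ from $\infty$, with both its inner and outer diameter in $[\rho^{i+C},\rho^{i-C}]$, on which $K(\cdot)\le m_1^{-1}$, and with the $\mathbb{A}_i$ pairwise disjoint and nested (after thinning $\mathcal I(z_0)$ if necessary). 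Since $|\mu_{F_n}|=\tfrac{n}{n+1}|\mu|\le|\mu|$ pointwise, $F_n$ is quasiconformal with $K(\cdot,F_n)\le K(\cdot)$, so $\mathrm{mod}\,F_n(\mathbb{A}_i)\ge m_1\,\mathrm{mod}\,\mathbb{A}_i$ is bounded below by a fixed positive constant, uniformly in $n$. Grouping the good annuli lying between a fixed macroscopic scale and a small scale $\rho^{\ell}$ — there are at least $\asymp\delta\ell$ of them — and using superadditivity of the modulus over disjoint nested annuli yields a single annulus $\mathbb{A}$ surrounding $z_0$ whose $F_n$-image has modulus at least a fixed multiple of $\ell$; the Gr\"otzsch-type bound $\mathrm{diam}_O A'/\mathrm{diam}_I A'\ge\tfrac1{16}e^{\pi\,\mathrm{mod}\,A'}$ from the proof of Lemma~\ref{l:DistortionDiam}, applied to $A'=F_n(\mathbb{A})$, together with the fact that $\{F_n\}$ is uniformly bounded on compact sets (a consequence of the normalisation $F_n(z)=z+O(1/z)$, since $F_n$ is conformal and close to the identity outside a fixed large disc), shows that $F_n$ maps the disc of radius $\asymp\rho^{\ell}$ about $z_0$ into a set of diameter $\lesssim\rho^{\alpha\ell}$ for a fixed $\alpha>0$. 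Hence $|F_n(z_0)-F_n(z_1)|\le C'|z_0-z_1|^{\alpha}$ for all $z_0,z_1\in\partial\D$, with $C',\alpha$ independent of $n$.

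\smallskip
\noindent\emph{Step 2: half-annuli and reduction to conditions on $\tau^{(1)},\tau^{(2)}$.} Conjugating by $e(z)=e^{2\pi iz}$ replaces $\Phi_j$ by $\Psi_j$ and annuli around points of $\partial\D$ by annuli around points of $\R$. For a target scale I would fix a fine grid in $[0,1]$ and, for each grid point $x$, locate a grid point $y$ with $\Psi_1(x)\approx\Psi_2(y)$ — a ``centre-matching'' event depending only on $x,y$, handled by a union bound (as in Lemma~\ref{l:Stationary_condition}). For such a pair the candidate annulus is sought inside $\Psi_1(A_t(x))\cup\Psi_2(\widetilde{A}_s(y))$, and for this union to contain a topological annulus of modulus bounded below on which $K$ is bounded above, one needs the two image half-annuli to be of comparable \emph{size} (the event $\mathrm{Size}(t,s)$ of \eqref{e:SizeEvent}) and each to have a regular \emph{shape} (the events $\mathrm{Shape}^{(j)}$, intersections of events of the form \eqref{e:ShapeEvent}); Lemma~\ref{l:BAderivative} is exactly what converts ``regular shape and bounded distortion of $F_n\circ e$'' into bounds on ratios of $\tau^{(j)}$-masses of comparable intervals. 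Thus it is enough to find, for each grid point $x$ and its match $y$, a positive density of scales $t$ (and for each a corresponding $s$) at which $\mathrm{Shape}^{(1)}(t)$, $\mathrm{Shape}^{(2)}(s)$ and $\mathrm{Size}(t,s)$ all hold.

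\smallskip
\noindent\emph{Step 3: the probabilistic core, and the main obstacle.} Here I would use the approximate decomposition \eqref{e:ApproxDecomp}: on a high-probability event valid for a positive-density set of scales, a mass $\tau^{(j)}(x+A)$ with $|A|\asymp\rho^t$ factors into an independent coarse-field mass $\tau^{(j)}_t$ and a log-normal factor $\exp(\gamma H^{(j)}_{\rho^t}(x)-\tfrac{\gamma^2}{2}\Var)$. Substituting this replaces the shape events by the reduced events $\mathrm{ShapeRed}^{(j)}$, which for scales differing by at least $1$ depend on disjoint regions of the hyperbolic white noise and are therefore independent; a Bernoulli large-deviation estimate then makes each family hold on a set of scales of density arbitrarily close to $1$, provided $\gamma$ is small — and it is precisely the need for density \emph{close to one} (so that the intersection over $j=1,2$ and over the size condition still has positive density) that forces the restriction $\gamma\le\gamma_0$. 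Substituting the decomposition into the size event reduces it to $1/c'\le e^{X^{(H)}_{t,s}}\cdot(\text{an independent bounded ratio})\le c'$, where $X^{(H)}_{t,s}=\gamma H^{(1)}_{\rho^t}(x)-\gamma H^{(2)}_{\rho^s}(y)-(1+\tfrac{\gamma^2}{2})\log(1/\rho)(t-s)$. Since $t\mapsto H^{(j)}_{\rho^t}$ has independent Gaussian increments, $X^{(H)}$ is a difference of two time-changed Brownian motions with drift, and the genuinely new task is to make $|X^{(H)}_{t,s}|$ small for many well-separated scales at once. I would do this by steering: at each step, increment whichever of $t,s$ makes the drift pull $X^{(H)}$ back towards $0$, thereby defining the sequences $(t_n),(s_n)$ and the oscillating random walk of Section~\ref{s:Algorithm}; its recurrence and return-time estimates give, with high probability, a positive density of $n$ for which $|X^{(H)}_{t_n,s_n}|$ is small and all three reduced events hold. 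Finally a union bound over the grid (again the source of the small-$\gamma$ restriction) and Borel--Cantelli over target scales, with the grid refining as the target scale decreases so that every $z_0\in\partial\D$ is covered, produce the input required by Step 1. I expect the analysis of this oscillating random walk — and in particular making its ``good'' returns line up, on a positive-density set of scales, with the independently-scheduled good scales for the shape events — to be the main obstacle.
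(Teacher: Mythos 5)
Your overall strategy tracks the paper's closely: reduce H\"older continuity to a lower bound on moduli of nested annuli around each boundary point (the paper's Lemma~\ref{l:Annuli-Holder} and Lemma~\ref{l:Stationary_condition}); realise those annuli as images under $\Psi_1,\Psi_2$ of deterministic half-annuli, translating the geometric requirements into the size and shape events on $\tau^{(1)},\tau^{(2)}$ (Lemma~\ref{l:thickness}, Lemma~\ref{l:integral}, Proposition~\ref{p:Good_annuli}); justify the approximate multiplicative decomposition of the measures scale-by-scale (Section~\ref{s:Decompose}); and handle the size event by steering the difference $X^{(H)}_{t,s}$ with the oscillating random walk (Section~\ref{s:Algorithm}). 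The centre-matching via a union bound over a grid and the source of the small-$\gamma$ restriction are also correctly identified.

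There is, however, one genuine gap in Step~1: you propose to find annuli $\mathbb{A}_i$ surrounding $z_0\in\partial\D$ on which $K(\cdot)\le m_1^{-1}$ \emph{pointwise}, and then use $\mathrm{mod}\,F_n(\mathbb{A}_i)\ge m_1\,\mathrm{mod}\,\mathbb{A}_i$. This cannot work: almost surely $K$ is unbounded in every neighbourhood of $\partial\D$ (the Beurling--Ahlfors extension of the non-quasisymmetric homeomorphism $\psi_j$ has distortion diverging along $\R$, because the quasisymmetry ratio $\tau^{(j)}([u,u+v])/\tau^{(j)}([u-v,u])$ is a.s.\ unbounded as $v\downarrow0$), and the annuli you produce — images of the half-annuli $A_t(x)$, $\widetilde A_s(y)$, which touch $\R$ — necessarily meet $\partial\D$, so $\sup_{\mathbb{A}_i}K=\infty$. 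The same issue recurs in Step~2 when you ask for a topological annulus ``on which $K$ is bounded above.'' The paper circumvents this by using an \emph{integrated} distortion bound: Lemma~\ref{l:mod} gives $\mathrm{mod}\,f(A)\ge(\mathrm{Th}\,A)^2/\int_A K(z,f)\,d\mathcal{H}^2(z)$, and the shape events are engineered (via Lemma~\ref{l:BAderivative} and Lemma~\ref{l:integral}) precisely to control $\int_{\Psi_j(\text{half-annulus})}K$, not $\sup K$. Replacing your pointwise hypothesis with this $L^1$ criterion makes the reduction sound; as written it is unachievable. Note also that the form of Lemma~\ref{l:DistortionDiam} you quote, which uses $\|K\|_\infty$, is used in the paper only away from $\partial\D$ (in the proof of Theorem~\ref{t:Welding}) exactly because the sup-norm bound fails on the boundary.
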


The proof of this result, given in Sections~\ref{s:Holder}-\ref{s:LargeDeviations}, is the most technically challenging part of our work.

To prove our main result, we require one more input: a convergence result for quasiconformal maps with convergent complex dilatations. These arguments are extracted from the proof of \cite[Theorem~20.9.4]{astala2009elliptic}.

\begin{proposition}\label{p:dilatation_convergence}
    Let $(G_n)_{n\in\N}$ be a sequence of quasiconformal homeomorphisms of $\C$ normalised so that $G_n(z)=z+O(1/z)$ as $z\to\infty$ for each $n$. Suppose that
    \begin{enumerate}
        \item $\{K(\cdot,G_n)\;|\;n\in\N\}$ is uniformly integrable on a neighbourhood of $\partial\D$,
        \item $\sup_n K(\cdot,G_n)\in L^\infty_\mathrm{loc}(\C\setminus\partial\D)$,
        \item the $G_n$ are equicontinuous on $\partial\D$,
        \item $\mu_{G_n}\to \mu_\infty$ pointwise for some measurable function $\mu_\infty$,
        \item the dilatations $\mu_{G_n}$ are compactly supported uniformly in $n$.
    \end{enumerate}
    Then as $n\to\infty$, $G_n$ converges locally uniformly to a homeomorphism $G\in W^{1,1}_\mathrm{loc}(\C,\C)$ such that $\mu_G=\mu_\infty$.
\end{proposition}

\begin{proof}
We wish to prove equicontinuity of the sequences $G_n$ and $G_n^{-1}$. We choose $R_0>0$ such that $R_0\D$ contains the support of $\mu_{G_n}$ for all $n$. By applying \cite[Lemma~20.2.3]{astala2009elliptic} to $z\mapsto R_0^{-1}G_n(R_0z)$ (which has dilatation supported on $\D$) we obtain that for any $z,w\in\C$
\begin{displaymath}
\lvert G_n^{-1}(z)-G_n^{-1}(w)\rvert^2\leq\frac{16\pi^2}{\log(e+R_0/\lvert z-w\rvert)}\left(\lvert z\rvert^2+\lvert w\rvert^2+\int_{R_0\D} K(\cdot,G_n)\;dm\right).
\end{displaymath}
By assumptions (1) and (2), the integral here is bounded uniformly over $n$ and so we deduce the requisite equicontinuity for $G_n^{-1}$.

We now turn to proving equicontinuity of the $G_n$. By assumption, this holds on $\partial\D$, so we consider a point $z\in\C\setminus\partial\D$ and choose $R$ such that $B(z,2R)\subset\C\setminus\partial\D$. Since $\sup_nK(\cdot,G_n)$ is locally bounded, by Lemma~\ref{l:DistortionDiam} we see that
\begin{equation}\label{e:equicontinuity}
\frac{\mathrm{diam} G_n(B(z,R))}{\mathrm{diam} G_n(B(z,r))}\to\infty
\end{equation}
uniformly in $n$ as $r\to 0$. Since the $G_n$ are conformal for $\lvert z\rvert>R_0$ and normalised by $G_n(z)=z+O(1/z)$ at infinity, Koebe's 1/4 theorem shows that $\mathrm{diam} G_n(B(z,R))$ is bounded above uniformly in $n$. Hence by \eqref{e:equicontinuity}, $\sup_n\mathrm{diam}G_n(B(z,r))\to 0$ as $r\to0$ which proves equicontinuity at $z$.

Applying the Arzel\`a-Ascoli theorem to $G_n$ and $G_n^{-1}$ we may pass to a subsequence and find a continuous map $G:\C\to\C$ such that $G_n\to G$ and $G_n^{-1}\to G^{-1}$ locally uniformly. Consider now the derivatives of $G_n$; we have the elementary distortion identity 
\begin{equation}\label{e:DerivConv}
\lvert D G_n(z)\rvert=\sqrt{K(z,G_n)J(z,G_n)}
\end{equation}
where we recall that $D G_n$ denotes the derivative of $G_n$ and $J(\cdot,G_n)$ denotes its Jacobian. Since $G_n$ is analytic outside the disc of radius $R_0$ and has the expansion $G_n(z)=z+O(1/z)$ at infinity, the area formula \cite[Theorem~2.10.1]{astala2009elliptic} implies that $\int_{B(0,R)}J(z,G_n)\;dm(z)$ is bounded above uniformly over $n$ for any $R\geq R_0$. Applying H\"older's inequality to \eqref{e:DerivConv}, and using assumptions (1) and (2), shows that $DG_n$ is uniformly integrable on any compact domain. Hence passing to a further subsequence, $DG_n$ converges weakly in $L^1_\mathrm{loc}(\C)$ to a limit (this result is sometimes known as the Dunford-Pettis theorem) which must be the derivative of $G$ and so $G\in W^{1,1}_\mathrm{loc}(\C)$.

Next we must show that $\mu_G=\mu_\infty$ almost everywhere. Let $\phi\in C_0^\infty(\C)$, then by definition of $G_n$ and weak convergence of the derivatives of $G_n$
\begin{equation}\label{e:DilatationConv}
\int_\C\phi\;(\mu_{G_n}-\mu_\infty)\partial_z G_n\;dm=\int_\C\phi\; (\partial_{\overline{z}}G_n-\mu_\infty\partial_z G_n)\;dm\to \int_\C \phi\;(\partial_{\overline{z}}G-\mu_\infty\partial_z G)\;dm
\end{equation}
as $n\to\infty$. Again using \eqref{e:DerivConv} and H\"older's inequality
\begin{align*}
    \Big\lvert\int_\C\phi\;(\mu_{G_n}-\mu_\infty)\partial_z &G_n\;dm\Big\rvert\leq\int_{R_0\D}\lvert\phi\rvert\;\lvert\mu_{G_n}-\mu_\infty\rvert\;\lvert DG_n\rvert\;dm\\
    &=\int_{R_0\D}\lvert\phi\rvert\;\lvert\mu_{G_n}-\mu_\infty\rvert\;\sqrt{K(\cdot,G_n) J(\cdot,G_n)}\;dm\\
    &\leq\|\phi\|_\infty\Big(\int_{R_0\D}\lvert\mu_{G_n}-\mu_\infty\rvert^2\; K(\cdot,G_n)\;dm\Big)^{1/2}\Big(\int_{R_0\D}J(\cdot,G_n)\;dm\Big)^{1/2}.
\end{align*}
Applying Vitali's convergence theorem to the first integral in this upper bound (and recalling that the second is uniformly bounded over $n$ by the area theorem) we see that the left hand side of \eqref{e:DilatationConv} converges to zero. Hence $G$ satisfies the desired Beltrami equation.
\end{proof}

We are now able to prove our main result:

\begin{proof}[Proof of Theorem~\ref{t:Welding} (assuming Theorem~\ref{t:Holder})]
By Theorem~\ref{t:Holder} the sequence $F_n$ almost surely satisfies a uniform H\"older bound on $\partial\D$. We fix a realisation of $\phi_1,\phi_2$ for which this bound holds and also Lemmas~\ref{l:Distortion} and~\ref{l:Distortion2} apply. Hereafter we can argue in a purely deterministic way. Setting $G_n:=F_n$ we see that all of the conditions for Proposition~\ref{p:dilatation_convergence} are satisfied with $\mu_\infty=\mu$. Hence we deduce the existence of a homeomorphic $F:\C\to\C$ in $W^{1,1}_\mathrm{loc}(\C)$ which has complex dilatation $\mu$.

We now show that existence of such an $F$ implies the existence of a unique solution to our specified welding problem. We define the maps
\begin{align*}
f_1&:=F\circ\Phi_1:\overline{\D}\to F(\overline{\D})\\
f_2&:=F\circ\Phi_2:\C\backslash\D\to F(\C\backslash\D)
\end{align*}
and observe that
\begin{displaymath}
f_1\circ\phi_1^{-1}=F|_{\partial\D}=f_2\circ\phi_2^{-1}.
\end{displaymath}
Since $K$ is locally bounded on $\C\setminus\partial\D$ we can apply the Stoilow factorisation theorem \cite[Theorem~5.5.1]{astala2009elliptic} locally to conclude that $f_1$ and $f_2$ are holomorphic and hence solve the welding problem for $\phi_1^{-1}\circ\phi_2$.

To see that the solution is unique (up to M\"obius transformations), observe that by H\"older continuity of $F|_{\partial\D}$ and the Jones-Smirnov theorem (Theorem~\ref{t:Removable}) the image $F(\partial\D)$ is conformally removable. Suppose that $g_1:\overline{\D}\to\overline{\Omega}$ and $g_2:\C\setminus\D\to\C\setminus\Omega$ is another solution to the welding problem, then the function defined by
\begin{displaymath}
\begin{cases}
g_1\circ f_1^{-1}(z) &\text{if }z\in F(\overline{\D})\\
g_2\circ f_2^{-1}(z) &\text{if }z\in F(\C\backslash\D)
\end{cases}
\end{displaymath}
is a homeomorphism of $\C$ and conformal off $F(\partial \mathbb{D})$. By conformal removability of the latter set, this map is a M\"obius transformation, as required.
\end{proof}

\section{Conditions for H\"older continuity}\label{s:Holder}
In this section we derive a probabilistic statement for events defined by $\tau^{(1)}$ and $\tau^{(2)}$ which implies the conclusion of Theorem~\ref{t:Holder}. (This statement is then proven in Sections~\ref{s:Decompose} and~\ref{s:LargeDeviations}). We proceed in three steps:
\begin{enumerate}
    \item First we show that H\"older continuity of $F_n$ follows if for any point $x\in\partial\D$, there exists a suitable sequence of concentric annuli $\mathbb{A}_i$ around $x$ such that $F_n(\mathbb{A}_i)$ has conformal modulus bounded away from zero.
    \item Next we use properties of the conformal modulus and the Beurling-Ahlfors extension to derive conditions on $\tau^{(1)}$ and $\tau^{(2)}$ which ensure that such annuli exist.
    \item Finally we use a Borel-Cantelli argument to quantify the probability bounds required to deduce almost sure uniform H\"older continuity.
\end{enumerate}

\subsection{H\"older continuity via `good' annuli}
The homeomorphisms $F_n$, for which we wish to prove uniform H\"older continuity, are characterised by their distortions, which are bounded above by $K$. Therefore the proof of Theorem~\ref{t:Holder} requires us to translate bounds on the distortion of a function into bounds on its modulus of continuity. We make this link by considering how the function behaves on small annuli centred around points of $\partial\D$. More precisely if we can show that the image of many such annuli have conformal modulus bounded away from zero, then standard estimates allow us to conclude that the function is H\"older continuous on $\partial\D$. This method is implicit in the approach of Lehto \cite{lehto1970} to solving degenerate Beltrami equations. The following deterministic lemma makes these ideas rigorous:

We say that a topological annulus $\mathbb{A}\subset\C$ \emph{surrounds} a set $U\subset\C$ if $U$ is contained in the bounded component of $\mathbb{A}^c$. We denote by $B(z,r)$ the open ball in $\C$ of radius $r$ centred at $z$.

\begin{lemma}\label{l:Annuli-Holder}
Let $c_1,c_2,c_3,\delta>0$ and let $f:\C\to\C$ be a homeomorphism. Suppose that for some $N_0\in\N$ and all $N>N_0$ the following holds: for each $z\in\partial\D$ there exists a disjoint sequence of topological annuli $\mathbb{A}_1,\dots,\mathbb{A}_{\delta N}$, such that
\begin{enumerate}
    \item $\mathbb{A}_k$ surrounds $\mathbb{A}_{k+1}$ for all $k$
    \item $\mathbb{A}_{\delta N}$ surrounds $B(z,e^{-c_1N})$
    \item The outer diameter of $f(\mathbb{A}_1)$ is at most $c_2$
    \item For each $k$, $\mod f(\mathbb{A}_k)>c_3$
\end{enumerate}
then there exists an open neighbourhood $E$ of $\partial\D$ and $C_1,C_2>0$ depending only on $c_1$, $c_2$, $c_3$, $\delta$ and $N_0$ such that
\begin{displaymath}
\lvert f(z_1)-f(z_2)\rvert\leq C_1\lvert z_1-z_2\rvert^{C_2}
\end{displaymath}
for all $z_1\in\partial\D$ and $z_2\in E$.
\end{lemma}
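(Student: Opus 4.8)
The plan is to reduce the H\"older bound to a counting statement about how many of the nested annuli $\mathbb{A}_1,\dots,\mathbb{A}_{\delta N}$ separate two given points, and then use the modulus lower bound (4) together with the standard estimate that a topological annulus of modulus $>c_3$ has inner diameter bounded by a fixed fraction of its outer diameter. Concretely, fix $z_1,z_2\in\partial\D$ and let $N$ be chosen (with $N>N_0$) so that $e^{-c_1(N+1)}\le \lvert z_1-z_2\rvert < e^{-c_1 N}$; this is possible once $\lvert z_1-z_2\rvert$ is small enough, and for $\lvert z_1-z_2\rvert$ bounded below the asserted inequality is trivial after adjusting $C_1$. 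Since the $U_j^{(N)}$ partition $\partial\D$ into connected pieces of diameter at least $e^{-c_1 N}>\lvert z_1-z_2\rvert$, the points $z_1,z_2$ lie in the same piece $U_j^{(N)}$ or in two adjacent pieces; in either case there is a single piece $U$ (one of the $U_j^{(N)}$, or — in the adjacent case — we argue separately for the two endpoints relative to a common point, see below) whose associated nested family $\mathbb{A}_1,\dots,\mathbb{A}_{\delta N}$ surrounds $U$ and hence surrounds both $z_1$ and $z_2$.

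The core estimate is the following. For a topological annulus $\mathbb{A}$ with $\mod \mathbb{A} > c_3$, a direct computation (or the inequality $\diam_O \mathbb{A}/\diam_I \mathbb{A}\ge \tfrac1{16}e^{\pi\,\mathrm{mod}\,\mathbb{A}/(2\pi)}$ used in the proof of Lemma~\ref{l:DistortionDiam}, applied now to $\mathbb{A}$ itself which is conformally a round annulus) gives a constant $q=q(c_3)>1$ with $\diam_O \mathbb{A}\ge q\,\diam_I \mathbb{A}$. Apply this to $f(\mathbb{A}_k)$ for each $k$: since $f(\mathbb{A}_k)$ surrounds $f(\mathbb{A}_{k+1})$ which in turn surrounds $f(\mathbb{A}_{k+2})\supset\dots$, we have $\diam_I f(\mathbb{A}_k)\ge \diam_O f(\mathbb{A}_{k+1})$, so $\diam_O f(\mathbb{A}_k)\ge q\,\diam_O f(\mathbb{A}_{k+1})$. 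Iterating from $k=1$ down to $k=\delta N$ and using (3), namely $\diam_O f(\mathbb{A}_1)\le c_2$, yields
\begin{displaymath}
\diam_O f(\mathbb{A}_{\delta N})\le c_2\, q^{-(\delta N-1)}.
\end{displaymath}
Because $\mathbb{A}_{\delta N}$ surrounds $U\ni z_1,z_2$ (so both points, and the shorter arc of $\partial\D$ between them, lie in the bounded component of $\mathbb{A}_{\delta N}^c$), and because $f$ is a homeomorphism, the points $f(z_1),f(z_2)$ lie in the bounded complementary component of $f(\mathbb{A}_{\delta N})$, whence $\lvert f(z_1)-f(z_2)\rvert\le \diam_I f(\mathbb{A}_{\delta N})\le \diam_O f(\mathbb{A}_{\delta N})\le c_2 q^{-(\delta N-1)}$. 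Finally convert the bound on $N$ into a bound on $\lvert z_1-z_2\rvert$: from $\lvert z_1-z_2\rvert< e^{-c_1 N}$ we get $N> \tfrac{1}{c_1}\log(1/\lvert z_1-z_2\rvert)$, so $q^{-\delta N}\le \lvert z_1-z_2\rvert^{\delta (\log q)/c_1}$, giving the claimed inequality with $C_2=\delta(\log q(c_3))/c_1$ and $C_1$ an explicit multiple of $c_2 q$.

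The one genuinely fiddly point — and the place I would spend the most care — is the boundary bookkeeping when $z_1$ and $z_2$ fall into two \emph{different} (adjacent) pieces $U_j^{(N)}$ and $U_{j'}^{(N)}$: the nested family surrounding $U_j^{(N)}$ need not surround $z_2$. The fix is to pick an auxiliary point $w$ in $U_j^{(N)}\cap \partial\D$ (e.g.\ on the common boundary, or just any point of $U_j$) and bound $\lvert f(z_1)-f(w)\rvert$ using the family for $U_j^{(N)}$ and $\lvert f(z_2)-f(w)\rvert$ using the family for $U_{j'}^{(N)}$ — but for this to work we need $w$ to be surrounded by \emph{both} families. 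A cleaner route, which I would adopt, is to observe that for $\lvert z_1 - z_2\rvert$ small the short arc of $\partial\D$ joining them has diameter comparable to $\lvert z_1-z_2\rvert$ and is covered by at most two consecutive pieces; replacing $N$ by $N-1$ if necessary (which changes constants only), the two points lie in the closure of a single piece $U_j^{(N-1)}$, since the pieces at level $N-1$ have diameter $\ge e^{-c_1(N-1)} = e^{c_1} e^{-c_1 N} > e^{c_1}\lvert z_1-z_2\rvert$, so for $c_1$-dependent smallness they cannot straddle two level-$(N-1)$ pieces unless $z_1$ or $z_2$ sits exactly on a partition boundary, which is handled by continuity of $f$ (take limits). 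Carrying that reduction through, and tracking that $N_0$ only enters by forcing $\lvert z_1-z_2\rvert$ below a threshold $e^{-c_1 N_0}$ (absorbing the complementary range into $C_1$), completes the proof.
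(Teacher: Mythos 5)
Your overall scheme---iterate a per-annulus diameter-decay factor through the nested family, then convert the count $N$ to $\log(1/|z_1-z_2|)$---is close in spirit to the paper's proof, and the inequalities $\diam_I f(\mathbb{A}_k)\ge\diam_O f(\mathbb{A}_{k+1})$ and the exponent bookkeeping are fine. But there is a gap in the core estimate. You claim that a single topological annulus of modulus $>c_3$ has $\diam_O/\diam_I\ge q(c_3)>1$, citing the inequality $\diam_O/\diam_I\ge\tfrac{1}{16}e^{\pi\,\mathrm{mod}}$ from the proof of Lemma~\ref{l:DistortionDiam}. That inequality gives $q>1$ only when $c_3>\tfrac{\log 16}{\pi}\approx 0.88$; for smaller $c_3$ (and the lemma is stated for arbitrary $c_3>0$) the bound is $<1$ and your iterated product $q^{\delta N-1}$ gives nothing. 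Put differently, iterating the estimate $\delta N$ times accumulates the multiplicative constant to $16^{-(\delta N-1)}$, which swamps the exponential gain unless $c_3$ is large. The paper avoids this by first aggregating the moduli: by the composition law for nested annuli, the smallest topological annulus $\hat{\mathbb{A}}$ containing $\mathbb{A}_1,\dots,\mathbb{A}_{\delta N}$ satisfies $\mathrm{mod}\,f(\hat{\mathbb{A}})\ge\sum_k\mathrm{mod}\,f(\mathbb{A}_k)\ge c_3(\delta N-1)$, and the diameter estimate is then applied once, paying the $1/16$ a single time. (A constant $q(c_3)>1$ valid for all $c_3>0$ does follow from sharper ring estimates, e.g.\ via the Teichm\"uller ring function, but not from the bound you quote.)

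Your handling of the adjacent-pieces case also contains an error, though your initial instinct was right. The ``cleaner route'' of replacing $N$ by $N-1$ does not work: a lower bound on the diameter of each partition piece does not prevent two points at distance $\le e^{-c_1 N}$ from straddling a partition boundary at level $N-1$---the boundary point can always fall between $z_1$ and $z_2$, however close they are---so coarsening the partition does not force both points into one piece. The auxiliary-point argument you first proposed and then abandoned is in fact correct and is what the paper does: take $w$ to be the common endpoint of the two adjacent arcs $U_j^{(N)}$ and $U_{j'}^{(N)}$, so $w\in\overline{U_j^{(N)}}\cap\overline{U_{j'}^{(N)}}$. By continuity of $f$, $f(w)\in\overline{f(U_j^{(N)})}\cap\overline{f(U_{j'}^{(N)})}$, and each of those sets has diameter controlled by $\diam_I f(\hat{\mathbb{A}})$ for the corresponding index; the triangle inequality then costs only a factor $2$. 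There is no need for $w$ to be surrounded by both families---you only need the images of both arcs to be small, which the modulus argument already gives.
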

\begin{remark}
    In the statement above, $\delta N$ need not be an integer and so $\mathbb{A}_{\delta N}$ should rigorously be interpreted as $\mathbb{A}_{\lfloor \delta N\rfloor}$ where $\lfloor\delta N\rfloor$ is the integer part of $\delta N$. To ease notation we use this convention throughout the paper for integer values indices.
\end{remark}
\begin{proof}
For $z\in\partial\D$ and $N>N_0$ let $\hat{\mathbb{A}}$ be the smallest topological annulus containing $\mathbb{A}_1,\dots,\mathbb{A}_{\delta N}$ (as defined in the statement of the proposition). In other words, the complement of $\hat{\mathbb{A}}$ is made up of the unbounded component of $\mathbb{A}_1^c$ and the bounded component of $\mathbb{A}_{\delta N}^c$. Since $f(\hat{\mathbb{A}})$ contains $f(\mathbb{A}_k)$ for each $k=1,\dots,{\delta N}$ the standard composition law for conformal modulus \cite[Theorem~4.2]{ahlfors73} implies that $\mod f(\hat{\mathbb{A}})\geq\sum_k\mod f(\mathbb{A}_k)$. By assumption, the latter is at least $c_3(\delta N-1)$. We recall from the proof of Lemma~\ref{l:DistortionDiam} that for any topological annulus $A^\prime\subset\C$
\begin{displaymath}
\frac{\mathrm{diam}_O A^\prime}{\mathrm{diam}_I A^\prime}\geq\frac{1}{16}e^{\pi\mod A^\prime}.
\end{displaymath}
Applying the previous estimate to $A^\prime=f(\hat{\mathbb{A}})$ we have
\begin{displaymath}
\diam\; f\big(B(z,e^{-c_1N})\big)\leq\diam_I f(\hat{\mathbb{A}}(j))\leq 16 e^{-\pi c_3(\delta N-1)}\diam_O(f(\mathbb{A}_1))\leq 16 c_2e^{-\pi (c_3\delta N-1)}.
\end{displaymath}
This implies the statement of the lemma whenever $e^{-c_1 (N+1)}\leq\lvert z_1-z_2\rvert\leq e^{-c_1 N}$ for some $N\geq N_0$. Applying the above estimate repeatedly yields that for any $z_1,z_2$ with distance at most $e^{-c_1N_0}$ from $\partial\D$, $\lvert f(z_1)-f(z_2)\rvert$ is bounded by a constant depending only on $c_2$, $c_3$, $\delta$ and $N_0$. We thus obtain the statement of the lemma in the remaining case that $\lvert z_1-z_2\rvert>e^{-c_1N_0}$.
\end{proof}

Our aim is to apply this lemma to $f=F_n$ and so we might wonder how best to choose sequences of annuli satisfying the conditions of the lemma?

One might initially choose the $\mathbb{A}_i$ of the previous lemma to be regular annuli centred at equally spaced points on $\partial\D$. However since the distortion $K$ is defined in terms of $\Phi_1^{-1},\Phi_2^{-1}$ this would require us to find the inverse images of these regular annuli under random homeomorphisms, and then control the distortion of $\Phi_1,\Phi_2$ on these sets. This seems difficult if not unfeasible.

Instead we consider deterministic families of regular annuli centred at points of $\partial\D$ and then we map them under $\Phi_1,\Phi_2$ to obtain random annuli on which it is easier to control the distortion $K$ (see Figure~\ref{fig:AnnuliIllustration}). This introduces a new difficulty; if $A$ is some topological annulus which intersects both components of $\C\setminus\partial\D$ and we map it using $\Phi_1$ on $\D$ and $\Phi_2$ on $\C\setminus\overline{\D}$ then its image will no longer be a topological annulus in general. Our method will be to consider families of `half-annuli' inside and outside $\D$ and allow for the possibility that the images under $\Phi_1$ and $\Phi_2$ respectively of different families will match up to form `composite annuli'. If we could find enough such composite annuli around each point of $\partial\D$ (such that their images under $F_n$ have conformal modulus bounded away from zero) then we could apply Lemma~\ref{l:Annuli-Holder} to deduce H\"older continuity.

We now describe these constructions more precisely. For notational convenience we will find it simpler to define our half-annuli on $\C\setminus\R$ and work with $\Psi_1,\Psi_2$ before mapping back to $\C\setminus\partial\D$ (see Figure~\ref{fig:AnnuliIllustration} again). Specifically we denote
\begin{align*}
D[x,r]=[x-r,x+r]\times[0,r],\qquad A_t(x)=\mathrm{Clos}\big(D[x,\rho^{t}]\backslash D[x,\rho^{t+1/4}])
\end{align*}
where $\mathrm{Clos}(\cdot)$ denotes the topological closure and $\rho\in(0,1)$ will be specified later. Furthermore let $\widetilde{D}[x,r]$ and $\widetilde{A}_t(x)$ be the reflections of $D[x,r]$ and $A_t(x)$ respectively in $\R$. Recall that we denote the map $z\mapsto e^{2\pi iz}$ by $e(\cdot)$.

\begin{definition}\label{d:Ann}
    For $x,y,\in[0,1]$ and $s,t\geq 0$ let $\mathrm{Ann}(x,t,y,s,N)$ be the event that there exists a topological annulus $\mathbb{A}$ such that
\begin{enumerate}
    \item $\mathbb{A}\subseteq\Psi_1(A_{t}(x))\cup\Psi_2(\widetilde{A}_{s}(y))$,
    \item $\mathbb{A}$ surrounds $\Psi_1(D[x,\rho^{5N+1}])\cup \Psi_2(D[y,\rho^{5N+1}])$ and
    \item $\mathrm{mod}\;F_n\circ e(\mathbb{A})>2^{-70}$ for all $n$.
\end{enumerate}
See Figure~\ref{fig:MatchingAnnuli}.
\end{definition}

\begin{figure}[h]
    \centering
    \begin{tikzpicture}[use fpu reciprocal,scale=0.6]
\pgfmathsetseed{1}
\node at (-4,4) {$\mathbb{C}\setminus\mathbb{R}$};
\draw[] (-4,0)--(4,0);
\filldraw[black] (-1.5,0) circle (2pt) node[anchor=north]{$x$};
\filldraw[black] (-1.5,0) circle (2pt);
\filldraw[black] (1.5,0) circle (2pt) node[anchor=south]{$y$};
\begin{scope}[scale=0.5,shift={(-3,0)}]
\draw [pattern=north west lines,pattern color=gray](-3,0)--(-3,3)--(3,3)--(3,0)--(1,0)--(1,1)--(-1,1)--(-1,0)--(-3,0);
\end{scope}
\begin{scope}[scale=0.3,shift={(5,0)}]
\draw [pattern=north west lines,pattern color=gray](-3,0)--(-3,-3)--(3,-3)--(3,0)--(1,0)--(1,-1)--(-1,-1)--(-1,0)--(-3,0);
\end{scope}
\node[right] at (2,2) {$A_t(x)$};
\draw (2,2)--(-0.5,1);
\node[right] at (2.9,-1.3) {$\widetilde{A}_s(y)$};
\draw (2.9,-1.3)--(2.2,-0.7);

\draw[->] (6,0) -- node [text width=2.5cm,midway,above] {$\begin{cases}\Psi_1&\text{on }\mathbb{H}\\\Psi_2&\text{on } \mathbb{C}\setminus\overline{\mathbb{H}}\end{cases}$} (8,0);

\begin{scope}[shift={(14,0)}]
\node at (-4,4) {$\mathbb{C}\setminus\mathbb{R}$};
\draw (-4,0)--(4,0);
\filldraw[black] (-0.1,0) circle (2pt);
\filldraw[black] (0.1,0) circle (2pt);

\begin{scope}
\clip (-5,0) rectangle (5,4);
\draw[pattern=north west lines, pattern color = gray, decorate,decoration={random steps,segment length=1pt,amplitude=1pt}] (-1.5,-0.2) to ++ (-0.2,0.5) to ++(0.4,0.7) to ++(0,0.5) to ++(1,-0.3) to ++(0.5,+0.4)to++(1.5,-0.2)
to ++(-0.3,-0.8)
to ++(0.3,-0.9)
to ++(-1,0) 
to ++(0.,+0.7)
to ++(-0.5,0) to ++(-0.7,0.2)
to ++(-0.2,-0.7)
to ++(-1,-0.1);
\end{scope}
\begin{scope}[shift={(-0.1,0)}]
\clip (-5,-4) rectangle (5,0);
\draw[pattern=north west lines, pattern color = gray, decorate,decoration={random steps,segment length=1pt,amplitude=1pt}] (-1.7,0.2)--(-1.9,-0.8)--(-1.7,-1.2)--(-1.2,-1.4)--(0.5,-1.7)--(0.8,-1.5)--(1.3,-1.4)--(1.7,-1.4)--(2,0.4)--(0.7,0.2)--(0.7,-0.3)--(0.5,-0.4)--(0,-0.3)--(-0.4,-0.2)--(-0.4,0.2)--cycle;
\end{scope}

\draw[fill, color=gray,opacity=0.5] (0.9,0)--(0.9,0.7)--(-0.8,0.7)--(-0.8,0)--(-1.2,0)--(-1.2,1.1)--(1.3,1.1)--(1.3,0)--cycle;
\draw[fill, color=gray,opacity=0.5] (0.9,0)--(0.9,-0.7)--(-0.8,-0.7)--(-0.8,0)--(-1.2,0)--(-1.2,-1.1)--(1.3,-1.1)--(1.3,0)--cycle;

\node[above] at (0.7,2.2){$\Psi_2(y)$};
\draw (0.7,2.2)--(0,0.1);
\node[right] at (2.5,-2){$\Psi_1(x)$};
\draw (2.5,-2)--(0.15,-0.1);
\node[left] at (-2.5,-2){$\mathbb{A}$};
\draw (-2.5,-2)--(-1,-1);
\draw (-2.5,-2)--(-0.9,0.8);
\end{scope}

\end{tikzpicture}

    \caption{An illustration of the event $\mathrm{Ann}(x,t,y,s,N)$. The images of the two half-annuli must `match up' to contain a topological annulus $\mathbb{A}$ which surrounds $\Psi_1(D[x,\rho^{5N+1}])\cup \Psi_2(D[y,\rho^{5N+1}])$. We will later construct $\mathbb{A}$ with straight edges as illustrated, although this is not important for our arguments.}
    \label{fig:MatchingAnnuli}
\end{figure}
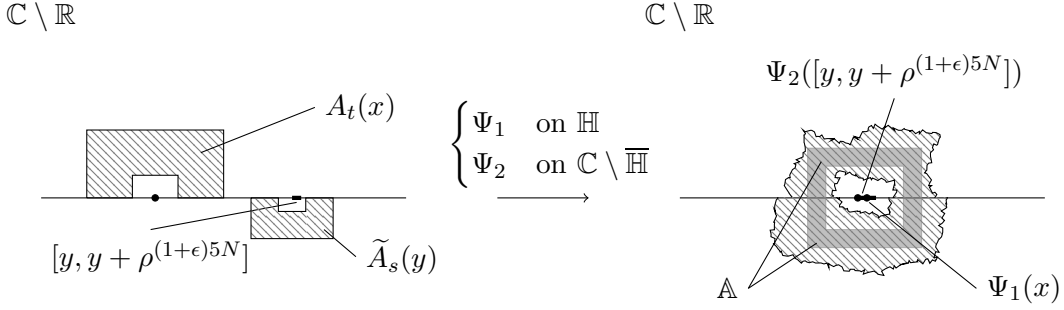

\subsection{Constructing annuli}
In order to apply Lemma~\ref{l:Annuli-Holder} we need to find conditions on $\psi_1$ and $\psi_2$ which ensure that $\mathrm{Ann}(x,t,y,s,N)$ occurs.

The first order of business is constructing topological annuli. The Beurling-Ahlfors extension allows us to give some basic geometric estimates on the images of (half)-annuli without much difficulty:
\begin{lemma}\label{l:thickness}
Let $g:\R\to\R$ be an increasing homeomorphism and $G$ its Beurling-Ahlfors extension (either the original version defined in \cite{beurling56} or the adjusted version defined in Section~\ref{s:Beltrami}; recall that the latter definition requires $x\mapsto g(x)-x$ to be $1$-periodic). Recalling that $D[x,r]=[x-r,x+r]\times[0,r]$ we have for $0<r<R\leq 1$
\begin{displaymath}
G\big(D[x,R]\setminus D[x,r]\big)\supseteq D[g(x),R^\prime]\setminus D[g(x),r^\prime]
\end{displaymath}
where
\begin{align*}
R^\prime&=\frac{1}{2}\min_{k=1,\dots,8}g\left(x-R+\frac{kR}{4}\right)-g\left(x-R+\frac{(k-1)R}{4}\right)\\
r^\prime&=g(x+2r)-g(x-2r).
\end{align*}
\end{lemma}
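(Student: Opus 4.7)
The plan is to reduce the claim to two simpler inclusions:
\textbf{(I)} $G(D[x,r])\subseteq D[g(x),r']$ and \textbf{(II)} $G(D[x,R])\supseteq D[g(x),R']$.
Since $G$ is a bijection and $D[x,r]\subseteq D[x,R]$, the identity $G(D[x,R]\setminus D[x,r]) = G(D[x,R])\setminus G(D[x,r])$ combines these into the desired inclusion. Throughout I use only the integral formula for the Beurling--Ahlfors extension on the strip $\{0\le\Im z\le 1\}$ (where the original and adjusted versions coincide, since $R\le 1$), monotonicity of $g$, and the Jordan curve theorem.

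For \textbf{(I)}, I would estimate $\Re G(u+iv)$ and $\Im G(u+iv)$ for $(u,v)\in D[x,r]$ directly from \eqref{e:BAExt}. Since $u\pm tv\in[x-2r,x+2r]$ for all $t\in[0,1]$, monotonicity of $g$ yields $\Re G(u+iv)\in[g(x-2r),g(x+2r)]\subseteq[g(x)-r',g(x)+r']$ and $\Im G(u+iv)=\int_0^1 g(u+tv)-g(u-tv)\,dt\in[0,g(x+2r)-g(x-2r)]=[0,r']$. This is exactly (I).

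Inclusion \textbf{(II)} is the substantive step, and I would argue topologically. Extend $G$ to a homeomorphism $\widetilde G$ of $\C$ by Schwarz reflection across $\R$, and work with $\widetilde D[x,R]:=[x-R,x+R]\times[-R,R]$ and its analogue $\widetilde D[g(x),R']$. It suffices to prove $\widetilde D[g(x),R']\subseteq\widetilde G(\widetilde D[x,R])$, since restricting to $\overline{\H}$ recovers (II). Because $\widetilde G(\partial\widetilde D[x,R])$ is a Jordan curve (the homeomorphic image of the rectangular boundary), this containment follows by connectedness once I verify \textbf{(a)} $\widetilde G(\partial\widetilde D[x,R])$ does not meet the open rectangle $\mathrm{int}(\widetilde D[g(x),R'])$, and \textbf{(b)} some point of $\mathrm{int}(\widetilde D[g(x),R'])$ lies in $\widetilde G(\mathrm{int}(\widetilde D[x,R]))$. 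For (b), continuity of $\widetilde G$ at $x$ (with $\widetilde G(x)=g(x)$, the centre of $\widetilde D[g(x),R']$) ensures that $\widetilde G(x+i\delta)\in\mathrm{int}(\widetilde D[g(x),R'])$ for all sufficiently small $\delta>0$. For (a), the key observation is that each of the eight intervals $I_k:=[x-R+(k-1)R/4,x-R+kR/4]$ has $g$-mass at least $2R'$; in particular $g(x+R)-g(x)\ge 8R'$ and symmetrically $g(x)-g(x-R)\ge 8R'$. The integral formula then gives on the right edge $\Re G((x+R)+iv)\ge\tfrac12(g(x)+g(x+R))\ge g(x)+4R'$, using $g(x+R-tv)\ge g(x)$ for $tv\le R$; the left edge is symmetric, and both transfer to the reflected edges. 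For the top edge I would use $\Im G(u+iR)\ge\tfrac12(g(u+R/2)-g(u-R/2))$, obtained by restricting the integrand to $t\in[1/2,1]$, followed by a short case analysis on $u\in[x-R,x+R]$ to show that $[u-R/2,u+R/2]\cap[x-R,x+R]$ always contains at least two of the $I_k$, yielding $\Im G(u+iR)\ge 2R'>R'$. The bottom edge follows by reflection, completing (a). The only non-routine part of the argument will be the covering count for the top edge, which requires care when $u$ is near $x\pm R$ and $[u-R/2,u+R/2]$ protrudes beyond $[x-R,x+R]$.
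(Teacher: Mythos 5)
Your proposal is correct, and its skeleton — the split into $G(D[x,r])\subseteq D[g(x),r']$ and $G(D[x,R])\supseteq D[g(x),R']$, with the conclusion via injectivity of $G$ — is exactly the paper's; part (I) is verbatim the paper's estimate. Where you genuinely diverge is in how you establish the containment (II). The paper stays in $\overline{\H}$ and uses a crosscut: the path consisting of the two short vertical segments at $\Re z=x\pm R$, $0\le\Im z\le R/2$, and the horizontal segment at height $R/2$. At that height the bound $\Im G(u+iR/2)\ge\tfrac12\bigl(g(u+R/4)-g(u-R/4)\bigr)$ matches the definition of $R'$ with no further work (each window of length $R/2$ centred at $u\in[x-R,x+R]$ contains one of the eight intervals), and the side estimates $\Re G\ge g(x+R/2)$, $\Re G\le g(x-R/2)$ plus the homeomorphism property finish the argument. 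You instead reflect across $\R$ and run a Jordan-curve argument on the full boundary of the doubled rectangle, which forces you to estimate on the actual top edge at height $R$; that is why you need the restriction of the integral to $t\in[1/2,1]$ and the two-full-tiles covering count near $u\approx x\pm R$ (which does check out: the truncated window $[\max(u-R/2,x-R),\min(u+R/2,x+R)]$ always contains two consecutive $I_k$ because its protruding endpoint is aligned with the tiling), as well as the standard facts that the Beurling--Ahlfors extension is a homeomorphism of $\overline{\H}$, that the reflected map is a homeomorphism of $\C$, and that it therefore carries the interior of the rectangle onto the Jordan interior of the image curve. Both routes are sound; the paper's crosscut at height $R/2$ is the leaner one (no reflection machinery, no corner case analysis), while yours is more self-contained topologically and makes the "curve avoids the target, one point lands inside" logic completely explicit.
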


\begin{proof}
Without loss of generality, we may assume $x=0$. From the definition of the Beurling-Ahlfors extension in \eqref{e:BAExt}, we see that for $u+iv\in D[0,r]$ we have
\begin{align*}
    \lvert\Re G(u+iv)-g(0)\rvert&\leq\max\{g(2r)-g(0),g(0)-g(-2r)\}\\
    \Im G(u+iv)&\leq g(2r)-g(-2r).
\end{align*}
Combining these estimates proves that $G(D[x,r])\subseteq D[g(x),r^\prime]$.

Next we observe, again by the definition in \eqref{e:BAExt}, that for $u\in\R$ and $v\in[0,1]$
\begin{displaymath}
\Im G(u+iv)\geq\frac{1}{2}(g(u+v/2)-g(u-v/2)).
\end{displaymath}
If $u+iv\in[(k-5)R/4+iR/2,(k-4)R/4+iR/2]$ for some $k=1,\dots,8$ then since $g$ is increasing, we see $\Im G(u+iv)\geq\frac{1}{2}(g((k-4)R/4)-g((k-5)R/4))$. Therefore on the line segment $[-R+iR/2,R+iR/2]$, the imaginary part of $G$ is bounded below by $R^\prime$.

Finally we note that on $[R,R+iR/2]$ the real part of $G$ is bounded below by $g(R/2)$ whilst on $[-R,-R+iR/2]$ there is a corresponding upper bound of $g(-R/2)$. Since $G$ is a homeomorphism of $\H$, the union of the previous three line segments will form a path in $\H$ with end-points in $\R$. The bounds on the real and imaginary parts of this image ensure that $G(D[x,R])\supseteq D[g(x),R^\prime]$. Combined with the earlier statement for $D[x,r]$, this proves the lemma.
\end{proof}

Next we need to give lower bounds on the conformal modulus of the images of annuli under $F_n$. We obtain bounds in terms of the distortion of these maps directly from the definition of conformal modulus:
\begin{lemma}\label{l:mod}
Let $A\subset\C$ be a compact topological annulus and $f$ be a quasiconformal map defined on some neighbourhood of $A$, then
\begin{displaymath}
\mod f(A)\geq\frac{(\mathrm{Th}A)^2}{\int_A K(z,f)\;dm(z)}
\end{displaymath}
where $\mathrm{Th}(A)$ is the thickness of $A$ (i.e., the distance between the components of its complement) and $m$ denotes Lebesgue measure.
\end{lemma}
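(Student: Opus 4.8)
The plan is to bound $\mod f(A)$ from below by passing to the dual (``crossing'') curve family, for which the thickness of $A$ supplies an admissible conformal metric essentially for free. Recall that, with the normalisation of conformal modulus fixed above, $\mod f(A)$ is the modulus of the family $\Gamma_{\mathrm w}$ of curves in $f(A)$ of winding number one about the bounded complementary component, and that for any ring domain one has the classical reciprocity $\mod\Gamma_{\mathrm w}\cdot\mod\Gamma_{\mathrm c}=1$, where $\Gamma_{\mathrm c}$ is the family of curves in $f(A)$ joining the two components of its complement (see e.g.\ \cite{ahlfors73,lehto1973quasiconformal}). It therefore suffices to establish the upper bound $\mod\Gamma_{\mathrm c}\le (\mathrm{Th}\,A)^{-2}\int_A K(z,f)\,d\mathcal H^2$.

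To produce an admissible metric for $\Gamma_{\mathrm c}$ I would work with $g:=f^{-1}$, which is quasiconformal on a neighbourhood of $f(A)$, and write $d:=\mathrm{Th}\,A$. If $\gamma\in\Gamma_{\mathrm c}$, then $g\circ\gamma$ joins the two complementary components of $A$, hence has Euclidean length at least $d$; since $g$ is absolutely continuous on almost every line, this length is at most $\int_\gamma\|Dg(z)\|\,|dz|$, where $\|\cdot\|$ denotes the operator norm of the (a.e.\ defined) differential. Consequently $\rho:=\|Dg\|/d$ is admissible for $\Gamma_{\mathrm c}$. Using the pointwise identity $\|Dg\|^2=K(\cdot,g)\,J(\cdot,g)$ valid a.e.\ (write $\|Dg\|=|\partial_z g|+|\partial_{\bar z}g|$ and $J(\cdot,g)=|\partial_z g|^2-|\partial_{\bar z}g|^2$), the area formula for the quasiconformal map $g$, and the elementary fact that a linear isomorphism and its inverse have equal distortion (so that $K(f(w),f^{-1})=K(w,f)$ for a.e.\ $w$ by the chain rule), a change of variables $z=f(w)$ gives
\begin{displaymath}
\mod\Gamma_{\mathrm c}\;\le\;\frac1{d^2}\int_{f(A)}\|Dg(z)\|^2\,d\mathcal H^2(z)\;=\;\frac1{d^2}\int_{f(A)}K(z,g)\,J(z,g)\,d\mathcal H^2(z)\;=\;\frac1{d^2}\int_A K(z,f)\,d\mathcal H^2(z).
\end{displaymath}
The reciprocity relation then yields $\mod f(A)=1/\mod\Gamma_{\mathrm c}\ge (\mathrm{Th}\,A)^2\big/\int_A K(z,f)\,d\mathcal H^2$, which is the claim.

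Most of this is bookkeeping: verifying that $\rho$ is Borel and admissible, and that the area-formula change of variables is legitimate — both of which rest on standard regularity of quasiconformal maps (a.e.\ differentiability, membership in $W^{1,2}_{\mathrm{loc}}$, Lusin's condition (N), absolute continuity on lines). The only genuinely delicate point is to invoke the two classical black boxes correctly and consistently, namely the reciprocity $\mod\Gamma_{\mathrm w}\cdot\mod\Gamma_{\mathrm c}=1$ for ring domains and the area formula for quasiconformal mappings, and to check that the normalisation of $\mod$ matches the one used elsewhere in the paper; once these are pinned down the estimate drops out with no spurious constants.
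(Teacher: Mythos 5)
Your proposal is correct and follows essentially the same route as the paper: the paper's admissible metric $\rho(w)=(\mathrm{Th}\,A)^{-1}(|\partial_z f|-|\partial_{\bar z}f|)^{-1}\circ f^{-1}(w)$ is exactly your $\|Df^{-1}\|/d$, and both arguments then combine the crossing-family reciprocity $\mod\Gamma_{\mathrm w}\cdot\mod\Gamma_{\mathrm c}=1$ with a change of variables to land on $\int_A K(z,f)\,d\mathcal H^2$. The one point the paper is more careful about than your sketch is the absolute-continuity step: ACL alone is not enough, and one needs the Fuglede-type fact (Lehto--Virtanen III.6.2 in the paper) that a quasiconformal map is absolutely continuous on all curves outside a family of zero modulus, so that the length inequality $\mathrm{Th}\,A\le\int_\gamma\|Dg\|\,|dz|$ holds for $\mathrm{mod}$-almost every $\gamma\in\Gamma_{\mathrm c}$.
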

\begin{proof}
Let $\mathcal{C}_{f(A)}$ denote the set of rectifiable, injective curves in $f(A)$ joining the inner and outer boundary components. An elementary calculation for regular annuli \cite[Examples~2-3]{ahlfors2006} (along with conformal invariance of the modulus - recall the definition given after the statement of Lemma~\ref{l:DistortionDiam}) shows that $\mod\;\mathcal{C}_{f(A)}=1/\mod f(A)$. Let $\mathcal{C}_A$ be defined analogously to $\mathcal{C}_{f(A)}$. Since $f$ is quasiconformal, we can map between $\mathcal{C}_A$ and $\mathcal{C}_{f(A)}$ up to sets of conformal modulus zero. More precisely, by \cite[III Theorem~6.2]{lehto1973quasiconformal} there exist $\mathcal{C}_{A}^\prime\subseteq\mathcal{C}_A$ and $\mathcal{C}_{f(A)}^\prime\subseteq\mathcal{C}_{f(A)}$ such that
\begin{displaymath}
    \gamma\in\mathcal{C}_{A}^\prime\;\Leftrightarrow\; f\circ\gamma\in\mathcal{C}_{f(A)}^\prime\qquad\text{and}\qquad\mod\;\mathcal{C}_A\setminus\mathcal{C}_{A}^\prime=\mod\;\mathcal{C}_{f(A)}\setminus\mathcal{C}_{f(A)}^\prime=0.
\end{displaymath}
Therefore $\mod\;\mathcal{C}_{f(A)}^\prime=\mod\;\mathcal{C}_{f(A)}=1/\mod f(A)$. Moreover the proof of the cited result shows that for $\gamma\in\mathcal{C}^\prime(A)$ the derivative of $f\circ\gamma$ is equal to $Df(\gamma_t)\gamma^\prime_t$ for almost every $t$ where $Df$ denotes the (total) derivative of $f$.

We define a non-negative measurable function $\rho$ by
\begin{displaymath}
\rho(w):=(\mathrm{Th}A)^{-1}(\lvert\partial_zf\rvert-\lvert\partial_{\overline{z}}f\rvert)^{-1}\circ f^{-1}(w)
\end{displaymath}
for $w\in f(A)$. Given $\tilde{\gamma}\in\mathcal{C}^\prime_{f(A)}$, the curve $\gamma:=f^{-1}\circ\tilde{\gamma}$ is rectifiable. By the above expression for the derivative of $\tilde{\gamma}=f\circ\gamma$
\begin{displaymath}
\int_{\tilde{\gamma}}\rho(w)\lvert dw\rvert=(\mathrm{Th}A)^{-1}\int_{\gamma}\frac{\lvert Df(z)dz\rvert}{\lvert\partial_zf\rvert-\lvert\partial_{\overline{z}}f\rvert}\geq(\mathrm{Th}A)^{-1}\int_{\gamma}\lvert dz\rvert\geq 1
\end{displaymath}
where the inequality follows because for $\zeta\in\C$ we have $Df(z)\zeta=\partial_zf(z)\zeta+\partial_{\overline{z}}f(z)\overline{\zeta}$. Hence by the definition of conformal modulus
\begin{displaymath}
\frac{1}{\mod f(A)}=\mod\;\mathcal{C}^\prime_{f(A)}\leq \int_{f(A)}\rho^2(w)\;dm(w)=(\mathrm{Th}A)^{-2}\int_{f(A)}\frac{dm(w)}{(\lvert\partial_zf\rvert-\lvert\partial_{\overline{z}}f\rvert)^2}.
\end{displaymath}
Recalling that the Jacobian of $f$ can be written as $\lvert\partial_zf\rvert^2-\lvert\partial_{\overline{z}}f\rvert^2$, by the change of variables formula for quasiconformal maps \cite[Theorem~3.8.1]{astala2009elliptic}, the rightmost term above is equal to
\begin{displaymath}
(\mathrm{Th}A)^{-2}\int_{A}\frac{\lvert\partial_zf\rvert+\lvert\partial_{\overline{z}}f\rvert}{\lvert\partial_zf\rvert-\lvert\partial_{\overline{z}}f\rvert}\;dm(z)=(\mathrm{Th}A)^{-2}\int_{A}K(z,f)\;dm(z).
\end{displaymath}
Combining the previous two equations completes the proof.
\end{proof}

We will apply the previous lemma with $f=F_n$ and $A=\mathbb{A}$ as defined in $\mathrm{Ann}(x,t,y,s,N)$. The thickness of this annulus will be controlled by Lemma~\ref{l:thickness} and the average distortion on the annulus is bounded by the following estimate:

\begin{lemma}\label{l:integral}
Let $\mathcal{A}=D[0,R]\backslash D[0,r]$ for some $0<r<R<1$ then
\begin{multline*}
    \int_{\Psi_1(\mathcal{A})}K(z,\Psi_1^{-1})\;dm(z)\leq 2^5\;\lvert \psi_1(2R)-\psi_1(-2R)\rvert^2\\
    +2^7\sum_{m=0}^\infty\sum_{\ell\in S(m,r,R)}\lvert \psi_1((\ell+2)R2^{-m})-\psi_1((\ell-2)R2^{-m})\rvert^2
\end{multline*}
where $S(m,r,R):=\{\ell\in\Z\;|\;r\leq R2^{-m}\lvert\ell\rvert\leq R\}$. Moreover the same inequality holds if we replace $\Psi_1,\psi_1$ by $\Psi_2,\psi_2$ and reflect $\mathcal{A}$ in the real axis.
\end{lemma}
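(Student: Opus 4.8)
The plan is to bound the average distortion $\int_{\Psi_1(\mathcal{A})}K(z,\Psi_1^{-1})\,d\mathcal{H}^2(z)$ by transporting the integral back to the source half-annulus $\mathcal{A}$ and there estimating the derivative of $\Psi_1$ via Lemma~\ref{l:BAderivative}. The first step is a change of variables: exactly as in the proof of Lemma~\ref{l:Distortion2} (equation~\eqref{e:Distortion1}), since $\Psi_1$ is $C^1$ on the relevant region,
\begin{displaymath}
\int_{\Psi_1(\mathcal{A})}K(z,\Psi_1^{-1})\,d\mathcal{H}^2(z)=\int_{\Psi_1(\mathcal{A})}K(\Psi_1^{-1}(z),\Psi_1)\,d\mathcal{H}^2(z)=\int_{\mathcal{A}}K(w,\Psi_1)J(w,\Psi_1)\,d\mathcal{H}^2(w)=\int_{\mathcal{A}}\lvert D\Psi_1(w)\rvert^2\,d\mathcal{H}^2(w),
\end{displaymath}
using $K(w,\Psi_1)J(w,\Psi_1)=\lvert D\Psi_1(w)\rvert^2$. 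So it suffices to bound $\int_{\mathcal{A}}\lvert D\Psi_1(x+iy)\rvert^2\,dx\,dy$, where $\mathcal{A}=D[0,R]\setminus D[0,r]$.

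Next I would apply Lemma~\ref{l:BAderivative}, which gives $\lvert D\Psi_1(x+iy)\rvert\leq 4y^{-1}\lvert\psi_1(x+y)-\psi_1(x-y)\rvert$ for $y\in(0,1)$, so the integral is at most $16\int_{\mathcal{A}}y^{-2}\lvert\psi_1(x+y)-\psi_1(x-y)\rvert^2\,dx\,dy$. The set $\mathcal{A}$ consists of points $(x,y)$ with $\lvert x\rvert\leq R$, $0\le y\le R$, and excludes the smaller rectangle $D[0,r]$; in particular on $\mathcal{A}$ either $y\geq r$ or $\lvert x\rvert\geq r$. The plan is to dyadically decompose in the $y$-variable: write the $y$-range as a union of dyadic scales $y\in[R2^{-m-1},R2^{-m}]$ for $m\geq 0$. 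On such a slab $y^{-2}\asymp R^{-2}2^{2m}$, and integrating in $y$ over a slab of height $R2^{-m-1}$ contributes a factor $\asymp R^{-1}2^{m}$. For the $x$-integral at scale $m$, I would partition $[-R,R]$ into intervals of length $R2^{-m}$ indexed by $\ell$ (so the interval is roughly $[(\ell-1)R2^{-m},\ell R2^{-m}]$), and on each such interval $\lvert\psi_1(x+y)-\psi_1(x-y)\rvert\le\lvert\psi_1((\ell+2)R2^{-m})-\psi_1((\ell-2)R2^{-m})\rvert$ by monotonicity of $\psi_1$ (since $y\leq R2^{-m}$, both $x\pm y$ lie within two steps of the grid point). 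Integrating the constant bound over the $x$-interval of length $R2^{-m}$ gives another factor $\asymp R2^{-m}$, so each $(m,\ell)$ term contributes $\asymp R^{-1}2^m\cdot R2^{-m}\cdot\lvert\psi_1((\ell+2)R2^{-m})-\psi_1((\ell-2)R2^{-m})\rvert^2=\lvert\psi_1((\ell+2)R2^{-m})-\psi_1((\ell-2)R2^{-m})\rvert^2$, with constants accumulating to give the claimed $2^7$; summing over $m\geq 0$ and over the $\ell$'s that are actually hit by $\mathcal{A}$ yields exactly the index set $S(m,r,R)=\{\ell : r\leq R2^{-m}\lvert\ell\rvert\leq R\}$.

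Finally I would account for the part of $\mathcal{A}$ near the real axis where $\lvert x\rvert\in[r,R]$ but $y$ may be small — here the dyadic-in-$y$ decomposition still works, but there is a `coarsest' contribution from the region with $y$ of order $R$ (equivalently $m=0$, or the slab directly below $D[0,R]$'s top), which I would bound crudely by $\lvert\psi_1(2R)-\psi_1(-2R)\rvert^2$ times an absolute constant, accounting for the separate leading term $2^5\lvert\psi_1(2R)-\psi_1(-2R)\rvert^2$ in the statement. The analogous bound for $\Psi_2,\psi_2$ with $\mathcal{A}$ reflected in $\R$ is immediate since $\Psi_2(z)=\overline{\mathrm{Ext}(\psi_2)}(\overline z)$ and reflection preserves distortion and the modulus of continuity estimate of Lemma~\ref{l:BAderivative}. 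The only mildly delicate point — the `main obstacle', such as it is — is bookkeeping the geometry of $\mathcal{A}$ (the removed inner rectangle) against the dyadic grid so that the surviving index set is precisely $S(m,r,R)$ and no scale is double-counted; the actual estimates are elementary applications of monotonicity of $\psi_1$ and Lemma~\ref{l:BAderivative}, with all constants absorbed into the stated $2^5$ and $2^7$.
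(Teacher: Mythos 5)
Your overall strategy — change of variables, Lemma~\ref{l:BAderivative}, dyadic decomposition and monotonicity of $\psi_1$ — is the right one and matches the paper. But the accounting in your last two paragraphs has a genuine gap: you claim that ``the $\ell$'s actually hit by $\mathcal{A}$'' at scale $m$ are exactly $S(m,r,R)$, and you then attribute the separate $2^5$ term to the $m=0$ slab of the region $\{\lvert x\rvert\in[r,R]\}$. Both claims are off. The region $\{\lvert x\rvert\in[r,R]\}$ (the ``wings'') is precisely what the $S(m,r,R)$ sum already covers at every scale $m$, including $m=0$. What is \emph{not} covered by $S(m,r,R)$ is the central column $\mathcal{A}_1:=[-r,r]\times[r,R]$: there $\lvert x\rvert\le r<R2^{-m}\lvert\ell\rvert$ fails, so the corresponding $\ell$'s (namely $\ell$ near $0$) are excluded from $S(m,r,R)$ by definition. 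Your proposal as written leaves $\mathcal{A}_1$ unaccounted for; and if you do try to push the naive dyadic-in-$x$ decomposition through the column, you must track how those central indices accumulate across all $m$ with $R2^{-m}\ge r$, which is feasible but is not what you wrote.

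The paper's proof avoids this by splitting $\mathcal{A}=\mathcal{A}_1\cup\mathcal{A}_2$ with $\mathcal{A}_1=[-r,r]\times[r,R]$ and $\mathcal{A}_2=([-R,-r]\cup[r,R])\times[0,R]$, and estimating $\mathcal{A}_1$ in one shot: by monotonicity $\lvert\psi_1(x+y)-\psi_1(x-y)\rvert\le\lvert\psi_1(2R)-\psi_1(-2R)\rvert$ on $\mathcal{A}_1$, and
\begin{displaymath}
\int_{-r}^{r}\int_r^R y^{-2}\,dy\,dx\le 2r\cdot\frac{1}{r}=2,
\end{displaymath}
giving (after the factor $16$ from Lemma~\ref{l:BAderivative}) the $2^5\lvert\psi_1(2R)-\psi_1(-2R)\rvert^2$ term. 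The dyadic-in-$y$, dyadic-in-$x$ decomposition is then applied only to $\mathcal{A}_2$, whose $x$-section at every height is $[-R,-r]\cup[r,R]$, producing exactly the index set $S(m,r,R)$ at every scale. You should replace your final paragraph with this split; with that correction your argument is sound and essentially coincides with the paper's.
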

\begin{proof}
This follows from essentially the same argument as in the proof of Lemma~\ref{l:Distortion2}. By a change of variables and Lemma~\ref{l:BAderivative}
\begin{displaymath}
\int_{\Psi_1(\mathcal{A})}K(z,\Psi_1^{-1})\;dm(z)=\int_{\mathcal{A}}\lvert D\Psi_1(w)\rvert^2\;dm(w)\leq 16\int_\mathcal{A}\frac{\lvert \psi_1(x+y)- \psi_1(x-y)\rvert^2}{y^2}\;dxdy.
\end{displaymath}
Let $\mathcal{A}_1=[-r,r]\times[r,R]$ and $\mathcal{A}_2=\mathcal{A}\setminus\mathcal{A}_1$. Since $\psi_1$ is increasing
\begin{align*}
    \int_{\mathcal{A}_1}\frac{\lvert \psi_1(x+y)- \psi_1(x-y)\rvert^2}{y^2}\;dxdy&\leq \lvert \psi_1(2R)-\psi_1(-2R)\rvert^2\int_{-r}^r\int_r^R\frac{1}{y^{2}}\;dydx\\
    &\leq 2 \lvert \psi_1(2R)-\psi_1(-2R)\rvert^2.
\end{align*}
Turning to $\mathcal{A}_2$ and again using the fact that $\psi_1$ is increasing;
\begin{multline*}
\int_{\mathcal{A}_2}\frac{\lvert \psi_1(x+y)- \psi_1(x-y)\rvert^2}{y^2}\;dxdy\\
\begin{aligned}
&\leq \sum_{m=0}^\infty R^{-2}2^{2(m+1)}\int_{\mathcal{A}_2\cap\{R2^{-(m+1)}<y\leq R2^{-m}\}}\lvert \psi_1(x+R2^{-m})- \psi_1(x-R2^{-m})\rvert^2\;dxdy\\
&\leq\sum_{m=0}^\infty R^{-1}2^{m+2}\int_{[-R,-r]\cup[r,R]}\lvert \psi_1(x+R2^{-m})- \psi_1(x-R2^{-m})\rvert^2\;dx.
\end{aligned}
\end{multline*}
By similar reasoning, for each $m$
\begin{multline*}
    \int_{[-R,-r]\cup[r,R]}\lvert \psi_1(x+R2^{-m})- \psi_1(x-R2^{-m})\rvert^2\;dx\\
    \begin{aligned}
    &\leq\sum_{\ell\in S(m,r,R)}\int_{[(\ell-1)R2^{-m},(\ell+1)R2^{-m}]}\lvert \psi_1((\ell+2)R2^{-m})-\psi_1((\ell-2)R2^{-m})\rvert^2\;dx\\
    &\leq \sum_{\ell\in S(m,r,R)}R2^{-m+1}\lvert \psi_1((\ell+2)R2^{-m})-\psi_1((\ell-2)R2^{-m})\rvert^2
    \end{aligned}
\end{multline*}
and combining the previous four displayed equations completes the proof of the lemma.
\end{proof}

Combining the previous three lemmas, we may now give sufficient conditions (in terms of $\psi_1$ and $\psi_2$) to construct topological annuli as specified in $\mathrm{Ann}(x,t,y,s,N)$. Letting $\rho,\epsilon\in(0,1)$ and $x,y\in[0,1]$ be given, for $t\geq 0$, $\ell\in\Z$ and $m\in\N$ we define
\begin{align*}
    B_t=[-\rho^{t},\rho^{t}],\quad    J_{t,\ell}=\left[\rho^{t}\frac{\ell-5}{4},\rho^{t}\frac{\ell-4}{4}\right],\quad I_{m,\ell,t}=[(\ell-2)\rho^t2^{-m},(\ell+2)\rho^t2^{-m}]
\end{align*}
and
\begin{multline*}
    \mathrm{Shape}^{(1)}(t):=\left\{\min_{\ell=1,\dots,8}\frac{\tau^{(1)}(x+J_{t,\ell})}{\tau^{(1)}(x+B_t)}\geq 2^{-9}\right\}\cap\left\{\frac{\tau^{(1)}(x+2B_{t+1/4})}{\tau^{(1)}(x+B_t)}\leq 2^{-25}\right\}\cap\\\left\{\frac{\tau^{(1)}(x+2B_t)}{\tau^{(1)}(x+B_t)}\leq 2^7\right\}\cap
    \left\{\sum_{m=0}^\infty\sum_{\ell\in S(m,\rho^{t+1/4},\rho^{t})}\frac{\tau^{(1)}(x+I_{m,\ell,t})^2}{\tau^{(1)}(x+B_t)^2}\leq 2^{13}\right\}.
\end{multline*}
Intuitively this event controls the `shape' of the image $\Psi^{(1)}(A_t(x))$. We define $\mathrm{Shape}^{(2)}(t)$ analogously after replacing $x$ by $y$ and $\tau^{(1)}$ by $\tau^{(2)}$. For $N\in\N$, we then define
\begin{displaymath}
\mathrm{Match}(x,y,N):=\big\{\psi_1(x)\in\psi_2([y,y+\rho^{(1+\epsilon)5N}))\big\}\cup\Big\{\psi_2(y)\in\psi_1([x,x+\rho^{(1+\epsilon)5N}))\Big\}
\end{displaymath}
and
\begin{displaymath}
\mathrm{Centre}(N):=\left\{\frac{\tau^{(1)}([x,x+\rho^{(1+\epsilon)5N}])}{\tau^{(1)}(x+B_{5N+1})}\leq 2^{-25}\right\}\cap\left\{\frac{\tau^{(2)}([y,y+\rho^{(1+\epsilon)5N}])}{\tau^{(2)}(y+B_{5N+1})}\leq 2^{-25}\right\}.
\end{displaymath}
Together, these events ensure that $\psi_1(x)$ and $\psi_2(y)$ are close. Finally, for $t,s\geq 0$ we set
\begin{displaymath}
\mathrm{Size}(t,s):=\left\{2^{-12}\leq\frac{\tau^{(1)}(x+B_t)}{\tau^{(1)}([0,1])}\Big\slash\frac{\tau^{(2)}(y+B_s)}{\tau^{(2)}([0,1])}\leq 2^{12}\right\}.
\end{displaymath}
This event controls the relative sizes of $\Psi_1(A_t(x))$ and $\Psi_2(\widetilde{A}_s(y))$. Note that these events depend on $x$ and $y$ but this is suppressed in the notation. For all of the events above, we interpret the sets as subsets of $\T:=\R\slash\Z$ (so for example $B_t=[-\rho^t,\rho^t]=[0,\rho^t]\cup[1-\rho^t,1]$).

These events together imply the existence of a suitable annulus:
\begin{lemma}\label{l:Good_annuli}
Let $0\leq t,s\leq 5N$ and $\epsilon,\rho\in(0,1)$ and define
\begin{displaymath}
\mathrm{Ann}^\prime(x,t,y,s,N):=\mathrm{Size}(t,s)\cap \mathrm{Shape}^{(1)}(t)\cap \mathrm{Shape}^{(2)}(s)\cap \mathrm{Centre}(N).
\end{displaymath}
Then for all $x,y\in[0,1]$
\begin{displaymath}
\mathrm{Ann}^\prime(x,t,y,s,N)\cap\mathrm{Match}(x,y,N)\subseteq\mathrm{Ann}(x,t,y,s,N)
\end{displaymath}
where $\mathrm{Ann}(x,t,y,s)$ was specified in Definition~\ref{d:Ann}.
\end{lemma}
\begin{proof}
Throughout the lemma, we assume that
\begin{equation}\label{e:relative_size}
    \frac{\tau^{(1)}(x+B_t)}{\tau^{(1)}([0,1])}\geq\frac{\tau^{(2)}(y+B_s)}{\tau^{(2)}([0,1])}.
\end{equation}
If the reverse inequality holds, the proof remains valid by exchanging the roles of $\tau^{(1)}(x+\cdot)$ and $\tau^{(2)}(y+\cdot)$. By Lemma~\ref{l:thickness} (applied to $g=\psi_1$) on $\mathrm{Shape}^{(1)}(t)\cap \mathrm{Size}(t,s)$ we have
\begin{align*}
\Psi_1(A_t(x))&\supseteq D\left[\psi_1(x),\frac{1}{2}\min_{\ell=1,\dots,8}\frac{\tau^{(1)}(x+J_{t,\ell})}{\tau^{(1)}([0,1])}\right]\backslash D\left[\psi_1(x),\frac{\tau^{(1)}(x+2B_{t+1/4})}{\tau^{(1)}([0,1])}\right]
\\
&\supseteq\psi_1(x)+ \frac{\tau^{(1)}(x+B_t)}{\tau^{(1)}([0,1])}D[0,2^{-10}]\backslash D[0,2^{-25}]\\
&\supseteq\psi_1(x)+ \frac{\tau^{(2)}(y+B_s)}{\tau^{(2)}([0,1])}D[0,2^{-10}]\backslash D[0,2^{-13}].
\end{align*}
Similarly on $\mathrm{Shape}^{(2)}(s)$ we have
\begin{align*}
\Psi_2(\widetilde{A}_s(y))&\supseteq \psi_2(y)+ \frac{\tau^{(2)}(y+B_s)}{\tau^{(2)}([0,1])}\widetilde{D}[0,2^{-10}]\backslash \widetilde{D}[0,2^{-25}].
\end{align*}
On the event $\mathrm{Centre}(N)\cap\mathrm{Match}(x,y,N) \cap\mathrm{Size}(t,s)$, we see that
\begin{align*}
    \lvert\psi_2(y)-\psi_1(x)\rvert&\leq\max\Big\{\frac{\tau^{(1)}([x,x+\rho^{(1+\epsilon)5N}])}{\tau^{(1)}([0,1])},\frac{\tau^{(2)}([y,y+\rho^{(1+\epsilon)5N}])}{\tau^{(2)}([0,1])}\Big\}\\
    &\leq 2^{-25}\max\Big\{\frac{\tau^{(1)}(x+B_t)}{\tau^{(1)}([0,1])},\frac{\tau^{(2)}(y+B_s)}{\tau^{(2)}([0,1])}\Big\}\leq 2^{-13}\frac{\tau^{(2)}(y+B_s)}{\tau^{(2)}([0,1])}
\end{align*}
where the final inequality uses \eqref{e:relative_size}.
Combined with the previous inclusion, this implies that
\begin{equation}
    \Psi_2(\widetilde{A}_s(y))\supseteq\psi_1(x)+\frac{\tau^{(2)}(y+B_s)}{\tau^{(2)}([0,1])}\widetilde{D}[0,2^{-11}]\backslash \widetilde{D}[0,2^{-12}].
\end{equation}
We therefore define
\begin{displaymath}
\mathbb{A}:=\psi_1(x)+2^{-12}\frac{\tau^{(2)}(y+B_s)}{\tau^{(2)}([0,1])}[-2,2]^2\setminus[-1,1]^2\subseteq \Psi_1(A_t(x))\cup\Psi_2(\widetilde{A}_s(y))
\end{displaymath}
which is a topological annulus that surrounds $\Psi_1(D[x,\rho^{5N+1}])\cup\Psi_2(\widetilde{D}[y,\rho^{5N+1}])$.

By Lemma~\ref{l:integral} (applied to $g=\psi_1,\psi_2$) on $\mathrm{Shape}^{(1)}(t)\cap \mathrm{Size}(t,s)$ we have
\begin{align*}
    \int_{\Psi_1(A_{t}(x))}K(z,\Psi_1^{-1})\;dm(z)&\leq (2^{19}+2^{20})\left(\frac{\tau^{(1)}(x+B_t)}{\tau^{(1)}([0,1])}\right)^2\leq 2^{45}\left(\frac{\tau^{(2)}(y+B_s)}{\tau^{(2)}([0,1])}\right)^2
\end{align*}
and on $\mathrm{Shape}^{(2)}(s)$
\begin{align*}
    \int_{\Psi_2(\widetilde{A}_{s}(y))}K(z,\Psi_2^{-1})\;dm(z)\leq 2^{21}\left(\frac{\tau^{(2)}(y+B_s)}{\tau^{(2)}([0,1])}\right)^2.
\end{align*}
It follows easily from the chain rule for Wirtinger derivatives that the distortion is preserved under pre- or post-composition by a conformal map. More precisely, for differentiable maps $f$ and $g$, if $g$ is conformal then $K(z,f\circ g)=K(g(z),f)$ and if $f$ is conformal then $K(z,f\circ g)=K(z,g)$. Using this observation along with the definition of $K$ in \eqref{e:Distortion}, we see that for all $n$
\begin{align*}
    \int_{\mathbb{A}} K(z,F_n\circ e)\;dm(z)&\leq\int_\mathbb{A} K(e(z))\;dm(z)\\
    &\leq \int_{\Psi_1(A_{t}(x))}K(z,\Phi_1^{-1}\circ e)\;dm(z)+\int_{\Psi_2(\widetilde{A}_{s})}K(z,\Phi_2^{-1}\circ e)\;dm(z)\\
    &=\int_{\Psi_1(A_{t}(x))}K(z,\Psi_1^{-1})\;dm(z)+\int_{\Psi_2(\widetilde{A}_{s})}K(z,\Psi_2^{-1})\;dm(z)
\end{align*}
where the final equality uses the fact that $e\circ\Psi_j^{-1}=\Phi_j^{-1}\circ e$. Combining the previous bounds with Lemma~\ref{l:mod} we have $\mod\; F_n\circ e(\mathbb{A}_{t,s})\geq2^{-70}$ which completes the proof of the lemma.
\end{proof}

The previous lemma can be viewed as separating out the local and global conditions ($\mathrm{Ann}^\prime$ and $\mathrm{Match}$ respectively) required to construct a `good' annulus. We wish to construct sufficiently many `good' annuli to apply Lemma~\ref{l:Annuli-Holder} but the difficulty is that we only have useful probabilistic control over the local behaviour. Our strategy is therefore to control the local behaviour with sufficiently high probability that $\mathrm{Ann}^\prime$ occurs often for (essentially) all $x$ and $y$. We then use the deterministic fact that for each $x$, there must be some $y$ such that $\mathrm{Match}(x,y,N)$ occurs. This argument is contained in the following proposition:

\begin{definition}\label{d:AnnSeq}
    Given $x,y\in[0,1]$, $\delta>0$ and $N\in\N$, we define $\mathrm{AnnSeq}^\prime(x,y,N)$ to be the event that there exist two sequences $t_1,t_2,\dots$ and $s_1,s_2,\dots$ satisfying
\begin{enumerate}
    \item $t_m-t_{m-1},s_m-s_{m-1}\geq 1/4$ for all $m\geq 2$
    \item $\sum_{t_m,s_m\leq 5N}\ind_{\mathrm{Ann}^\prime(x,t_m,y,s_m,N)}>\delta N$.
\end{enumerate}
    We define $\mathrm{AnnSeq}(x,y,N)$ analogously if $\mathrm{Ann}^\prime$ is replaced by $\mathrm{Ann}$.
\end{definition}

\begin{proposition}\label{p:Stationary_condition}
Let $\delta,C>0$ and $\epsilon,\rho\in(0,1)$ be given and let $F_n$ be the collection of maps defined by \eqref{e:QCApprox} and \eqref{e:QCApprox2}. Suppose that
\begin{displaymath}
\P(\mathrm{AnnSeq}^\prime(x,y,N))>1-C\rho^{(2+3\epsilon)5N}
\end{displaymath}
for all $x,y\in[0,1]$ and all $N\in\N$, then with probability one the functions $(F_n)_{n\in\N}$ are uniformly H\"older continuous on $\partial\D$.
\end{proposition}
\begin{proof}
Let $P_N\subset[0,1]$ be a $(1/2)\rho^{(1+\epsilon)5N}$-net of $[0,1]$ containing at most $2\rho^{(1+\epsilon)5N}$ points. Then by assumption
\begin{equation}\label{e:Stationary1}
    \sum_{N\in\N}\sum_{x,y\in P_N}\P\left(\mathrm{AnnSeq}^\prime(x,y,N)^c\right)\leq\sum_{N\in\N}4C\rho^{5\epsilon N}<\infty.
\end{equation}
Hence by the first Borel-Cantelli lemma, there exists a random $N_0$ such that for all $N\geq N_0$ and all $x,y\in P_N$, $\mathrm{AnnSeq}^\prime(x,y,N)$ occurs. For any $N\in\N$ and $u\in\T\simeq[0,1)$, there exist $x_1,x_2\in P_n$ such that
\begin{equation}\label{e:grid_points}
    u\in\psi_j([x_j,x_j+\rho^{(1+\epsilon)5N}))\qquad\text{for }j=1,2.
\end{equation}
This means that $\mathrm{Match}(x_1,x_2,N)$ occurs and hence by Lemma~\ref{l:Good_annuli}, so does $\mathrm{AnnSeq}(x_1,x_2,N)$.

We now apply Lemma~\ref{l:Annuli-Holder} to $F_n$ for each $n\in\N$. Specifically we choose the sequence of concentric annuli to be $e(\mathbb{A}_m)$ where $\mathbb{A}_m$ is defined by the event $\mathrm{Ann}(x_1,t_m,x_2,s_m,N)$ whenever it occurs. By assumption we have at least $\delta N$ such annuli and the conditions on $t_m,s_m$ ensure that they satisfy conditions $(1)$, $(3)$, and $(4)$ of Lemma~\ref{l:Annuli-Holder} for $f=F_n$ (the third condition follows from applying Koebe's 1/4 theorem at infinity). For $N$ sufficiently large, by Definition~\ref{d:Ann} the concentric annuli all surround
\begin{displaymath}
    e\circ\Psi_1(D[x_1,2\rho^{(1+\epsilon)5N}])\cup e\circ\Psi_2(\widetilde{D}[x_2,2\rho^{(1+\epsilon)5N}]).
\end{displaymath}
By Lemma~\ref{l:MeasureBasic} and definition of the Beurling-Ahlfors extension, $\Psi_1$ and $\Psi_2$ are almost surely locally H\"older continuous. Therefore by \eqref{e:grid_points} there exists a random $N_1\in\N$ and a deterministic $\alpha>0$ (depending on $\gamma$) such that for all $N>N_1$, the above set contains $e (B(u,\rho^{(1+\epsilon)5N\alpha}))$ which verifies the second condition of Lemma~\ref{l:Annuli-Holder}. We conclude that the $F_n$ are uniformly H\"older continuous on $\partial\D$ as required.
\end{proof}

\begin{remark}\label{re:eps}
    In the remainder of our work, the reader will see that the parameter $\epsilon>0$ plays no essential role. More precisely, for any $\epsilon>0$ all of our later proofs would be valid by reducing $\gamma$ (the parameter of our measures) sufficiently. Therefore we now fix a value of $\epsilon>0$ once and for all. Our motivation for working with an arbitrary value rather than simplifying notation by setting $\epsilon=1$, is that this generality may be useful in the future for extending our results to higher values of $\gamma$.
\end{remark}

\section{An approximate decomposition for the measures}\label{s:Decompose}
To summarise the analysis of the previous section: if we can show that for each $x,y\in[0,1]$, $\mathrm{Ann}^\prime(x,t_m,y,s_m,N)$ occurs for sufficiently many pairs $(t_m,s_m)$ with high probability then we may deduce the desired H\"older continuity bound on the family of maps $F_n$. The events $\mathrm{Ann}^\prime(x,t,y,s,N)$ (defined in terms of the measures $\tau^{(1)}$ and $\tau^{(2)}$) have strong dependence for different values of $t$ and $s$. In this section we show that for many values of $t$ and $s$, the measures $\tau^{(1)}$ and $\tau^{(2)}$ can be approximately decomposed over different scales. This decomposition will allow us to control the dependence between the events $\mathrm{Ann}^\prime(\cdot)$.

\subsection{Outline of the deomposition}
We begin by roughly describing the decomposition and giving some intuition for why it is useful.

Recall that for $a>0$, the continuous Gaussian process $H_a(\cdot)$ is defined in Lemma~\ref{l:GFFExistence} with respect to the white noise above height $a$. For $u\in[0,1]$, $0<a\leq b$ and $j=1,2$ we define
\begin{align*}
E^{(j)}(u,a)&:=\exp\Big(\gamma H^{(j)}_a(u)-\frac{\gamma^2}{2}\Var[H_a^{(j)}(0)]\Big),\\
E^{(j)}(u,a,b)&:=\frac{E^{(j)}(u,a)}{E^{(j)}(u,b)}.
\end{align*}
For $t\geq 0$, let $\tau_t^{(j)}$ denote the measure $\tau^{(j)}$ defined by the restriction of the white noise to $\{(x,y)\in\H\;|\;y\leq\rho^t\}$. More precisely, we define $\tau_t^{(j)}$ by repeating the construction of $\tau^{(j)}$, given in Lemma~\ref{l:GFFExistence} and \eqref{e:MeasureDef}, using the same white noise process but replacing $\mathcal{H}$ by the set $\{(x,y)\in\mathcal{H}\;|\;y\leq\rho^t\}$. Then by independence of the white noise process on disjoint domains
\begin{equation}\label{e:WhiteNoiseDecomp1}
\tau^{(j)}_t(du)=\lim_{\xi\to 0^+} E^{(j)}(u,\xi,\rho^t)e^{-\gamma G}2^{\gamma^2}\;du,\quad\text{and}\quad\tau^{(j)}(du)=\lim_{\xi\to 0^+} E^{(j)}(u,\xi)e^{-\gamma G}2^{\gamma^2}\;du
\end{equation}
where convergence is in the weak-$*$ topology.

Now consider the events $\mathrm{Shape}^{(1)}(t)$, $\mathrm{Shape}^{(2)}(s)$ and $\mathrm{Size}(t,s)$. For the moment, to simplify exposition, let us treat $\tau^{(1)}([0,1])$ and $\tau^{(2)}([0,1])$ as constants, set $x,y=0$ and assume that $t,s$ are integers. Our events of interest are therefore determined by $\tau^{(1)}$ restricted to $B_t$ and $\tau^{(2)}$ restricted to $B_s$. The essence of our argument is the approximation
\begin{equation}\label{e:WhiteNoiseApprox}
\tau^{(j)}(I)\approx \tau^{(j)}_t(I\setminus B_{t+7/8})\cdot E^{(j)}(0,\rho^t)
\end{equation}
for intervals $I\subset B_t$ which are not too small. Intuitively this follows from \eqref{e:WhiteNoiseDecomp1} by replacing $E^{(j)}(u,\xi,\rho^t)$ with $E^{(j)}(u,\xi)/E^{(j)}(0,\rho^t)$ and treating $B_{t+7/8}$ as negligible. Note that $\tau^{(j)}(I)$ is determined by the behaviour of the hyperbolic white noise on the region shown in Figure~\ref{Fig:WhiteNoiseDecomp1} whilst our approximation depends only on the smaller region shown in Figure~\ref{Fig:WhiteNoiseDecomp2}.

\begin{figure}[ht]
    \centering
\begin{subfigure}[b]{0.45\textwidth}
     \begin{tikzpicture}[declare function={
    func(\x)= (\x<=-1) * ((2/pi)*tan(abs(pi*(\x+1))))   +
     and(\x>-1, \x<=1) * (0)     +
     (\x>1) * ((2/pi)*tan(abs(pi*(\x-1))));
  }, ]
\begin{axis}[trig format plots=rad,samples=500,domain=-0.9:0.9,ymax=3,ymin=-0.35,axis equal,hide axis]

\addplot[samples=500,dashed,domain=-1.48:1.48,name path=A] {func(x)};
\addplot[domain=-1.1:1.1,name path=B]{5};
\addplot[pattern=north west lines,opacity=0.5] fill between[of=A and B];
\filldraw[black] (0,0) circle (1pt) node[anchor=north]{$0$};
\filldraw[black] (1,0) circle (1pt) node[anchor=north]{$2\rho^t$};
\draw[thick] (-5,0)--(5,0);
\draw [thick](-0.5,0)--(-0.5,0.5)--(0.5,0.5)--(0.5,0)--(0.3,0)--(0.3,0.3)--(-0.3,0.3)--(-0.3,0)--(-0.5,0);
\end{axis}

\draw (4.7,2.2)--(5.9,2);
\node[right] at (5.9,2) {$\bigcup_{\lvert a\rvert\leq2\rho^t}(\mathcal{H}+a)$};
\draw (4.1,0.8)--(5.6,1);
\node[right] at (5.6,1) {$A_t(0)$};
\end{tikzpicture}

\caption{The restriction of $\tau^{(1)}$ to $B_t$ is determined by the white noise restricted to ${\bigcup_{\lvert a\rvert\leq2\rho^t}(\mathcal{H}+a)}$.}\label{Fig:WhiteNoiseDecomp1}
\end{subfigure}
        \hfill
    \begin{subfigure}[b]{0.45\textwidth}
\begin{tikzpicture}[declare function={
     func2(\x)= max(0.5,(2/pi)*tan(abs(pi*\x)));
     func3(\x)= (\x<=-1) * ((2/pi)*tan(abs(pi*(\x+1))))   +
     and(\x>-0.12, \x<=0.12) * min(0.3,(2/pi)*tan(abs(pi*(\x+0.12))),(2/pi)*tan(abs(pi*(\x-0.12))))    +
     (\x>1) * ((2/pi)*tan(abs(pi*(\x-1))));
     func4(\x)=(func3(\x)<0.5)*func3(\x);
  }, ]

\begin{axis}[trig format plots=rad,samples=500,domain=-0.9:0.9,ymax=3,ymin=-0.35,axis equal,hide axis]
\addplot[samples=500,dashed,domain=-0.48:0.48,name path=A] {func2(x)};
\addplot[domain=0.3:0.8,name path=B]{5};
\addplot[pattern=north west lines,opacity=0.5] fill between[of=A and B];
\addplot[domain=-1.2:1.2,dashed,name path=C] {func4(x)};
\addplot[domain=-1.2:1.2,dashed,name path=D]{0.5};
\addplot[pattern=north west lines,opacity=0.5] fill between[of=C and D];
\addplot[domain=1.2:1.48,dashed]{(2/pi)*tan(abs(pi*(\x-1)))};
\addplot[domain=-1.48:-1.2,dashed]{(2/pi)*tan(abs(pi*(\x+1)))};
\filldraw[black] (0,0) circle (1pt) node[anchor=north]{$0$};
\draw[thick] (-5,0)--(5,0);
\draw [thick](-0.5,0)--(-0.5,0.5)--(0.5,0.5)--(0.5,0)--(0.3,0)--(0.3,0.3)--(-0.3,0.3)--(-0.3,0)--(-0.5,0);
\end{axis}

\draw (3.5,2.2)--(4.8,2);
\node[right] at (4.8,2) {$\mathcal{H}_{\rho^t}$};
\draw (4.6,0.9)--(5.7,1);
\node[right] at (5.7,1) {$(\star)$};
\end{tikzpicture}

\caption{The approximation in \eqref{e:WhiteNoiseApprox} is determined by the white noise restricted to $\mathcal{H}_{\rho^t}\cup(\star)$ where $(\star)=\bigcup_{\rho^{t+7/8}\leq\lvert a\rvert\leq2\rho^t}(\mathcal{H}+a)\cap\{y\leq\rho^t\}$.}\label{Fig:WhiteNoiseDecomp2}
    \end{subfigure}
    \caption{}
    \label{fig:WhiteNoiseDecomp1+2}
\end{figure}

If we substitute the right hand side of \eqref{e:WhiteNoiseApprox} into $\mathrm{Shape}^{(1)}(t)$ then the exponential terms cancel and we obtain an intersection of events of the form
\begin{equation}\label{e:ApproxEvents}
\frac{\tau_t^{(1)}((\rho^t\cdot I)\setminus B_{t+7/8})}{\tau_t^{(1)}((\rho^t\cdot J)\setminus B_{t+7/8})}\leq c
\end{equation}
for intervals $I,J$ which each have length of order $1$. These events depend (through the hyperbolic white noise) on disjoint regions for distinct integer values of $t$ (see Figure~\ref{Fig:WhiteNoiseDecomp3}) and so are independent. Moreover as $t$ increases, a basic scaling argument for the white noise shows that $\rho^{-t}\tau_t^{(1)}(\rho^t\cdot)$ converges in distribution to a non-degenerate random measure. Hence the intersection of events of the form \eqref{e:ApproxEvents} will approach an i.i.d.\ sequence for large values of $t$, so we can easily prove large deviation bounds for the number of such events which occur.

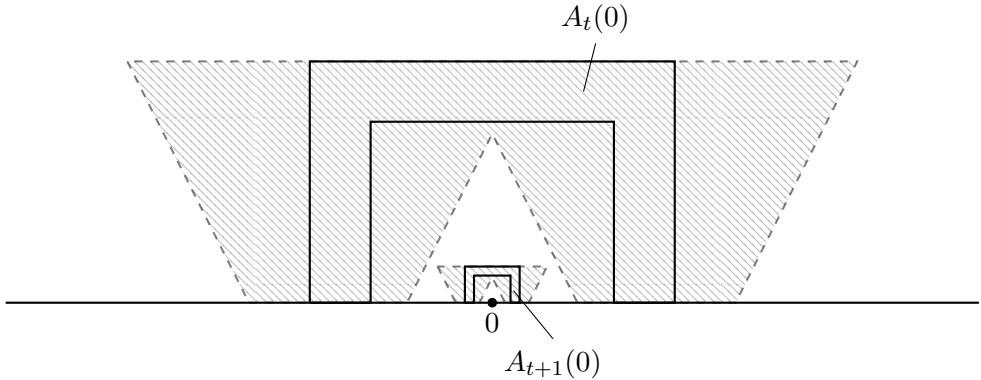
\begin{figure}[ht]
    \centering
\begin{tikzpicture}
\begin{scope}[scale=0.8]
\filldraw[black] (0,0) circle (2pt) node[anchor=north]{$0$};
\draw[thick] (-8,0)--(8,0);
\draw [thick](-3,0)--(-3,4)--(3,4)--(3,0)--(2,0)--(2,3)--(-2,3)--(-2,0)--(-3,0);
\draw[thick,dashed,pattern=north west lines,opacity=0.5] (-6,4)--(-4,0)--(-1.4,0)--(0,2.8)--(1.4,0)--(4,0)--(6,4)--(-6,4);
\begin{scope}[scale=0.15]
\draw [thick](-3,0)--(-3,4)--(3,4)--(3,0)--(2,0)--(2,3)--(-2,3)--(-2,0)--(-3,0);
\draw[thick,dashed,pattern=north west lines,opacity=0.5] (-6,4)--(-4,0)--(-1.4,0)--(0,2.8)--(1.4,0)--(4,0)--(6,4)--(-6,4);
\end{scope}
\draw (1.5,3.5)--(1.7,4.3);
\node [above] at (1.7,4.3) {$A_t(0)$};
\draw (0.35,0.2)--(1,-0.6);
\node [below] at (1,-0.6) {$A_{t+1}(0)$};
\end{scope}
\end{tikzpicture}
\caption{Events of the form \eqref{e:ApproxEvents} are determined by the white noise restricted to the shaded region containing $A_t(0)$. Such regions are disjoint for $t$ and $t+1$, implying that the corresponding events are independent. (Note: this illustration is not to scale.)}\label{Fig:WhiteNoiseDecomp3}
\end{figure}

Turning to $\mathrm{Size}(t,s)$, if we now replace $\tau^{(1)}(B_t)$ and $\tau^{(2)}(B_s)$ by their approximations according to \eqref{e:WhiteNoiseApprox} then we obtain an event of the form
\begin{equation}\label{e:ApproxEvent2}
    c^{-1}\leq\frac{\rho^{-t}\tau^{(1)}_t(B_t\setminus B_{t+7/8})}{\rho^{-s}\tau^{(2)}_s(B_s\setminus B_{s+7/8})}\cdot\exp(\star)\leq c
\end{equation}
where
\begin{displaymath}
    \star=\gamma H^{(1)}_{\rho^t}(x)-\gamma H^{(2)}_{\rho^s}(y)-\big(1+{\gamma^2}/{2}\big)(t-s)\log(1/\rho).
\end{displaymath}
(Note that we have treated $\tau^{(1)}([0,1])$ and $\tau^{(2)}([0,1])$ as constants and approximated the variance of $H^{(j)}_{\xi}(0)$ by $\log(1/\xi)$.) By a simple telescoping:
\begin{displaymath}
\gamma H_{\rho^t}^{(1)}(x)-t(1+\gamma^2/2)\log(1/\rho)=\gamma H_0^{(1)}(x)+\sum_{k=1}^t\big(\gamma H_{\rho^k}^{(1)}(x)-\gamma H_{\rho^{k-1}}^{(1)}(x)-(1+\gamma^2/2)\log(1/\rho)\big),
\end{displaymath}
the terms making up $\star$ can be expressed as sums of independent Gaussian variables (see Figure~\ref{Fig:WhiteNoiseDecomp4}). Proving that \eqref{e:ApproxEvent2} occurs for many values of $t$ and $s$ thus reduces to the study of two independent, biased random walks. We analyse this situation in Section~\ref{s:LargeDeviations}.
\begin{figure}[ht]
    \centering
\begin{tikzpicture}
\begin{axis}[trig format plots=rad,samples=500,domain=-0.5:0.5,ymax=1,ymin=-0.15,axis equal,hide axis]
\addplot[dashed,domain=-0.48:0.48,name path=A] {(2/pi)*tan(abs(pi*\x))};
\addplot[domain=0.3:0.8,name path=B]{5};
\addplot[pattern=north west lines,opacity=0.5] fill between[of=A and B];
\filldraw[black] (0,0) circle (1pt) node[anchor=north]{$x$};
\draw[thick] (-5,0)--(5,0);

\foreach \a in {0.8,0.4,0.2,0.1,0.05}{
\addplot[dashed,domain={-atan(\a*pi/2)/pi}:{atan(\a*pi/2)/pi},name path=B]{\a};
}
\draw [thick,decorate,
    decoration = {brace}] ({-atan(0.4*pi)/180},0) -- node[left] {$\rho$} ({-atan(0.4*pi)/180},0.8);
\draw [thick,decorate,
    decoration = {brace}] ({-atan(0.2*pi)/180},0) -- node[left] {$\rho^2$} ({-atan(0.2*pi)/180},0.4);
\draw [thick,decorate,
    decoration = {brace}] ({atan(0.1*pi)/180},0.2) -- node[right] {$\rho^3$} ({atan(0.1*pi)/180},0);
\draw (0.15,0.6)--(0.4,0.5);
\node [right] at (0.4,0.5) {$\mathcal{H}+x$};
\end{axis}
\end{tikzpicture}
\caption{The Gaussian variable $H_{\rho^{t+1}}^{(1)}(x)-H_{\rho^{t}}^{(1)}(x)$ is determined by the white noise restricted to $(\mathcal{H}+x)\cap\{\rho^{t+1}\leq y\leq \rho^t\}$ and so these variables are independent for different integer values of $t$.}\label{Fig:WhiteNoiseDecomp4}
\end{figure}
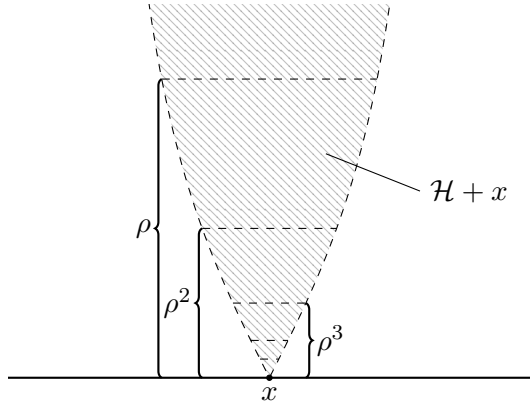

\subsection{Sufficient conditions for the approximation}\label{ss:Sufficient}
We now begin to justify the approximation described above rigorously. In this subsection we introduce a number of events which ensure that \eqref{e:WhiteNoiseApprox} is valid, allow us to deal with non-integer values of $t$ and $s$ and justify treating $\tau^{(1)}([0,1])$ and $\tau^{(2)}([0,1])$ as constants (for our purposes). In the next subsection we will show that these events occur with high probability for many values of $t$ and $s$.
\medskip

\noindent\underline{Upper-scale correlations:} First we consider how to approximate the term $E^{(j)}(u,\epsilon,\rho^t)$ in \eqref{e:WhiteNoiseDecomp1}. For $k\in\N\cup\{0\}$ and a Borel set $I$, we define
\begin{align*}
    \mathcal{S}_k^{(1)}(I):=\sup_{u\in I}\frac{E^{(1)}(u,\rho^{k+1},\rho^k)}{E^{(1)}(x,\rho^{k+1},\rho^k)},\quad \mathcal{S}_{-1}^{(1)}(I):=\sup_{u\in I}\frac{E^{(1)}(u,1)}{E^{(1)}(x,1)},
\end{align*}
and we define $\mathcal{I}_k^{(1)}(I)$ to be the corresponding infima. Moreover we define analogous quantities $\mathcal{S}_k^{(2)}(\cdot)$ and $\mathcal{I}_k^{(2)}(\cdot)$ by replacing $E^{(1)}$ with $E^{(2)}$ and $x$ with $y$.

From \eqref{e:WhiteNoiseDecomp1} we see that for any Borel set $I$ and $n\in\N$
\begin{equation}\label{e:Upper-scale}
    \prod_{k=-1}^{n-1}\mathcal{I}_k^{(1)}(I)\leq\frac{\tau^{(1)}(I)}{\tau_n^{(1)}(I)E^{(1)}(x,\rho^n)}\leq\prod_{k=-1}^{n-1}\mathcal{S}_k^{(1)}(I).
\end{equation}
Therefore if each of these suprema/infima is close to one, we can justify the first part of our approximation. Accordingly we define the events
\begin{align*}
    \mathrm{Upp}^{(1)}_{n,k}&:=\left\{\log \mathcal{S}_k^{(1)}(x+2B_n)\leq 2^{k-n}\log(2)\right\}\cap\left\{\log \mathcal{I}_k^{(1)}(x+2B_n)\geq -2^{k-n}\log(2)\right\},\\ \mathrm{Upp}_n^{(1)}&:=\bigcap_{k=-1}^{n-1}\mathrm{Upp}^{(1)}_{n,k}
\end{align*}
and note that on $\mathrm{Upp}_n^{(1)}$ we have
\begin{displaymath}
1\leq\prod_{k=-1}^{n-1}\mathcal{S}_k^{(1)}(x+2B_n)\leq 2,\quad\text{and}\quad \frac{1}{2}\leq\prod_{k=-1}^{n-1}\mathcal{I}_k^{(1)}(x+2B_n)\leq 1.
\end{displaymath}
We define $\mathrm{Upp}_{n,k}^{(2)}$ and $\mathrm{Upp}_n^{(2)}$ analogously on replacing $x$ by $y$ and $\mathcal{S}_k^{(1)},\mathcal{I}_k^{(1)}$ by $\mathcal{S}_k^{(2)},\mathcal{I}_k^{(2)}$. The notation `$\mathrm{Upp}$' (short for upper-scale correlations) is to indicate that these events account for removing the dependency on the two upper regions of white noise in Figure~\ref{Fig:WhiteNoiseDecomp2}. We can obtain probability bounds on these events using standard estimates for the supremum of a Gaussian process (i.e., the Borell-TIS inequality), and will do so in the next subsection.
\medskip

\noindent\underline{Lower-scale correlations:} We next wish to justify replacing $\tau_t(I)$ in \eqref{e:WhiteNoiseDecomp1} by $\tau_t(I\setminus B_{t+7/8})$ in \eqref{e:WhiteNoiseApprox}. To do so we simply consider events on which the measure of $B_{t+7/8}$ is small relative to that of the sets we are interested in: for $n,k\in\N$ with $k\geq n$ we define
\begin{align*}
\mathrm{Low}_{n,k}^{(1)}:=\left\{\frac{\tau_n^{(1)}(x+B_{k+7/8}\setminus B_{k+1+7/8})}{\tau_n^{(1)}(x+B_{n+5/8}\setminus B_{n+7/8})}\leq 2^{n-k-33}\right\},\qquad \mathrm{Low}_n^{(1)}:=\bigcap_{k=n}^\infty \mathrm{Low}^{(1)}_{n,k}.
\end{align*}
Similarly we define $\mathrm{Low}_{n,k}^{(2)}$ and $\mathrm{Low}_{n}^{(2)}$ by replacing $\tau_n^{(1)}$ and $x$ with $\tau_n^{(2)}$ and $y$ respectively. By countable additivity of measures, on $\mathrm{Low}_n^{(1)}$ we have
\begin{equation}\label{e:Low}
\frac{\tau_n^{(1)}(x+B_{n+7/8})}{\tau_n^{(1)}(x+B_{n+5/8}\setminus B_{n+7/8})}\leq 2^{-32}.
\end{equation}
This event accounts for the dependence of the hyperbolic white noise in the lower unshaded region of Figure~\ref{Fig:WhiteNoiseDecomp2}. In the next subsection we will control the probability of these events using moment bounds on $\tau_n^{(j)}(\cdot)$. In anticipation of this, we observe that the events $\mathrm{Low}_{n_1,k_1}^{(j)}$ and $\mathrm{Low}_{n_2,k_2}^{(j)}$ depend on disjoint regions of the white noise whenever $k_1+2\leq n_2$ (provided that $\rho\leq 2^{-8}$, in which case these regions look qualitatively similar to those in Figure~\ref{Fig:WhiteNoiseDecomp3}) and so are independent.

\medskip

\noindent\underline{Fractional-scale correlations:}
The events defined so far control the behaviour of our measures on integer scales (i.e., for $t,s\in\N$). In order to extend this control to non-integer scales we consider the following event:
\begin{displaymath}
\mathrm{Frac}_n^{(1)}:=\left\{\sup_{t\in[n,n+5/8]}\sup_{u\in (x+2B_n)}\left\lvert\log E^{(1)}(u,\rho^t,\rho^n)\right\rvert<\log(2)\right\}.
\end{displaymath}
On this event, for any $t\in[n,n+5/8]$ and $I\subseteq(x+2B_n)$ we see from \eqref{e:WhiteNoiseDecomp1} that
\begin{equation}\label{e:Frac}
\frac{1}{2}\tau_n^{(1)}(I)\leq\tau_t^{(1)}(I)\leq 2\tau_n^{(1)}(I).
\end{equation}
We define $\mathrm{Frac}^{(2)}_n$ analogously. The probability of these events can once again be controlled through consideration of the supremum of a continuous Gaussian process.

\medskip

\noindent\underline{Scaling-limit correlations:} In the heuristics of the previous subsection we approximated the variance of $H^{(j)}_\epsilon$ by $\log(1/\epsilon)$. This is useful as it allows us to consider random walks with constant bias in the next section. To justify the approximation, we will define two fields $V^{(1)}$ and $V^{(2)}$ which are close to $H^{(1)}$ and $H^{(2)}$ and satisfy an exact scaling relation (as opposed to the approximate scaling relation satisfied by the $H^{(j)}$).

We define
\begin{displaymath}
\mathcal{V}=\left\{(x,y)\in\mathbb{H}\;|\; 2\lvert x\rvert\leq y\leq 1/2\right\}
\end{displaymath}
and for $0<\xi\leq 1/2$
\begin{displaymath}
V^{(j)}_\xi(u)=W^{(j)}((\mathcal{V}+u)\cap\{y\geq\xi\})
\end{displaymath}
where $W^{(j)}$ is the white noise associated with the measure $\tau^{(j)}$. Direct computation shows that $\Var[V_\xi(u)]=\log(1/2)-\log(\xi)$. Analogously with the measures $\tau^{(j)}_t$ we define $\nu_t^{(j)}$ by
\begin{equation}\label{e:MeasureDef2}
\lim_{\xi\to 0^+}\exp\Big(\gamma \big(V^{(j)}_\xi(u)-V^{(j)}_{\rho^t}(u)\big)-\frac{\gamma^2}{2}\Var\big[V^{(j)}_\xi(u)-V^{(j)}_{\rho^t}(u)\big]\Big)\;dx=\nu_t^{(j)}(dx).
\end{equation}
More precisely, it was shown in \cite[Section~3.3]{ajks} that there exists a version of hyperbolic white noise for which both $V^{(j)}_\xi$ and $H^{(j)}_\xi$ can be defined continuously and the measures $\nu_t^{(j)}$ and $\tau^{(j)}_t$ defined by \eqref{e:WhiteNoiseDecomp1} and \eqref{e:MeasureDef2} exist. We denote $\nu^{(j)}:=\nu^{(j)}_0$.

From Figure~\ref{Fig:WhiteNoiseDecomp5} we see that $\mathcal{V}$ approximates $\mathcal{H}$ close to the real axis and so we expect that $\nu_t^{(j)}$ should be a good approximation for $\tau_t^{(j)}$ whenever $t$ is large.
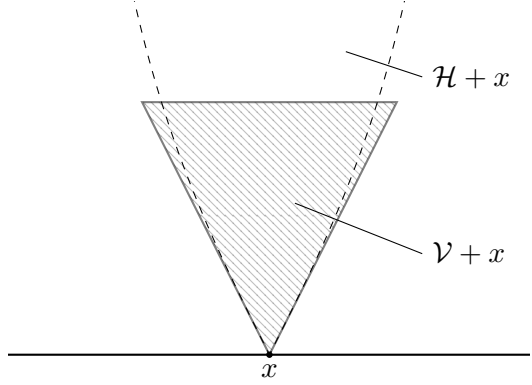
\begin{figure}[ht]
    \centering
\begin{tikzpicture}
\begin{axis}[trig format plots=rad,samples=500,domain=-0.5:0.5,ymax=0.7,ymin=-0.15,axis equal,hide axis]
\addplot[dashed,domain=-0.48:0.48] {(2/pi)*tan(abs(pi*\x))};
\filldraw[black] (0,0) circle (1pt) node[anchor=north]{$x$};
\draw[thick] (-5,0)--(5,0);

\draw[thick,pattern=north west lines,opacity=0.5] (0,0)--(0.25,0.5)--(-0.25,0.5)--cycle;
\draw (0.15,0.6)--(0.3,0.55);
\node [right] at (0.3,0.55) {$\mathcal{H}+x$};

\draw (0.05,0.3)--(0.3,0.2);
\node [right] at (0.3,0.2) {$\mathcal{V}+x$};
\end{axis}
\end{tikzpicture}

\caption{The set $\mathcal{V}$ is chosen to approximate $\mathcal{H}$ near the real axis, ensuring that $\tau_t^{(j)}$ and $\nu_t^{(j)}$ are close for large $t$.}\label{Fig:WhiteNoiseDecomp5}
\end{figure}

We next define for $t,s\geq 1$
\begin{align*}
X^{(H)}_{t,s}&:=\gamma H^{(1)}_{\rho^t}(x)-\gamma H^{(2)}_{\rho^s}(y)-\frac{\gamma^2}{2}\Big(\Var\big[H^{(1)}_{\rho^t}(x)\big]-\Var\big[H^{(2)}_{\rho^s}(y)\big]\Big)-(t-s)\log(1/\rho)\\
X^{(V)}_{t,s}&:=\gamma V^{(1)}_{\rho^t}(x)-\gamma V^{(2)}_{\rho^s}(y)-\left(\frac{\gamma^2}{2}+1\right)(t-s)\log(1/\rho).
\end{align*}
The former is the expression which we have to show is not too large in order for $\mathrm{Size}(t,s)$ to occur. Our final class of events justifying the approximations described in the previous subsection are
\begin{multline*}
\mathrm{Scal}(N):=\left\{\frac{\tau^{(1)}(x+B_N)}{\tau^{(1)}([0,1])}<2^{-1}\right\}\cap\left\{\frac{\tau^{(2)}(y+B_N)}{\tau^{(2)}([0,1])}<2^{-1}\right\}\\
\cap\left\{\sup_{t,s\in[N,5N]}\left\lvert X^{(H)}_{t,s}-X^{(H)}_{N,N}-(X^{(V)}_{t,s}-X^{(V)}_{N,N})\right\rvert<\log(2)\right\}
\end{multline*}
defined for $N\in\N$.
\medskip

\noindent\underline{Reduced events:} Armed with the four previous classes of events we can begin to apply our approximate decomposition to define simplified (or reduced) versions of $\mathrm{Shape}^{(j)}(t)$ and $\mathrm{Size}(t,s)$. For $t\geq 0$ with fractional part at most $5/8$, we define
\begin{multline*}
\mathrm{ShapeRed}^{(1)}(t):=\\
\left\{1\leq\tau_t^{(1)}(x+B_t\setminus B_{\lfloor t\rfloor+7/8})\rho^{-t}\leq 2^2 \right\}\cap\bigcap_{\ell=1}^8\left\{\frac{\tau_t^{(1)}(x+J_{t,\ell}\setminus B_{\lfloor t\rfloor+7/8})}{\tau_t^{(1)}(x+B_t\setminus B_{\lfloor t\rfloor+7/8})}\geq 2^{-4}\right\}\\
\cap\left\{\frac{\tau_t^{(1)}(x+2B_{t+1/4}\setminus B_{\lfloor t\rfloor+7/8})}{\tau_t^{(1)}(x+B_t\setminus B_{\lfloor t\rfloor+7/8})}\leq 2^{-30}\right\}\cap
\left\{\frac{\tau_t^{(1)}(x+2B_t\setminus B_{\lfloor t\rfloor+7/8})}{\tau_t^{(1)}(x+B_t\setminus B_{\lfloor t\rfloor+7/8})}\leq 2^2\right\}\\
\cap\left\{\sum_{m=0}^\infty\sum_{\ell\in S(m,\rho^{t+1/4},\rho^{t})}\frac{\tau_t^{(1)}(x+I_{m,\ell,t})^2}{\tau_t^{(1)}(x+B_t\setminus B_{\lfloor t\rfloor+7/8})^2}\leq 2^5\right\}.
\end{multline*}
Observe that each of the events in this intersection, apart from the first, are obtained from $\mathrm{Shape}^{(1)}(t)$ by replacing $\tau^{(1)}(x+\cdot)$ with $\tau^{(1)}_t(x+\cdot\setminus B_{\lfloor t\rfloor+7/8})$ and changing the constants slightly. The role of the first event will be explained soon. As the reader may have come to expect, we define $\mathrm{ShapeRed}^{(2)}(t)$ analogously on replacing $\tau^{(1)}$ with $\tau^{(2)}$ and $x$ with $y$.

Then for $N\in\N$ and $t,s\geq N$ we define
\begin{displaymath}
\mathrm{SizeRed}(t,s)=\left\{\left\lvert X^{(V)}_{t,s}-X_{N,N}^{(V)}+X_{N,N}^{(H)}+\log\left(\frac{\tau^{(2)}([0,1]\setminus (y+B_{N}))}{\tau^{(1)}([0,1]\setminus (x+B_{N}))}\right)\right\rvert\leq \log(2)\right\}.
\end{displaymath}

After controlling for the different types of `correlation events' described above, these reduced events imply their oirginal counterparts:

\begin{lemma}\label{l:Decompose}
Let $N\in\N$ and $t,s\in[N,5N]$. Suppose that $t-\lfloor t\rfloor,s-\lfloor s\rfloor\in[0,5/8]$. The intersection of the following events:
\begin{align*}
\mathrm{Upp}^{(1)}_{\lfloor t\rfloor},\mathrm{Low}_{\lfloor t\rfloor}^{(1)},\mathrm{Frac}^{(1)}_{\lfloor t\rfloor},\mathrm{ShapeRed}^{(1)}(t),\\
\mathrm{Upp}^{(2)}_{\lfloor s\rfloor},\mathrm{Low}_{\lfloor s\rfloor}^{(2)},\mathrm{Frac}^{(2)}_{\lfloor s\rfloor},\mathrm{ShapeRed}^{(2)}(s),\\
\mathrm{Centre}(N),\mathrm{Scal}(N),\mathrm{SizeRed}(t,s)
\end{align*}
is contained in $\mathrm{Ann}^\prime(x,t,y,s,N)$ (as defined in Lemma~\ref{l:Good_annuli}).
\end{lemma}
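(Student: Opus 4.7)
\emph{Proof plan.} The event $\mathrm{Ann}'(x,t,y,s,N)=\mathrm{Size}(t,s)\cap\mathrm{Shape}^{(1)}(t)\cap\mathrm{Shape}^{(2)}(s)\cap\mathrm{Centre}(N)$ has $\mathrm{Centre}(N)$ already in the hypothesised intersection, so the task is to deduce $\mathrm{Shape}^{(1)}(t)$, $\mathrm{Shape}^{(2)}(s)$ and $\mathrm{Size}(t,s)$. The backbone of the argument is the identity $\tau^{(j)}(du)=E^{(j)}(u,\rho^{\lfloor t\rfloor})\,\tau^{(j)}_{\lfloor t\rfloor}(du)$, which follows from \eqref{e:WhiteNoiseDecomp1} since $E^{(j)}(u,\xi)=E^{(j)}(u,\xi,\rho^{\lfloor t\rfloor})E^{(j)}(u,\rho^{\lfloor t\rfloor})$. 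Combining this with Upp (to control $E^{(j)}(u,\rho^{\lfloor t\rfloor})/E^{(j)}(x,\rho^{\lfloor t\rfloor})$ on $x+2B_{\lfloor t\rfloor}$ by a factor $2$) and with Frac (to pass between $\tau^{(j)}_{\lfloor t\rfloor}$ and $\tau^{(j)}_t$ and between $E^{(j)}(x,\rho^{\lfloor t\rfloor})$ and $E^{(j)}(x,\rho^t)$, each up to factor $2$) yields the basic two-sided approximation
\begin{displaymath}
 c^{-1}\,E^{(1)}(x,\rho^t)\,\tau^{(1)}_t(I)\;\leq\;\tau^{(1)}(I)\;\leq\;c\,E^{(1)}(x,\rho^t)\,\tau^{(1)}_t(I)
\end{displaymath}
for every Borel $I\subseteq x+2B_{\lfloor t\rfloor}$, with a deterministic constant $c$. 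In parallel, by \eqref{e:Low} and one further application of Frac we will show that $\tau^{(1)}_t(x+B_{\lfloor t\rfloor+3/4})\leq 2^{-38}\tau^{(1)}_t(x+B_t\setminus B_{\lfloor t\rfloor+3/4})$, so that the restriction $\setminus B_{\lfloor t\rfloor+3/4}$ present in $\mathrm{ShapeRed}^{(1)}(t)$ is harmless. Analogous statements hold for the $(2)$-quantities.

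For $\mathrm{Shape}^{(1)}(t)$ I will apply the basic approximation to the numerator and denominator of each of the four ratios defining $\mathrm{Shape}^{(1)}(t)$. In every case the factor $E^{(1)}(x,\rho^t)$ cancels, so each ratio $\tau^{(1)}(x+I_1)/\tau^{(1)}(x+B_t)$ differs from $\tau^{(1)}_t(x+I_1)/\tau^{(1)}_t(x+B_t)$ only by the constant $c^2$. Using $\tau^{(1)}_t(x+B_t)\in[1,1+2^{-38}]\,\tau^{(1)}_t(x+B_t\setminus B_{\lfloor t\rfloor+3/4})$ and, when the $I_1$ region may intersect $B_{\lfloor t\rfloor+3/4}$, decomposing $\tau^{(1)}_t(x+I_1)=\tau^{(1)}_t(x+I_1\setminus B_{\lfloor t\rfloor+3/4})+\tau^{(1)}_t(x+I_1\cap B_{\lfloor t\rfloor+3/4})$ and bounding the latter by the Low estimate, each ratio reduces to the corresponding $\mathrm{ShapeRed}^{(1)}(t)$ clause. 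The numerical constants $2^{-9},2^{-33},2^7,2^{13}$ in $\mathrm{Shape}^{(1)}(t)$ versus $2^{-4},2^{-38},2^2,2^5$ in $\mathrm{ShapeRed}^{(1)}(t)$ are chosen with precisely this accumulated multiplicative loss in mind; $\mathrm{Shape}^{(2)}(s)$ is treated identically.

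For $\mathrm{Size}(t,s)$ I will take logarithms. From the basic approximation and the first clause of $\mathrm{ShapeRed}^{(1)}(t)$, which gives $\tau^{(1)}_t(x+B_t\setminus B_{\lfloor t\rfloor+3/4})\asymp\rho^t$, one obtains
\begin{displaymath}
\bigl|\log\tau^{(1)}(x+B_t)-\log\bigl(\rho^t E^{(1)}(x,\rho^t)\bigr)\bigr|\leq C\log 2,
\end{displaymath}
and similarly for $\tau^{(2)}(y+B_s)$. Using stationarity $\Var H^{(j)}_\epsilon(x)=\Var H^{(j)}_\epsilon(0)$, the direct computation
\begin{displaymath}
\log\bigl(\rho^t E^{(1)}(x,\rho^t)\bigr)-\log\bigl(\rho^s E^{(2)}(y,\rho^s)\bigr)=X^{(H)}_{t,s}.
\end{displaymath}
The first two clauses of $\mathrm{Scal}(N)$ control $\tau^{(j)}([0,1])$ in terms of $\tau^{(j)}([0,1]\setminus(\cdot+B_N))$ within a factor $2$, and its third clause converts $X^{(H)}_{t,s}$ into $X^{(V)}_{t,s}-X^{(V)}_{N,N}+X^{(H)}_{N,N}$ within $\log 2$; after this substitution the quantity inside the absolute value becomes exactly the expression bounded by $\log 2$ in $\mathrm{SizeRed}(t,s)$. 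Summing the errors gives $|\log\text{ratio}|\leq 11\log 2$, i.e.\ $\mathrm{Size}(t,s)$.

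The only real obstacle is bookkeeping: the constants $2^{-11},2^{11}$ in $\mathrm{Size}$, $2^{-23}$ in $\mathrm{Centre}$, and the various powers of $2$ in the clauses of $\mathrm{ShapeRed}^{(j)}$ must be large enough to absorb the multiplicative distortion produced by converting $\tau^{(j)}$ to $E^{(j)}(x,\rho^t)\tau^{(j)}_t$, the restriction $\setminus B_{\lfloor t\rfloor+3/4}$, the passage from scale $\lfloor t\rfloor$ to scale $t$, and the substitution $X^{(H)}\to X^{(V)}$. No further probabilistic input is needed; this is the content of the lemma.
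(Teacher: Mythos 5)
Your plan follows the paper's proof essentially verbatim: the $E^{(j)}(x,\rho^t)$-factorisation via \eqref{e:Upper-scale} controlled by $\mathrm{Upp}$, the passage between scales $\lfloor t\rfloor$ and $t$ via $\mathrm{Frac}$, the neglect of $B_{\lfloor t\rfloor+3/4}$ via \eqref{e:Low}, the clause-by-clause reduction of $\mathrm{Shape}^{(j)}$ to $\mathrm{ShapeRed}^{(j)}$, and the treatment of $\mathrm{Size}(t,s)$ using the first clause of $\mathrm{ShapeRed}$, the identity $\log(\rho^tE^{(1)}(x,\rho^t))-\log(\rho^sE^{(2)}(y,\rho^s))=X^{(H)}_{t,s}$, and the three parts of $\mathrm{Scal}(N)$ together with $\mathrm{SizeRed}(t,s)$. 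The only difference is cosmetic (working with logarithms rather than multiplicative two-sided bounds), so the proposal is correct and matches the paper's argument.
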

\begin{proof}
From the definition of $\mathrm{Ann}^\prime(x,t,y,s,N)$ we need to show that our list of events imply $\mathrm{Shape}^{(1)}(t)$, $\mathrm{Shape}^{(2)}(s)$ and $\mathrm{Size}(t,s)$. To ease notation we denote $n=\lfloor t\rfloor$ and $k=\lfloor s\rfloor$.

We argue first that $\mathrm{Size}(t,s)$ occurs. Applying \eqref{e:Upper-scale} for $j=1,2$ and \eqref{e:Low} and using $\mathrm{Frac}^{(1)}_n$ and $\mathrm{Frac}^{(2)}_k$ twice we see that
\begin{align*}
&\frac{\tau^{(1)}(x+B_t)}{\tau^{(1)}([0,1])}\frac{\tau^{(2)}([0,1])}{\tau^{(2)}(y+B_s)}\\
&\qquad\qquad\leq2^5\frac{\tau_t^{(1)}(x+B_t\setminus B_{n+7/8})\rho^{-t}}{\tau_s^{(2)}(y+B_s\setminus B_{k+7/8})\rho^{-s}}\exp\big(X_{t,s}^{(H)})\frac{\prod_{i=-1}^{n-1}\mathcal{S}_i^{(1)}(x+B_t)}{\prod_{i=-1}^{k-1}\mathcal{I}_i^{(2)}(y+B_s)}\frac{\tau^{(2)}([0,1])}{\tau^{(1)}([0,1])}\\
&\qquad\qquad\leq 2^9\exp\big(X_{t,s}^{(H)}\big)\frac{\tau^{(2)}([0,1])}{\tau^{(1)}([0,1])}
\end{align*}
where we have used the first parts of $\mathrm{ShapeRed}^{(1)}(t)$ and $\mathrm{ShapeRed}^{(2)}(s)$ to control the first term on the right hand side and $\mathrm{Upp}^{(1)}_{n}\cap\mathrm{Upp}^{(2)}_k$ to control the third term. Then using monotonicity of the measure $\tau^{(1)}$ and the second part of $\mathrm{Scal}(N)$
\begin{align*}
    \frac{\tau^{(2)}([0,1])}{\tau^{(1)}([0,1])}&\leq\frac{\tau^{(2)}([0,1])}{\tau^{(2)}([0,1]\setminus (y+B_N))}\frac{\tau^{(2)}([0,1]\setminus (y+B_N))}{\tau^{(1)}([0,1]\setminus (x+B_N))}\leq 2 \frac{\tau^{(2)}([0,1]\setminus (y+B_N))}{\tau^{(1)}([0,1]\setminus (x+B_N))}.
\end{align*}
Finally we observe that by $\mathrm{SizeRed}(t,s)$ and the third part of $\mathrm{Scal}(N)$
\begin{displaymath}
\exp\big(X_{t,s}^{(H)})\frac{\tau^{(2)}([0,1]\setminus (y+B_N))}{\tau^{(1)}([0,1]\setminus (x+B_N))}\leq 2^2.
\end{displaymath}
(This follows from summing the two terms inside absolute value signs for the stated events and taking the exponential.) Combining the three previous equations shows that
\begin{displaymath}
\frac{\tau^{(1)}(x+B_t)}{\tau^{(1)}([0,1])}\frac{\tau^{(2)}([0,1])}{\tau^{(2)}(y+B_s)}\leq 2^{12}.
\end{displaymath}
An entirely analogous argument applied to the reciprocal of this expression proves the lower bound of $2^{-12}$, verifying that $\mathrm{Size}(t,s)$ holds.

It remains to verify that $\mathrm{Shape}^{(1)}(t)$ occurs. (Verification of $\mathrm{Shape}^{(2)}(s)$ requires only minor notational changes and so is omitted.) From \eqref{e:Frac} and \eqref{e:Low}
\begin{align*}
\frac{\tau_t^{(1)}(x+B_{n+7/8})}{\tau_t^{(1)}(x+B_t\setminus  B_{n+7/8})}\leq \frac{\tau_t^{(1)}(x+B_{n+7/8})}{\tau_t^{(1)}(x+B_{n+5/8}\setminus B_{n+7/8})}\leq 2^2\frac{\tau_n^{(1)}(x+B_{n+7/8})}{\tau_n^{(1)}(x+B_{n+5/8}\setminus B_{n+7/8})}\leq 2^{-30}.
\end{align*}
From \eqref{e:Upper-scale}, \eqref{e:Frac} and the definitions of $\mathrm{Upp}_n^{(1)}$ and $\mathrm{Frac}_n^{(1)}$, for any $I,J\subset(x+2B_n)$ we have 
\begin{displaymath}
2^{-4}\frac{\tau_t^{(1)}(I)}{\tau_t^{(1)}(J)}\leq\frac{\tau^{(1)}(I)}{\tau^{(1)}(J)}\leq 2^4\frac{\tau^{(1)}_t(I)}{\tau^{(1)}_t(J)}.
\end{displaymath}
We use these two facts repeatedly to show that each part of $\mathrm{Shape}^{(1)}(t)$ is implied by the corresponding part of $\mathrm{ShapeRed}^{(1)}(t)$. To simplify notation, we assume below that $x=0$. The proof for general $x\in[0,1]$ follows from shifting every interval by $x$. 

By part 3 of $\mathrm{ShapeRed}^{(1)}(t)$,
\begin{align*}
\frac{\tau^{(1)}(2B_{t+1/4})}{\tau^{(1)}(B_t)}\leq 2^4\frac{\tau^{(1)}_t(2B_{t+1/4})}{\tau^{(1)}_t(B_t)}&\leq 2^4 \frac{\tau^{(1)}_t(2B_{t+1/4})}{\tau^{(1)}_t(B_t\setminus B_{n+7/8})}\\
&\leq 2^4\frac{\tau^{(1)}_t(2B_{t+1/4}\setminus B_{n+7/8})}{\tau^{(1)}_t(B_t\setminus B_{n+7/8})}+2^4\frac{\tau^{(1)}_t(B_{n+7/8})}{\tau^{(1)}_t(B_t\setminus  B_{n+7/8})}\leq 2^{-25}.
\end{align*}
This verifies the second part of $\mathrm{Shape}^{(1)}(t)$.

Similarly by part 2 of $\mathrm{ShapeRed}^{(1)}(t)$, for all $\ell=1,\dots, 8$
\begin{align*}
\frac{\tau^{(1)}(J_{t,\ell})}{\tau^{(1)}(B_t)}\geq 2^{-4}\frac{\tau^{(1)}_t(J_{t,\ell})}{\tau^{(1)}_t(B_t)}\geq 2^{-4}\frac{\tau^{(1)}_t(J_{t,\ell}\setminus B_{n+7/8})}{\tau^{(1)}_t(B_t)}\geq 2^{-5} \frac{\tau^{(1)}_t(J_{t,\ell}\setminus B_{n+7/8})}{\tau^{(1)}_t(B_t\setminus B_{n+7/8})}\geq 2^{-9}
\end{align*}
verifying the first part of $\mathrm{Shape}^{(1)}(t)$.

By part 4 of $\mathrm{ShapeRed}^{(1)}(t)$
\begin{align*}
\frac{\tau^{(1)}(2B_t)}{\tau^{(1)}(B_t)}\leq 2^4\frac{\tau^{(1)}_t(2B_t)}{\tau^{(1)}_t(B_t)}&\leq 2^4\frac{\tau^{(1)}_t(2B_t)}{\tau^{(1)}_t(B_t\setminus B_{n+7/8})}\\
&\leq 2^4\frac{\tau^{(1)}_t(2B_t\setminus B_{n+7/8})}{\tau^{(1)}_t(B_t\setminus B_{n+7/8})}+2^4\frac{\tau^{(1)}_t( B_{n+7/8})}{\tau^{(1)}_t(B_t\setminus B_{n+7/8})}\leq 2^7
\end{align*}
which verifies the third part of $\mathrm{Shape}^{(1)}(t)$.

Finally by part 5 of $\mathrm{ShapeRed}^{(1)}(t)$
\begin{displaymath}
\sum_m\sum_\ell\frac{\tau^{(1)}(I_{m,\ell,t})^2}{\tau^{(1)}(B_t)^2}\leq 2^8\sum_m\sum_\ell\frac{\tau^{(1)}_t(I_{m,\ell,t})^2}{\tau^{(1)}_t(B_t)^2}\leq 2^8\sum_m\sum_\ell\frac{\tau^{(1)}_t(I_{m,\ell,t})^2}{\tau^{(1)}_t(B_t\setminus B_{n+7/8})^2}\leq 2^{13}
\end{displaymath}
which proves the fourth and final part of $\mathrm{Shape}^{(1)}(t)$.
\end{proof}

\subsection{Verifying the sufficient conditions}
We now show that the events justifying our approximation occur sufficiently often. More precisely, we show that with high probability the events $\mathrm{Centre}(N)$ and $\mathrm{Scal}(N)$ occur, as well as the events $\mathrm{Upp}^{(j)}_n$, $\mathrm{Low}^{(j)}_n$ and $\mathrm{Frac}^{(j)}_n$ for many values of $n$. This will justify restricting our attention to the reduced events $\mathrm{ShapeRed}^{(j)}(t)$ and $\mathrm{SizeRed}(t,s)$ which we analyse in the next section.

The main result of this subsection is:
\begin{proposition}\label{p:LDApp}
For all $\rho>0$ sufficiently small, $\delta\in(0,1)$ and $\gamma<\gamma_0(\delta,\rho)$, there exists $C>0$ such that
\begin{displaymath}
    \P\big(\mathrm{Scal}\left(N\right)\cap\mathrm{Centre}(N)\big)>1-C\rho^{(2+3\epsilon)5N}
\end{displaymath}
and
\begin{equation}\label{e:GoodApprox}
\P\left(\mathrm{Approx}^{(j)}(N)\right)>1-C\rho^{(2+3\epsilon)5N}
\end{equation}
for $j=1,2$ and $N>N_0(\rho,\delta,\gamma)$, where
\begin{displaymath}
\mathrm{Approx}^{(j)}(N):=\left\{\sum_{n=1}^{5N}\ind_{\mathrm{Upp}^{(j)}_n\cap\mathrm{Low}_n^{(j)}\cap\mathrm{Frac}_n^{(j)}}>(1-\delta)5N\right\}.
\end{displaymath}
\end{proposition}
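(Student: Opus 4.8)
The plan is to treat each of the five families of events separately, obtaining for each a bound on the probability that it fails, and then assembling these via a union bound. For $\mathrm{Scal}(N)\cap\mathrm{Centre}(N)$ the three components are: (i) the two bounds $\tau^{(j)}(\cdot+B_N)/\tau^{(j)}([0,1])<2^{-1}$; (ii) the oscillation bound $\sup_{t,s\in[N,5N]}|X^{(H)}_{t,s}-X^{(H)}_{N,N}-(X^{(V)}_{t,s}-X^{(V)}_{N,N})|<\log 2$; and (iii) the $\mathrm{Centre}(N)$ bound on $\tau^{(2)}([y,y+\rho^{(1+\epsilon)5N}])/\tau^{(2)}(y+B_{5N+1})$. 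For (i), since $\tau^{(j)}([0,1])$ is a.s.\ positive, $\P(\tau^{(j)}(x+B_N)\ge 2^{-1}\tau^{(j)}([0,1]))$ is bounded, using the Markov/Chebyshev inequality at moment $p\in(1,2/\gamma_0^2)$ and the moment bound of Lemma~\ref{l:MeasureBasic}, by a constant times $\E[\tau^{(j)}(x+B_N)^p]\le C|B_N|^{\zeta_p(\gamma_0)}=C\rho^{N\zeta_p(\gamma_0)}$ (after dividing out by a fixed small power of $\tau^{(j)}([0,1])$, which has negative moments — these follow from standard GMC estimates, or can be circumvented by first conditioning on $\tau^{(j)}([0,1])>c$ on an event of arbitrarily high probability). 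Choosing $\gamma_0$ small makes $\zeta_p(\gamma_0)$ close to $p>2(1+\epsilon)\cdot\tfrac{5}{N}\cdot N/\dots$ — more precisely, we need $N\zeta_p(\gamma_0)>(2+3\epsilon)5N$, i.e.\ $\zeta_p(\gamma_0)>5(2+3\epsilon)$; taking $p$ large (allowed since $p$ ranges up to $2/\gamma_0^2\to\infty$ as $\gamma_0\to 0$) and $\gamma_0$ small gives $\zeta_p(\gamma_0)\approx p$ arbitrarily large, so this holds. The same moment-bound argument handles (iii): $\E[\tau^{(2)}([y,y+\rho^{(1+\epsilon)5N}])^p]\le C\rho^{(1+\epsilon)5N\zeta_p(\gamma_0)}$, while $\tau^{(2)}(y+B_{5N+1})$ is bounded below with overwhelming probability by the lower bound in Lemma~\ref{l:MeasureBasic} ($\tau^{(2)}(y+B_{5N+1})\ge C_1\rho^{c_1(5N+1)}$), and for $\gamma_0$ small $c_1$ can be taken as small as we like while $\zeta_p(\gamma_0)$ is large, so the ratio bound fails with probability $\le C\rho^{(2+3\epsilon)5N}$.

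For component (ii), the process $Y_{t,s}:=X^{(H)}_{t,s}-X^{(V)}_{t,s}$ is Gaussian (a difference of the Gaussian fields $H^{(j)}$ and $V^{(j)}$, plus deterministic drift), and by the construction in Lemma~\ref{l:GFFExistence} and Figure~\ref{Fig:WhiteNoiseDecomp5} the fields $H^{(j)}_{\rho^t}$ and $V^{(j)}_{\rho^t}$ agree up to white noise supported near the real axis; a direct covariance computation shows $\Var[Y_{t,s}-Y_{N,N}]$ is bounded, uniformly in $t,s\in[N,5N]$, by $C\rho^{2N}$ or similar (the discrepancy between $\mathcal H$ and $\mathcal V$ lives in a region of hyperbolic measure $\to 0$ as the height $\rho^t\to 0$). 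By the Borell–TIS inequality, $\P(\sup_{t,s\in[N,5N]}|Y_{t,s}-Y_{N,N}|\ge\log 2)\le 2\exp(-(\log 2-\E\sup)^2/(2\sigma^2))$ where both $\E\sup$ and $\sigma^2$ decay polynomially in $\rho^N$, which is far smaller than $\rho^{(2+3\epsilon)5N}$ for $N$ large. (Continuity/separability needed for Borell–TIS follows from Dudley's theorem exactly as in Lemma~\ref{l:GFFExistence}.)

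For $\mathrm{Approx}^{(j)}(N)$ we must show that at least $(1-\delta)5N$ of the indices $n\in\{1,\dots,5N\}$ lie in $\mathrm{Upp}^{(j)}_n\cap\mathrm{Low}^{(j)}_n\cap\mathrm{Frac}^{(j)}_n$. It suffices to bound $\P(\mathrm{Upp}^{(j)}_n$ fails$)$, $\P(\mathrm{Low}^{(j)}_n$ fails$)$ and $\P(\mathrm{Frac}^{(j)}_n$ fails$)$ each by, say, $\delta/10$ for all $n\ge n_1(\rho,\delta,\gamma)$, and then invoke a large-deviation (Bernstein/Chernoff) estimate for the sum $\sum_n\ind$ over a sequence of events each of high probability — using independence of $\mathrm{Low}^{(j)}_{n_1,k_1}$ and $\mathrm{Low}^{(j)}_{n_2,k_2}$ for $|n_1-n_2|$ large (Figure~\ref{Fig:WhiteNoiseDecomp3}) and the finite-range dependence of the $\mathrm{Upp}$ and $\mathrm{Frac}$ events, so that the indicator sequence is a bounded-range-dependent Bernoulli-like sequence to which standard block large-deviation bounds apply, yielding a bound exponential in $N$, hence $\le C\rho^{(2+3\epsilon)5N}$ for $\gamma$ small. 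The bounds on the individual failure probabilities: $\mathrm{Upp}^{(j)}_{n,k}$ controls $\sup_{u\in x+2B_n}|\log E^{(1)}(u,\rho^{k+1},\rho^k)-\log E^{(1)}(x,\rho^{k+1},\rho^k)|$, which is the supremum of a centred Gaussian process indexed by a set of diameter $\rho^n$ at scale $\rho^k$; its standard deviation is of order $(\rho^{n-k})^{1/2}$ or so, and one applies Borell–TIS against the threshold $2^{k-n}\log 2$, summing the geometric series over $k$. For $\mathrm{Frac}^{(j)}_n$, similarly, $\sup_{t\in[n,n+1/2],u\in x+2B_n}|\log E^{(1)}(u,\rho^t,\rho^n)|$ is a Gaussian sup with small variance (height between $\rho^{n+1/2}$ and $\rho^n$), handled by Borell–TIS against $\log 2$. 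For $\mathrm{Low}^{(j)}_{n,k}$, $\tau^{(j)}_n(x+B_{k+3/4}\setminus B_{k+7/4})$ and $\tau^{(j)}_n(x+B_{n+1/2}\setminus B_{n+3/4})$ are GMC masses of intervals of length $\sim\rho^{k}$ and $\sim\rho^{n}$ respectively (truncated white noise); using the moment bound of Lemma~\ref{l:MeasureBasic} for the numerator (at a high moment $p$, giving $\rho^{k\zeta_p}$) against a lower bound for the denominator (on a high-probability event, via Lemma~\ref{l:MeasureBasic}, of order $\rho^{n c_1}$), one gets that $\mathrm{Low}^{(j)}_{n,k}$ fails with probability $\le C2^{(n-k)p}\rho^{(k-n)\zeta_p(\gamma_0)}\cdot$(small), which is summable over $k\ge n$ and small for $n$ large once $\zeta_p(\gamma_0)$ is large, i.e.\ $\gamma_0$ small.

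\emph{Main obstacle.} The delicate point is the last one: to get the sum $\sum_{n=1}^{5N}\ind$ to exceed $(1-\delta)5N$ with probability $1-C\rho^{(2+3\epsilon)5N}$ — i.e.\ an error \emph{exponentially small in $N$}, not merely $o(1)$ — we need the individual per-$n$ failure probabilities to be smaller than some absolute $\delta$-dependent constant \emph{and} we need a genuine large-deviation (not just first-moment) bound for a dependent Bernoulli sum. The dependence range of $\mathrm{Upp},\mathrm{Low},\mathrm{Frac}$ in $n$ is bounded (for $\rho$ small enough, $\le 2$ or $3$ scales), so grouping consecutive indices into blocks of fixed length makes the blocks independent, and the standard Chernoff bound for sums of independent bounded variables gives the required exponential rate $\exp(-c\delta N)$; the price is that the per-block failure probability must be made small, which forces the Gaussian-sup and GMC-moment thresholds above to beat a fixed constant, and this is exactly where $\gamma<\gamma_0(\delta,\rho)$ enters (small $\gamma_0$ makes $\zeta_p(\gamma_0)$ and the relevant Gaussian variances favourable). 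Once $\gamma_0$ is chosen so that $\exp(-c\delta N)<C\rho^{(2+3\epsilon)5N}$ for $N$ large, the proof concludes by a union bound over the finitely many event families.
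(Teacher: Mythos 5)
Your treatment of $\mathrm{Scal}(N)\cap\mathrm{Centre}(N)$ is on the right track and close to the paper's argument: the paper uses a cleaner ratio-moment bound (Lemma~\ref{l:momentbound}, $\P(\tau(J)/\tau(I)>\lambda)\le C\lambda^{-q_+}(|J|/|I|)^{q_-}$) in place of your two-sided Markov/lower-bound split, and handles the oscillation term $X^{(H)}-X^{(V)}$ via a dedicated comparison lemma (Lemma~\ref{l:MeasureComp}) followed by Borell--TIS, but the substance is the same. Your identification of the ``main obstacle'' is also correct.

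Where the proposal breaks down is in the claim that ``the dependence range of $\mathrm{Upp},\mathrm{Low},\mathrm{Frac}$ in $n$ is bounded (for $\rho$ small enough, $\le 2$ or $3$ scales), so grouping consecutive indices into blocks of fixed length makes the blocks independent.'' This is true for $\mathrm{Frac}_n$ (it is $1$-dependent), but false for $\mathrm{Upp}_n$ and $\mathrm{Low}_n$. By definition $\mathrm{Upp}_n=\bigcap_{k=-1}^{n-1}\mathrm{Upp}_{n,k}$, and the sub-event $\mathrm{Upp}_{n,k}$ depends on the white noise in the horizontal strip $\{\rho^{k+1}\le y\le\rho^k\}$; thus $\mathrm{Upp}_n$ depends on \emph{every} scale coarser than $\rho^n$, and $\mathrm{Upp}_n,\mathrm{Upp}_m$ share all strips above $\rho^{\min(n,m)}$ no matter how far apart $n$ and $m$ are. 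Symmetrically, $\mathrm{Low}_n=\bigcap_{k\ge n}\mathrm{Low}_{n,k}$ depends on the white noise down to arbitrarily fine scales, and $\mathrm{Low}_n,\mathrm{Low}_m$ share that tail for all $n,m$. So a block/Chernoff argument does not apply directly: you cannot make the blocks independent by spacing them out, and one must quantify how the long-range dependence decays. The paper does this in Lemmas~\ref{l:LDLow} and~\ref{l:LDUpp} by a ``cluster'' union bound: for a large bad set $\{n_1<\dots<n_M\}$ one writes $\P\bigl(\bigcap_i\mathrm{Low}_{n_i}^c\bigr)\le\sum_{r}\sum_{\ell_1,\dots,\ell_r}\P\bigl(\bigcap_k\mathrm{Low}^c_{n_{i_k},n_{i_k}+\ell_k}\bigr)$, choosing the indices $i_k$ greedily so that the sub-events $\mathrm{Low}^c_{n_{i_k},n_{i_k}+\ell_k}$ land on genuinely disjoint white-noise regions (using $k_1+2\le n_2$), exploits the geometric decay of $\P(\mathrm{Low}_{n,k}^c)$ in $k-n$ (resp.\ the double-exponential decay of $\P(\mathrm{Upp}_{n,k}^c)$ in $n-k$ from Lemma~\ref{l:upscale}), and controls $M$ through the constraint $\{n_1,\dots,n_M\}\subset\bigcup_k[n_{i_k},n_{i_k}+\ell_k+1]$. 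The final exponential rate in $M$ then comes from a geometric-series estimate, not from a Chernoff bound, and the smallness of $\gamma$ is used to make the per-cluster constants summable. Without this replacement your large-deviation step for $\mathrm{Upp}$ and $\mathrm{Low}$ would not go through.
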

In proving this result, we will make repeated use of the two following estimates: a bound on the supremum of continuous Gaussian processes and a bound on ratios for the measures $\tau^{(j)}$.

\begin{lemma}\label{l:BTIS}
Let $Y_t$ be a continuous, centred Gaussian process indexed by $T:=[0,\ell_1]\times\dots\times[0,\ell_d]$ where $\ell_1\leq\dots\leq\ell_d$. Let $\sigma_T^2=\sup_{t\in T}\E[Y_t^2]$ and suppose that $Y_{t_0}=0$ almost surely for some $t_0\in T$. If $\E[\lvert Y_t-Y_s\rvert^2]\leq L\lvert t-s\rvert^\alpha$ for some $\alpha\in(0,1]$ and all $t,s\in T$, then for all $u\geq 0$
\begin{displaymath}
\P\Big(\sup_{t\in T}\lvert Y_t\rvert>u\Big)<C_{d,\alpha}\Big(1+L^{d/\alpha}\ell_d^d\Big(\frac{u}{\sigma_T^2}\Big)^{2d/\alpha}\Big)e^{-u^2/2\sigma_T^2}
\end{displaymath}
where $C_{d.\alpha}>0$ depends only on $d$ and $\alpha$.
\end{lemma}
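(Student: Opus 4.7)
The plan is to cover $T$ by a $u$-dependent grid of small Euclidean balls and apply the Borell-TIS inequality separately on each ball. The key point is that on a ball small enough the expected supremum of $Y$ is negligible compared to $u$, so Borell-TIS yields the sharp Gaussian rate $e^{-u^2/(2\sigma_T^2)}$ on each ball; a union bound over the balls then produces the polynomial prefactor in the statement.

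I would fix the mesh $\delta := c\,\sigma_T^{4/\alpha}L^{-1/\alpha}u^{-2/\alpha}$ for a small constant $c=c(d,\alpha)$ to be chosen, and cover $T$ by $|G| \leq C_d(1 + \ell_d/\delta)^d \leq C_{d,\alpha}\bigl(1 + L^{d/\alpha}\ell_d^d (u/\sigma_T^2)^{2d/\alpha}\bigr)$ Euclidean balls $B_g$ of radius $\delta$ centred at points $g$. On each $B_g$ the hypothesis $\E[|Y_t-Y_s|^2] \leq L|t-s|^\alpha$ dominates the intrinsic metric by $d_Y(s,t) \leq \sqrt{L}|s-t|^{\alpha/2}$, so $(B_g, d_Y)$ has diameter at most $\sqrt{L}\delta^{\alpha/2} = c^{\alpha/2}\sigma_T^2/u$, and, using $\E[Y_g]=0$, Dudley's entropy integral yields
\begin{displaymath}
\E\Bigl[\sup_{t\in B_g} Y_t\Bigr] \leq \E\Bigl[\sup_{t\in B_g}(Y_t-Y_g)\Bigr] \leq C_{d,\alpha}\sqrt{L}\delta^{\alpha/2}.
\end{displaymath}
For $c$ small enough this mean is at most $\sigma_T^2/(4u)$, and since $\sup_{B_g}\Var(Y_t)\leq\sigma_T^2$, the Borell-TIS inequality applied to the centred Gaussian process $\{Y_t\}_{t\in B_g}$ gives
\begin{displaymath}
\P\Bigl(\sup_{t\in B_g} Y_t > u\Bigr) \leq \exp\Bigl(-\tfrac{(u - \sigma_T^2/(4u))^2}{2\sigma_T^2}\Bigr) \leq e^{1/4}\,e^{-u^2/(2\sigma_T^2)},
\end{displaymath}
using $(u - \sigma_T^2/(4u))^2 \geq u^2 - \sigma_T^2/2$. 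A union bound over the $|G|$ balls, combined with the same argument applied to $-Y$ to pass from $\sup Y_t$ to $\sup|Y_t|$, then yields the claimed bound. The regime $u \leq \sigma_T/2$, in which Borell-TIS does not directly apply, is handled trivially because $e^{-u^2/(2\sigma_T^2)}$ is bounded below by an absolute constant and the bound is vacuous upon enlarging $C_{d,\alpha}$.

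\textbf{Main obstacle.} The delicate technical point is the calibration of the mesh. Choosing $\delta$ too large makes the Dudley-controlled mean $\sqrt{L}\delta^{\alpha/2}$ contribute an $O(\sigma_T)$ term to the Borell-TIS exponent and thus introduces a non-polynomial factor $e^{u/\sigma_T}$ into the final prefactor; choosing $\delta$ too small makes $|G|$ exceed the prescribed polynomial order. The scaling $\delta \propto \sigma_T^{4/\alpha}/(L^{1/\alpha}u^{2/\alpha})$ is the unique choice for which $\sqrt{L}\delta^{\alpha/2}\sim\sigma_T^2/u$ contributes only a constant to the Borell-TIS exponent, while simultaneously yielding a cardinality $|G|$ of exactly the polynomial form $L^{d/\alpha}\ell_d^d(u/\sigma_T^2)^{2d/\alpha}$; verifying this balance and the entropy bound $N(B_g, d_Y, \epsilon) \leq C_d(1+\sqrt{L}\delta^{\alpha/2}/\epsilon)^{2d/\alpha}$ feeding Dudley's integral is the principal computation of the proof.
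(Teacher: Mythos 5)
Your proof is correct, but it takes a more self-contained route than the paper. The paper's proof is essentially a reduction to a quotable result: it rescales $Y$ to a process on $[0,1]^d$, checks that the intrinsic metric is dominated by $\lvert t-s\rvert^{\alpha/2}$ so that the metric-entropy condition holds with covering numbers of order $\delta^{-2d/\alpha}$, uses the hypothesis $Y_{t_0}=0$ only to guarantee $\sigma_T^2\leq C_d L\ell_d^\alpha$, and then invokes the Borell--TIS-based supremum bound in Adler and Taylor (Theorem~4.1.2) to read off the polynomial-times-Gaussian estimate. You instead reprove that black-box bound from scratch: a $u$-dependent Euclidean covering at mesh $\delta\propto\sigma_T^{4/\alpha}L^{-1/\alpha}u^{-2/\alpha}$, a local Dudley estimate showing the expected supremum on each ball is $O(\sigma_T^2/u)$, Borell--TIS on each ball at level $u$, and a union bound whose cardinality produces exactly the prefactor $L^{d/\alpha}\ell_d^d(u/\sigma_T^2)^{2d/\alpha}$; your calibration of $\delta$, the entropy bound feeding Dudley, the handling of $u\leq\sigma_T/2$, and the two-sided bound via $-Y$ are all sound. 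What your route buys is transparency and a slightly stronger statement: the argument never uses $Y_{t_0}=0$ (that hypothesis enters the paper's proof only to fit the normalisation of the cited theorem), and it makes explicit how the polynomial prefactor arises. What the paper's route buys is brevity: three lines plus a citation, which is all that is needed since the lemma is only an auxiliary tool.
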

\begin{proof}
We consider the normalised process $\Tilde{Y}_t:=(L\ell_d^\alpha)^{-1/2}Y_{(\ell_1 t_1,\dots,\ell_dt_d)}$ indexed by $[0,1]^d$. Defining the intrinsic metric $d(t,s)=\E[\lvert \Tilde{Y}_t-\Tilde{Y}_s\rvert^2]^{1/2}$ we have $d(t,s)\leq\lvert t-s\rvert^{\alpha/2}$. Therefore the minimal number of intrinsic metric balls of radius $\delta\in(0,1]$ required to cover $[0,1]^d$ is at most $C_{d}\delta^{-2d/\alpha}$. Since $Y_{t_0}=0$ we have $\tilde{\sigma}_T^2:=\sup_{t\in[0,1]^d}\E[\tilde{Y}_t^2]\leq d^{\alpha/2}$. These conditions allows us to apply the form of the Borell-TIS inequality stated in \cite[Theorem~4.1.2]{at07}. To be precise, using the notation of \cite{at07} we set $A:=\max\{d^{\alpha/4},C_d^{1/\alpha}\}$ and $\epsilon_0:=\min\{\tilde{\sigma}_T,1\}$. The bound follows for $\tilde{u}:=L^{-1/2}\ell_d^{-\alpha/2}u>\tilde{\sigma}_T^2(1+\sqrt{2d/\alpha})/\epsilon_0$ by the cited result whilst for the remaining values of $\tilde{u}$ we bound the probability by one and use the fact that 
\begin{displaymath}
    \exp\Big(-\frac{\tilde{u}^2}{2\tilde{\sigma}_T^2}\Big)\geq \exp\Big(-\frac{\tilde{\sigma}_T^2(1+\sqrt{2d/\alpha})^2}{2\epsilon_0^2}\Big)\geq\exp\Big(-\frac{1}{2}d^{\alpha/2}(1+\sqrt{
    2d/\alpha})^2\Big).
\end{displaymath}
\end{proof}

\begin{lemma}\label{l:momentbound}
Given $q_+>q_->0$, there exists $\gamma_0,C>0$ such that the following holds: for all $\gamma\leq\gamma_0$, $j=1,2$ and all $J,I\subset[-1/4,1/4]$ with $\lvert J\rvert\leq \lvert I\rvert$ and mutual distance $\leq 100\lvert I\rvert$
\begin{displaymath}
\P\left(\frac{\tau^{(j)}(J)}{\tau^{(j)}(I)}>\lambda\right)\leq C\lambda^{-q_+}\left(\frac{\lvert J\rvert}{\lvert I\rvert}\right)^{q_-}.
\end{displaymath}
\end{lemma}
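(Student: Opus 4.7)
The plan is to combine Markov's inequality with the hyperbolic white noise decomposition at scale $|I|$, reducing the bound to a unit-scale ratio estimate handled by Cauchy--Schwarz and Lemma~\ref{l:MeasureBasic}. By Markov with exponent $q_+$,
\begin{displaymath}
\P\!\left(\tfrac{\tau^{(j)}(J)}{\tau^{(j)}(I)}>\lambda\right) \leq \lambda^{-q_+}\,\E\!\left[\big(\tau^{(j)}(J)/\tau^{(j)}(I)\big)^{q_+}\right],
\end{displaymath}
so it suffices to show $\E\!\left[\big(\tau^{(j)}(J)/\tau^{(j)}(I)\big)^{q_+}\right] \leq C\,(|J|/|I|)^{q_-}$ for $\gamma\leq\gamma_0$ small enough.

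For this expectation, I would choose $t$ with $\rho^t\asymp |I|$ and use the approximate factorization $\tau^{(j)}(A)\approx\tau_t^{(j)}(A)\cdot E^{(j)}(c_A,\rho^t)$ for $A\in\{J,I\}$, as in the heuristics opening Section~\ref{s:Decompose}. Since $|c_J-c_I|\leq 100|I|\asymp\rho^t$, the upper-scale exponentials cancel up to a single factor $\exp(\gamma(H^{(j)}_{\rho^t}(c_J)-H^{(j)}_{\rho^t}(c_I)))$, which is log-normal with variance bounded uniformly in $t$ by the covariance structure of $H^{(j)}_\epsilon$, and is independent of the lower-scale measures $\tau_t^{(j)}$. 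Taking $q_+$-th moments reduces the task to bounding $\E[(\tau_t^{(j)}(J)/\tau_t^{(j)}(I))^{q_+}]$. The (approximate) scale-invariance $\tau_t^{(j)}(\rho^t\,\cdot)\overset{d}{\approx}\rho^t\,\tilde\tau^{(j)}$, which is exact for the companion measure $\nu_t^{(j)}$ defined in Section~\ref{s:Decompose}, then transfers the problem to unit scale: with $I'=\rho^{-t}I$ and $J'=\rho^{-t}J$, we have $|I'|\asymp 1$ and $|J'|=|J|/|I|$, and it suffices to control $\E[(\tilde\tau^{(j)}(J')/\tilde\tau^{(j)}(I'))^{q_+}]$.

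At unit scale, Cauchy--Schwarz and Lemma~\ref{l:MeasureBasic} give
\begin{displaymath}
\E\!\left[\tilde\tau^{(j)}(J')^{q_+}\,\tilde\tau^{(j)}(I')^{-q_+}\right] \leq \E[\tilde\tau^{(j)}(J')^{2q_+}]^{1/2}\,\E[\tilde\tau^{(j)}(I')^{-2q_+}]^{1/2} \leq C\,|J'|^{\zeta_{2q_+}(\gamma_0)/2},
\end{displaymath}
where the second factor is bounded because negative moments of the GMC mass of a unit-scale interval are finite in the subcritical regime (classical Kahane-type bounds; cf.\ the discussion in \cite{ajks}). Since $\zeta_{2q_+}(\gamma_0)/2 = q_+ - \gamma_0^2\,q_+(2q_+-1)/2$ tends to $q_+>q_-$ as $\gamma_0\to 0$, fixing $\gamma_0$ small gives exponent at least $q_-$. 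The main obstacle is the clean justification of the two approximations in the middle paragraph; both incur only multiplicative errors of the form $\exp(O(\gamma^2))$ which can be absorbed by a further small shrinkage of $\gamma_0$, entirely in the spirit of the ``correlation events'' machinery developed in Section~\ref{s:Decompose} (and which, at this expectation-level, does not require the full strength of that machinery).
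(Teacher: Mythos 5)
Your overall skeleton (Markov with exponent $q_+$, reduce to a ratio moment, split numerator and denominator by Cauchy--Schwarz using positive multifractal moments and finite negative moments, then shrink $\gamma_0$ so that the exponent $q_+-O(\gamma_0^2)$ beats $q_-$) is sound, and it is in fact how the bound is ultimately closed. But the middle step, which you yourself flag as the main obstacle, contains a genuine error as stated: the cost of replacing $E^{(j)}(u,\rho^t)$ by $E^{(j)}(c_A,\rho^t)$ for $u$ ranging over an interval of length comparable to $|I|\asymp\rho^t$, and likewise the cost of trading $\tau^{(j)}_t$ for the exactly scaling measure $\nu^{(j)}_t$, are \emph{not} deterministic factors of size $\exp(O(\gamma^2))$. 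They are random log-normal factors of the form $\exp\bigl(\gamma\,\sup_{u}\lvert H^{(j)}_{\rho^t}(u)-H^{(j)}_{\rho^t}(c_A)\rvert\bigr)$, where the Gaussian supremum has variance of order one uniformly in $t$ (this is exactly the computation underlying Lemma~\ref{l:upscale}: $\E[(H_{\rho^t}(u)-H_{\rho^t}(c))^2]\leq 2\rho^{-t}\lvert u-c\rvert$, which is $O(1)$ but not $O(\gamma^2)$ on the relevant interval), and similarly a factor controlled by the field $U$ of Lemma~\ref{l:MeasureComp} for the $\tau_t$ versus $\nu_t$ comparison. Such factors cannot be absorbed merely by shrinking $\gamma_0$; to make your reduction rigorous you must observe that these error factors depend only on the coarse field (hence are independent of the fine-scale measure), have all moments bounded uniformly in $t$ and in $\gamma\leq\gamma_0$ (Borell--TIS, as in Lemma~\ref{l:BTIS}), and then pay an extra H\"older to decouple them, which perturbs the exponent on $\lvert J\rvert/\lvert I\rvert$ only by $O(\gamma_0^2)$. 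You would also need the positive and negative moment bounds at unit scale for the rescaled measure $\tilde\tau^{(j)}$ (or $\nu_\star$), which are not literally Lemma~\ref{l:MeasureBasic} but follow from the corresponding estimates in \cite{ajks}.

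For comparison, the paper avoids the decomposition entirely: it quotes the $L^{q_+}$ bound $\|\nu(J)/\nu(I)\|_{q_+}\leq C(\lvert J\rvert/\lvert I\rvert)^{\zeta_p(\gamma)/p}$ for the exactly scale-invariant measure (Lemma~4.4 of \cite{ajks}), transfers it to $\tau^{(j)}$ via the global comparison $G^{-1}\nu\leq\tau\leq G\nu$ with $G$ having all moments and independent of $\gamma$ (Lemma~3.6 of \cite{ajks}), applies Cauchy--Schwarz to separate $G$ from the $\nu$-ratio, and finishes with Markov; the $\gamma$-dependence then enters only through $\zeta_p(\gamma)/p$, exactly as in your exponent count. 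Your route amounts to re-deriving the cited ratio bound from scratch; it can be made to work, but only after the error control described above is supplied, and at that point it is considerably longer than the citation-based argument.
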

\begin{proof}
For the proof we denote $\nu=\nu^{(j)}$ (recall the definition in \eqref{e:MeasureDef2}) and $\tau=\tau^{(j)}$ since the index $j$ plays no role. The proof of \cite[Lemma~4.4]{ajks} shows that for any $\gamma<\sqrt{2}$, $1<q_+<p<2/\gamma^2$ and $I,J$ as above
\begin{equation}\label{e:MeasureRatio}
\left\|\frac{\nu(J)}{\nu(I)}\right\|_{q_+}\leq C\left(\frac{\lvert J\rvert}{\lvert I\rvert}\right)^{\zeta_p(\gamma)/p}
\end{equation}
where $\|X\|_q:=\E[\lvert X\rvert^q]^{1/q}$, $C$ depends only on $q_+$ and $p$ and we recall that
\begin{displaymath}
\zeta_p(\gamma)=p\left(1-\frac{p-1}{2}\gamma^2\right)
\end{displaymath}
denotes the multifractal spectrum of the measures. Lemma 3.6 of \cite{ajks} states that there exists a positive random variable $G$ with polynomial moments of all orders such that $G^{-1}\nu(\cdot)\leq\tau(\cdot)\leq G\nu(\cdot)$. Moreover $G$ does not depend on $\gamma$. We see then by H\"older's inequality that for $\gamma$ sufficiently small and $p>2q_+$
\begin{displaymath}
\E\left[\left(\frac{\tau(J)}{\tau(I)}\right)^{q_+}\right]\leq \E[G^{4q_+}]^{1/2}\E\left[\left(\frac{\nu(J)}{\nu(I)}\right)^{2q_+}\right]^{1/2}\leq C^\prime\left(\frac{\lvert J\rvert}{\lvert I\rvert}\right)^{q_+\zeta_p(\gamma)/p}.
\end{displaymath}
By Markov's inequality
\begin{displaymath}
\P\left(\frac{\tau(J)}{\tau(I)}>\lambda\right)\leq C^\prime\lambda^{-q_+}\left(\frac{\lvert J\rvert}{\lvert I\rvert}\right)^{q_+(1-\frac{p-1}{2}\gamma^2)}.
\end{displaymath}
After restricting $\gamma_0$ sufficiently small, this exponent matches (or exceeds) the desired one.
\end{proof}

We can now give a quantitative statement that the increments of $\nu_t$ and $\tau_t$ are close when $t$ is large:

\begin{lemma}\label{l:MeasureComp}
    For $0<\delta\leq r\leq 1/4$ and $t\in[0,1]$ let
    \begin{displaymath}
        U_\delta^r(t):=H_\delta^{(1)}(t)-H_r^{(1)}(t)-\big(V_\delta^{(1)}(t)-V_r^{(1)}(t)\big).
    \end{displaymath}
    Then there exist absolute constants $c_0,C_1>0$ such that 
    \begin{displaymath}
        \sup_{\delta\in(0,r]}\Var[U_\delta^r(0)]\in[c_0r^2,r^2]
    \end{displaymath}
    and for $u\geq 0$
    \begin{displaymath}
        \P\left(\sup_{\delta\in(0,r]}\sup_{t\in[0,1]}\lvert U_\delta^r(t)\rvert>u\right)\leq C_1\big(1+u^6/r^{12}\big)e^{-u^2/2r^2}.
    \end{displaymath}
\end{lemma}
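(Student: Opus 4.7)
The plan is to write $U_\delta^r(t)$ as the white noise of a single explicit set, then apply the Borell--TIS-style bound of Lemma~\ref{l:BTIS} to the resulting Gaussian process.

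\textbf{Step 1 (Explicit representation).} From the definitions of $H_\epsilon^{(1)}$ and $V_\xi^{(1)}$ one gets the identities $H_\delta^{(1)}(t)-H_r^{(1)}(t)=W^{(1)}((\mathcal{H}\cap\{\delta\le y<r\})+t)$ and $V_\delta^{(1)}(t)-V_r^{(1)}(t)=W^{(1)}((\mathcal{V}\cap\{\delta\le y<r\})+t)$. Using $\tan u\ge u$ for $u\in[0,\pi/4]$ one checks that at every height $y\in[0,1/2]$ the horizontal slice of $\mathcal{H}$ is contained in that of $\mathcal{V}$; since $r\le 1/4$ this gives
\begin{equation*}
U_\delta^r(t)\;=\;-\,W^{(1)}(S_{\delta,r}+t),\qquad S_{\delta,r}:=(\mathcal{V}\setminus\mathcal{H})\cap\{\delta\le y<r\}.
\end{equation*}
A Taylor expansion of $(2/\pi)\arctan(\pi y/2)$ shows the slice of $\mathcal{V}\setminus\mathcal{H}$ at height $y\in[0,1/2]$ has horizontal length $L(y):=y-(2/\pi)\arctan(\pi y/2)$ satisfying $c\,y^3\le L(y)\le y^3$ for absolute constants.

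\textbf{Step 2 (Variance bound).} Because $S_{\delta,r}\subset\{|x|\le r/2\}\subset\{|x|\le 1/8\}$ is far from its integer translates, the formula for $\Var(W(\cdot))$ simplifies to $\lambda$, so
\begin{equation*}
\Var[U_\delta^r(0)]=\lambda(S_{\delta,r})=\int_\delta^r\frac{L(y)}{y^2}\,dy.
\end{equation*}
The bounds on $L$ yield $\Var[U_\delta^r(0)]\in [c(r^2-\delta^2),\,(r^2-\delta^2)]$, and taking $\delta\to 0^+$ (which maximises the variance) gives the claimed range $[c_0r^2,r^2]$.

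\textbf{Step 3 (H\"older increments).} We view $(\delta,t)\mapsto U_\delta^r(t)$ as a Gaussian process on $T=[0,r]\times[0,1]$, noting that $U_0^r(t)$ is well defined because the cancellation between $H$ and $V$ makes the underlying set have finite $\lambda$-measure, even though $H_0$ and $V_0$ individually do not exist. For the $\delta$-direction, with $\delta\le\delta'\le r$,
\begin{equation*}
\E[(U_\delta^r(t)-U_{\delta'}^r(t))^2]=\int_\delta^{\delta'}\frac{L(y)}{y^2}\,dy\le r(\delta'-\delta).
\end{equation*}
For the $t$-direction, using shift-invariance of $\lambda$,
\begin{equation*}
\E[(U_\delta^r(t)-U_\delta^r(t'))^2]=2\lambda\bigl(S_{\delta,r}\setminus(S_{\delta,r}+\Delta)\bigr),\qquad \Delta:=|t-t'|.
\end{equation*}
At each height $y$, $S_{\delta,r}$ is a union of two intervals, so the one-sided set difference there has length at most $\min(2\Delta,L(y))$. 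Splitting the integral at $y^{*}=(2\Delta)^{1/3}$ and using $L(y)\le y^3$,
\begin{equation*}
\int_\delta^r\frac{\min(2\Delta,L(y))}{y^2}\,dy\le\int_0^{y^{*}}y\,dy+\int_{y^{*}}^r\frac{2\Delta}{y^2}\,dy\le C\,\Delta^{2/3}.
\end{equation*}
Combining both directions with $|\delta-\delta'|\le r\le 1$, one obtains
\begin{equation*}
\E\bigl[(U_{\delta,t}-U_{\delta',t'})^2\bigr]\le L\bigl(|\delta-\delta'|+|t-t'|\bigr)^{2/3}
\end{equation*}
for an absolute constant $L$. This gives a continuous modification (for which the preceding identities are pointwise valid).

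\textbf{Step 4 (Apply Lemma~\ref{l:BTIS}).} Since $U_r^r(\cdot)\equiv 0$, we may take $t_0=(r,0)$. Applying Lemma~\ref{l:BTIS} with $d=2$, $\alpha=2/3$, $\ell_1=r$, $\ell_2=1$ and $\sigma_T^2\in[c_0r^2,r^2]$ yields
\begin{equation*}
\P\Bigl(\sup_{(\delta,t)\in T}|U_\delta^r(t)|>u\Bigr)\le C\Bigl(1+L^{3}\bigl(u/\sigma_T^2\bigr)^{6}\Bigr)e^{-u^2/(2\sigma_T^2)}.
\end{equation*}
Replacing $\sigma_T^2$ by its upper bound $r^2$ in the exponential and by its lower bound $c_0r^2$ inside the prefactor (valid since the whole upper bound is increasing in $1/\sigma_T^2$) gives, after a constant rescaling $u\mapsto C_1u$, the claimed estimate $C_2(1+u^6/r^{12})e^{-u^2/2r^2}$.

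\textbf{Main obstacle.} The delicate point is the H\"older exponent $2/3$ in the $t$-direction, which is dictated by the cubic vanishing $L(y)\asymp y^3$ at $y=0$; getting the sharp $2/3$ (rather than a worse exponent that would damage the exponent in the prefactor $u^6/r^{12}$) requires splitting the shift-symmetric-difference integral at the correct scale $y\sim\Delta^{1/3}$.
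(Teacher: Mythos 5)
Your proof is correct and follows essentially the same route as the paper: the variance is computed as the hyperbolic area of $(\mathcal{V}\setminus\mathcal{H})$ between heights $\delta$ and $r$ with the cubic slice-length bound from the $\arctan$ expansion, the increments are estimated with exponent $1$ in $\delta$ and $2/3$ in $t$ by splitting at $y\sim\Delta^{1/3}$, and the conclusion follows from Lemma~\ref{l:BTIS} with $\alpha=2/3$. Your explicit representation $U_\delta^r(t)=-W^{(1)}(S_{\delta,r}+t)$ and the BTIS parameter bookkeeping are just slightly more detailed versions of what the paper does implicitly.
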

\begin{proof}
    Recalling that $\lambda$ denotes the hyperbolic measure on the upper half plane, by definition of the fields $H^{(1)}$ and $V^{(1)}$
    \begin{displaymath}
        \Var[U_\delta^r(0)]=\lambda\Big((\mathcal{V}\setminus\mathcal{H})\cap\{(a,b)\in\H\;|\;\delta\leq b\leq r\}\Big)=\int_\delta^r\frac{2}{b^2}\Big(\frac{b}{2}-\frac{1}{\pi}\arctan(\pi b/2)\Big)\;db.
    \end{displaymath}
    Using the inequality
    \begin{displaymath}
        \frac{\pi^2b^3}{24}-\frac{\pi^4b^5}{160}\leq\frac{b}{2}-\frac{\arctan(\pi b/2)}{\pi}\leq \frac{\pi^2b^3}{24}
    \end{displaymath}
     which follows from the series expansion of $\arctan(\cdot)$, the latter integral is at most
     \begin{displaymath}
         \int_\delta^r b\;db=(r^2-\delta^2)/2\leq r^2/2
     \end{displaymath}
     and at least
     \begin{displaymath}
         \int_\delta^r \frac{\pi^2}{12}b-\frac{\pi^4}{80}b^3\;db\geq\frac{\pi^2}{24}(r^2-\delta^2)-\frac{\pi^4}{320}(r^4-\delta^4)\geq \big(\frac{\pi^2}{24}-\frac{\pi^2}{32}\big)(r^2-\delta^2)
     \end{displaymath}
     proving the first statement of the lemma.

     Aiming towards Lemma~\ref{l:BTIS}, for $0<\eta\leq\delta\leq r$ and $0\leq s\leq t\leq 1$
     \begin{equation}\label{e:MeasureComp1}
         \E\left[(U^r_\delta(t)-U^r_\eta(s))^2\right]=\E\left[(U^r_\delta(t)-U^r_\delta(s))^2\right]+\E\left[(U^r_\delta(s)-U^r_\eta(s))^2\right]
     \end{equation}
     using the fact that these terms depend on distinct regions of the white noise. By stationarity and our previous argument
     \begin{equation}\label{e:MeasureComp2}
         \E\left[(U^r_\delta(s)-U^r_\eta(s))^2\right]=\E[U^\delta_\eta(0)^2]\leq \delta-\eta.
     \end{equation}
    Similarly by definition of the white noise $W$
     \begin{equation}\label{e:MeasureComp3}
         \E\left[(U^r_\delta(t)-U^r_\delta(s))^2\right]=\lambda\left((\mathcal{V}\setminus\mathcal{H})\Delta(\mathcal{V}\setminus\mathcal{H}+t-s)\cap\{(a,b)\;|\;\delta\leq b\leq r\}\right)
     \end{equation}
     where $\Delta$ denotes the symmetric difference. Using the inequality for $\arctan(\cdot)$ above, the one-dimensional measure of
     \begin{displaymath}
         (\mathcal{V}\setminus\mathcal{H})\Delta(\mathcal{V}\setminus\mathcal{H}+t-s)\cap\{(a,b)\;|\;b=b_0\}
     \end{displaymath}
     is bounded above by $\min\{2(t-s),2b_0^3\}$ (see Figure~\ref{Fig:WhiteNoiseDecomp5}). Note that since $r\leq 1/4$ the periodicity of the white noise plays no role. Therefore \eqref{e:MeasureComp3} is bounded above by
     \begin{displaymath}
         \int_\delta^r\frac{2}{b^2}\min\{t-s,b^3\}\;db\leq
         \int_0^{\sqrt[3]{t-s}}2b\;db+\int_{r\wedge \sqrt[3]{t-s}}^r2(t-s)/b^2\;db\leq 3(t-s)^{2/3}.
     \end{displaymath}
     Combining this with \eqref{e:MeasureComp1}, \eqref{e:MeasureComp2} and \eqref{e:MeasureComp3} we may apply Lemma~\ref{l:BTIS} for $\alpha=2/3$ which yields the statement of the lemma.
\end{proof}

We now prove Proposition~\ref{p:LDApp} in a series of steps:

\begin{proof}[Proof of Proposition~\ref{p:LDApp} for $\mathrm{Scal}(N)$ and $\mathrm{Centre}(N)$]
Recall that $\epsilon>0$ was fixed in Section~\ref{s:Holder}. We choose $q_-=(2+3\epsilon)/\epsilon+1$ and $q_+=q_-+1$. By the definition of $\mathrm{Centre}(N)$ and Lemma~\ref{l:momentbound}, for $\gamma<\gamma_0$ sufficiently small
\begin{align*}
    \P(\mathrm{Centre}(N)^c)=\P\left(\frac{\tau^{(2)}([0,\rho^{(1+\epsilon)5N}])}{\tau^{(2)}(B_{5N+1})}>2^{-25}\right)&\leq C 2^{25q_+}\left(\frac{\rho^{(1+\epsilon)5N}}{2\rho^{5N+1}}\right)^{q_-}\\
    &\leq C^\prime2^{24q_-}\rho^{(5\epsilon N-1)q_-}.
\end{align*}
By our definition of $q_-$, the right hand side is at most $C_\epsilon\rho^{(2+3\epsilon)5N}$ for $N$ sufficiently large.

Turning to $\mathrm{Scal}(N)$; by Lemma~\ref{l:momentbound}, for any $1<q_-<q_+$ we can find $\gamma_0>0$ such that for $\gamma\in[0,\gamma_0]$
\begin{align*}
\P\left(\frac{\tau^{(j)}(B_{N})}{\tau^{(j)}([0,1])}>\frac{1}{2}\right)\leq\P\left(\frac{\tau^{(j)}(B_{N})}{\tau^{(j)}([-1/4,1/4])}>\frac{1}{2}\right)\leq C 2^{q_+}\left(4\rho^{N}\right)^{q_-}.
\end{align*}
For $q_->5(2+3\epsilon)$ and $q_+=q_-+1$ the right hand side is at most $C_\epsilon\rho^{(2+3\epsilon)5N}$.

For the final part of $\mathrm{Scal}(N)$ we write
\begin{equation*}
    X_{t,s}^{(H)}-X_{N,N}^{(H)}-(X_{t,s}^{(V)}-X_{N,N}^{(V)})=I_t+II_s+III
\end{equation*}
where
\begin{align*}
    I_t&:=\gamma\Big(H_{\rho^t}^{(1)}(x)-H_{\rho^N}^{(1)}(x)-(V_{\rho^t}^{(1)}(x)-V_{\rho^N}^{(1)}(x))\Big)\\
    II_s&:=\gamma\Big(H_{\rho^s}^{(2)}(y)-H_{\rho^N}^{(2)}(y)-(V_{\rho^s}^{(2)}(y)-V_{\rho^m}^{(2)}(y))\Big)\\
    III&:=\frac{\gamma^2}{2}\Big(\Var\big[H_{\rho^t}^{(1)}(0)-H_{\rho^s}^{(1)}(0)\big]-\Var\big[V_{\rho^t}^{(1)}(0)-V_{\rho^s}^{(1)}(0)\big]\Big).
\end{align*}
We bound the suprema of each of these terms separately. By the first statement of Lemma~\ref{l:MeasureComp}, $\lvert III\rvert\leq\gamma^2\rho^{2N}$. Taking $\rho$ sufficiently small this is less than $\log(2)/3$ for all $N$. By the second statement of Lemma~\ref{l:MeasureComp} with $u=\log(2)/3$ and $r=\rho^N$
\begin{displaymath}
    \P\left(\sup_{t\geq N}\lvert I_t\rvert>\log(2)/3\right)\leq C^\prime(1+\rho^{-12N})\exp(-C\rho^{-2N})
\end{displaymath}
for absolute constants $C,C^\prime>0$. In particular the right-hand side is at most $C^{\prime\prime}\rho^{(2+3\epsilon)5N}$. The term $II_s$ has an identical distribution, so the same probability bound holds. Combining this with the bound on $III$ we conclude that 
\begin{displaymath}
    \P\left(\sup_{t,s\in[N,5N]}\left\lvert X_{t,s}^{(H)}-X_{N,N}^{(H)}-(X_{t,s}^{(H)}-X_{N,N}^{(H)})\right\rvert>\log(2)\right)<2C^{\prime\prime}\rho^{(2+3\epsilon)5N}
\end{displaymath}
for all $N$ sufficiently large and hence
\begin{displaymath}
    \P\big(\mathrm{Scal}(N)^c\big)<C^{\prime\prime\prime}\rho^{(2+3\epsilon)5N}.
\end{displaymath}
\end{proof}

\begin{lemma}\label{l:LDFrac}
For all $\rho>0$ sufficiently small, $\delta\in(0,1)$ and $\gamma<\gamma_0(\delta,\rho)$, there exists $C>0$ such that
\begin{displaymath}
\P\left(\sum_{n=1}^{5N}\ind_{\mathrm{Frac}_n^{(j)}}>(1-\delta)5N\right)>1-C\rho^{(2+3\epsilon)5N}
\end{displaymath}
for $j=1,2$ and $N>N_0(\rho,\delta,\gamma)$.
\end{lemma}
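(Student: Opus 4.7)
My plan is to exploit two structural facts: the events $\mathrm{Frac}^{(j)}_n$ for distinct integer $n$ are independent, and each individual event has a very small failure probability when $\gamma$ is small. Then a simple binomial/Chernoff bound will upgrade a per-index bound into an exponential large deviation bound.

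\textbf{Independence across scales.} Note that $\log E^{(j)}(u,\rho^t,\rho^n)$ is a linear functional of the white noise $W^{(j)}$ restricted to $\{(a,b)\in\H:\rho^t\leq b\leq\rho^n\}$ (intersected with a bounded horizontal strip). For $t\in[n,n+1/2]$ this vertical range is contained in $[\rho^{n+1/2},\rho^n]$, and these ranges are pairwise disjoint as $n$ varies over $\N$. Therefore the events $\mathrm{Frac}^{(j)}_n$ are mutually independent.

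\textbf{Per-index bound via Borell-TIS.} Write $\log E^{(j)}(u,\rho^t,\rho^n)=Y_{t,u}+D(t)$ where $Y_{t,u}=\gamma(H^{(j)}_{\rho^t}(u)-H^{(j)}_{\rho^n}(u))$ is a centered Gaussian field and $D(t)=-\tfrac{\gamma^2}{2}\lambda(\mathcal H_{\rho^t}\setminus\mathcal H_{\rho^n})$ is a deterministic drift. A direct computation using the shape of $\mathcal H$ gives $|D(t)|\leq\tfrac{\gamma^2}{4}\log(1/\rho)$ and $\sigma_T^2:=\sup\Var[Y_{t,u}]\leq\tfrac{\gamma^2}{2}\log(1/\rho)$. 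An elementary estimate for the hyperbolic measure of the symmetric differences
\begin{displaymath}
(\mathcal H_{\rho^{t_1}}+u_1)\,\Delta\,(\mathcal H_{\rho^{t_2}}+u_2)\quad\subset\quad\{\rho^{n+1/2}\leq y\leq\rho^n\}
\end{displaymath}
shows that $Y_{t,u}$ is $L$-Lipschitz in intrinsic metric with $L$ polynomial in $1/\rho$; this lets us invoke Lemma~\ref{l:BTIS} with $d=2$, $\alpha=1$ and the anchor point $(n,x)$ at which $Y_{t,u}=0$. Choosing $\gamma$ small enough that $|D(t)|<\tfrac{1}{2}\log 2$, we conclude
\begin{displaymath}
p_0:=\P(\mathrm{Frac}^{(j){}^c}_n)\leq C_1\rho^{-c_1}\exp\!\Big(-\tfrac{c_2}{\gamma^2\log(1/\rho)}\Big)
\end{displaymath}
uniformly in $n$, for absolute constants $C_1,c_1,c_2>0$.

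\textbf{Chernoff bound and choice of $\gamma_0$.} By the independence in the first step and the standard bound $\binom{5N}{k}\leq(5Ne/k)^k$ with $k=\lceil\delta\cdot 5N\rceil$,
\begin{displaymath}
\P\!\left(\sum_{n=1}^{5N}\ind_{\mathrm{Frac}^{(j){}^c}_n}\geq \delta\cdot 5N\right)\leq\binom{5N}{k}p_0^{k}\leq\Big(\tfrac{e\,p_0}{\delta}\Big)^{\delta\cdot 5N}.
\end{displaymath}
To force this to be at most $C\rho^{(2+3\epsilon)5N}$ it suffices that $ep_0/\delta\leq\rho^{(2+3\epsilon)/\delta}$. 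Given $\rho$ small and $\delta\in(0,1)$ fixed, the per-index estimate shows this holds once $\gamma^2\log(1/\rho)$ is smaller than an explicit constant depending on $\rho,\delta,\epsilon$; this determines $\gamma_0(\delta,\rho)$.

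I do not expect a serious obstacle: the independence across scales is essentially built into the construction, and all three ingredients (Gaussian concentration, disjointness of the white-noise strips, and the binomial tail bound) are standard. The only care needed is in computing the Lipschitz constants of $Y_{t,u}$ and the drift $D(t)$ so that Lemma~\ref{l:BTIS} applies cleanly and so that the final exponent inequality defining $\gamma_0(\delta,\rho)$ is correct.
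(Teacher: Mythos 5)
Your proposal is correct and follows essentially the same route as the paper: independence of the events $\mathrm{Frac}^{(j)}_n$ across integer scales from disjoint horizontal white-noise strips, a per-event bound via the Borell--TIS estimate (Lemma~\ref{l:BTIS}) after isolating the deterministic variance drift, and then a standard binomial large deviation bound with $\gamma_0(\delta,\rho)$ chosen to make the per-event failure probability small. The only cosmetic difference is that the paper phrases the final step through the exponential Markov inequality and the rate function $I(\delta,p)$ (and rescales the spatial variable to make the Lipschitz data uniform in $n$), while you use the $\binom{5N}{k}p_0^k$ union bound; both are equivalent.
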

\begin{proof}
The key to the proof is that $\mathrm{Frac}_1^{(j)},\mathrm{Frac}_2^{(j)},\dots$ are independent (as they depend on the white noise $W$ on disjoint regions) and so we can apply standard large deviation estimates for Bernoulli variables. Specifically if $Z_1,Z_2,\dots,$ are i.i.d.\ Bernoulli variables with parameter $p>1-\delta$, then a simple argument using the exponential Markov inequality shows that
\begin{displaymath}
    \P\left(\sum_{i=1}^{5N} Z_i<(1-\delta)5N\right)\leq e^{-5NI(\delta,p)}
\end{displaymath}
where
\begin{displaymath}
    I(\delta,p)=\delta\log\Big(\frac{\delta}{1-p}\Big)+(1-\delta)\log\Big(\frac{1-\delta}{p}\Big).
\end{displaymath}
Clearly for any value of $\delta\in(0,1)$, $I(\delta,p)\to\infty$ as $p\to 1$. Moreover if we instead suppose that $Z_i\sim\mathrm{Ber}(p_i)$ where $p_i\geq p$ for all $i$, then the above estimate still holds (to see this one can define $Z_i=\ind_{U_i\leq p_i}$ and $\Tilde{Z}_i=\ind_{U_i\leq p}$ where the $U_i$ are i.i.d.\ uniform variables on $[0,1]$). Therefore, if for any $p\in(0,1)$ we can choose $\gamma_0$ (depending on $\rho$) such that $\P(\mathrm{Frac}_n^{(j)})\geq p$ for all $n$, then the statement of the lemma follows.

Recall that $\mathrm{Frac}_n^{(1)}$ is the indicator function for the event that
\begin{equation}
    \sup_{t\in[n,n+5/8]}\sup_{u\in (x+2B_n)}\Big\lvert\gamma(H^{(1)}_{\rho^t}(u)-H^{(1)}_{\rho^n}(u))-\frac{\gamma^2}{2}\Var\big[H_{\rho^t}(0)-H_{\rho^n}(0)\big]\Big\rvert<\log(2).
\end{equation}
We first observe that
\begin{align*}
    \Var\big[H_{\rho^t}(0)-H_{\rho^n}(0)\big]=\int_{\mathcal{H}\cap\{\rho^t\leq y\leq\rho^n\}}y^{-2}\;dxdy&\leq \int_{\rho^t}^{\rho^n}y^{-2}\int_{-y/2}^{y/2}\;dxdy\\
    &=(t-n)\log(1/\rho)\leq\frac{5}{8}\log(1/\rho).
\end{align*}
Therefore taking $\gamma$ sufficiently small relative to $\rho$, we have $\frac{\gamma^2}{2}\Var\big[H_{\rho^t}(0)-H_{\rho^n}(0)\big]<\log(2)/2$ for all $t$ and $n$.

Next we define $Y_{t,u}=H_{\rho^t}(x+\rho^nu)-H_{\rho^n}(x+\rho^nu)$ which is a continuous centred Gaussian process on $(t,u)\in[n,n+5/8]\times[-2,2]$. Then by independence of the white noise on disjoint domains, for $t>s$
\begin{align*}
    \E\big[\lvert Y_{t,u}-Y_{s,w}\rvert^2]=\E\big[\lvert Y_{t,u}-Y_{s,u}\rvert^2\big]+\E\big[\lvert Y_{s,u}-Y_{s,w}\rvert^2\big].
\end{align*}
Our previous calculation shows that the first term on the right hand side is bounded above by $\lvert t-s\rvert\log(1/\rho)$. The second term is given by
\begin{align*}
    \lambda\big((\rho^nu+\mathcal{H})\Delta(\rho^nw+\mathcal{H})\cap\{\rho^s\leq y\leq\rho^n\}\big)\leq 2\rho^n\lvert u-w\rvert\int_{\rho^s}^{\rho^n} y^{-2}\;dy\leq 2\rho^{-5/8}\lvert u-w\rvert
\end{align*}
where $\Delta$ denotes the symmetric difference, so that
\begin{displaymath}
    \E\big[\lvert Y_{t,u}-Y_{s,w}\rvert^2]\leq 2\rho^{-5/8}(\lvert u-w\rvert+\lvert t-s\rvert).
\end{displaymath}
Finally we observe that
\begin{displaymath}
    \sup_{t,u}\E[Y_{t,u}^2]=\E[Y_{n+5/8,0}^2]\in[c_0\log(1/\rho),c_1\log(1/\rho)]
\end{displaymath}
for some $0<c_0<c_1$ independent of $n$. Applying Lemma~\ref{l:BTIS} to $Y_{t,u}$ with $\alpha=1$ we see that for any $a>0$
\begin{displaymath}
    \P\left(\sup_{t\in[n,n+5/8],u\in(x+2B_n)}\gamma\lvert Y_{t,u}\rvert>a\right)<C(1+\rho^{-5/4}(a/\gamma)^4)e^{-a^2/S\gamma^22c_1\log(1/\rho)}
\end{displaymath}
for an absolute constant $C>0$. Taking $a=\log(2)/2$ and $\gamma$ sufficiently small, depending on $\rho$, ensures that the probability of the supremum exceeding $\log(2)/2$ is as small as desired. Therefore for each $n$, $\mathrm{Frac}_n^{(j)}$ will occur with sufficiently high probability completing the proof of the lemma.
\end{proof}

It remains to prove that the events $\mathrm{Upp}_n^{(j)}$ and $\mathrm{Low}_n^{(j)}$ occur on a sequence of density $1-\delta$. This will be more challenging due to the dependence between these events. We follow the arguments of \cite[Section~4.3]{ajks}, although in some cases our proofs simplify since we consider only small values of $\gamma$.

\begin{lemma}\label{l:LDLow}
For all $\rho>0$ sufficiently small, $\delta\in(0,1)$ and $\gamma<\gamma_0(\delta,\rho)$, there exists $C>0$ such that
\begin{displaymath}
\P\left(\sum_{n=1}^{5N}\ind_{\mathrm{Low}_n^{(j)}}>(1-\delta)5N\right)>1-C\rho^{(2+3\epsilon)5N}
\end{displaymath}
for $j=1,2$ and $N\in\N$.
\end{lemma}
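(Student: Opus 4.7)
The plan is to follow \cite[Section~4.3]{ajks} closely, simplified thanks to the restriction to small $\gamma$. First I would bound individual failure probabilities. Exploiting the scale invariance of hyperbolic white noise, the rescaled measure $A\mapsto\rho^{-n}\tau_n^{(j)}(x+\rho^n A)$ has essentially the same distribution as $\tau^{(j)}$ on a unit-length interval, so Lemma~\ref{l:momentbound} applied to the ratio defining $\mathrm{Low}_{n,k}^{(j)}$ (for which $|J|/|I|\sim\rho^{k-n+1/4}$ after rescaling) yields
\[
\P(\mathrm{Low}_{n,k}^{(j),c})\leq C\rho^{(k-n+1/4)q_-/2},
\]
where $q_-$ can be made as large as we wish by choosing $\gamma$ small, taking $\rho$ small enough to absorb the factor $2^{(k-n+41)q_+}$. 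Summing over $k\geq n$ gives a uniform bound $\P(\mathrm{Low}_n^{(j),c})\leq C\rho^{q_-/8}$, which we denote by $\eta$ and which can be made arbitrarily small.

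Next, I would split $\mathrm{Low}_n^{(j)}=\mathrm{Low}_n^{(j),\mathrm{loc}}\cap\mathrm{Low}_n^{(j),\mathrm{tail}}$ with $\mathrm{Low}_n^{(j),\mathrm{loc}}:=\bigcap_{k=n}^{n+L-1}\mathrm{Low}_{n,k}^{(j)}$ and $\mathrm{Low}_n^{(j),\mathrm{tail}}:=\bigcap_{k\geq n+L}\mathrm{Low}_{n,k}^{(j)}$, for a parameter $L$ to be tuned. The disjoint-region observation from Section~\ref{ss:Sufficient} (independence when $k_1+2\leq n_2$) shows that $\mathrm{Low}_{n_1}^{(j),\mathrm{loc}}$ and $\mathrm{Low}_{n_2}^{(j),\mathrm{loc}}$ are independent whenever $|n_1-n_2|\geq L+1$. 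Partitioning $\{1,\dots,5N\}$ into $L+1$ arithmetic subsequences of common difference $L+1$ and applying a Cram\'er-type large deviation bound within each subsequence gives
\[
\P\Big(\sum_{n=1}^{5N}\ind_{\mathrm{Low}_n^{(j),\mathrm{loc},c}}>\tfrac{\delta}{2}5N\Big)\leq (L+1)\exp\Big(-\tfrac{5N}{L+1}\,I(\tfrac{\delta}{2},\eta)\Big),
\]
where $I(a,p)=a\log(a/p)+(1-a)\log((1-a)/(1-p))$ is the usual rate function. This is at most $\tfrac{C}{2}\rho^{(2+3\epsilon)5N}$ once $q_-$ is large enough that $\delta\log(1/\eta)/(2(L+1))\geq(2+3\epsilon)\log(1/\rho)$.

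For the tail I would use a moment method. Given any well-separated $n_1<\dots<n_m$, decompose each $\mathrm{Low}_{n_i}^{(j),\mathrm{tail},c}$ into an ``inner'' part at scales $k\in[n_i+L,n_{i+1}-2]$, which by the disjoint-region observation is independent of everything concerning $n_{i+1}$, and an ``outer'' part at scales $k\geq n_{i+1}-1$, whose probability is bounded by $\sum_{k\geq n_{i+1}-1}C\rho^{(k-n_i+1/4)q_-/2}\leq C'\rho^{(n_{i+1}-n_i-1)q_-/2}$. Expanding $\E\big[(\sum_n\ind_{\mathrm{Low}_n^{(j),\mathrm{tail},c}})^m\big]$ over ordered tuples and bounding cross terms via this inner/outer decomposition, then applying Markov's inequality at moment $m=\Theta(N)$, delivers the required bound. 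The main obstacle is precisely this tail step: the events $\mathrm{Low}_n^{(j),\mathrm{tail},c}$ all probe the small region of white noise near~$x$, so a naive union bound gives only $5N\rho^{Lq_-/2}$, which is not exponentially small in~$N$ for any constant~$L$. The inner/outer decomposition restores enough (conditional) independence for the moment method to succeed, but the trade-off between $L$ and $q_-$ is precisely what forces $\gamma$ to be small here, consistent with the ``bottleneck'' already flagged in Section~\ref{s:Literature}.
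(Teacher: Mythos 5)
Your opening step (the single-scale estimate $\P(\mathrm{Low}_{n,k}^{(j),c})\leq C\rho^{q(k-n+1/4)}$ with $q$ made large via Lemma~\ref{l:momentbound}) and your treatment of the local part are sound; the arithmetic-progression large-deviation argument for $\mathrm{Low}_n^{(j),\mathrm{loc}}$ with independence at spacing $L+1$ is a legitimate simplification for the bounded range of scales. The gap is in the tail step, exactly where you locate the difficulty, and the proposed moment method does not close it as sketched. First, in the expansion of $\E\big[(\sum_n\ind_{\mathrm{Low}_n^{(j),\mathrm{tail},c}})^m\big]$ you cannot restrict to well-separated tuples: clustered tuples appear, and for a block of nearby indices the tail failures are typically all produced by a single anomalously large mass of $\tau^{(j)}$ near $x$ at one deep scale, so $\P\big(\bigcap_i\mathrm{Low}_{n_i}^{(j),\mathrm{tail},c}\big)$ is of the order of \emph{one} factor $\rho^{Lq/2}$ rather than one factor per index; correspondingly your outer bound $C'\rho^{(n_{i+1}-n_i-1)q_-/2}$ is vacuous when $n_{i+1}=n_i+1$. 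Second, even for separated tuples the outer part of $\mathrm{Low}_{n_i}^{(j),\mathrm{tail},c}$ (scales $k\geq n_{i+1}-1$) is \emph{not} independent of the events at later indices: its white-noise region reaches arbitrarily close to $x$ at arbitrarily small heights, so the ``cross terms'' cannot be bounded by multiplying the stated probabilities, and the claimed conditional independence structure is not there.

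The repair is to record, for each failing index, the scale $\ell$ at which $\mathrm{Low}_{n,n+\ell}$ actually fails, and then jump to the next index exceeding $n+\ell+1$, charging all skipped indices to $\ell$. That is precisely the paper's proof: a union bound over all subsets $B\subset\{1,\dots,5N\}$ with $\lvert B\rvert>\delta 5N$; a greedy extraction producing $r$ mutually independent events $\mathrm{Low}^{c}_{n_{i_k},n_{i_k}+\ell_k}$; the covering inequality $M\leq\sum_{k}\ell_k+2r$; and geometric summation over the $\ell_k$, yielding $\P\big(\bigcap_{n\in B}\mathrm{Low}_n^{c}\big)\leq C''\rho^{qM/11}$ and hence the total bound $2^{5N}C''\rho^{q\delta N/11}$, which beats the crude $2^{5N}$ once $q$ (equivalently $\gamma$, given $\rho$ and $\delta$) is chosen suitably. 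With that charging argument in place there is no need for the loc/tail split, the inner/outer decomposition, or high moments at all; without it, your tail estimate does not follow.
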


\begin{proof}
For the proof we write $\mathrm{Low}_n$ rather than $\mathrm{Low}_n^{(j)}$ since the index $j=1,2$ will play no role. First we claim that for any $q\geq 1$ there exists $C>0$ such that for all $\rho,\gamma>0$ sufficiently small
\begin{displaymath}
    \P(\mathrm{Low}_{n,k}^c)<C\rho^{q(k-n+1/4)}.
\end{displaymath}
Indeed if we choose $\rho<2^{-(q+2)}$ then we can apply Lemma~\ref{l:momentbound} with $q_+=q_-+1=q+2$ to see that for $\gamma$ sufficiently small
\begin{align*}
\P(\mathrm{Low}_{n,k}^c)=\P\left(\frac{\tau_n(B_{k+7/8}\setminus B_{k+1+7/8})}{\tau_n(B_{n+5/8}\setminus B_{n+7/8})}> 2^{n-k-33}\right)&\leq C_0 2^{q_+(33+k-n)}\rho^{q_-(k-n+1/4)}\\
&\leq C\rho^{q(k-n+1/4)}
\end{align*}
where $C$ depends only on $q$.

Turning to the estimate we wish to prove; by the union bound
\begin{equation}\label{e:LowUnionBound}
\P\left(\sum_{n=1}^{5N}\ind_{\mathrm{Low}_n^c}>\delta 5N\right)\leq\sum_{B\subseteq\{1,\dots,5N\},\lvert B\rvert>\delta 5N}\P\left(\bigcap_{n\in B}\mathrm{Low}_n^c\right).
\end{equation}
In order to bound the expression on the right, we consider a sequence $n_1<n_2<\dots<n_M$ and recalling that $\mathrm{Low}_n=\cap_{\ell\geq 0}\mathrm{Low}_{n,n+\ell}$ we have
\begin{align*}
    \P(\mathrm{Low}_{n_1}^c\cap\dots\cap \mathrm{Low}_{n_M}^c)\leq \sum_{\ell_1=0}^\infty\P\big(\mathrm{Low}^c_{n_1,n_1+\ell_1}\cap\mathrm{Low}_{n_{i_2}}^c\cap\dots\cap\mathrm{Low}_{n_M}^c\big)
\end{align*}
where $i_2$ is chosen as the first index $j$ such that $n_1+\ell_1+1<n_j$. Iterating this argument we obtain that
\begin{align*}
    \P(\mathrm{Low}_{n_1}^c\cap\dots\cap \mathrm{Low}_{n_M}^c)\leq \sum_{r=1}^M\sum_{\ell_1,\dots,\ell_r}\P\left(\bigcap_{k=1}^r\mathrm{Low}^c_{n_{i_k},n_{i_k}+\ell_k}\right)
\end{align*}
where $i_k$ is defined recursively as the first $j$ such that $n_j>n_{i_{k-1}}+\ell_{k-1}+1$ and we sum over all values of $\ell_1,\dots,\ell_r$ such that
\begin{equation}\label{e:low_prob}
    \{n_1,\dots,n_M\}\subset\bigcup_{k=1}^r[n_{i_k},n_{i_k}+\ell_k+1].
\end{equation}
Now using the observation from Section~\ref{ss:Sufficient} that $\mathrm{Low}_{n_1,k_1}$ and $\mathrm{Low}_{n_2,k_2}$ are independent whenever $k_1+2\leq n_2$ along with the estimate above we have
\begin{displaymath}
    \P(\mathrm{Low}_{n_1}^c\cap\dots\cap \mathrm{Low}_{n_M}^c)\leq\sum_{r=1}^M\sum_{\ell_1,\dots,\ell_r}C^r\rho^{q\sum_{k=1}^r(\ell_k+1/4)}.
\end{displaymath}
Next we observe that since each interval on the right of \eqref{e:low_prob} contains at most $\ell_k+2$ integers, we must have $M\leq\sum_{k=1}^r\ell_k+2r$. Substituting this into the equation above
\begin{align*}
    \P(\mathrm{Low}_{n_1}^c\cap\dots\cap \mathrm{Low}_{n_M}^c)&\leq\sum_{r=1}^MC^r\rho^{\frac{q}{10}(M-2r)}\sum_{\ell_1,\dots,\ell_r}\rho^{\frac{9q}{10}\sum_{k=1}^r(\ell_k+1/4)}\\
    &\leq\sum_{r=1}^MC^r\rho^{\frac{q}{10}(M-2r)}\left(\sum_{\ell=0}^\infty\rho^{\frac{9q}{10}(\ell+1/4)}\right)^r\leq\sum_{r=1}^MC^r\rho^{\frac{q}{10}(M-2r)}(C^\prime)^r\rho^{\frac{9qr}{40}}
\end{align*}
where $C^\prime>0$ is an absolute constant (provided $\rho\leq 1/2$, say). The latter expression can be rearranged as
\begin{align*}
    \rho^{\frac{q}{10}M}\sum_{r=1}^M(CC^{\prime})^r\rho^{\frac{qr}{40}}\leq M\rho^{\frac{q}{10}M}\leq C^{\prime\prime}\rho^{\frac{q}{11}M}.
\end{align*}
where we have assumed that $\rho^{q/40}<1/(CC^\prime)$ (recall that $C$ depends on $q$ but not $\rho$ or $\gamma$ and $C^\prime$ is an absolute constant). Substituting this bound into \eqref{e:LowUnionBound} we see that
\begin{align*}
    \P\left(\sum_{n=1}^{5N}\ind_{\mathrm{Low}_n^c}>\delta 5N\right)\leq 2^{5N}C^{\prime\prime}\rho^{\frac{5q}{11}\delta N}.
\end{align*}
Choosing $q$ sufficiently large (depending on $\delta$) ensures that the right hand side is at most $C^{\prime\prime\prime}\rho^{(2+3\epsilon)5N}$, completing the proof of the lemma.
\end{proof}

\begin{lemma}\label{l:upscale}
There exist absolute constants $C_0,c_0>0$ such that for all $n\in\N$ and all $k\in\{-1,0,\dots,n-1\}$
\begin{displaymath}
\P\left(\mathrm{Upp}_{n,k}^{(j)}\right)\geq 1-C_0\exp\left(\frac{-c_0}{\gamma^2(4\rho)^{n-k-1}}\right).
\end{displaymath}
\end{lemma}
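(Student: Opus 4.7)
The plan is to recognize the quantity inside the event $\mathrm{Upp}_{n,k}^{(j)}$ as the supremum of a centred Gaussian process, estimate the variance of its increments directly from the hyperbolic white noise representation, and then apply Lemma~\ref{l:BTIS}.

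Fix $j=1$ (the case $j=2$ is identical). Since the variance terms appearing in the definition of $E^{(1)}(\cdot,\rho^{k+1},\rho^k)$ do not depend on the spatial variable, they cancel in the ratio defining $\mathcal{S}_k^{(1)}$ and $\mathcal{I}_k^{(1)}$. For $k\geq 0$ set $A_k:=\mathcal{H}_{\rho^{k+1}}\setminus\mathcal{H}_{\rho^k}$, and for $k=-1$ set $A_{-1}:=\mathcal{H}_1$. Then
\begin{displaymath}
Y^{(k)}_u:=\gamma\bigl[W^{(1)}(A_k+u)-W^{(1)}(A_k+x)\bigr],\qquad u\in x+2B_n,
\end{displaymath}
is a centred continuous Gaussian process (by Lemma~\ref{l:GFFExistence}) with $Y^{(k)}_x=0$, and we have the identities $\log\mathcal{S}_k^{(1)}(x+2B_n)=\sup_u Y^{(k)}_u$ and $\log\mathcal{I}_k^{(1)}(x+2B_n)=\inf_u Y^{(k)}_u$. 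Consequently
\begin{displaymath}
\P\bigl((\mathrm{Upp}_{n,k}^{(1)})^c\bigr)\leq 2\,\P\Bigl(\sup_{u\in x+2B_n}\lvert Y^{(k)}_u\rvert>2^{k-n}\log 2\Bigr).
\end{displaymath}

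The core computation is the variance bound. By the definition of $W^{(1)}$,
\begin{displaymath}
\Var[Y^{(k)}_u-Y^{(k)}_v]=\gamma^2\,\lambda\Bigl((A_k+u)\triangle(A_k+v)\Bigr)
\end{displaymath}
(the periodicity of $W^{(1)}$ is irrelevant for the small shifts $|u-v|\leq 4\rho^n$ we consider). For $k\geq 0$, the horizontal cross-section of $A_k$ at height $y\in[\rho^{k+1},\rho^k]$ is an interval of length $2\arctan(\pi y/2)/\pi\leq y$; translating by $u-v$ and computing the symmetric difference at each height gives
\begin{displaymath}
\lambda\bigl((A_k+u)\triangle(A_k+v)\bigr)\leq\int_{\rho^{k+1}}^{\rho^k}\frac{2\min(|u-v|,y)}{y^2}\,dy\leq 2|u-v|\rho^{-k-1}.
\end{displaymath}
In particular, $\sigma_T^2:=\sup_{u\in x+2B_n}\Var[Y^{(k)}_u]\leq 8\gamma^2\rho^{n-k-1}$. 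The case $k=-1$ reduces to estimating $\lambda((\mathcal H_1+u)\triangle(\mathcal H_1+x))\leq 2|u-x|$, which gives $\sigma_T^2\leq 4\gamma^2\rho^n$, matching the desired scaling $\gamma^2(4\rho)^{n-k-1}\rho$.

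With $T=x+2B_n$, $d=1$, $\alpha=1$, $\ell_1=4\rho^n$, and Lipschitz constant $L=2\gamma^2\rho^{-k-1}$, Lemma~\ref{l:BTIS} applied with $u:=2^{k-n}\log 2$ yields
\begin{displaymath}
\P\Bigl(\sup_{u\in T}\lvert Y^{(k)}_u\rvert>2^{k-n}\log 2\Bigr)\leq C\Bigl(1+L\ell_1\bigl(u/\sigma_T^2\bigr)^2\Bigr)\exp\Bigl(-\frac{u^2}{2\sigma_T^2}\Bigr).
\end{displaymath}
A direct calculation gives $u^2/\sigma_T^2\geq c'/(\gamma^2(4\rho)^{n-k-1})$ for an absolute constant $c'>0$. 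The polynomial prefactor can be absorbed by halving the exponent, which costs only a multiplicative constant. This produces the advertised bound, completing the proof. The entire argument is quantitative and avoids any chaining or entropy estimate beyond the already-invoked Borell--TIS inequality, so the only technical point worth pausing over is the geometric variance computation for $A_k$.
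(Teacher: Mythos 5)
Your proof is correct and follows essentially the same route as the paper: reduce $\mathrm{Upp}_{n,k}^{(j)}$ to a tail bound for the supremum of a centred Gaussian field, estimate increment variances by the hyperbolic measure of symmetric differences of translates, and apply Lemma~\ref{l:BTIS}; the only cosmetic difference is that the paper splits via the triangle inequality into the two cone fields $H_{\rho^k}$ and $H_{\rho^{k+1}}$, whereas you work directly with the strip $\mathcal{H}_{\rho^{k+1}}\setminus\mathcal{H}_{\rho^k}$, which is immaterial. The one glossed point is absorbing the prefactor $L\ell_1(u/\sigma_T^2)^2$ (the paper does this with a matching lower bound $\sigma_T^2\geq c\,\rho^{n-k}$); your ``halve the exponent'' step is fine once you add the remark that the claimed bound is trivially true when $u^2/\sigma_T^2$ is of order one, since probabilities are at most $1$.
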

\begin{proof}
As in previous proofs we will omit the superscript $j$ as it plays no role here. Recalling the definition of $\mathrm{Upp}_{n,k}$ at the start of Section~\ref{ss:Sufficient} and noting that
\begin{displaymath}
    \frac{E(u,\rho^{k+1},\rho^k)}{E(x,\rho^{k+1},\rho^k)}=\exp\left(\gamma\Big(H_{\rho^{k+1}}(u)-H_{\rho^k}(u)-H_{\rho^{k+1}}(x)+H_{\rho^k}(x)\Big)\right)
\end{displaymath}
we see that a sufficient condition for $\mathrm{Upp}_{n,k}$ is that
\begin{equation}\label{e:UpperSufficient}
    \sup_{u\in x+2B_n}\gamma\Big(\left\lvert H_{\rho^{k+1}}(u)-H_{\rho^{k+1}}(x)\right\rvert+\left\lvert H_{\rho^k}(u)-H_{\rho^k}(x)\right\rvert\Big)\leq 2^{k-n}\log(2).
\end{equation}
(Note this is true even for $k=-1$, although in this case we can ignore the $H_{\rho^k}$ terms in the above expression.)

Letting $Y_u=H_{\rho^k}(u)-H_{\rho^k}(x)$ for $u\in x+2B_n$ we have
\begin{displaymath}
    \E[(Y_u-Y_w)^2]\leq\lambda((u+\mathcal{H}_{\rho^k})\Delta(w+\mathcal{H}_{\rho^k}))\leq 2\lvert u-w\rvert\int_{\rho^k}^\infty y^{-2}\;dy=2\rho^{-k}\lvert u-w\rvert.
\end{displaymath}
In particular $\sup_{u\in x+2B_n}\E[Y_u^2]\leq 4\rho^{n-k}$. A simple geometric argument shows this bound is tight up to constants: that is $\sup_{u\in x+2B_n}\E[Y_u^2]\geq \E[Y_{x+\rho^n/2}^2]\geq c_0\rho^{n-k}$ for some $c_0>0$ and all $k\leq n$. Since $Y_x=0$, applying Lemma~\ref{l:BTIS} with $\alpha=1$, $L=2\rho^{-k}$ and $\ell_d=4\rho^{n}$ we have
\begin{displaymath}
\P\left(\sup_{u\in x+ 2B_n}\gamma\left\lvert H_{\rho^k}(u)-H_{\rho^k}(x)\right\rvert> a\right)\leq C\Big(1+\rho^{n-k}\frac{a^2}{\gamma^2\rho^{n-k}}\Big)e^{-a^2/8\gamma^2\rho^{n-k}}
\end{displaymath}
for any $a\geq 0$ where $C>0$ is an absolute constant. Applying this to both expressions on the left hand side of \eqref{e:UpperSufficient} (with $a=2^{k-n-1}\log(2)$) we see that
\begin{displaymath}
    \P(\mathrm{Upp}_{n,k}^c)\leq c\left(1+\gamma^{-2}4^{k-n}\right)\exp\left(-\frac{c^\prime}{\gamma^2(4\rho)^{n-k-1}}\right)
\end{displaymath}
where $c,c^\prime>0$ are absolute constants. Since $x\mapsto (1+1/x)e^{-c^\prime/(2x)}$ is bounded above near $x=0$ by an absolute constant, the expression above satisfies the bound in the statement of the lemma.
\end{proof}

\begin{lemma}\label{l:LDUpp}
For all $\rho>0$ sufficiently small, $\delta\in(0,1)$ and $\gamma<\gamma_0(\delta,\rho)$, there exists $C>0$ such that
\begin{displaymath}
\P\left(\sum_{n=1}^{5N}\ind_{\mathrm{Upp}_n^{(j)}}>(1-\delta)5N\right)>1-C\rho^{(2+3\epsilon)5N}
\end{displaymath}
for $j=1,2$ and $N\in\N$.
\end{lemma}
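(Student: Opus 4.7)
The plan is to exploit the strip structure of the hyperbolic white noise to partially decouple the $\mathrm{Upp}_n^{(j)}$ events. Fix $j \in \{1,2\}$ and abbreviate $\mathrm{Upp}_{n,k}=\mathrm{Upp}_{n,k}^{(j)}$, $\mathrm{Upp}_n=\mathrm{Upp}_n^{(j)}$. Since $\mathrm{Upp}_n = \bigcap_{k=-1}^{n-1}\mathrm{Upp}_{n,k}$, setting
\begin{displaymath}
Z_k := \sum_{n=\max\{k+1,1\}}^{5N}\ind_{\mathrm{Upp}_{n,k}^c}
\end{displaymath}
we have $\sum_{n=1}^{5N}\ind_{\mathrm{Upp}_n^c} \leq \sum_{k=-1}^{5N-1}Z_k$, so it suffices to prove $\P(\sum_k Z_k > \delta\cdot 5N) \leq C\rho^{(2+3\epsilon)5N}$. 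The key observation is that $\mathrm{Upp}_{n,k}$ is measurable with respect to the restriction of $W^{(j)}$ to the strip $\{\rho^{k+1}\leq y\leq \rho^k\}$ (or $\{y\geq 1\}$ when $k=-1$). Since these strips are disjoint across $k$, the family $\{Z_k\}_{k=-1}^{5N-1}$ consists of independent random variables.

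The tail of $\sum_k Z_k$ will be controlled via Chernoff, reducing to a bound on the moment generating function of each $Z_k$. Writing $p_m := C_0\exp(-c_0/(\gamma^2(4\rho)^m))$ for the estimate furnished by Lemma~\ref{l:upscale} and using the crude intersection bound $\P(\bigcap_{n \in S}\mathrm{Upp}_{n,k}^c) \leq \P(\mathrm{Upp}_{\max S,k}^c) \leq p_{\max S - k - 1}$, expanding
\begin{displaymath}
\E[e^{\lambda Z_k}] = \sum_{S\subseteq\{k+1,\ldots,5N\}}(e^\lambda-1)^{|S|}\P\Big(\bigcap_{n\in S}\mathrm{Upp}_{n,k}^c\Big)
\end{displaymath}
and grouping subsets $S$ according to the value $n^* = \max S$, the binomial theorem collapses the inner sum to produce
\begin{displaymath}
\E[e^{\lambda Z_k}] \leq 1 + (e^\lambda-1)\sum_{m=0}^\infty e^{\lambda m}p_m.
\end{displaymath}
The ratio $p_{m+1}/p_m = \exp(-(c_0/\gamma^2)(4\rho)^{-m}((4\rho)^{-1}-1))$ is doubly exponentially small in $m$, so consecutive terms of the series decay by a factor of at least $1/2$ as soon as $\gamma$ is small relative to $\lambda$ and $\rho$. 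This yields the compact bound $\E[e^{\lambda Z_k}] \leq 1 + 2e^\lambda p_0$.

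Independence of the $Z_k$ across $k$ together with this MGF bound and Chernoff gives
\begin{displaymath}
\P\Big(\sum_k Z_k > \delta\cdot 5N\Big) \leq \exp\bigl(-\lambda\delta\cdot 5N + 2(5N+1)e^\lambda p_0\bigr).
\end{displaymath}
I will finish by choosing $\lambda$ of order $\log(1/\rho)/\delta$ (concretely $\lambda = 4(2+3\epsilon)\log(1/\rho)/\delta$) so that the leading exponent produces the desired decay $\rho^{(2+3\epsilon)5N}$, and then selecting $\gamma$ small enough that $p_0 = C_0 e^{-c_0/\gamma^2}$ is bounded by a sufficiently high power of $\rho$ (larger than $4(2+3\epsilon)/\delta$) to render the correction term $e^\lambda p_0$ subdominant; this smallness requirement is exactly what forces the dependence $\gamma_0 = \gamma_0(\delta,\rho)$. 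The main obstacle is the correlation between the events $\mathrm{Upp}_{n,k}^c$ with fixed $k$ and varying $n$: unlike the Bernoulli setup of Lemma~\ref{l:LDFrac}, within a single strip these events can be heavily dependent, and the only clean intersection bound available is the wasteful $p_{\max S-k-1}$. What rescues the argument is the double exponential decay of $p_m$, which absorbs the $e^{\lambda m}$ weights from the subset expansion even when $\lambda$ is taken of order $\log(1/\rho)/\delta$.
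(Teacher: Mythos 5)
Your argument is correct: the only probabilistic inputs you use are Lemma~\ref{l:upscale} and the fact that $\mathrm{Upp}_{n,k}^{(j)}$ is measurable with respect to the white noise on the strip $\{\rho^{k+1}\leq y\leq\rho^k\}$ (respectively $\{y\geq 1\}$ for $k=-1$), and both are available; the subset expansion $\E[e^{\lambda Z_k}]=\sum_S(e^\lambda-1)^{|S|}\P(\cap_{n\in S}\mathrm{Upp}_{n,k}^c)$, the grouping by $\max S$ (which works because $p_m$ is \emph{decreasing} in $m$), and the absorption of the entropy factor $e^{\lambda m}$ by the doubly exponential decay of $p_m$ all check out, and the final parameter choices ($\lambda\asymp\log(1/\rho)/\delta$, then $\gamma$ small enough that $p_0\leq\rho^{4(2+3\epsilon)/\delta}$ and $e^{\lambda}p_1/p_0\leq 1/2$) do yield the bound $C\rho^{(2+3\epsilon)5N}$ with the dependence $\gamma_0=\gamma_0(\delta,\rho)$ required by the statement. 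However, your route is organized differently from the paper's. The paper proves the lemma by a union bound over all $2^{5N}$ candidate bad sets $B\subset\{1,\dots,5N\}$ with $\lvert B\rvert>\delta 5N$, and for each such set decomposes $\P(\cap_{n\in B}\mathrm{Upp}_n^c)$ by a recursive (greedy) selection of sub-events $\mathrm{Upp}_{n_{i_k},n_{i_k}-\ell_k}^c$ lying in disjoint strips, multiplies the bounds of Lemma~\ref{l:upscale} using that independence, and finally beats the $2^{5N}$ entropy by taking $\gamma$ small (this mirrors the proof of Lemma~\ref{l:LDLow}). You instead decouple by strip from the outset: the per-strip failure counts $Z_k$ are exactly independent across $k$, the within-strip dependence is handled by the crude single-event bound $\P(\cap_{n\in S}\mathrm{Upp}_{n,k}^c)\leq p_{\max S-k-1}$, and a Chernoff bound on $\sum_k Z_k$ replaces the union bound over subsets. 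What your approach buys is a cleaner exploitation of the exact product structure (no recursive index selection, no $2^{5N}$ prefactor to defeat at the end), at the price of a cruder treatment of intersections within a strip — which is harmless precisely because of the double exponential decay in Lemma~\ref{l:upscale}. Two cosmetic points: for $k=-1$ the index set in your expansion should be $\{1,\dots,5N\}$ rather than $\{k+1,\dots,5N\}$ (immaterial, the same bound $\sum_{m\geq0}e^{\lambda m}p_m$ covers it), and it is worth stating explicitly that $\rho<1/4$ so that $(4\rho)^{-1}-1>0$ in the ratio estimate; both are within the ``$\rho$ sufficiently small'' hypothesis.
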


\begin{proof}
We once more suppress the superscript $j$ in our arguments, which roughly follow the proof of Lemma~\ref{l:LDLow}. By the union bound
\begin{equation}\label{e:UpperUnion}
\P\left(\sum_{n=1}^{5N}\ind_{\mathrm{Upp}_n^c}>\delta 5N\right)\leq\sum_{B\subset\{1,\dots,5N\},\lvert B\rvert>\delta 5N}\P\left(\bigcap_{n\in B}\mathrm{Upp}_n^c\right).
\end{equation}
For a sequence $n_1>\dots>n_M$, arguing as in the proof of Lemma~\ref{l:LDLow} we have
\begin{align*}
    \P(\mathrm{Upp}_{n_1}^c\cap\dots\cap\mathrm{Upp}_{n_M}^c)\leq\sum_{r=1}^M\sum_{\ell_1,\dots,\ell_r}\P\left(\bigcap_{k=1}^r\mathrm{Upp}^c_{n_{i_k},n_{i_k}-\ell_k}\right)
\end{align*}
where now $n_{i_{k+1}}$ is the largest $n_j$ less than $n_{i_k}-\ell_k$ and we sum over $\ell_1,\dots,\ell_r$ such that
\begin{displaymath}
    \{n_1,\dots,n_M\}\subset\bigcup_{k=1}^r[n_{i_k},n_{i_k}-\ell_k].
\end{displaymath}
The latter condition implies that $M\leq\sum_{k=1}^r\ell_k+r$.

Next we observe that the events $\mathrm{Upp}_{n_{i_k},n_{i_k}-\ell_k}$ are independent for distinct $k$ since they depend on disjoint horizontal strips of the white noise process. Therefore using Lemma~\ref{l:upscale}
\begin{align*}
    \P(\mathrm{Upp}_{n_1}^c\cap\dots\cap\mathrm{Upp}_{n_M}^c)\leq\sum_{r=1}^M\sum_{\ell_1,\dots,\ell_r}C_0^r\exp\left(\sum_{k=0}^r\frac{-c_0}{\gamma^2(4\rho)^{\ell_k-1}}\right).
\end{align*}
Now we choose $\rho>0$ small enough that $(4\rho)^{-\ell}\geq 1+\ell$ for all $\ell\geq 0$ and then $\gamma<4\rho$. Using this and $\sum_k\ell_k\geq M-r$, the above expression is bounded by
\begin{align*}
    \sum_{r=1}^M\sum_{\ell_1,\dots,\ell_r}C_0^r\exp\left(\frac{-c_0}{\gamma}\Big(r+\sum_{k=0}^r\ell_k\Big)\right)&\leq \exp\left(\frac{-c_0}{2\gamma}M\right)\sum_{r=1}^MC_0^r\left(\sum_{\ell\geq 0}\exp\left(\frac{-c_0}{2\gamma}(1+\ell)\right)\right)^r\\
    &\leq \exp\left(\frac{-c_0}{2\gamma}M\right)\sum_{r=1}^MC_0^rC_1^r\exp\left(\frac{-c_0}{2\gamma}r\right)
\end{align*}
for some absolute constant $C_1>0$. Choosing $\gamma$ small enough that $C_0C_1e^{-c_0/(2\gamma)}\leq 1$ we conclude that
\begin{displaymath}
    \P(\mathrm{Upp}_{n_1}^c\cap\dots \mathrm{Upp}_{n_M}^c)\leq  M\exp\left(\frac{-c_0}{2\gamma}M\right) \leq C^{\prime}\exp\left(\frac{-c_0^\prime}{2\gamma}M\right).
\end{displaymath}
Substituting this into \eqref{e:UpperUnion} we obtain
\begin{displaymath}
    \P\left(\sum_{n=1}^{5N}\ind_{\mathrm{Upp}_n^c}>\delta 5N\right)\leq 2^{5N}C^{\prime}\exp\left(\frac{-c_0^\prime}{2\gamma}\delta 5N\right)
\end{displaymath}
and taking $\gamma$ sufficiently small ensures that this is at most $C\rho^{(2+3\epsilon)5N}$ as required.
\end{proof}

\begin{proof}[Proof of Proposition~\ref{p:LDApp}]
    We have already proven the required probability bound for $\mathrm{Scal}(\cdot)$ and $\mathrm{Centre}(\cdot)$. Combining Lemmas~\ref{l:LDFrac}, \ref{l:LDLow} and \ref{l:LDUpp} with the union bound proves \eqref{e:GoodApprox} with $3\delta$ replacing $\delta$. Since $\delta\in(0,1)$ was arbitrary, this suffices to prove the proposition.
\end{proof}

\section{Large deviation bounds for reduced events}\label{s:LargeDeviations}
In this section we tie together the remaining arguments needed to show that the maps $F_n$ are uniformly H\"older continuous and so complete the proof of our conformal welding result.

\subsection{Completing the proof of H\"older continuity}\label{ss:CompletingHolder}
The final ingredient we require to prove the uniform H\"older continuity of Theorem~\ref{t:Holder} is a large deviation bound for the reduced size and shape events described in the previous section:

\begin{proposition}\label{p:LDRed}
    For some $\delta>0$, all $\rho>0$ sufficiently small, all $\gamma<\gamma_0(\rho)$ and all $N$ sufficiently large the following holds: there exist two random sequences $(t_n)_{n\in\N}$ and $(s_n)_{n\in\N}$ such that
    \begin{enumerate}
        \item $\lfloor t_{n}\rfloor-\lfloor t_{n-1}\rfloor,\lfloor s_n\rfloor-\lfloor s_{n-1}\rfloor\geq 1$ for all $n$
        \item $t_n-\lfloor t_n\rfloor,s_n-\lfloor s_n\rfloor\in[0,5/8]$ for all $n$
        \item for some $C>0$ independent of $N$
        \begin{displaymath}
        \P\left(\sum_{n\::\:N\leq t_n,s_n\leq 5N}\ind_{\mathrm{ShapeRed}^{(1)}(t_n)\cap\mathrm{ShapeRed}^{(2)}(s_n)\cap\mathrm{SizeRed}(t_n,s_n)}<11\delta N\right)\leq C\rho^{(2+3\epsilon)5N}.
    \end{displaymath}
    \end{enumerate}
\end{proposition}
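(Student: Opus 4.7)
The plan is to construct the pair of sequences $(t_n,s_n)$ greedily so as to keep the target quantity in $\mathrm{SizeRed}$ small. Set $(t_0,s_0)=(N,N)$ and let
\begin{equation*}
Y_n := X^{(V)}_{t_n,s_n} - X^{(V)}_{N,N} + X^{(H)}_{N,N} + \log\!\Bigl(\tau^{(2)}([0,1]\setminus(y+B_N))\big/\tau^{(1)}([0,1]\setminus(x+B_N))\Bigr),
\end{equation*}
so that $\mathrm{SizeRed}(t_n,s_n)=\{|Y_n|\le\log 2\}$. Given $(t_{n-1},s_{n-1})$ and $Y_{n-1}$, take integer increments $(t_n,s_n)=(t_{n-1}+2,\,s_{n-1}+1)$ if $Y_{n-1}>0$ and $(t_n,s_n)=(t_{n-1}+1,\,s_{n-1}+2)$ otherwise, stopping once either coordinate exceeds $5N$. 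The resulting number of steps satisfies $M\ge(4/3)N-O(1)$, and since the increments lie in $\{1,2\}$ and the sequences are integer valued, conditions (1)--(2) of the proposition are automatic.

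\textbf{Size events via oscillating-walk ergodicity.} Using the exact scaling of the Gaussian fields $V^{(j)}$ and the independence of $V^{(1)}$ and $V^{(2)}$, the increment $Y_n-Y_{n-1}$ is Gaussian conditional on the past, with variance exactly $3\gamma^2\log(1/\rho)$ and mean $-(1+\gamma^2/2)\log(1/\rho)\,\mathrm{sgn}(Y_{n-1})$. Thus $(Y_n)$ is an oscillating Markov chain whose deterministic drift toward zero, of order $\log(1/\rho)$, dominates the Gaussian noise of order $\gamma\sqrt{\log(1/\rho)}$ once $\gamma$ is small in terms of $\rho$. A Foster--Lyapunov drift estimate with $V(y)=|y|$, combined with a regenerative decomposition at successive visits to $[-\log 2,\log 2]$, shows that $(Y_n)$ is geometrically ergodic with stationary mass on $[-\log 2,\log 2]$ at least $1-\delta_0$ for any prescribed $\delta_0>0$. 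A standard Chernoff bound for additive functionals of geometrically ergodic chains then yields, after taking $\gamma,\rho$ small enough that the exponential rate exceeds $5(2+3\epsilon)\log(1/\rho)$,
\begin{equation*}
\P\Bigl(\sum\nolimits_{n=1}^M\ind_{\mathrm{SizeRed}(t_n,s_n)}<(1-\delta_0)M\Bigr)\le C\rho^{(2+3\epsilon)5N}.
\end{equation*}

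\textbf{Shape events via Bernoulli large deviations.} For each fixed $t\ge N$, the scale invariance of $\nu^{(j)}_t$, the comparison $\tau^{(j)}_t\asymp\nu^{(j)}_t$ (implicit in the proof of Lemma~\ref{l:momentbound} via \cite[Lemma~3.6]{ajks}), together with Lemma~\ref{l:momentbound} itself, give that each of the five inequalities defining $\mathrm{ShapeRed}^{(j)}(t)$ holds with probability at least $1-\delta_1$ uniformly in $t$, for $\gamma$ small. Moreover, once $\rho\le 1/16$ and $t_n-t_{n-1}\ge 1$, the bound $2\rho^{t_n}\le\rho^{\lfloor t_{n-1}\rfloor+3/4}$ implies that the dominant part of the white noise used by $\mathrm{ShapeRed}^{(j)}(t_n)$ (that above the set $2B_{t_n}$) lies inside the interior region excised by $\mathrm{ShapeRed}^{(j)}(t_{n-1})$, and hence the shape events along $(t_n)$ are independent, as is the sequence along $(s_n)$, and across $j=1,2$. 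A Bernoulli large-deviation bound then gives
\begin{equation*}
\P\Bigl(\sum\nolimits_{n=1}^M\ind_{\mathrm{ShapeRed}^{(1)}(t_n)\cap\mathrm{ShapeRed}^{(2)}(s_n)}<(1-\delta_2)M\Bigr)\le C\rho^{(2+3\epsilon)5N}.
\end{equation*}

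\textbf{Combination and main obstacle.} The shape family depends only on white noise in $\{y\le\rho^{t_n}\}$, while $(Y_n)$ is measurable with respect to the fields $V^{(j)}_{\rho^{t_n}}$, which live on $\mathcal V\cap\{y\ge\rho^{t_n}\}$; the two families are therefore independent. Intersecting the two large-deviation events produces at least $(1-\delta_0-\delta_2)M$ indices on which all three events hold, and choosing $\delta_0,\delta_2$ small gives at least $11\delta N$ such indices for some $\delta>0$. The main obstacle is the quantitative ergodic analysis of the oscillating walk $(Y_n)$: one must obtain a tail rate for $\sum\ind_{|Y_n|>\log 2}$ of order $\log(1/\rho)$, which forces the noise-to-drift ratio $\gamma^2/(1+\gamma^2/2)^2$ to be pushed arbitrarily small to beat the target exponent $(2+3\epsilon)5\log(1/\rho)$, and explains the smallness condition on $\gamma$ in terms of $\rho$.
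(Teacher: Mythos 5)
There is a genuine gap, and it is in the step your whole argument rests on: the claim that your all\nobreakdash-integer greedy walk has stationary mass at least $1-\delta_0$ on $[-\log 2,\log 2]$. Your chain moves, at every step, by a Gaussian increment whose mean is $\pm d$ with $d=(1+\gamma^2/2)\log(1/\rho)$ and whose standard deviation $\sigma=\gamma\sqrt{\log(1/\rho)}$ is tiny compared to $d$. Such a chain cannot concentrate in a window of width $2\log 2\ll d$: from a typical point at distance between $\log 2$ and $d$ from the origin, the next step overshoots to the other side at distance of order $d$ again, and the ``phase'' of the oscillation equidistributes over a band of width $\sim d$. Consequently the long-run fraction of times with $|Y_n|\le\log 2$ is of order $\log 2/\log(1/\rho)$, which tends to $0$ as $\rho\to0$ --- the opposite of $1-\delta_0$. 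Even granting a Bernoulli-type bound with this small per-step success probability $p\sim 1/\log(1/\rho)$ over $M\le 4N$ steps, the best attainable failure exponent is of order $Mp\sim N/\log(1/\rho)$, which is nowhere near the required $\rho^{(2+3\epsilon)5N}=e^{-(2+3\epsilon)5N\log(1/\rho)}$. This is exactly the difficulty the paper's Algorithm~\ref{Algo} is designed to overcome: integer steps (moving only one coordinate when $|Y_m|>d$) are used merely to reach the wide window $[-d,d]$, and at the stopping times $T_n$ one then chooses \emph{fractional} increments $u,v\in[1,3/2]\cup\{2\}$ solving $Y_{T_n}+d(v-u)=0$, so that conditionally on $Y_{T_n}$ the value at $(t_n,s_n)$ is $\mathcal N(0,\le 4\sigma^2)$ and $\mathrm{SizeRed}(t_n,s_n)$ holds with probability $1-\kappa$, $\kappa\to0$ as $\gamma\to0$ (Lemma~\ref{l:LDSizeRed}); this is what makes a large-deviation rate beating $(2+3\epsilon)5N\log(1/\rho)$ possible. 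Your scheme has no such fine-tuning step, so condition (3) fails for it; note also that the fractional values are the reason conditions (1)--(2) are stated as they are.

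Two further points are asserted rather than proved and are nontrivial in the paper. First, the initial value $Y_N=X^{(H)}_{N,N}+\log\bigl(\tau^{(2)}([0,1]\setminus(y+B_N))/\tau^{(1)}([0,1]\setminus(x+B_N))\bigr)$ is typically of size growing with $N$, so one must control the burn-in time to reach $[-d,d]$ and the overshoots with failure probability $\le C\rho^{(2+3\epsilon)5N}$; this is the content of Lemmas~\ref{l:Initial}--\ref{l:Occupationtime} and does not follow from a generic ``geometric ergodicity plus Chernoff'' statement with an $N$-dependent initial condition. Second, the times $t_n$ are random (they depend on the walk, hence on $X^{(H)}_{N,N}$ and on the total masses $\tau^{(j)}([0,1]\setminus\cdot)$, which involve white noise at all heights), so independence of the events $\mathrm{ShapeRed}^{(j)}(t_n)$ along the sequence, and their independence from the walk, cannot be read off from disjointness of deterministic regions; the paper handles this by the resampling construction with auxiliary white noises and the strong Markov property in Lemma~\ref{l:LDShapeRed}, together with the uniform-in-$t$ lower bound on $\P(\mathrm{ShapeRed}^{(1)}(t))$ proved via the exact scaling relation \eqref{e:ScaleInvar} and the $\gamma\downarrow0$ limit \eqref{e:MeasureConv}, rather than via Lemma~\ref{l:momentbound} alone.
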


Armed with this result, and our previous arguments, the proof of Theorem~\ref{t:Holder} becomes straightforward:

\begin{proof}[Proof of Theorem~\ref{t:Holder}]
By Proposition~\ref{p:Stationary_condition}, uniform H\"older continuity of $(F_n)_{n\in\N}$ follows if we can find two sequences $t_n$ and $s_n$ such that $t_n-t_{n-1},s_n-s_{n-1}\geq 1/4$ and
\begin{equation}\label{e:FinalLD}
    \P\left(\sum_{t_n,s_n\leq 5N}\ind_{\mathrm{Ann}^\prime(x,t_n,y,s_n,N)}<\delta N\right)\leq C\rho^{(2+3\epsilon)5N}.
\end{equation}
Lemma~\ref{l:Decompose} shows that for $t_n,s_n\geq N$ we can replace $\mathrm{Ann}^\prime(x,t_n,y,s_n,N)$ by the intersection of the following events
\begin{equation}\label{e:ConditionList}
\begin{aligned}
\mathrm{Upp}^{(1)}_{\lfloor t_n\rfloor},\mathrm{Low}_{\lfloor t_n\rfloor}^{(1)},\mathrm{Frac}^{(1)}_{\lfloor t_n\rfloor},
\mathrm{Upp}^{(2)}_{\lfloor s_n\rfloor},\mathrm{Low}_{\lfloor s_n\rfloor}^{(2)},\mathrm{Frac}^{(2)}_{\lfloor s_n\rfloor},\\
\mathrm{Centre}(N),\mathrm{Scal}(N),
\mathrm{ShapeRed}^{(1)}(t_n),\mathrm{ShapeRed}^{(2)}(s_n),\mathrm{SizeRed}(t_n,s_n).
\end{aligned}
\end{equation}
Let us choose $\delta>0$ as given in the statement of Proposition~\ref{p:LDRed}. Taking $\gamma>0$ sufficiently small, Proposition~\ref{p:LDApp} states that with probability at least $1-C\rho^{(2+3\epsilon)5N}$ the event $\mathrm{Upp}^{(j)}_m\cap\mathrm{Low}^{(j)}_m\cap \mathrm{Frac}^{(j)}_m$ occurs for at least $(1-\delta)5N$ points from $m\in\{1,2,\dots,5N\}$ and $j\in\{1,2\}$. Moreover the same proposition gives the same lower bound on the probability of $\mathrm{Centre}(N)\cap\mathrm{Scal}(N)$.  Hence by Proposition~\ref{p:LDRed} and the union bound, all of the events in \eqref{e:ConditionList} occur for a sequence of length at least $\delta N$ with probability at least $1-C\rho^{(2+3\epsilon)5N}$. This verifies \eqref{e:FinalLD} and so completes the proof.
\end{proof}

Let us turn to proving Proposition~\ref{p:LDRed}. We note that the measures $\tau^{(j)}$, $\nu^{(j)}$, $\tau^{(j)}_t$, $\nu^{(j)}_t$ are all stationary in the sense that $\tau^{(j)}(u+\cdot)$ has the same distribution as $\tau^{(j)}(\cdot)$ for any given $u\in\R$ (and analogous statements hold for the other measures). This fact follows from the corresponding property of the periodic white noise processes used to construct these measures. From the definitions of the events $\mathrm{ShapeRed}^{(j)}(t)$ and $\mathrm{SizeRed}(t,s)$ we see that every interval to which $\tau^{(1)}$, $\tau^{(1)}_t$, $\nu^{(1)}$ or $\nu^{(1)}_t$ is applied has been translated by $x$ and similarly the intervals for measures with superscript $(2)$ have been translated by $y$. In proving Proposition~\ref{p:LDRed} it is therefore sufficient to consider the case $x=y=0$, which will simplify our notation in what follows.

\subsection{Oscillating random walk algorithm}\label{ss:algorithm}
We now describe the algorithm for selecting $(t_n,s_n)$, which forms the basis of Proposition~\ref{p:LDRed}. We begin with some heuristics. Recall that the event $\mathrm{SizeRed}(t,s)$ occurs if
\begin{equation}\label{e:SizeRed}
    \Bigg\lvert \underbrace{X^{(V)}_{t,s}-X_{N,N}^{(V)}}_{(1)}+\underbrace{X_{N,N}^{(H)}+\log\left(\frac{\tau^{(2)}([0,1]\setminus B_{N})}{\tau^{(1)}([0,1]\setminus B_{N})}\right)}_{(2)}\Bigg\rvert\leq\log(2)
\end{equation}
where
\begin{displaymath}
    X^{(V)}_{t,s}=\gamma V^{(1)}_{\rho^t}(0)-\gamma V^{(2)}_{\rho^s}(0)-(t-s)\left(1+\frac{\gamma^2}{2}\right)\log(1/\rho)
\end{displaymath}
and $X_{t,s}^{(H)}$ is defined analogously for $H$. Viewed as a function of $t$, the process $V^{(j)}_{\rho^t}(0)$ is a Brownian motion with speed $\log(1/\rho)$ and so $X_{t,s}^{(V)}$ can be viewed as a difference of two independent Brownian motions with the same drift (where we allow the time parameters $t$ and $s$ of each Brownian motion to vary independently). In particular
\begin{align}\label{e:Increment}
    X^{(V)}_{t+u,s+v}-X^{(V)}_{t,s}&\sim\mathcal{N}(d(v-u),\sigma^2(u+v))
\end{align}
where
\begin{equation}\label{e:Drift}
    d:=\left(1+\frac{\gamma^2}{2}\right)\log(1/\rho)\quad\text{and}\quad\sigma^2:=\gamma^2\log(1/\rho).
\end{equation}
Therefore increasing one parameter or the other will tend to increase or decrease the value of $(1)$ in \eqref{e:SizeRed} on average.

The idea of the algorithm is to choose these parameter values $t$ and $s$ iteratively, increasing whichever one will bring the left hand side of \eqref{e:SizeRed} closer to the target interval $[-\log(2),\log(2)]$. Moreover, we can view expression $(2)$ of \eqref{e:SizeRed} as an `initial distribution' since for $t,s\geq N$, expression $(1)$ is independent of $(2)$ (as they depend on disjoint regions of the hyperbolic white noise). Therefore the increments will be independent of the left hand side of \eqref{e:SizeRed}.

It will turn out to be convenient to mostly consider integer values of the parameters (this is for consistency with the events defined in earlier sections on integer scales). Then once we find a value of the parameters such that \eqref{e:SizeRed} is not too large (on the order of $d$) we choose fractional values for the next increment to obtain a high probability of landing in the target interval. These fractional parameter values will be $(t_n,s_n)$.

More precisely our algorithm is the following:
\begin{algorithm}\label{Algo}
    Given $N\in\N$ we define the integer sequences $(i_m,j_m)_{m\geq N}$ inductively as follows:
    \begin{itemize}
        \item Set $i_N=j_N=N$
        \item Given $(i_m,j_m)$, define
        \begin{displaymath}
         Y_m=X^{(V)}_{i_m,j_m}-X^{(V)}_{N,N}+X^{(H)}_{N,N}+\log\frac{\tau^{(2)}([0,1]\setminus B_{N})}{\tau^{(1)}([0,1]\setminus B_{N})}
         \end{displaymath}
         \item If $Y_m<-d$ then set $(i_{m+1},j_{m+1})=(i_m,j_m+1)$
        \item If $Y_m>d$ then set $(i_{m+1},j_{m+1})=(i_m+1,j_m)$
        \item Otherwise set $(i_{m+1},j_{m+1})=(i_m+2,j_m+2)$.
    \end{itemize}
See Figure~\ref{fig:Algorithm}. We next define the stopping times $T_0,T_1,\dots$ inductively as $T_0=N-1$ and
\begin{displaymath}
T_{n}=\inf\left\{m>T_{n-1}\;:\;\lvert Y_m\rvert\leq d\right\}
\end{displaymath}
for $n\in\N$. Then for each $n$, let $u$ and $v$ be the minimal values in $[1,1+5/8]\cup\{2\}$ such that
\begin{displaymath}
    Y_{T_n}+d(v-u)=0
\end{displaymath}
and set
\begin{displaymath}
    (t_n,s_n)=(i_{T_n}+u,j_{T_n}+v).
\end{displaymath}
\end{algorithm}

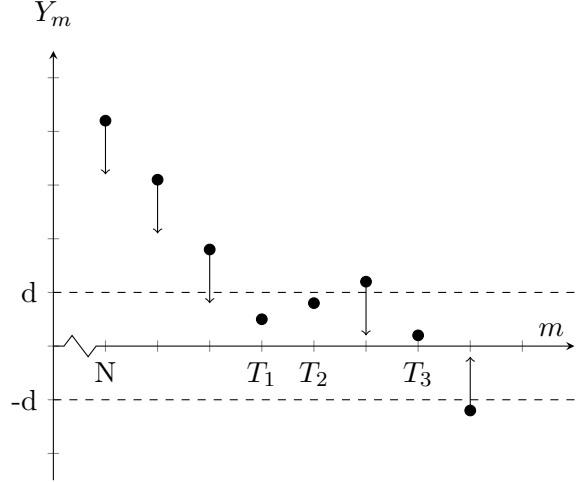
\begin{figure}[ht]
    \centering
    \begin{tikzpicture}
\begin{axis}[
xmin=0, xmax=10,
ymin=-2.5, ymax=5.5,axis x line =center, axis y line = left, xlabel={$m$}, ylabel={$Y_m$}, ylabel style={rotate=-90,at={(current axis.north west)},above=2mm},
ytick={-2,-1,...,5}, yticklabels={,-d,,d,,,},
xtick={1,2,3,4,5,6,7,8,9}, xticklabels={N,,,$T_1$,$T_2$,,$T_3$},
axis x discontinuity=crunch
]
\draw[fill] (1,4.2) circle (2pt);
\draw[->] (1,4.2)--(1,3.2);
\draw[fill] (2,3.1) circle (2pt);
\draw[->] (2,3.1)--(2,2.1);
\draw[fill] (3,1.8) circle (2pt);
\draw[->] (3,1.8)--(3,0.8);
\draw[fill] (4,0.5) circle (2pt);
\draw[fill] (5,0.8) circle (2pt);
\draw[fill] (6,1.2) circle (2pt);
\draw[->] (6,1.2)--(6,0.2);
\draw[fill] (7,0.2) circle (2pt);
\draw[fill] (8,-1.2) circle (2pt);
\draw[->] (8,-1.2)--(8,-0.2);
\draw[dashed] (0,1)--(10,1);
\draw[dashed] (0,-1)--(10,-1);
\end{axis}
\end{tikzpicture}
    \caption{The process $Y_m$ described in Algorithm~\ref{Algo} can be thought of as an oscillating random walk with Gaussian steps: when $Y_m$ is outside the interval $[-d,d]$ it takes Gaussian steps with expectation $\pm d$ towards the origin (indicated by arrows in the figure). When $\lvert Y_m\rvert\leq d$ the next step has an expectation of zero.}
    \label{fig:Algorithm}
\end{figure}

Having now defined our algorithm, we begin to prove Proposition~\ref{p:LDRed}. The first step is to show that (with high probability) the number of stopping times $T_n$ which occur before time $5N$ is of order $N$. This ensures that many values of $t_n$ and $s_n$ are bounded above by $5N$. We prove this in the next subsection. The second step is to show that the events of interest (i.e., $\mathrm{ShapeRed}$ and $\mathrm{SizeRed}$) occur for many such $(t_n,s_n)$. This is done in the final subsection.

\subsection{Large deviation bounds for occupation time}\label{ss:LargeDeviation}
The stopping times $(T_n)_{n\in\N}$ occur whenever the process $(Y_m)_{m\geq N}$ hits $[-d,d]$. Therefore proving that many $T_n$ occur before $5N$ is equivalent to proving bounds on the occupation time of $[-d,d]$ for $(Y_m)_{m\geq N}$. From \eqref{e:Increment} and Algorithm~\ref{Algo}, we see that the distribution of $(Y_m)_{m\geq N}$ can be described as follows: for all $m$, $Y_{m+1}-Y_m$ is independent of $Y_N,\dots,Y_m$ and
\begin{displaymath}
    Y_{m+1}-Y_m\sim\begin{cases}
\mathcal{N}(-d,\sigma^2) &\text{if }Y_m>d\\
\mathcal{N}(d,\sigma^2) &\text{if }Y_m< -d\\
\mathcal{N}(0,4\sigma^2) &\text{if }\lvert Y_m\rvert\leq d.
\end{cases}
\end{displaymath}
The initial distribution of $(Y_m)_{m\geq N}$ is
\begin{displaymath}
    Y_N=X^{(H)}_{N,N}+\log\frac{\tau^{(2)}([0,1]\setminus B_{{N}})}{\tau^{(1)}([0,1]\setminus B_{N})}.
\end{displaymath}
This process is clearly attracted to the interval $[-d,d]$ and so we should expect it to spend a constant proportion of its time in this interval in the long run. One obstacle to proving a large deviation estimate as $N\to\infty$ for this occupation time is the fact that the initial distribution $Y_N$ increases in magnitude with $N$. Fortunately this can be controlled fairly easily using tail bounds for $\tau^{(j)}$ and the Gaussian distribution:

\begin{lemma}[Initial distribution]\label{l:Initial}
For all $\rho>0$ sufficiently small and $\gamma<\gamma_0(\rho)$
\begin{displaymath}
\P\left(\left\lvert Y_N\right\rvert>\frac{dN}{8}\right)\leq C\rho^{(2+3\epsilon)5N}
\end{displaymath}
for some $C>0$ and all $N$ sufficiently large.
\end{lemma}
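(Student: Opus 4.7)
The plan is to decompose
\begin{displaymath}
Y_N = A + B, \qquad A := X^{(H)}_{N,N}, \qquad B := \log \frac{\tau^{(2)}([0,1]\setminus B_{N})}{\tau^{(1)}([0,1]\setminus B_{N})},
\end{displaymath}
and to establish $\P(|A| > dN/8) \leq C\rho^{(2+3\epsilon)5N}$ and $\P(|B| > dN/8) \leq C\rho^{(2+3\epsilon)5N}$ separately; a union bound then finishes. (The symbol $B$ as a random variable is not to be confused with the interval $B_N$.)

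For the Gaussian term $A$: since $H^{(1)}$ and $H^{(2)}$ are independent copies and we work at $x = y = 0$, the two variance centerings in the definition of $X^{(H)}_{N,N}$ cancel, so that $A = \gamma(H^{(1)}_{\rho^N}(0) - H^{(2)}_{\rho^N}(0))$ is a centered Gaussian with variance $2\gamma^2 \lambda(\mathcal{H}_{\rho^N})$. A direct integration, using that the horizontal width of $\mathcal{H}$ at height $y$ equals $(2/\pi)\arctan(\pi y/2)$, shows $\lambda(\mathcal{H}_{\rho^N}) = N\log(1/\rho) + O(1)$. Combined with $d = (1+\gamma^2/2)\log(1/\rho)$, the standard Gaussian tail bound yields $\P(|A| > dN/8) \leq 2\rho^{c_0 N/\gamma^2}$ for some absolute $c_0 > 0$, and choosing $\gamma$ small enough (depending on $\rho$ and $\epsilon$) so that $c_0/\gamma^2 > 5(2+3\epsilon)$ handles this half.

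For the log-ratio $B$, it suffices to control $\P(|\log \tau^{(j)}([0,1]\setminus B_N)| > dN/16)$ for each $j \in \{1,2\}$, separating upper and lower tails. The upper tail is immediate from $\tau^{(j)}([0,1]\setminus B_N) \leq \tau^{(j)}([0,1])$ and the positive-moment bound of Lemma~\ref{l:MeasureBasic}: Markov's inequality with $p$ close to $2/\gamma_0^2$ yields an upper bound of the form $C_{\gamma_0} \rho^{p(1+\gamma^2/2)N/16}$, whose exponent exceeds $5(2+3\epsilon)$ once $\gamma_0$ is small. For the lower tail, restrict to $\rho \leq 1/4$ so that $[1/4, 3/4] \subset [0,1]\setminus B_N$ for every $N \geq 1$, giving $\tau^{(j)}([0,1]\setminus B_N) \geq \tau^{(j)}([1/4, 3/4])$. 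Invoking the classical fact that subcritical Gaussian multiplicative chaos on a fixed interval has all negative moments finite (see e.g.\ \cite{RV10}, or by a Kahane-convexity comparison to a simpler log-correlated field with explicit negative moments), Markov's inequality with a sufficiently large exponent $q > 0$ gives $\P(\tau^{(j)}([1/4, 3/4]) < e^{-dN/16}) \leq C_q \rho^{q(1+\gamma^2/2)N/16}$, which is acceptable for $q$ large. The main anticipated difficulty is invoking this negative-moment bound in a form uniform over small $\gamma$; every other step is a routine Gaussian concentration or Markov estimate.
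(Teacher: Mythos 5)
Your argument is correct and, in its main structure, is the same as the paper's: split $Y_N$ into the Gaussian part $X^{(H)}_{N,N}$ and the log-ratio of total masses, kill the Gaussian part with the standard tail bound (its variance is of order $\gamma^2 N\log(1/\rho)$, so the probability is $\rho^{cN/\gamma^2}$ and one takes $\gamma$ small), and kill the measure part with Markov's inequality using moments of the GMC mass, with the exponent made large by taking $\gamma_0$ small. The one place where you deviate is the lower tail of $\tau^{(j)}([0,1]\setminus B_N)$: the paper writes $\tau^{(j)}([0,1]\setminus B_N)\geq \tfrac12\tau^{(j)}([0,1])$ on the event $\{\tau^{(j)}(B_N)/\tau^{(j)}([0,1])\leq 1/2\}$, bounds that event's complement by the $\mathrm{Scal}(N)$ estimate of Proposition~\ref{p:LDApp}, and then uses negative moments of $\tau^{(j)}([0,1])$; you instead lower bound directly by $\tau^{(j)}([1/4,3/4])$ (valid once $\rho\leq 1/4$) and use negative moments of that fixed interval. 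Your version is slightly more self-contained, since it does not re-invoke Proposition~\ref{p:LDApp}; the paper's version avoids introducing a new subinterval but relies on the same finiteness of all negative moments of the total mass (which, as in your write-up, is a classical GMC fact not literally contained in the statement of Lemma~\ref{l:MeasureBasic}). Finally, the difficulty you flag — uniformity of the negative-moment bound over small $\gamma$ — is not actually needed: the constant $C$ in the lemma is allowed to depend on $\rho$ and $\gamma$ (only $N$-independence matters), and the required exponent $q$ can be chosen as an absolute constant (e.g.\ $q=80(2+3\epsilon)$) independent of $\gamma$, so the negative moment for the fixed subcritical $\gamma$ at hand suffices.
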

\begin{proof}
Since $X_{N,N}^{(H)}$ is a centred Gaussian with variance bounded above by $C \gamma^2 N\log(1/\rho)$ for an absolute constant $C$, using the standard Gaussian tail inequality we have
\begin{displaymath}
\P\Big(\big\lvert X_{N,N}^{(H)}\big\rvert>\frac{dN}{16}\Big)\leq \exp\big(-d^2N/(2^9C\gamma^2\log(1/\rho))\big).
\end{displaymath}
Choosing $\gamma$ sufficiently small, depending on $\rho$, we can ensure that the latter is bounded above by $C\rho^{(2+3\epsilon)5N}$.

By the union bound
\begin{multline*}
\P\left(\left\lvert \log\frac{\tau^{(2)}([0,1]\setminus B_{N})}{\tau^{(1)}([0,1]\setminus B_{N})}\right\rvert>(d/16)N\right)\\
\begin{aligned}
\leq& 2\P\Big(\tau^{(j)}([0,1]\setminus B_N)>e^{(d/32)N}\Big)+2\P\Big(\tau^{(j)}([0,1]\setminus B_N)^{-1}>e^{(d/32)N}\Big)\\
\leq& 2\P\Big(\tau^{(j)}([0,1])>e^{(d/32)N}\Big)+2\P\Big(\tau^{(j)}([0,1])^{-1}>2^{-1}e^{(d/32)N}\Big)\\
&+2\P\Big(\tau^{(j)}(B_N)/\tau^{(j)}([0,1])>2^{-1}\Big).
\end{aligned}
\end{multline*}
By Proposition~\ref{p:LDApp} (specifically the statement for the event $\mathrm{Scal}(N)$) the final term above is at most $C\rho^{(2+3\epsilon)5N}$. Recall from Lemma~\ref{l:MeasureBasic} that $\E[\tau^{(j)}([0,1])^p]<\infty$ for all $p\in(-\infty,2/\gamma^2)$. We choose $p>(2+3\epsilon)160$ which, by \eqref{e:Drift}, ensures that $e^{-pd/32}<\rho^{(2+3\epsilon)5}$. Taking $\gamma$ small enough that $p<2/\gamma^2$, by the Markov inequality
\begin{align*}
\P(\tau^{(j)}([0,1])>e^{(d/32)N})\leq \E[\tau^{(j)}([0,1])^p]e^{-(pd/32)N}\leq C_p\rho^{(2+3\epsilon)5N}
\end{align*}
as required. Similarly
\begin{align*}
\P(\tau^{(j)}([0,1])^{-1}>2^{-1}e^{(d/32)N})\leq \E[\tau^{(j)}([0,1])^{-p}]2^pe^{-(pd/32)N}\leq C_p^\prime\rho^{(2+3\epsilon)5N}
\end{align*}
completing the proof of the lemma.
\end{proof}

Our next objective is to control how long it takes for $Y_m$ to travel from the (typically large) initial value $Y_N$ to the interval $[-d,d]$. Intuitively this is just the time that a negatively biased random walk started at a large positive value takes to first cross the origin. However since our steps are normally distributed it is possible for the walk to `overshoot' the interval $[-d,d]$ and then become a random walk biased in the opposite direction. We control the overshoot distribution with the following lemma:

\begin{lemma}[Overshoot distribution]\label{l:Jump}
Let $S=\inf\{m\geq N: Y_m\leq d\}$ then for any $a\leq-d$
\begin{displaymath}
\P(Y_{S}<a\;|\;Y_N>d)\leq Ce^{-\frac{a^2}{2\sigma^2}}
\end{displaymath}
where $C>0$ is an absolute constant. Moreover this is true even if we allow $Y_N$ to have an arbitrary distribution (which is independent of $(Y_m-Y_N)_{m>N}$ and not supported on $(-\infty,d]$).
\end{lemma}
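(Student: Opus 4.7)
The approach is to condition on the pre-crossing value $Y_{S-1}$ and reduce the bound to an estimate on a ratio of Gaussian tail integrals. Since the step distribution above $d$ has strictly negative mean $-d$, the stopping time $S$ is almost surely finite, and on $\{Y_N>d\}$ one has $Y_{S-1}>d$ by the definition of $S$. By the strong Markov property (or, equivalently, by summing over the disjoint events $\{S=k\}$ and using that on $\{S\geq k\}$ the increment $\xi_k = Y_k-Y_{k-1}$ is $\mathcal{N}(-d,\sigma^2)$ and independent of $\mathcal{F}_{k-1}$), the conditional law of $\xi_S$ given $Y_{S-1}=y$ with $y>d$ is precisely $\mathcal{N}(-d,\sigma^2)$ conditioned on $\{\xi_S \leq d-y\}$. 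Since $a\leq -d < d$, the event $\{\xi_S<a-y\}$ lies inside $\{\xi_S<d-y\}$, so
\begin{displaymath}
\P\bigl(Y_S<a\,\big|\,Y_{S-1}=y\bigr) = \frac{\Phi(-\alpha)}{\Phi(-\beta)}, \qquad \alpha := \frac{y-a-d}{\sigma}, \quad \beta := \frac{y-2d}{\sigma},
\end{displaymath}
where $\Phi$ is the standard normal CDF. The point is that integrating this uniform-in-$y$ estimate against the conditional distribution of $Y_{S-1}$ (supported on $(d,\infty)$) only uses the hypothesis that $Y_N$ is not supported on $(-\infty,d]$, which also accounts for the final sentence of the lemma.

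It then remains to prove the uniform bound $\Phi(-\alpha)/\Phi(-\beta) \leq C e^{-a^2/(2\sigma^2)}$ for $y>d$, $a\leq -d$. The key algebraic input is the factorisation
\begin{displaymath}
\alpha^2-\beta^2 = \frac{(d-a)(2y-a-3d)}{\sigma^2}.
\end{displaymath}
For $y>d$ one has $\alpha = ((y-d)+|a|)/\sigma \geq |a|/\sigma$, so $\alpha^2 \geq a^2/\sigma^2$. When additionally $y>2d$, both factors $d-a$ and $2y-a-3d$ are at least $d+|a|$ (using $a\leq -d$), giving the stronger bound $\alpha^2-\beta^2 \geq (d+|a|)^2/\sigma^2 \geq a^2/\sigma^2$.

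I would then split into three cases in $\beta$. If $y\leq 2d$ then $\beta\leq 0$ and $\Phi(-\beta)\geq 1/2$; combined with the standard Gaussian tail bound $\Phi(-\alpha)\leq \tfrac{1}{2}e^{-\alpha^2/2}$ and $\alpha^2\geq a^2/\sigma^2$, this closes the case. If $y\in(2d,2d+\sigma]$, i.e.\ $\beta\in(0,1]$, then $\Phi(-\beta)\geq \Phi(-1)$ is bounded below by an absolute constant, and the same tail bound on $\Phi(-\alpha)$ suffices. Finally, if $y>2d+\sigma$ (so $\beta>1$) I invoke the Mills-ratio inequalities $\Phi(-\alpha)\leq \phi(\alpha)/\alpha$ and $\Phi(-\beta)\geq \beta\phi(\beta)/(\beta^2+1)$, where $\phi$ is the standard normal density, to obtain
\begin{displaymath}
\frac{\Phi(-\alpha)}{\Phi(-\beta)} \leq \frac{\beta^2+1}{\alpha\beta}\,e^{-(\alpha^2-\beta^2)/2} \leq 2\,e^{-a^2/(2\sigma^2)},
\end{displaymath}
since the prefactor is at most $1+1/\beta^2\leq 2$ (using $\alpha\geq\beta\geq 1$) and the exponent is controlled by the second displayed inequality above.

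The only subtle point is choosing the split in $\beta$ correctly: the Mills-ratio lower bound on $\Phi(-\beta)$ degenerates as $\beta\to 0^+$, so the small-$\beta$ window $\beta\in(0,1]$ must be handled separately, absorbing the denominator into the absolute constant $\Phi(-1)^{-1}$ and relying instead on the stronger bound $\alpha^2\geq a^2/\sigma^2$. Everything else is elementary Gaussian calculus.
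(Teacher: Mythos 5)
Your proof is correct and follows essentially the same route as the paper's: both reduce, by conditioning on the pre-crossing value $Y_{S-1}=y>d$, to bounding $\sup_{y>d}\Phi(-\alpha)/\Phi(-\beta)$ with $\alpha=(y-a-d)/\sigma$ and $\beta=(y-2d)/\sigma$, and then apply Gaussian tail and Mills-ratio estimates in near and far regimes of $\beta$, using $a\leq -d$ to control the exponent. The only differences are organisational (the paper partitions over $\{S=n\}$ and takes suprema rather than conditioning on $Y_{S-1}$ directly, and splits into two cases at $\beta=2$ instead of your three), which do not change the substance.
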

\begin{proof}
Throughout this proof we condition on the event $\{Y_N>d\}$ and so we omit this from our notation. By partitioning over the different possible values of $S$
\begin{align*}
    \P(Y_S< a)&=\sum_{n=1}^\infty\P(Y_n< a\;|\;S=n)\P(S=n)\leq\sup_{n\in\N}\P(Y_n< a\;|\;S=n).
\end{align*}
By definition of $S$, for a given $n$
\begin{align*}
    \P(Y_n<a\;|\;S=n)&=\P(Y_n<a\;|\;Y_n\leq d< Y_1,\dots,Y_{n-1})\\
    &=\frac{\P(Y_n<a,Y_{n-1}>d\;|\;Y_1,\dots,Y_{n-2}>d)}{\P(Y_n\leq d,Y_{n-1}>d\;|\;Y_1,\dots,Y_{n-2}>d)}.
\end{align*}
If we write $\Tilde{p}$ for the law of $Y_{n-1}$ conditional on the event $Y_1,\dots,Y_{n-2}>d$, then by independence of increments we can rewrite the above expression as
\begin{align*}
    \frac{\int_d^\infty\P(Y_{n}<a\;|\;Y_{n-1}=u)\;d\Tilde{p}(u)}{\int_d^\infty\P(Y_{n}<d\;|\;Y_{n-1}=u)\;d\Tilde{p}(u)}\leq \sup_{u>d}\frac{\P(Y_{n}<a\;|\;Y_{n-1}=u)}{\P(Y_{n}<d\;|\;Y_{n-1}=u)}
\end{align*}
where this inequality follows from simply factoring out the supremum. The distribution of $Y_n$ conditional on $Y_{n-1}=u>d$ is Gaussian with mean $u-d$ and variance $\sigma^2$ (independent of $n$). Therefore we conclude that
\begin{displaymath}
\P(Y_S<a)\leq\sup_{u\geq d}\P\left(Z<\frac{a+d-u}{\sigma}\;\middle|\:Z<\frac{2d-u}{\sigma}\right)=\sup_{u\geq 0}\P\left(Z<\frac{a-u}{\sigma}\;\middle|\:Z<\frac{d-u}{\sigma}\right)
\end{displaymath}
where $Z\sim\mathcal{N}(0,1)$. We can bound this expression using the standard Gaussian tail inequalities
\begin{displaymath}
    \frac{1}{\sqrt{2\pi}}\Big(\frac{1}{x}-\frac{1}{x^3}\Big)e^{-x^2/2}\leq\P(\mathcal{N}(0,1)>x)\leq\min\Big\{1,\frac{1}{\sqrt{2\pi}x}\Big\} e^{-x^2/2}
\end{displaymath}
valid for $x>0$. Applying this with $u\in[0,d+2\sigma]$ we have
\begin{align*}
\P\left(Z<\frac{a-u}{\sigma}\;\middle|\:Z<\frac{d-u}{\sigma}\right)&\leq \P\left(Z<\frac{a-u}{\sigma}\right)/\P\left(Z<-2\right)\leq Ce^{-\frac{(a-u)^2}{2\sigma^2}}.
\end{align*}
Since $a-u\leq a<0$, the latter is at most $Ce^{-a^2/(2\sigma^2)}$. Similarly for $u>d+2\sigma$
\begin{align*}
\P\left(Z<\frac{a-u}{\sigma}\;\middle|\:Z<\frac{d-u}{\sigma}\right)&\leq\frac{\frac{\sigma}{u-a}e^{-\frac{(u-a)^2}{2\sigma^2}}}{\frac{\sigma}{u-d}\left(1-\frac{\sigma^2}{(u-d)^2}\right)e^{-\frac{(u-d)^2}{2\sigma^2}}}= \frac{u-d}{u-a}\frac{1}{1-\frac{\sigma^2}{(u-d)^2}}e^{-\frac{a^2-2au+2du-d^2}{2\sigma^2}}.
\end{align*}
The two fractional terms are bounded above by $1$ and $4/3$ respectively for all $u>d+2\sigma$ whilst the exponential term is bounded above by $e^{-a^2/(2\sigma^2)}$, completing the proof of the lemma.
\end{proof}

We can now give the desired tail estimate on the first stopping time $T_1$, conditional on the initial value $Y_N$. (In fact by the Markov property, we will be able to apply the following result to each of the stopping times $T_n$. So this is really the key estimate of the subsection.)

\begin{lemma}[Burn-in period]\label{l:Burn-in}
Given $\kappa\in(0,1)$, for $\rho>0$ sufficiently small, all $\gamma\in(0,\sqrt{2})$ and all $j\in\N$
\begin{displaymath}
\P\left(T_1-N>j\;|\;Y_N=u\right)<\kappa^{2j-8\lvert u\rvert/d}.
\end{displaymath}
\end{lemma}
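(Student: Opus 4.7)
If $|u| \le d$, then $T_1 = N$ and $\P(T_1 - N > j \mid Y_N = u) = 0$, so the bound is trivial. Assume henceforth $|u| > d$. By Algorithm~\ref{Algo} and the independence of the hyperbolic white noise on disjoint regions, conditional on $\mathcal{F}_m$ the increment $Y_{m+1} - Y_m$ is Gaussian with mean $-d\cdot\mathrm{sgn}(Y_m)$ and variance $\sigma^2 = \gamma^2\log(1/\rho)$ whenever $|Y_m| > d$. The plan is to build an exponential supermartingale $M_m := e^{\xi|Y_m|/d}\beta^m$ up to time $T_1$, with $\xi := 8\log(\kappa^{-1})$ and $\beta := \kappa^{-2} > 1$, and conclude via Doob's optional stopping theorem.

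The central computation is a sharp bound on $\E\bigl[e^{\xi|Y_{m+1}|/d}\mid Y_m = y\bigr]$ for $|y| > d$. Writing $t := (|y|-d)/\sigma \ge 0$ and $c := \xi\sigma/d$, splitting the integral over $Z \sim \mathcal{N}(0,1)$ at the sign-change locus of $Y_{m+1}$ and completing the square yields
\[
    \E\bigl[e^{\xi|Y_{m+1}|/d} \,\big|\, Y_m = y\bigr] \;=\; e^{\xi|y|/d - \xi}\, e^{\xi^2\sigma^2/(2d^2)} \bigl[\Phi(t+c) + e^{-2ct}\Phi(-t+c)\bigr].
\]
Using $e^{-2ct}\le 1$ and $\Phi(t+c) + \Phi(-t+c) = 1 + \int_{t-c}^{t+c}\phi(s)\,ds \le 1 + 2c\phi(0)$, the bracket is bounded by $1 + C_0 c$ with $C_0 = \sqrt{2/\pi}$. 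Hence, with the chosen $\xi,\beta$, the supermartingale condition $\E[M_{m+1}\mid\mathcal{F}_m] \le M_m$ reduces to
\[
    (1 + C_0\,\xi\sigma/d)\,e^{\xi^2\sigma^2/(2d^2)} \;\le\; \kappa^{-6}.
\]
Since $\sigma/d \to 0$ as $\gamma\to 0$ for fixed $\kappa,\rho$, this holds for all $\gamma$ below some explicit threshold depending on $\kappa$ and $\rho$.

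Finally, applying Doob's optional stopping theorem to the bounded stopping time $T_1 \wedge (N+j)$ gives $\E[M_{T_1\wedge(N+j)}\mid Y_N=u] \le M_N = e^{\xi|u|/d}\beta^N$. On the event $\{T_1 - N > j\}$ we have $|Y_{N+j}| > d$, which forces $M_{N+j} \ge e^\xi\beta^{N+j}$, and hence
\[
    \P(T_1 - N > j \mid Y_N = u) \;\le\; e^{\xi(|u|/d - 1)}\,\beta^{-j} \;=\; \kappa^{2j - 8|u|/d + 8} \;<\; \kappa^{2j - 8|u|/d},
\]
with the final strict inequality from $\kappa < 1$.

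The main obstacle is avoiding a spurious factor of $2$ arising from the naive estimate $|y - d + \sigma Z| \le (y-d) + \sigma|Z|$ together with $\E[e^{\xi\sigma|Z|/d}] \le 2\,\E[e^{\xi\sigma Z/d}]$: this cruder argument would force $\beta \le e^\xi/2$ and restrict the conclusion to $\kappa < 2^{-1/6}$, missing values of $\kappa$ close to $1$. Keeping the compensating prefactor $e^{-2ct}$ attached to the opposite-sign branch instead combines the two branches into $1 + O(\sigma/d)$ rather than $2$, which is absorbed by taking $\gamma$ small.
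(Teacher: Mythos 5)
Your proof is correct, and it takes a genuinely different route from the paper. The paper's argument decomposes $T_1-N$ into successive crossing times $S_{k+1}-S_k$ of the interval $[-d,d]$, controls the number of crossings via the overshoot estimate of Lemma~\ref{l:Jump}, treats the first crossing separately (using the initial value $u$ and a Gaussian tail bound), bounds exponential moments of the later crossing durations, and assembles everything with Chernoff-type estimates. You instead run a single exponential supermartingale $M_m=e^{\xi\lvert Y_m\rvert/d}\beta^m$ stopped at $T_1$, verify the one-step drift condition $(1+C_0\xi\sigma/d)\,e^{\xi^2\sigma^2/(2d^2)}\leq e^{\xi}\beta^{-1}=\kappa^{-6}$ (which holds for $\gamma$ small since $\sigma/d\to 0$), and conclude by optional stopping at the bounded time $T_1\wedge(N+j)$; your exact two-branch computation of $\E\big[e^{\xi\lvert Y_{m+1}\rvert/d}\,\big|\,Y_m=y\big]$, keeping the $e^{-2ct}$ factor rather than crudely bounding $\lvert y-d+\sigma Z\rvert$, is what lets the argument reach all $\kappa\in(0,1)$ rather than only small $\kappa$. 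I checked the key identity and the algebra ($e^{\xi(\lvert u\rvert/d-1)}\beta^{-j}=\kappa^{2j-8\lvert u\rvert/d+8}<\kappa^{2j-8\lvert u\rvert/d}$), and the supermartingale property is used only on $\{m<T_1\}$ where indeed $\lvert Y_m\rvert>d$, so the step distribution is the biased $\mathcal{N}(\mp d,\sigma^2)$ one; the boundary case $\lvert u\rvert\leq d$ is correctly dismissed since then $T_1=N$. What each approach buys: yours is shorter, bypasses Lemma~\ref{l:Jump} and the crossing decomposition entirely, and even yields the slightly stronger bound with the extra factor $\kappa^{8}$; the paper's crossing/overshoot analysis is more explicit about the pathwise behaviour of the walk (how overshoots are controlled), at the cost of a longer argument with several separately tuned estimates.
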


\begin{proof}
We will assume that $u>d$; the case $u<-d$ follows by symmetry of the oscillating random walk about zero and the case $\lvert u\rvert\leq d$ is trivial since this implies $T_1=N$. We inductively define the stopping times $(S_k)_{k\geq 0}$ by $S_0:=N$,
\begin{displaymath}
S_{2k+1}=\inf\{m>S_{2k}\;|\;{Y}_m\leq d\}\wedge T_1\quad\text{and}\quad S_{2k}=\inf\{m>S_{2k-1}\;|\;{Y}_m\geq-d\}\wedge T_1
\end{displaymath}
where $n_1\wedge n_2:=\min\{n_1,n_2\}$ and as usual the infimum of an empty set is defined to be $\infty$. So informally speaking, $S_k$ is the $k$-th time that the oscillating random walk crosses over the interval $[-d,d]$ without hitting it (see Figure~\ref{fig:CrossingTimes}). Let $\eta$ be the number of such crossings, i.e., $\eta$ is the smallest integer such that
\begin{equation}\label{e:EtaDef}
    T_1-N=\sum_{k=0}^\eta S_{k+1}-S_k.
\end{equation}

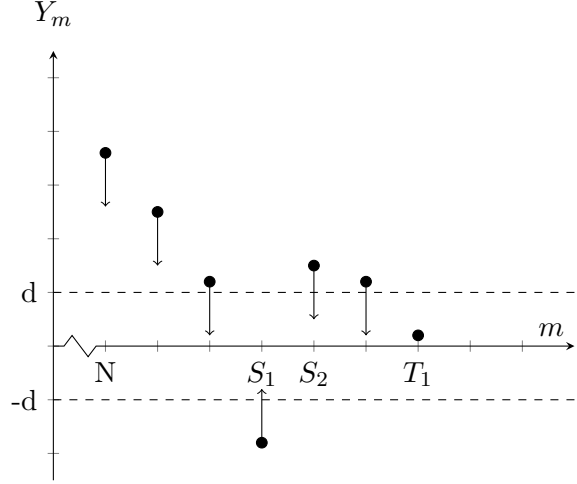
\begin{figure}[ht]
    \centering
    \begin{tikzpicture}
\begin{axis}[
xmin=0, xmax=10,
ymin=-2.5, ymax=5.5,axis x line =center, axis y line = left, xlabel={$m$}, ylabel={$Y_m$}, ylabel style={rotate=-90,at={(current axis.north west)},above=2mm},
ytick={-2,-1,...,5}, yticklabels={,-d,,d,,,},
xtick={1,2,3,4,5,6,7,8,9}, xticklabels={N,,,$S_1$,$S_2$,,$T_1$},
axis x discontinuity=crunch
]
\draw[fill] (1,3.6) circle (2pt);
\draw[->] (1,3.6)--(1,2.6);
\draw[fill] (2,2.5) circle (2pt);
\draw[->] (2,2.5)--(2,1.5);
\draw[fill] (3,1.2) circle (2pt);
\draw[->] (3,1.2)--(3,.2);
\draw[fill] (4,-1.8) circle (2pt);
\draw[->] (4,-1.8)--(4,-.8);
\draw[fill] (5,1.5) circle (2pt);
\draw[->] (5,1.5)--(5,0.5);
\draw[fill] (6,1.2) circle (2pt);
\draw[->] (6,1.2)--(6,0.2);
\draw[fill] (7,0.2) circle (2pt);
\draw[dashed] (0,1)--(10,1);
\draw[dashed] (0,-1)--(10,-1);
\end{axis}
\end{tikzpicture}
    \caption{An illustration of the stopping times $(S_k)_{k\geq 0}$. Arrows indicate the expected increments of the process $Y_m$. Roughly speaking, $S_{k+1}$ is the first time after $S_k$ that $Y_m$ `crosses over' the interval $[-d,d]$ before hitting it. For the illustrated process $\eta=2$ because $T_1=S_3>S_2$.}
    \label{fig:CrossingTimes}
\end{figure}

Our first goal is to prove a tail bound for $\eta$. For $k\in\N$,
\begin{displaymath}
\P(\eta\geq k\;|\;\eta\geq k-1)=\P\left(\lvert Y_{S_k}\rvert>d\;\middle|\;\lvert Y_{S_{k-1}}\rvert>d\right).
\end{displaymath}
The event $\left\{\lvert Y_{S_{k-1}}\rvert>d\right\}$ is measurable with respect to the stopped filtration $\mathcal{F}_{S_{k-1}}$ (where $\mathcal{F}_n=\sigma(Y_1,\dots,Y_n)$ is the natural filtration) and conditioning on this event induces some distribution on $Y_{S_{k-1}}$. We call the distribution $\Lambda_{k-1}$ and note that $\Lambda_{k-1}$ is supported on $\R\setminus(-d,d)$. By the strong Markov property then
\begin{displaymath}
\P\left(\lvert Y_{S_k}\rvert>d\;\middle|\;\lvert Y_{ S_{k-1}}\rvert>d\right)=\P\left(\lvert Y_{S_1}\rvert>d\;|\;Y_N\sim\Lambda_{k-1}\right)
\end{displaymath}
where, in a slight abuse of notation, the latter expression means we consider a random walk with the same distribution as $Y_m$ except that the first variable $Y_N$ has the distribution $\Lambda_{k-1}$. By symmetry, the latter expression is equal to
\begin{displaymath}
\P(Y_{S_1}<-d\;|\;Y_N\sim\Lambda^\prime_{k-1})
\end{displaymath}
for some $\Lambda^\prime_{k-1}$ supported on $[d,\infty)$. By Lemma~\ref{l:Jump} this probability is at most $Ce^{-d^2/(2\sigma^2)}$ for an absolute constant $C$. Therefore taking $d/\sigma$ sufficiently large, we may ensure that for all $k\in\N$, $\P(\eta\geq k\;|\;\eta\geq k-1)$ is smaller than any given constant. Recalling \eqref{e:Drift} we see that this can be done by taking $\rho$ sufficiently small. Hence, by iterated conditioning, we can choose $\rho$ small enough that for all $j\in\N$
\begin{equation}\label{e:BurnIn0}
\P(\eta>j/8)\leq \frac{1}{3}\kappa^{2j}.
\end{equation}

With this bound proven, it is now enough for us to control the probability that
\begin{displaymath}
\sum_{k=0}^{j/8}S_{k+1}-S_{k}>j.
\end{displaymath}
We consider the first term of this sum separately: if $S_1-S_0>j/2$ then the first $\lfloor j/2\rfloor$ increments of the random walk must have negative drift and the sum of these increments and the initial value must be greater than $d$. Therefore
\begin{equation}\label{e:BurnIn1}
    \begin{aligned}
     \P(S_1-S_0>j/2\;|\;Y_N=u)&\leq\P\Big(u+\mathcal{N}\big(-d\lfloor j/2\rfloor,\sigma^2 \lfloor j/2\rfloor\big)>d\Big)\\
     &=\P\left(\mathcal{N}(0,1)>\frac{d(\lfloor j/2\rfloor+1)-u}{\sigma\sqrt{\lfloor j/2\rfloor}}\right).   
    \end{aligned}
\end{equation}
We may assume that $j\geq 4u/d$ since otherwise the statement of the lemma becomes trivial. Hence the above expression is bounded by
\begin{equation}\label{e:BurnIn2}
    \P\left(\mathcal{N}(0,1)>\frac{\sqrt{2j}d}{4\sigma}\right)\leq e^{-\frac{jd^2}{16\sigma^2}}
\end{equation}
using the standard Gaussian tail inequality. Taking $d/\sigma$ sufficiently large, we can ensure that this is at most $\kappa^{2j}/3$.

Now moving to the subsequent increments, for any $i\in\N$ by the union bound
\begin{equation}\label{e:Burn-in3}
\begin{aligned}
\P(S_{k+1}-S_k>i\;|\;S_1,\dots,S_k)\leq&\P\Big(\{S_{k+1}-S_k>i\}\cap\big\{\big\lvert Y_{S_k}\big\rvert\leq id/2\big\}\;\big|\;S_1,\dots,S_k\Big)\\
&+\P(\lvert Y_{S_k}\rvert>id/2\;|\;S_1,\dots,S_k)
\end{aligned}
\end{equation}
and we consider these two terms in turn. For any increasing sequence $n_1<\dots<n_k$
\begin{multline*}
\P(\{S_{k+1}-S_k>i\}\cap\{\lvert Y_{n_k}\rvert\leq id/2\}\;|\;S_1=n_1,\dots,S_k=n_k)\\
\begin{aligned}
&\leq \P(\{\lvert Y_{n_k}\rvert-id+Z_i>d\}\cap\{\lvert Y_{n_k}\rvert\leq id/2\}\;|\;S_1=n_1,\dots,S_k=n_k)\\
&\leq\P(Z_i>id/2\;|\;S_1=n_1,\dots,S_k=n_k)
\end{aligned}
\end{multline*}
where $Z_i\sim\mathcal{N}(0,i\sigma^2)$ is independent of $S_1,\dots,S_k$ and $Y_{n_k}$, which allows us to drop the conditioning and bound the expression by $e^{-id^2/(8\sigma^2)}$. Since this bound is uniform over the choice of $n_1,\dots,n_k$ it holds for the first term on the right hand side of \eqref{e:Burn-in3}.

Turning to the second term in \eqref{e:Burn-in3}, let us assume that $k$ is odd (a near identical proof holds if $k$ is even). We again fix $n_1<\dots<n_k$ and define
\begin{displaymath}
A=\{S_1=n_1,\dots,S_{k-1}=n_{k-1}\}\cap\{Y_{n_{k-1}},\dots,Y_{n_k-1}>d\}
\end{displaymath}
so that $A\cap\{Y_{n_k}\leq d\}=\{S_1=n_1,\dots,S_k=n_k\}$ (this is where we use the fact that $k$ is odd). Then
\begin{align*}
\P\big(\lvert Y_{S_k}\rvert>id/2\;\big|\;S_1=n_1,\dots,S_k=n_k\big)&=\P\big(Y_{S_k}<-id/2\;\big|\;A\cap\{Y_{n_k}\leq d\}\big)\\
&=\frac{\P(Y_{n_k}<-id/2\;|A)}{\P(Y_{n_k}\leq d\;|A)}
\end{align*}
By conditioning over the different possible values of $Y_{n_k-1}$ and using the Markov property, this is at most
\begin{displaymath}
    \sup_{u\geq d}\P(Y_{n_k}-Y_{n_k-1}<-id/2-u\;|\;Y_{n_k}-Y_{n_k-1}<d-u)=\sup_{u\geq 0}\P(Z<-id/2-u\;|\;Z<d-u)
\end{displaymath}
where $Z\sim\mathcal{N}(0,\sigma^2)$. In the proof of Lemma~\ref{l:Jump} it was shown that this expression is bounded above, for $i\geq 2$, by $Ce^{-i^2d^2/(8\sigma^2)}$. We conclude that the right hand side of \eqref{e:Burn-in3} is bounded above by $C^\prime e^{-id^2/(8\sigma^2)}$ for all $i\geq 2$ and an absolute constant $C^\prime>0$.

Let $\theta>1$, then using a standard identity for $\E[\theta^X]$ when $X$ is some random variable taking values in $\{0\}\cup\N$, we have
\begin{equation*}
\begin{aligned}
\E\left[\theta^{S_{k+1}-S_k}\;\middle|\;S_1,\dots,S_k\right]&=1+(\theta-1)\sum_{i=0}^\infty\theta^i\P\left(S_{k+1}-S_k>i\;|\;S_1,\dots,S_k\right)\\
&\leq 1+(\theta-1)\Big(1+\theta+\sum_{i=2}^\infty\theta^iC^\prime e^{-id^2/(8\sigma^2)}\Big)
\end{aligned}
\end{equation*}
Thus for each $\theta>0$ we can choose $d/\sigma$ sufficiently large to ensure that this expression is less than, say, $2\theta^2$.

We now apply this to the sum of crossing times. By the Markov inequality and standard properties of conditional expectation, for any $\theta>0$
\begin{align*}
\P\left(\sum_{k=1}^{j/8}S_{k+1}-S_{k}>j/2\right)&\leq \theta^{-j/2}\E\left[\theta^{\sum_{k=1}^{j/8}S_{k+1}-S_{k}}\right]\\
&\leq \theta^{-j/2}\E\left[\E[\theta^{S_{j/8+1}-S_{j/8}}\;|\;S_1,\dots,S_{j/8}]\theta^{\sum_{k=1}^{j/8-1}S_{k+1}-S_{k}}\right]\\
&\leq \theta^{-j/2}2\theta^2\E\left[\theta^{\sum_{k=1}^{j/8-1}S_{k+1}-S_{k}}\right]\leq\dots\leq 2^{j/8}\theta^{-j/4}. 
\end{align*}
Choosing $\theta$ sufficiently large, which in turn requires $\rho$ sufficiently small, the above expression is at most $\kappa^{2j}/3$. Combining this with \eqref{e:EtaDef}, \eqref{e:BurnIn0} and \eqref{e:BurnIn1}-\eqref{e:BurnIn2} completes the proof of the lemma.
\end{proof}

Combining this tail bound on  $T_1$ with the Markov property allows us to prove a large deviation bound for the occupation time of the oscillating random walk.

\begin{lemma}[Occupation time]\label{l:Occupationtime}
Given $\rho>0$, there exists $\delta>0$ such that for all $\gamma<\gamma_0(\rho)$
\begin{displaymath}
\P\left(T_{14\delta N}>3N\right)<C\rho^{(2+3\epsilon)5N}
\end{displaymath}
for some $C>0$ and all $N\in\N$.
\end{lemma}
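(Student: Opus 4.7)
The plan is to reduce the bound on $T_{14\delta N}$ to a Chernoff-type estimate on the interarrival times $\sigma_n := T_n - T_{n-1}$ (with $T_0 = N-1$), by producing a uniform exponential-moment bound. Since $T_{14\delta N} > 3N$ is equivalent to $\sum_{n=1}^{14\delta N} \sigma_n > 2N+1$, and since Lemma~\ref{l:Initial} reduces the problem to the event $\{|Y_N| \leq dN/4\}$ at the cost of probability $C\rho^{(2+3\epsilon)5N}$, it suffices to control the Laplace transform of this sum on that event.

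For the initial waiting time $\sigma_1$, applying Lemma~\ref{l:Burn-in} with $u=Y_N$ and using $|Y_N|/d \leq N/4$ yields $\P(\sigma_1 > j \mid Y_N) \leq \kappa^{2j - 2 - 2N}$; hence for any $\theta$ with $\theta\kappa^2 < 1$ one gets $\E[\theta^{\sigma_1} \mid |Y_N| \leq dN/4] \leq C(\theta,\kappa)\kappa^{-2N}$. For $n \geq 2$, the strong Markov property at the stopping time $T_{n-1}$ together with the observation that $Y_{T_{n-1}+1} = Y_{T_{n-1}} + Z$ with $|Y_{T_{n-1}}|\leq d$ and $Z\sim\mathcal{N}(0,4\sigma^2)$ independent of $\mathcal{F}_{T_{n-1}}$ allows one to invoke Lemma~\ref{l:Burn-in} with $u = Y_{T_{n-1}+1}$. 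Integrating over the Gaussian step gives
$$
\P(\sigma_n > j \mid \mathcal{F}_{T_{n-1}}) \leq \kappa^{2(j-1)-8}\, \E\bigl[\kappa^{-8|Z|/d}\bigr] \leq 2\kappa^{2j-10} \exp\Bigl(\tfrac{128\sigma^2 \log^2(1/\kappa)}{d^2}\Bigr),
$$
and the exponential factor stays bounded provided $\gamma$ is small enough so that $\gamma^2 \log^2(1/\kappa)/\log(1/\rho)$ is small. This yields a uniform bound $\E[\theta^{\sigma_n} \mid \mathcal{F}_{T_{n-1}}] \leq K(\theta,\kappa)$ for all $n\geq 2$.

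Combining these with the tower property and Markov's inequality gives
$$
\P\Bigl(\sum_{n=1}^{14\delta N}\sigma_n > 2N+1\Bigr) \leq \theta^{-(2N+1)}\, C(\theta,\kappa)\kappa^{-2N}\, K(\theta,\kappa)^{14\delta N - 1}.
$$
Choosing $\theta = 1/(2\kappa^2)$ makes $K(\theta,\kappa) = O(\kappa^{-12})$, so the bound collapses to (constants)$^N \cdot \kappa^{2N - O(\delta N)}$. Taking $\delta$ small enough that the exponent of $\kappa$ stays bounded below by, say, $N$, and then choosing $\kappa$ small of order $\rho^{(2+3\epsilon)5}$, yields the target bound $\rho^{(2+3\epsilon)5N}$. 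The requisite smallness of $\kappa$ is furnished by Lemma~\ref{l:Burn-in} on taking $\gamma = \gamma_0(\rho)$ sufficiently small.

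The main delicate point is coordinating three small parameters: $\kappa$ must be smaller than a fixed power of $\rho$, $\gamma$ must then be small enough (depending on both $\rho$ and $\kappa$) so that Lemma~\ref{l:Burn-in} applies and the Gaussian moment $\E[\kappa^{-8|Z|/d}]$ remains uniformly bounded, and finally $\delta$ must be chosen small enough to guarantee that the $K^{14\delta N}$ factor does not overwhelm the $\kappa^{2N}$ gain from the Chernoff argument. This hierarchy is precisely what the statement of the lemma allows, since $\delta$ is an absolute constant chosen after $\rho$ and $\kappa$, while $\gamma_0(\rho)$ is chosen last.
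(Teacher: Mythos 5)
Your proposal is correct and follows essentially the same route as the paper: Lemma~\ref{l:Initial} to control the initial value, Lemma~\ref{l:Burn-in} applied via the (strong) Markov property after one Gaussian step at each return time, exponential moments of the interarrival times with $\theta\kappa^2<1$, and the same parameter hierarchy ($\kappa$ of order $\rho^{(2+3\epsilon)5}$, $\delta$ an absolute constant, $\gamma$ chosen last). The only differences are cosmetic: you fold the burn-in increment into the Chernoff bound through the $\kappa^{-2N}$ factor and bound the Gaussian overshoot by an explicit MGF computation, whereas the paper treats $\{T_1-T_0>N\}$ as a separate event and argues the overshoot integral by concentration of the Gaussian step.
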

\begin{proof}
Defining $\delta^\prime=14\delta$, let us write
\begin{displaymath}
T_{\delta^\prime N}=T_0+\sum_{k=1}^{\delta^\prime N}T_k-T_{k-1}.
\end{displaymath}
Our aim is to apply Lemma~\ref{l:Burn-in} to each term in this sum (using the Markov property). First note that by combining Lemma~\ref{l:Initial} and Lemma~\ref{l:Burn-in} with $\kappa=\rho^{(2+3\epsilon)5}$, the probability that $T_1-T_0>N$ is less than $C\rho^{(2+3\epsilon)5N}$.

Next by the Markov property, for any $k,j\in\N$ we have
\begin{align*}
    \P(T_k-T_{k-1}>j\;|\;T_1,\dots,T_{k-1})&\leq\sup_{u\in[-d,d]}\P(T_k-T_{k-1}>j\;|\;Y_{T_{k-1}}=u)\\
    &=\sup_{u\in[-d,d]}\int_{\R\setminus[-d,d]}\P(T_k-T_{k-1}>j\;|\;Y_{T_{k-1}+1}=v)p_u(v)\;dv
\end{align*}
where $p_u(v)$ denotes the density of a $\mathcal{N}(u,4\sigma^2)$ random variable and we have removed $[-d,d]$ from the domain of integration because $T_k-T_{k-1}=1$ if $Y_{T_{k-1}+1}$ falls in this interval. Since $(Y_m)$ is a time-homogeneous Markov chain, we can apply Lemma~\ref{l:Burn-in} to bound the latter expression by
\begin{displaymath}
    \sup_{u\in[-d,d]}\int_{\R\setminus[-d,d]}\kappa^{2(j-1)-8\lvert v\rvert/d}p_u(v)\;dv.
\end{displaymath}
As $\sigma$ decreases to zero, the distribution $p_u(\cdot)$ will concentrate around $u\in[-d,d]$. In particular for $\sigma>0$ sufficiently small, we can ensure that the above integral is bounded by, say, $\kappa^{2(j-1)-8}$ where $\kappa>0$ will be specified below.

Now for $\theta>1$, using the same identity introduced in the proof of Lemma~\ref{l:Burn-in}
\begin{align*}
    \E\left[\theta^{T_k-T_{k-1}}\;\middle|\;T_1,\dots,T_{k-1}\right]&=1+(\theta-1)\sum_{j=0}^\infty\theta^j\P\left(T_k-T_{k-1}>j\;|\;T_1,\dots,T_{k-1}\right)\\
    &\leq 1+(\theta-1)\left(1+\sum_{j=1}^\infty\theta^j\kappa^{2j-10}\right)= \theta+(\theta-1)\kappa^{-10}\frac{\theta\kappa^2}{1-\theta\kappa^2}
\end{align*}
where the final inequality holds for $\theta\kappa^2<1$. If we take $\kappa^2=\theta^{-1}/2$ then this expression is bounded by $2^6\theta^6$. By iterating this inequality (analogously to the argument in the proof of Lemma~\ref{l:Burn-in}) we find that
\begin{align*}
\P(T_{\delta^\prime N}-T_1>N)\leq \theta^{-N}\E\left[\theta^{\sum_{n=2}^{\delta^\prime N}T_n-T_{n-1}}\right]\leq \theta^{-N}(2\theta)^{6(\delta^\prime N-1)}.
\end{align*}
Choosing $\delta^\prime<1/6$ and $\theta$ sufficiently large (the latter of which is possible by choosing $\sigma$ sufficiently small) we find that this expression is at most $\rho^{(2+3\epsilon)5N}$. Combined with the analogous bound for $T_1-T_0=T_1-(N-1)$ this proves the lemma.
\end{proof}

\subsection{Reduced size and shape events}\label{ss:Reduced}

\begin{lemma}\label{l:LDSizeRed}
    For all $\kappa>0$, all $\rho>0$ sufficiently small, all $\gamma<\gamma_0(\rho,\kappa)$ and all $M\in\N$ the sequence $(\ind_{\mathrm{SizeRed}(t_n,s_n)})_{n=1}^M$ stochastically dominates a sequence of $M$ i.i.d.\ Bernoulli random variables with parameter $1-\kappa$.
\end{lemma}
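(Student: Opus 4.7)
The plan is to reduce the lemma to a standard quantile coupling by showing that, conditional on the history up to each stopping time $T_n$, the indicator $\ind_{\mathrm{SizeRed}(t_n,s_n)}$ is Bernoulli with success probability at least $1-\kappa$ when $\gamma$ is sufficiently small. Two ingredients drive this: an algebraic identity built into Algorithm~\ref{Algo}, and the scale-separated independence of the hyperbolic white noise.

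First, by the construction of $u_n,v_n$ in Algorithm~\ref{Algo}, on $\{|Y_{T_n}|\leq d\}$ one has $d(v_n-u_n)=-Y_{T_n}$ and $u_n+v_n\leq 7/2$. Setting $\Delta_n:=X^{(V)}_{t_n,s_n}-X^{(V)}_{i_{T_n},j_{T_n}}$, the definitions of $\mathrm{SizeRed}$ and of $Y_m$ combine to give
\begin{displaymath}
\ind_{\mathrm{SizeRed}(t_n,s_n)} = \ind\{|Y_{T_n}+\Delta_n|\leq\log 2\}.
\end{displaymath}
I would then show that conditionally on $\mathcal{F}_{T_n}:=\sigma(Y_N,\ldots,Y_{T_n},T_n)$ one has $\Delta_n\sim\mathcal{N}(d(v_n-u_n),\sigma^2(u_n+v_n))$ with $\sigma^2=\gamma^2\log(1/\rho)$, so $Y_{T_n}+\Delta_n\sim\mathcal{N}(0,\sigma^2(u_n+v_n))$ has variance at most $4\sigma^2$. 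The marginal statement is \eqref{e:Increment}; upgrading to a conditional one reduces to showing $\Delta_n$ is independent of $\mathcal{F}_{T_n}$. This relies on the fact that $\Delta_n$ is determined by $W^{(j)}$ restricted to $\mathcal{V}$ at heights in the random interval $[\rho^{t_n},\rho^{i_{T_n}}]\subseteq(0,\rho^N]$, and I would check that this slice is $\lambda$-disjoint from every portion of white noise feeding into $\mathcal{F}_{T_n}$: (i) the earlier $X^{(V)}_{i_m,j_m}$-increments use $\mathcal{V}$ only at heights $\geq\rho^{i_{T_n}}\vee\rho^{j_{T_n}}$; (ii) $X^{(H)}_{N,N}$ uses $\mathcal{H}$ at heights $\geq\rho^N$; and (iii) the measure ratio in $Y_N$ depends on noise in $\bigcup_{u\in[\rho^N,1-\rho^N]}(\mathcal{H}+u)$, but at heights $h\leq\rho^N$ this lies in $\{x\geq\rho^N/2\}$ whereas $\mathcal{V}$ at such heights lies in $\{|x|\leq\rho^N/2\}$.

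Once this is established,
\begin{displaymath}
\P\bigl(\mathrm{SizeRed}(t_n,s_n)\,\big|\,\mathcal{F}_{T_n}\bigr)\geq 2\Phi\bigl(\log 2/(2\gamma\sqrt{\log(1/\rho)})\bigr)-1,
\end{displaymath}
and the right-hand side exceeds $1-\kappa$ once $\gamma\leq\gamma_0(\rho,\kappa)$. Writing $Z_n:=\ind_{\mathrm{SizeRed}(t_n,s_n)}$ and using tower conditioning gives $\P(Z_n=1\mid Z_1,\ldots,Z_{n-1})\geq 1-\kappa$; a standard recursive quantile coupling (pick i.i.d.\ $U_n\sim\mathrm{Unif}[0,1]$ so that $Z_n=\ind\{U_n\leq p_n\}$ for the conditional success probability $p_n\geq 1-\kappa$, and set $\tilde Z_n:=\ind\{U_n\leq 1-\kappa\}$) then produces an i.i.d.\ $\mathrm{Ber}(1-\kappa)$ sequence pointwise dominated by $(Z_n)$, proving the lemma.

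The main obstacle is the geometric independence check in step two: one must rule out any contribution of $\mathcal{V}$-at-heights-$\leq\rho^N$ to the various terms making up $Y_N$, and in particular confirm that although $\tau^{(j)}([0,1]\setminus B_N)$ depends on white noise at arbitrarily small heights, its support is horizontally separated from $\mathcal{V}$ at such heights. Everything else is routine manipulation of Gaussians followed by a coupling.
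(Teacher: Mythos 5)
Your proposal follows essentially the same route as the paper's proof: condition on the history up to the stopping time $T_n$, use the recentring $Y_{T_n}+d(v_n-u_n)=0$ built into Algorithm~\ref{Algo} so that conditionally $Y_{T_n}+\Delta_n$ is a centred Gaussian with variance at most $4\sigma^2$, bound the conditional success probability by $2\Phi(\log 2/(2\sigma))-1\geq 1-\kappa$ for $\gamma$ small, and iterate (the paper asserts the domination "by iteration" where you spell out the quantile coupling). The conditional-independence step that you flag as the main obstacle is exactly the point the paper handles by appealing to the disjointness of the white-noise regions below scale $\rho^{i_{T_n}},\rho^{j_{T_n}}$ from those determining $\mathcal{F}_{T_n}$, so your geometric check is a correct (and slightly more explicit) version of the same argument.
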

\begin{proof}
We write $\chi_n=\ind_{\mathrm{SizeRed}(t_n,s_n)}$ and define
\begin{displaymath}
    A=\{\chi_1=a_1,\dots,\chi_{n-1}=a_{n-1}\}
\end{displaymath}
where $a_1,\dots,a_{n-1}\in\{0,1\}$. From Algorithm~\ref{Algo}, we see that conditional on $Y_{T_{n}}$, $\chi_{n}$ is independent of $\chi_1,\dots,\chi_{n-1}$. Therefore
\begin{equation}\label{e:SizeRedCondition}
    \P(\chi_{n}=1\;|\;A)=\int_{-d}^d\P(\chi_{n}=1\;|\;Y_{T_{n}}=u)dp(u)
\end{equation}
where $p$ denotes the law of $Y_{T_{n}}$ conditional on $A$. We wish to show that this integrand is bounded below by $1-\kappa$ (for sufficiently small $\gamma$). Recall from the definitions in Section~\ref{ss:algorithm} that
\begin{displaymath}
\mathrm{SizeRed}(t_n,s_n)=\left\{\left\lvert Y_{T_n}+X_{t_n,s_n}^{(V)}-X^{(V)}_{i_{T_n},j_{T_n}}\right\rvert\leq\log(2)\right\}
\end{displaymath}
and that, conditional on $Y_{T_n}=u$, $X_{t_n,s_n}^{(V)}-X^{(V)}_{i_{T_n},j_{T_n}}$ is normally distributed with mean $-u$ and variance bounded above by $4\sigma^2$. Therefore
\begin{displaymath}
\P(\chi_n=1\;|\;Y_{T_n}=u)\geq\P(2\sigma \lvert Z\rvert\leq\log(2))
\end{displaymath}
where $Z\sim\mathcal{N}(0,1)$. Clearly we can ensure that this probability is greater than $1-\kappa$ by choosing $\sigma$ sufficiently small. This is equivalent to choosing $\gamma>0$ small depending on $\rho,\kappa>0$. Hence by \eqref{e:SizeRedCondition} we see that the probability of $\{\chi_n=1\}$ is at least $1-\kappa$ independent of conditioning on $\chi_1,\dots,\chi_{n-1}$. This is true for any $n$, so by iteration we see that $\chi_1,\dots,\chi_M$ stochastically dominates an i.i.d.\ sequence of Bernoulli variables with parameter $1-\kappa$.
\end{proof}

\begin{lemma}\label{l:LDShapeRed}
    The statement of Lemma~\ref{l:LDSizeRed} holds if we replace $\mathrm{SizeRed}(t_n,s_n)$ by either $\mathrm{ShapeRed}^{(1)}(t_n)$ or $\mathrm{ShapeRed}^{(2)}(s_n)$ and take $t_n,s_n$ greater than some $N_0$ depending on $\rho,\kappa$.
\end{lemma}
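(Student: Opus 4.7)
The proof follows the template of Lemma \ref{l:LDSizeRed}. Writing $\chi_n := \ind_{\mathrm{ShapeRed}^{(1)}(t_n)}$, it suffices to show $\P(\chi_n = 1 \mid \mathcal{F}_{n-1}) \geq 1 - \kappa$ almost surely for a filtration $\mathcal{F}_{n-1}$ containing the past $\chi_1, \dots, \chi_{n-1}$. This splits naturally into (i) establishing that $\chi_n$ is (nearly) conditionally independent of $\mathcal{F}_{n-1}$, and (ii) a pointwise bound $\P(\mathrm{ShapeRed}^{(1)}(t)) \geq 1 - \kappa$ uniform in $t \geq N_0(\rho, \kappa)$.

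For the conditional independence in (i), Algorithm \ref{Algo} enforces $t_n - t_{n-1} \geq 1$: the condition $\lvert Y_{T_{n-1}} \rvert \leq d$ forces $(i_{T_{n-1}+1}, j_{T_{n-1}+1}) = (i_{T_{n-1}} + 2, j_{T_{n-1}} + 2)$, so $i_{T_n} \geq i_{T_{n-1}} + 2$, and combined with $u_n \in [1,2]$ this yields the desired gap. The event $\chi_n$ is determined by $\tau_{t_n}^{(1)}$ evaluated on sets of scale $\rho^{t_n}$ near the origin, hence by $W^{(1)}$ restricted to a region $R_n \subset \{y \leq \rho^{t_n}\}$ with horizontal extent at most of order $\rho^{t_n - 1/4}$ (arising from the unions of translates $u + \mathcal{H}$ over the sets appearing in $\mathrm{ShapeRed}^{(1)}$). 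The algorithm trajectory up to time $T_n$ uses $V^{(1)}$ at heights $y \geq \rho^{i_{T_n}} \geq \rho^{t_n - 1}$, disjoint from $R_n$'s heights, together with $\tau^{(1)}([0,1] \setminus B_N)$ whose low-height noise sits at $\lvert x \rvert \gtrsim \rho^N/2$, disjoint from $R_n$'s horizontal extent once $t_n$ is sufficiently larger than $N$. For past events $\chi_k$ with $k < n$, the measure $\tau_{t_k}^{(1)}$ is evaluated primarily on the annulus $B_{t_k} \setminus B_{\lfloor t_k \rfloor + 3/4}$, whose white noise region sits at $\lvert x \rvert \gtrsim \rho^{\lfloor t_k \rfloor + 3/4}$; since $t_n - 1/4 \geq \lfloor t_k \rfloor + 3/4$ (using $t_n \geq t_k + 1$), this is disjoint from $R_n$, so $\chi_n$ is conditionally independent of $\mathcal{F}_{n-1}$.

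For the pointwise bound in (ii), $\mathrm{ShapeRed}^{(1)}(t)$ is the intersection of finitely many ratio bounds on $\tau_t^{(1)}$ together with an upper bound on an infinite sum. For each ratio bound, a moment estimate analogous to Lemma \ref{l:momentbound}, transferred from $\tau^{(1)}$ to $\tau_t^{(1)}$ via the exact-scaling measure $\nu_t^{(1)}$ and Lemma \ref{l:MeasureComp}, shows that the relevant ratio concentrates near its deterministic small-$\gamma$ value (a Lebesgue ratio), so the failure probability can be made smaller than $\kappa/K$ for any prescribed $K$ by taking $\gamma$ sufficiently small. For the sum term, moment bounds yield $\E\bigl[\tau_t^{(1)}(I_{m,\ell,t})^2 / \tau_t^{(1)}(B_t \setminus B_{\lfloor t\rfloor + 3/4})^2\bigr] \lesssim 2^{-m(2-\gamma^2)}$, and summing over the $O(2^m)$ admissible $\ell$ yields a convergent geometric series in $m$ for $\gamma^2 < 1$, so Markov's inequality delivers the required concentration.

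The main obstacle is a subtlety of step (i): the sum term in $\mathrm{ShapeRed}^{(1)}$ includes sets $I_{m, \pm 2, t_k}$ that contain the origin, so their associated white noise extends into the narrow cone near $x = 0$ at arbitrarily low heights and does not cleanly separate from $R_n$ at scale $t_n > t_k$. We handle these ``inner'' terms by a separate moment estimate: their total contribution to the sum has small expectation and can therefore be bounded directly, allowing us to decompose $\mathrm{ShapeRed}^{(1)}$ into a well-separated principal part (which enjoys genuine independence across scales by the argument above) plus a negligible remainder absorbed into the $\kappa$-tolerance. Transferring Lemma \ref{l:momentbound} from $\tau^{(j)}$ to $\tau_t^{(j)}$, while routine, is another technical point that relies on the approximate decomposition developed in Section \ref{s:Decompose}.
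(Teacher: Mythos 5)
Your two-step skeleton (conditional independence of $\chi_n$ from the past along the algorithm, plus a bound $\P(\mathrm{ShapeRed}^{(1)}(u))\geq 1-\kappa$ uniform in $u\geq N_0(\rho,\kappa)$) is exactly the paper's strategy; the paper formalises step (i) by resampling each algorithm step from a fresh independent white noise and working with the stopped $\sigma$-algebra $\mathcal{F}_{T_n}$, so that $\E[\chi_{t_n}\,|\,\mathcal{F}_{T_n}]=\P(\mathrm{ShapeRed}^{(1)}(u))\big|_{u=t_n}$, and your remark about the intervals $I_{m,\ell,t}$ through the origin is a fair catch that the paper's disjointness claim passes over. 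The genuine gap is in step (ii). Estimates of the type of Lemma~\ref{l:momentbound} are one-sided upper-tail bounds for ratios $\tau(J)/\tau(I)$ with $\lvert J\rvert\ll\lvert I\rvert$; they cannot produce the \emph{lower} bounds in $\mathrm{ShapeRed}^{(1)}(t)$, e.g. $\rho^{-t}\tau_t^{(1)}(B_t\setminus B_{\lfloor t\rfloor+3/4})\geq 1$ or the $\geq 2^{-4}$ bounds, nor any two-sided concentration around Lebesgue ratios. The mechanism the paper actually uses, and which is missing from your sketch, is the exact scaling identity $\rho^{-t}\nu_t(\rho^t\cdot)\overset{d}{=}\nu_\star$ (after transferring from $\tau_t$ to $\nu_t$ via Lemma~\ref{l:MeasureComp}, which is what makes the bound uniform in $t$) combined with the convergence in probability $\nu_\star^{(\gamma)}(I)\to\lvert I\rvert$ as $\gamma\downarrow 0$ (analyticity in $\gamma$, quoted from \cite{ajks}); a second-moment computation as $\gamma\to 0$ would also serve, but some such ingredient must be supplied — "a moment estimate analogous to Lemma~\ref{l:momentbound}" does not give it.

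The series term, as you treat it, would fail outright: the threshold $2^5$ is a fixed constant of the same order as the mean of the series (at small $\gamma$ the mean is already about $\sum_m 2^{m+1}(2^{-m+1})^2\approx 16$, and larger once the constants from \eqref{e:MeasureRatio} are included), so Markov's inequality can never make the failure probability smaller than an arbitrary $\kappa$, which is what stochastic domination with parameter $1-\kappa$ requires. The paper instead splits at a finite $M=M(\kappa)$: the tail $m\geq M$ has expectation as small as desired and is handled by Markov, while the finitely many terms $m<M$ are controlled by the same small-$\gamma$ convergence, whose deterministic limit $16/(1-\rho^{1/4})^2$ sits safely below $2^5$. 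Finally, note that your proposed repair of step (i) — a well-separated principal part plus a remainder "absorbed into the $\kappa$-tolerance" — does not obviously preserve the domination statement: the remainder parts of the \emph{past} events $\chi_k$ still depend on white noise near the origin at heights below $\rho^{t_n}$, so conditioning on the full past still entangles with the region driving $\chi_n$. A cleaner fix is to excise $B_{\lfloor t\rfloor+3/4}$ from the $I_{m,\ell,t}$ in the definition of the fifth part of $\mathrm{ShapeRed}^{(1)}$ (the proof of Lemma~\ref{l:Decompose} tolerates this change) and then run the paper's resampling argument verbatim.
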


\begin{proof}
We give only the argument for $\mathrm{ShapeRed}^{(1)}(t_n)$ since the argument for $\mathrm{ShapeRed}^{(2)}(s_n)$ is entirely symmetric. Let $\chi_t=\ind_{\mathrm{ShapeRed}^{(1)}(t)}$ and for $m\geq N$ define
    \begin{displaymath}
        Z_m=\begin{cases}
            \chi_{t_n}&\text{if }m-1=T_n\text{ for some }n\geq 0\\
            0 &\text{otherwise.}
        \end{cases}
    \end{displaymath}
    We first show that $(Y_m,Z_m)_{m\geq N}$ is a Markov chain. In order to construct an explicit filtration, we define a different version of the process as follows: let $W_1,W_2,\dots$ be an independent sequence of hyperbolic white noise processes (as defined in Section~\ref{s:Intro}) and let $\mathcal{F}_m=\sigma(W_1,\dots,W_{2m})$. For each $m\geq N$ we let $(Y_m-Y_{m-1},Z_m)$ be defined as in Algorithm~\ref{Algo} except that $V^{(1)}$, $\nu^{(1)}$, $\tau^{(1)}$ etc are sampled using $W_{2m-1}$ and $V^{(2)}$, $\nu^{(2)}$ etc are sampled using $W_{2m}$. This does not change the distribution of $(Y_m-Y_{m-1},Z_m)_{m\geq N}$ since each element of this sequence is determined by the restriction of the white noise to disjoint domains; the original white noise $W$ is independent on these domains and so it does not matter if we instead sample from $W_{2m-1}$ and $W_{2m}$. To be more precise, $Y_m-Y_{m-1}$ is determined by the white noise for $\nu^{(1)}$ restricted to $\mathcal{V}\cap\{(x,y)\;|\;\rho^{i_m}\leq y\leq\rho^{i_{m-1}}\}$ and the white noise for $\nu^{(2)}$ restricted to $\mathcal{V}\cap\{(x,y)\;|\;\rho^{j_m}\leq y\leq\rho^{j_{m-1}}\}$. Similarly $Z_m$ is determined by the white noise for $\nu^{(1)}$ restricted to
    \begin{displaymath}
        \bigcup_{\rho^{t_n+7/8}\leq\lvert a\rvert\leq 2\rho^{t_n}}(\mathcal{V}+a)\cap\big\{(x,y)\;|\;y\leq\rho^{i_{m-1}}\big\}.
    \end{displaymath}
    These regions are disjoint for distinct values of $m$ (Figures~\ref{Fig:WhiteNoiseDecomp3} and~\ref{Fig:WhiteNoiseDecomp4} show the corresponding regions with $\mathcal{H}$ instead of $\mathcal{V}$).

    For each $n\in\N$ we note that $T_n$ is a stopping time with respect to $(\mathcal{F}_k)$ and that the events $\mathrm{ShapeRed}^{(1)}(t_1),\dots,\mathrm{ShapeRed}^{(1)}(t_{n-1})$ are measurable with respect to the stopped $\sigma$-algebra $\mathcal{F}_{T_n}$. Therefore to prove the result, it is enough to show that for $n$ sufficiently large
    \begin{displaymath}
        \E\big[\chi_{t_n}\;|\;\mathcal{F}_{T_n}\big]\geq 1-\kappa
    \end{displaymath}
    almost surely. By definition of $\mathrm{ShapeRed}$ we see that
    \begin{displaymath}
        \chi_{t_n}=G(t_n,W_{2T_n-1})
    \end{displaymath}
    for some measurable function $G$. Since $W_{2T_n-1}$ is independent of $\mathcal{F}_{T_n}$ and $t_n$ is measurable with respect to this $\sigma$-algebra, we see that
    \begin{displaymath}
        \E\big[\chi_{t_n}\;|\;\mathcal{F}_{T_n}\big]=\tilde{G}(t_n)
    \end{displaymath}
    where $\tilde{G}(u):=\E[G(u,W_1)]=\P(\mathrm{ShapeRed}^{(1)}(u))$ for any constant $u\geq 1$. Therefore the lemma follows if we can show that the latter probability is at least $1-\kappa$ for all $u\geq N_0(\rho,\kappa)$. This is precisely the statement of the next lemma.
\end{proof}

\begin{lemma}
    Given $\kappa>0$, there exists $\gamma_0>0$ such that for all $\gamma\leq\gamma_0$ and all $t$ sufficiently large
    \begin{displaymath}
        \P(\mathrm{ShapeRed}^{(1)}(t))\geq 1-\kappa.
    \end{displaymath}
\end{lemma}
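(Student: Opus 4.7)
By stationarity of the white noise we may assume $x = 0$. The five conditions defining $\mathrm{ShapeRed}^{(1)}(t)$ are all comparison statements about masses of $\tau_t^{(1)}$ on subintervals of $B_t = [-\rho^t,\rho^t]$. The numerical thresholds are chosen with considerable slack relative to their Lebesgue-measure counterparts: condition~4 requires a ratio $\leq 2^2$ where Lebesgue gives $\approx 2$; condition~3 requires $\leq 2^{-38}$ where Lebesgue gives $O(\rho^{1/4})$; the Lebesgue version of the sum in condition~5 is a convergent geometric series of size $O(1)$; and so on. Since the multifractal spectrum satisfies $\zeta_p(\gamma)/p = 1 - \tfrac{p-1}{2}\gamma^2 \to 1$ as $\gamma \to 0$, the chaos measure $\tau_t^{(1)}$ is close to Lebesgue measure in a moment sense for small $\gamma$, and each condition is expected to hold with probability approaching one. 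My plan is to bound the failure probability of each of the at most twelve component conditions separately by $\kappa/K$ and conclude by the union bound.

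\textbf{Implementation.} The key ingredient is a moment bound for ratios $\tau_t^{(1)}(J)/\tau_t^{(1)}(I)$ analogous to Lemma~\ref{l:momentbound}. The proof of that lemma (comparison $\tau \asymp \nu$ up to a log-normal factor with all moments, combined with Kahane-type bounds for $\nu$) carries over to $\tau_t^{(1)}$ essentially unchanged, since $\nu_t^{(1)}$ obeys an exact scaling relation and the arguments of Section~\ref{ss:Sufficient} together with Lemma~\ref{l:MeasureComp} give that $\tau_t^{(1)}$ and $\nu_t^{(1)}$ differ by bounded log-normal fluctuations uniformly for $t$ large. Given such a bound, for each upper-bound condition in parts~3, 4, the reciprocal form of~2, and the individual terms of~5, apply Markov's inequality at a moment $p = 1+\eta$: the resulting bound of order $C \lambda^{-p} (\lvert J\rvert/\lvert I\rvert)^{\zeta_p(\gamma)/p}$ is made arbitrarily small either by the slack in the threshold $\lambda$ or by the smallness of the length ratio $\lvert J\rvert/\lvert I\rvert$. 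For the two-sided first condition on $\rho^{-t}\tau_t^{(1)}(B_t \setminus B_{\lfloor t\rfloor+3/4})$, the upper bound follows from Markov and the lower bound from standard negative-moment estimates for GMC (finite for all moments when $\gamma$ is small enough); the ``$t$ large'' hypothesis absorbs $\mathcal{H}$ versus $\mathcal{V}$ boundary effects, again via Lemma~\ref{l:MeasureComp}.

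\textbf{Main obstacle.} The most delicate piece is condition~5, which involves an infinite double sum. Combined with the (separately controlled) lower bound from condition~1, it suffices to estimate $\rho^{-2t}\sum_{m,\ell}\tau_t^{(1)}(I_{m,\ell,t})^2$ in expectation. By the second-moment bound $\mathbb{E}[\tau_t^{(1)}(I_{m,\ell,t})^2] \lesssim (4\rho^t 2^{-m})^{\zeta_2(\gamma)} = (4\rho^t 2^{-m})^{2-\gamma^2}$, together with $\lvert S(m,\rho^{t+1/4},\rho^t)\rvert \leq 2^{m+1}$, the expected per-level contribution is of order $2^{-m(1-\gamma^2)}$, which gives a summable series precisely when $\gamma^2 < 1$. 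For $\gamma$ sufficiently small the total expectation of the rescaled sum can be made arbitrarily small, so Markov's inequality yields the required high-probability upper bound of $2^5$. Although this particular argument only needs $\gamma^2<1$, the final constraint on $\gamma$ in the theorem will be substantially tighter, driven by the union bounds of earlier sections rather than by this lemma.
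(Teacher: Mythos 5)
Your overall plan (reduce to per-condition failure probabilities and union bound) is reasonable, but the mechanism you propose for the individual conditions does not close. The difficulty is that several parts of $\mathrm{ShapeRed}^{(1)}(t)$ have thresholds that exceed their Lebesgue values only by a bounded factor: part 1 asks for $\rho^{-t}\tau_t^{(1)}(B_t\setminus B_{\lfloor t\rfloor+3/4})\in[1,2^2]$ around a typical value $\approx 2$, part 2 asks for a ratio $\geq 2^{-4}$ whose Lebesgue value is $\approx 2^{-3}$, and part 4 asks for a ratio $\leq 2^2$ whose Lebesgue value is $\approx 2$. For these, Markov's inequality at a moment $p=1+\eta$ (or at any fixed moment) applied to a bound of the form $C\lambda^{-p}(\lvert J\rvert/\lvert I\rvert)^{\zeta_p(\gamma)/p}$ produces a \emph{constant-order} failure probability: the slack factor $\lambda$ is only $2$, the length ratios are of order one, and the constant $C$ in Lemma~\ref{l:momentbound} (coming from the Kahane-type estimate and the moments of the comparison variable $G$) does not tend to zero as $\gamma\to 0$. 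Neither does finiteness of negative moments help for the lower bound in part 1: it gives $\P(X<1)\leq\E[X^{-q}]$, which is merely some finite constant. Nothing in your cited toolkit produces a bound that can be made smaller than an arbitrary $\kappa$. What is genuinely needed is a concentration statement as $\gamma\to0$ that is uniform in $t$, i.e.\ that the rescaled measure converges to Lebesgue measure. The paper obtains this by first replacing $\tau_t^{(1)}$ with $\nu_t^{(1)}$ (the error being controlled uniformly in $\gamma$ for large $t$ by Lemma~\ref{l:MeasureComp}), then using the exact scaling relation $\rho^{-t}\nu_t^{(1)}(\rho^t\cdot)\overset{d}{=}\nu_\star(\cdot)$ to remove $t$ altogether, and finally invoking the convergence $\nu_\star^{(\gamma)}(I)\to\lvert I\rvert$ in probability as $\gamma\downarrow0$ (via analyticity in $\gamma$ from the earlier work), so that each constant-slack inequality holds because its Lebesgue counterpart holds with strict margin. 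A direct variance/Chebyshev computation with $\Var$ vanishing as $\gamma\to0$ uniformly in $t$ could replace the analyticity input, but some such ingredient is indispensable and is absent from your implementation; your ``Strategy'' paragraph gestures at it, while the argument you actually run does not use it.

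A secondary, concrete error occurs in your treatment of part 5. You bound $\E[\tau_t^{(1)}(I_{m,\ell,t})^2]\lesssim(4\rho^t2^{-m})^{\zeta_2(\gamma)}$, which is the estimate for the full chaos $\tau^{(1)}$; after multiplying by $\rho^{-2t}$ and summing over $\ell$, the per-level contribution is $C\,\rho^{-t\gamma^2}2^{-m(1-\gamma^2)}$, not $2^{-m(1-\gamma^2)}$, and the factor $\rho^{-t\gamma^2}$ destroys the uniformity in $t$ that the lemma requires (the statement must hold for all $t$ sufficiently large). To make your computation work one must use the scale-truncated covariance of $\tau_t^{(1)}$, giving $\E[\tau_t^{(1)}(I)^2]\lesssim\lvert I\rvert^{2}(\rho^t/\lvert I\rvert)^{\gamma^2}$ for $\lvert I\rvert\leq\rho^t$, or, as the paper does, pass to $\nu_\star$ via the scaling relation and apply the ratio moment bound \eqref{e:MeasureRatio} to the tail $m\geq M$ while handling the finitely many terms $m<M$ by the convergence-to-Lebesgue argument; moreover the denominator $\tau_t^{(1)}(B_t\setminus B_{\lfloor t\rfloor+3/4})^2$ in part 5 again needs the concentration input from the previous paragraph rather than a bare moment bound.
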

\begin{proof}
    Throughout this proof, we denote $\nu_t=\nu_t^{(1)}$ and $\tau_t=\tau_t^{(1)}$ since the superscripts play no role in our argument. For any Borel set $I\subset[0,1]$ and any $t\geq 1$, it is clear from the definition that
    \begin{displaymath}
        \frac{\tau_t(I)}{\nu_t(I)}=\lim_{\delta\downarrow 0}\frac{\int_I\exp\big(\gamma (H_\delta(a)-H_{\rho^t}(a))-(\gamma^2/2)\Var[H_\delta(0)-H_{\rho^t}(0)]\big)\;da}{\int_I\exp\big(\gamma (V_\delta(a)-V_{\rho^t}(a))-(\gamma^2/2)\Var[V_\delta(0)-V_{\rho^t}(0)]\big)\;da}
    \end{displaymath}
    is bounded above and below by $\exp(\pm G_{t,\gamma})$ where
    \begin{displaymath}
        G_{t,\gamma}=\sup_{\delta\in(0,\rho^t]}\sup_{a\in[0,1]}\left\lvert\gamma U_\delta^{\rho^t}(a)-\frac{\gamma^2}{2}\Var\big[U_\delta^{\rho^t}(a)\big]\right\rvert
    \end{displaymath}
    and we recall the definition of $U$ from Lemma~\ref{l:MeasureComp}. Applying this lemma, we see that as $t\to\infty$, $G_{t,\gamma}$ converges to zero in probability uniformly over $\gamma$. Therefore in each of the events defining $\mathrm{ShapeRed}(t)$ we can replace $\tau_t$ by $\nu_t$ provided we shrink the intervals by some constant. More precisely it is enough to prove that for some $\delta>0$ and all $t\geq N_0(\rho,\kappa)$, the probability of 
    \begin{equation}\label{e:ShapeRed2}
    \begin{aligned}
    &\left\{1+\delta\leq\nu_t(B_t\setminus B_{\lfloor t\rfloor+7/8})\rho^{-t}\leq 2^2-\delta \right\}\cap\bigcap_{\ell=1}^8\left\{\frac{\nu_t(J_{t,\ell}\setminus B_{\lfloor t\rfloor+7/8})}{\nu_t(B_t\setminus B_{\lfloor t\rfloor+7/8})}\geq 2^{-4}+\delta\right\}\\
    &\qquad\qquad\cap\left\{\frac{\nu_t(2B_{t+1/4}\setminus B_{\lfloor t\rfloor+7/8})}{\nu_t(B_t\setminus B_{\lfloor t\rfloor+7/8})}\leq 2^{-30}-\delta\right\}\cap
    \left\{\frac{\nu_t(2B_t\setminus B_{\lfloor t\rfloor+7/8})}{\nu_t(B_t\setminus B_{\lfloor t\rfloor+7/8})}\leq 2^2-\delta\right\}\\
    &\quad\qquad\qquad\qquad\qquad\qquad\cap\left\{\sum_{m=0}^\infty\sum_{\ell\in S(m,\rho^{t+1/4},\rho^{t})}\frac{\nu_t(I_{m,\ell,t})^2}{\nu_t(B_t\setminus B_{\lfloor t\rfloor+7/8})^2}\leq 2^5-\delta\right\}
    \end{aligned}
    \end{equation}
    is at least $1-\kappa/2$.

    Next we derive a scaling relation for the measures $\nu_t$ which shows that the probability of the previous event is roughly constant over $t$. Denoting $V^r_\delta(a):=V_\delta(a)-V_r(a)$ for $0<\delta\leq r\leq 1/2$ and $a,b\in[-1/4,1/4]$ we have
    \begin{displaymath}
        \mathrm{Cov}\big[V^r_\delta(a),V^r_\delta(b)\big]=
        \begin{cases}
        \log(r/\delta)-\lvert b-a\rvert(1/\delta-1/r) &\text{if }\lvert b-a\rvert\leq\delta\\
        \log(r/\lvert b-a\rvert)-1+\lvert b-a\rvert/r &\text{if }\delta<\lvert b-a\rvert\leq r\\
        0 &\text{otherwise.}
        \end{cases}
    \end{displaymath}
    (Note the restriction on $a,b$ and $r$ ensures that the periodicity of the white noise can be ignored.) This covariance structure is unchanged when all parameters are scaled by a common factor (less than one), more precisely: for $\beta\in(0,1]$
    \begin{displaymath}
        \mathrm{Cov}\big[V^r_\delta(a),V^r_\delta(b)\big]=\mathrm{Cov}\big[V^{\beta r}_{\beta\delta}(\beta a),V^{\beta r}_{\beta\delta}(\beta b)\big].
    \end{displaymath}
    Therefore we conclude that the two families of random variables
    \begin{align*}
        \int_{\beta I}\beta^{-1}\exp\Big(\gamma V^{\beta r}_{\beta\delta}(x)-\frac{\gamma^2}{2}\Var[V^{\beta r}_{\beta\delta}(0)]\Big)\;dx\quad\text{and}\quad\int_{I}\exp\Big(\gamma V^{r}_{\delta}(x)-\frac{\gamma^2}{2}\Var[V^{r}_{\delta}(0)]\Big)\;dx
    \end{align*}
    indexed by $\delta\in(0,r]$ and Borel sets $I\subset[-1/4,1/4]$ are equal in distribution. From the definition of $\nu_t$ in \eqref{e:MeasureDef2} we see that for any $t\geq 1$
    \begin{equation}\label{e:ScaleInvar}
        \rho^{-t}\nu_t(\rho^t\cdot)\overset{d}{=}\rho^{-1}\nu_1(\rho\cdot)=:\nu_\star(\cdot).
    \end{equation}
    (Note we view $\nu_\star$ as a measure on $[-2,2]$ since $\rho\cdot[-2,2]\subset[-1/4,1/4]$ so the necessary restriction on $a,b$ above holds.)

    In \cite[Appendix~A]{ajks} it is shown that there exists a version of $\nu_\star=\nu_\star^{(\gamma)}$ such that for any Borel set $I$, $\gamma\mapsto\nu_\star^{(\gamma)}(I)$ is analytic. In particular, as $\gamma\downarrow0$
    \begin{equation}\label{e:MeasureConv}
        \nu_\star^{(\gamma)}(I)\to\nu_\star^{(0)}(I)=\lvert I\rvert.
    \end{equation}
    Turning to the first part of the event \eqref{e:ShapeRed2}, by monotonicity of $\nu_t$ we have the sandwich bound
    \begin{displaymath}
        \rho^{-t}\nu_t(B_t\setminus B_{t+7/8})\leq \rho^{-t}\nu_t(B_t\setminus B_{\lfloor t\rfloor+7/8})\leq \rho^{-t}\nu_t(B_t).
    \end{displaymath}
    By \eqref{e:ScaleInvar}, the left and right expressions here are equal in distribution to
    \begin{displaymath}
        \nu_\star([-1,1]\setminus[-\rho^{7/8},\rho^{7/8}])\quad\text{and}\quad\nu_\star([-1,1])
    \end{displaymath}
    respectively. By \eqref{e:MeasureConv}, as $\gamma\downarrow 0$ these converge in probability to $2(1-\rho^{7/8})$ and $2$ respectively. Hence by taking $\gamma$ sufficiently small, we can ensure that
    \begin{displaymath}
        \P\left(\rho^{-t}\nu_t(B_t\setminus B_{\lfloor t\rfloor+7/8})\in[3/2,5/2]\right)
    \end{displaymath}
    is as close to one as desired.

    A very similar sandwiching approach yields the same conclusion for the next three parts of \eqref{e:ShapeRed2}. The essence of the argument in each case is one of the following deterministic inequalities:
    \begin{align*}
        \frac{\lvert J_{t,\ell}\setminus B_{\lfloor t\rfloor+7/8}\rvert}{\lvert B_t\setminus B_{\lfloor t\rfloor+7/8}\rvert}&\geq\frac{\frac{1}{4}\rho^t-2\rho^{t+1/4}}{2\rho^t}=\frac{1}{8}-\rho^{1/4}\\
        \frac{\lvert 2B_{t+1/4}\setminus B_{\lfloor t\rfloor+7/8}\rvert}{\lvert B_t\setminus B_{\lfloor t\rfloor+7/8}\rvert}&\leq\frac{4\rho^{t+1/4}}{2\rho^t-2\rho^{t+1/4}}\leq \frac{2\rho^{1/4}}{1-\rho^{1/4}}\\
        \frac{\lvert 2B_t\setminus B_{\lfloor t\rfloor+7/8}\rvert}{\lvert B_t\setminus B_{\lfloor t\rfloor+7/8}\rvert}&\leq\frac{4\rho^t}{2\rho^t-2\rho^{t+1/4}}\leq\frac{2}{1-\rho^{1/4}}
    \end{align*}
    all of which satisfy the necessary bounds in \eqref{e:ShapeRed2} for $\rho$ sufficiently small.

    The final part of \eqref{e:ShapeRed2} requires slightly more work since it involves an infinite sum. Recalling the definitions
    \begin{displaymath}
        I_{m,\ell,t}=[(\ell-2)\rho^t2^{-m},(\ell+2)\rho^t2^{-m}]\quad\text{and}\quad S(m,r,R)=\{\ell\in\Z\;|\;r\leq \lvert \ell\rvert R2^{-m}\leq R\},
    \end{displaymath}
    by \eqref{e:ScaleInvar}
    \begin{align*}
        \sum_{m=0}^\infty\sum_{\ell\in S(m,\rho^{t+1/4},\rho^{t})}\frac{\nu_t(I_{m,\ell,t})^2}{\nu_t(B_t\setminus B_{\lfloor t\rfloor+7/8})^2}&\leq \sum_{m=0}^\infty\sum_{\lvert\ell\rvert\leq 2^m}\frac{\nu_t(I_{m,\ell,t})^2}{\nu_t(B_t\setminus B_{t+1/4})^2}\\
        &\overset{d}{=}\sum_{m=0}^\infty\sum_{\lvert\ell\rvert\leq 2^m}\frac{\nu_\star(I_{m,\ell,0})^2}{\nu_\star(B_0\setminus B_{1/4})^2}.
    \end{align*}
    By \eqref{e:MeasureRatio}, for $\gamma\leq\gamma_0$ sufficiently small, the expectation of the latter quantity is at most
    \begin{displaymath}
        \sum_{m=0}^\infty 2^{m+1}\cdot C_{\gamma_0}\big(2^{-m+2}\big)^{3/2}<\infty.
    \end{displaymath}
    Therefore by the Markov inequality we can choose $M\in\N$ (independent of $t,\gamma,\rho$) large enough to ensure that
    \begin{displaymath}
        \P\left(\sum_{m=M}^\infty\sum_{\lvert\ell\rvert\leq 2^m}\frac{\nu_\star(I_{m,\ell,0})^2}{\nu_\star(B_0\setminus B_{1/4})^2}>\delta\right)
    \end{displaymath}
    is arbitrarily small. For $\gamma$ sufficiently small, by \eqref{e:MeasureConv} applied to finitely many intervals we know that with probability sufficiently close to one
    \begin{align*}
        \sum_{m=0}^M\sum_{\lvert\ell\rvert\leq 2^m}\frac{\nu_\star(I_{m,\ell,0})^2}{\nu_\star(B_0\setminus B_{1/4})^2}\leq \sum_{m=0}^M\sum_{\lvert\ell\rvert\leq 2^m}\frac{\lvert I_{m,\ell,0}\rvert^2}{\lvert B_0\setminus B_{1/4}\rvert^2}+\delta&\leq\sum_{m=0}^\infty 2^{m+1}\frac{2^{-2(m-2)}}{(2(1-\rho^{1/4}))^2}+\delta\\
        &\leq \frac{16}{(1-\rho^{1/4})^2}+\delta.
    \end{align*}
    Combined with the previous displayed equations, this proves that the final event of \eqref{e:ShapeRed2} occurs with probability arbitrarily close to one for $\gamma$ sufficiently small, completing the proof of the lemma.
\end{proof}

\begin{proof}[Proof of Proposition~\ref{p:LDRed}]
We denote
\begin{displaymath}
    \chi_1(n)=\ind_{\mathrm{ShapeRed}^{(1)}(t_n)},\quad\chi_2(n)= \ind_{\mathrm{ShapeRed}^{(2)}(s_n)},\quad\chi_3(n)=\ind_{\mathrm{SizeRed}(t_n,s_n)}.
\end{displaymath}
By Lemma~\ref{l:Occupationtime} there exists $\delta>0$ such that for all $\gamma$ sufficiently small
\begin{displaymath}
\P\left(T_{14\delta N}>3N\right)<C\rho^{(2+3\epsilon)5N}.
\end{displaymath}
From Algorithm~\ref{Algo} we see that $t_{14\delta N},s_{14\delta N}\leq N+2(T_{14\delta N}-N)$ and so with probability at least $1-C\rho^{(2+3\epsilon)5N}$
\begin{equation}\label{e:StoppingTime}
    \sum_{n\;:\;N\leq t_n,s_n\leq 5N}\chi_1(n)\chi_2(n)\chi_3(n)\geq\sum_{n=1}^{14\delta N}\chi_1(n)\chi_2(n)\chi_3(n).
\end{equation}
By Lemma~\ref{l:LDShapeRed}, for $N\geq N_0$ the sequence $\chi_1(n)$ dominates a Bernoulli sequence with parameter $1-\kappa$. Therefore by the standard large deviation estimate for Bernoulli variables
\begin{displaymath}
    \P\left(\sum_{n=1}^{14\delta N}\chi_1(n)< (1-\eta)14\delta N\right)< e^{-14\delta NI(\eta,\kappa)}
\end{displaymath}
where
\begin{displaymath}
    I(\eta,\kappa)=\eta\log\left(\frac{\eta}{1-\kappa}\right)+(1-\eta)\log\left(\frac{1-\eta}{\kappa}\right).
\end{displaymath}
Choosing first $\eta$ and then $\kappa$ sufficiently small we may ensure that
\begin{displaymath}
    \P\left(\sum_{n=1}^{14\delta N}\chi_1(n)< 13\delta N\right)<\rho^{(2+3\epsilon)5N}.
\end{displaymath}
Applying Lemma~\ref{l:LDShapeRed} to $\chi_2(n)$ and Lemma~\ref{l:LDSizeRed} to $\chi_3(n)$, by the union bound we have
\begin{displaymath}
    \P\left(\sum_{n=1}^{14\delta N}\chi_1(n)\chi_2(n)\chi_3(n)< 11\delta N\right)< 3\rho^{(2+3\epsilon)5N}.
\end{displaymath}
Combined with \eqref{e:StoppingTime} this proves the statement of the proposition.
\end{proof}

\section{Generalisations}\label{s:generalisations}
In this section we generalise our main result in two ways: (i) we allow the homeomorphisms to have different values of the parameter $\gamma$ and (ii) we prove a simultaneous welding result for rotated versions of the homeomorphisms $\phi_1$ and $\phi_2$.
\subsection{Measures with different parameter values}\label{s:DiffParam}
Let $\tau^{(1)}$ and $\tau^{(2)}$ be defined as in \eqref{e:MeasureDef} with (possibly) different parameters $\gamma_1$ and $\gamma_2$ respectively and let $\phi_1$ and $\phi_2$ be defined by \eqref{e:DefineHomeo}.
\begin{theorem}\label{t:WeldingGeneral}
There exists $\gamma_0\in(0,\sqrt{2})$ such that for each $\gamma_1,\gamma_2\in[0,\gamma_0]$ the following holds with probability one: there exist conformal maps
\begin{displaymath}
f_1:\D\to D,\quad\text{and}\quad f_2:\C\setminus\overline{\D}\to \C\backslash \overline{D}
\end{displaymath}
(where $D\subset\C$ is some simply connected domain) which may be extended to homeomorphisms of their closures such that $f_1\circ\phi_1^{-1}=f_2\circ\phi_2^{-1}$. Moreover the maps $f_1$ and $f_2$ are unique up to post-composition with a M\"obius transformation.
\end{theorem}
The proof of this result is nearly identical to that of Theorem~\ref{t:Welding} and so we will simply highlight the points of difference.
\begin{proof}[Proof of Theorem~\ref{t:WeldingGeneral}]
    All of the arguments given in Sections~\ref{s:Beltrami} and~\ref{s:Holder} remain valid for $\tau^{(1)}$ and $\tau^{(2)}$ defined using $\gamma_1$ and $\gamma_2$ respectively.
    
    The arguments in Section~\ref{s:Decompose} are equally valid in this setting, provided that we redefine our processes using $\gamma_1$ and $\gamma_2$. More precisely, we define $E^{(j)}(u,a)$, $\tau_t^{(j)}$ and $\nu_t^{(j)}$ as before but with $\gamma_j$ replacing $\gamma$, and we now define
    \begin{align*}
    X^{(H)}_{t,s}&:=\gamma_1 H^{(1)}_{\rho^t}(x)-\gamma_2 H^{(2)}_{\rho^s}(y)-\frac{\gamma_1^2}{2}\Var\big[H^{(1)}_{\rho^t}(x)\big]+\frac{\gamma_2^2}{2}\Var\big[H^{(2)}_{\rho^s}(y)\big]-(t-s)\log(1/\rho)\\
    X^{(V)}_{t,s}&:=\gamma_1 V^{(1)}_{\rho^t}(x)-\gamma_2 V^{(2)}_{\rho^s}(y)-\Big(\frac{\gamma_1^2}{2}+1\Big)t\log(1/\rho)+\Big(\frac{\gamma_2^2}{2}+1\Big)s\log(1/\rho).
    \end{align*}
    We define the events in Section~\ref{s:Decompose} as before (in terms of these redefined measures/processes) and then all proofs go through once more. (In particular, we note that the probabilistic estimates are valid since they only deal with processes defined using either $\gamma_1$ or $\gamma_2$ separately and both of these parameters are assumed less than $\gamma_0$.)
    
    The proof given in Section~\ref{ss:CompletingHolder} holds in our new setting. The arguments from Sections~\ref{ss:algorithm} and~\ref{ss:LargeDeviation} will extend after minor alterations, which we now describe. The increments of $X^{(V)}$ are now given by
    \begin{align}
    X^{(V)}_{t+u,s+v}-X^{(V)}_{t,s}&\sim\mathcal{N}(d_2v-d_1u,\sigma_1^2u+\sigma_2^2v))
    \end{align}
    where
    \begin{equation}
    d_i:=\left(1+\frac{\gamma_i^2}{2}\right)\log(1/\rho)\quad\text{and}\quad\sigma_i^2:=\gamma_i^2\log(1/\rho).
    \end{equation}
    Without loss of generality, we assume $d_1\leq d_2$ and define $d:=2d_1-d_2$. We then let $i_m$, $j_m$, $T_n$, $t_n$ and $s_n$ be defined as before (via Algorithm~\ref{Algo} and the subsequent paragraph) with the new definition of $d$. In particular we note that, assuming $\gamma_2\leq 1/2$ say, for any $Y_{T_n}\in[-d,d]$ we may choose $v,u\in[1,1+5/8]\cup\{2\}$ such that $Y_{T_n}+d_2v-d_1u=0$.

    Our oscillating random walk $Y_n$ now has different (expected) step-sizes $d_1$ or $d_2$ depending on whether it is above or below the target interval $[-d,d]$ and the initial distribution $Y_N$ has an expectation comparable to $N(d_2-d_1)$. However by choosing $\gamma_0$ sufficiently small we can ensure that the ratios $d_2/d_1$ and $d_2/d$ are arbitrarily close to one, which in turn means that Lemmas~\ref{l:Initial}-\ref{l:Occupationtime} will hold with the new definition of $d$ and $\sigma^2:=\gamma_0^2\log(1/\rho)$ after small modifications to their proofs. Specifically, for Lemma~\ref{l:Initial} we can use the union bound to control $\lvert\E[X_{N,N}^{(H)}]\rvert$ and $\lvert X_{N,N}^{(H)}-\E[X_{N,N}^{(H)}]\rvert$ separately; the former is bounded by, say, $dN/32$ whilst the probability of the latter exceeding $d N/32$ can be bound as before. The proof of Lemma~\ref{l:Jump} requires almost no change; we simply note that $\sigma_1^2\leq\sigma^2$ and the expectation of $Y_n$ conditional on $Y_{n-1}=u$ is $u-d_1$ rather than $u-d$. We also note that the same proof yields a `reflected' version of this lemma: if $S:=\inf\{m\geq N\;|\;Y_m\geq d\}$ then $\P(Y_S\geq a\;|\;Y_N<-d)\leq Ce^{-a^2/(2\sigma^2)}$ (previously this version was immediate from symmetry of the oscillating random walk about zero). The proofs of Lemmas~\ref{l:Burn-in} and~\ref{l:Occupationtime} go through exactly as before (with the necessary substitutions).

    All of the arguments in Section~\ref{ss:Reduced} remain valid as before (with our new definitions) completing the proof of H\"older continuity in this setting and hence proving Theorem~\ref{t:WeldingGeneral}.
\end{proof}

\subsection{Simultaneous welding}
Recall that our welding result (Theorem~\ref{t:Welding}) involves transforming $\overline{\D}$ and $\C\setminus\D$ (conformally) so that the `normalised quantum length' of any part of their common boundary with respect to $\tau^{(1)}$ and $\tau^{(2)}$ coincide. This property does not uniquely define a homeomorphism of $\partial\D$; we also need to normalise by choosing a pair of points on the respective boundaries whose images should coincide. In our formulation, these points depend on $\Theta_1$ and $\Theta_2$. We now show the stronger result that, with probability one there is a unique solution to the welding problem for all possible choices of such points simultaneously and that these solutions vary continuously with the choice of points.

For $u\in\T\simeq[0,1)$ we let $\phi_1^{(u)}$ and $\phi_2^{(u)}$ denote the homeomorphisms constructed earlier (in \eqref{e:DefineHomeo}) with $\Theta_1=u$ and $\Theta_2=0$. (There will be no loss of generality in setting $\Theta_2=0$.)

If $f_1:\D\to D$ and $f_2:\C\setminus\overline{\D}\to\C\setminus\overline{D}$ solve the conformal welding problem for a particular homeomorphism then we define the \emph{welding curve} for this solution to be $\partial D=f_1(\partial\D)=f_2(\partial\D)$. If the solution to the welding problem is unique (up to post-composition by a M\"obius transformation) then we may equip the welding curve with a topology induced by uniform convergence on compacts of $f_2$ after normalising so that $f_2(z)=z+O(1/z)$ as $z\to\infty$. (Note that this is stronger than the topology of uniform convergence on $\partial\D$ modulo reparameterisation.)

The main result of this subsection is the following:

\begin{theorem}\label{t:translations}
    There exists $\gamma_0\in(0,\sqrt{2})$ such that for $\gamma\in[0,\gamma_0]$ with probability one, the following holds:
    \begin{enumerate}
        \item for all $u\in\T$ the welding problem for $(\phi_1^{(u)})^{-1}\circ\phi_2^{(u)}$ admits a solution which is unique up to post-composition by a M\"obius transformation, and
        \item the welding curves induce by the above solutions are continuous with respect to $u\in\T$.
    \end{enumerate}
\end{theorem}

With the aid of Figure~\ref{fig:WeldingIllustration}, we can think of this result heuristically in the following manner: as we vary $u$, all points on one side of the welding curve will move along the curve in the same direction (but at different rates) and the curve itself will change to ensure that both sides continue to `match up'. With this visualisation in mind, one might describe this result colourfully as a `conformal earthquake welding'.

The proof of this result follows from a few modifications to our earlier arguments. To emphasise the dependence on $u$, we now write $\mu^{(u)}$ for the complex dilatation defined in \eqref{e:Distortion} and $F_n^{(u)}$ for the solution to the Beltrami equation \eqref{e:Beltrami} satisfying $F_n^{(u)}(z)=z+O(1/z)$ as $z\to\infty$. The key estimate is that the $F_n^{(u)}$ are H\"older continuous uniformly in both $n$ and $u$:

\begin{proposition}\label{p:translation_Holder}
    There exists $\gamma_0\in(0,\sqrt{2})$ such that for each $\gamma\in[0,\gamma_0]$, the collection of maps $\{F_n^{(u)}\;|\;n\in\N, u\in\T\}$ is uniformly H\"older continuous on $\partial\D$ with probability one.
\end{proposition}

\begin{proof}
    We recall the events $\mathrm{Ann}^\prime$, $\mathrm{AnnSeq}^\prime$ and $\mathrm{Match}$ defined prior to Lemma~\ref{l:Good_annuli} and in Definition~\ref{d:AnnSeq}. Observe that only the last of these depends on $u$ (the others are determined by $\tau^{(j)}$ for $j=1,2$) so we denote this by $\mathrm{Match}_u$.
    
    In the proof of Theorem~\ref{t:Holder} it was shown that there exists $C>0$ such that for any $x,y\in\T$ and $N\in\N$
    \begin{displaymath}
        \P(\mathrm{AnnSeq}^\prime(x,y,N)^c)\leq C\rho^{(2+3\epsilon)5N}.
    \end{displaymath}
    In the proof of Proposition~\ref{p:Stationary_condition}, this bound was shown to imply that with probability one there exists a random $N_0$ such that for all $N\geq N_0$ and all $x,y\in P_N$ the event $\mathrm{AnnSeq}^\prime(x,y,N)$ holds where $P_N$ is a $(1/2)\rho^{(1+\epsilon)5N}$-net of $[0,1]$. Now for any $u,v\in\T$ and $N\geq N_0$ we can find $x_1,x_2\in P_N$ such that
    \begin{displaymath}
        v\in\Psi_j^{(u)}([x_j,x_j+\rho^{(1+\epsilon)5N}))\qquad\text{for }j=1,2.
    \end{displaymath}
    This ensures that $\mathrm{Match}_u(x_1,x_2,N)$ occurs. Hence Lemma~\ref{l:Good_annuli} implies the existence of a suitable sequence of annuli surrounding $v$ which will allow us to apply Lemma~\ref{l:Annuli-Holder} following the same argument as in the conclusion of the proof of Proposition~\ref{p:Stationary_condition}.
\end{proof}

\begin{proof}[Proof of Theorem~\ref{t:translations}]
    By Lemma~\ref{l:Distortion}
    \begin{equation}\label{e:translation1}
        \sup_{u\in\T}\sup_{n\in\N}K(\cdot,F_n^{(u)})\in L^\infty_{\mathrm{loc}}(\C\setminus\partial\D).
    \end{equation}
    Next we observe that $K(\cdot,F_n^{(u)})$ is formed by pre-composing $K(\cdot,F_n^{(0)})$ with a rotation on $\D$ (and the identity on $\C\setminus\D$). Hence applying Lemma~\ref{l:Distortion2} with $\Theta_1=0$ implies that, with probability one,
    \begin{equation}\label{e:translation2}
        \{K(\cdot, F_n^{(u)})\;|\;n\in\N,u\in\T\}\qquad\text{is uniformly integrable on compact sets}.
    \end{equation}
    We now fix a realisation of our underlying white noise such that \eqref{e:translation1} and \eqref{e:translation2} hold and also, by Proposition~\ref{p:translation_Holder}, the $F_n^{(u)}$ are uniformly H\"older continuous.

    On this realisation, for any $u\in\T$ we may argue as in the proof of Theorem~\ref{t:Welding} to conclude that there exists $F^{(u)}\in W^{1,1}_\mathrm{loc}(\C)$ which satisfies the Beltrami equation for $\mu^{(u)}$ with the standard normalisation at infinity such that $F_n^{(u)}\to F^{(u)}$ locally uniformly. Moreover the welding problem for $(\phi_1^{(u)})^{-1}\circ\phi_2^{(u)}$ is solved by $f_1^{(u)}:=F^{(u)}\circ\Phi_1^{(u)}$ and $f_2^{(u)}:=F^{(u)}\circ\Phi_2^{(u)}$ and this solution is unique up to M\"obius transformation.

    It remains to prove continuity of the solutions in $u\in\T$. Given a sequence $u_n\to u$ as $n\to\infty$ we may choose a sequence $k_n\to\infty$ such that for all $R>1$
    \begin{equation}\label{e:local_unif}
        \lim_{n\to\infty}\sup_{z\in R\D}\lvert F_{k_n}^{(u_n)}(z)-F^{(u_n)}(z)\rvert=0.
    \end{equation}
    Defining $G_n=F_{k_n}^{(u_n)}$ it follows that all of the conditions of Proposition~\ref{p:dilatation_convergence} are satisfied with $\mu_\infty=\mu^{(u)}$ and so there exists a homeomorphism $G_\infty\in W^{1,1}_\mathrm{loc}(\C)$ which satisfies the Beltrami equation for $\mu_\infty=\mu^{(u)}$ such that $G_n\to G_\infty$ locally uniformly. The uniqueness of the solution to the welding problem for this choice of $u$ implies that $G_\infty=F^{(u)}$. Combined with \eqref{e:local_unif}, this shows that $F^{(u_n)}\to F^{(u)}$ uniformly on $R\D$. Since $f_2^{(u_n)}=F^{(u_n)}\circ\Phi_2^{(u_n)}$ and $\Phi_2^{(u)}(z)$ is jointly continuous in $u$ and $z$, we deduce the stated continuity of the welding curves.    
\end{proof}

\printbibliography

\end{document}